\begin{document}\newtheoremstyle{all}
  {11pt}
  {11pt}
  {\slshape}
  {}
  {\bfseries}
  {}
  {.5em}
  {}
\newtheoremstyle{rem}
  {11pt}
  {11pt}
  {}
  {}
  {\bfseries}
  {}
  {.5em}
  {}

\theoremstyle{all}
\newtheorem{thm}{Theorem}[section]
\newtheorem{prop}[thm]{Proposition}
\newtheorem{cor}[thm]{Corollary}
\newtheorem{lemma}[thm]{Lemma}
\newtheorem{defn}[thm]{Definition}
\newtheorem*{defn*}{Definition}
\newtheorem{ques}[thm]{Question}
\newtheorem{conj}[thm]{Conjecture}
\theoremstyle{rem}
\newtheorem{rem}[thm]{Remark}
\newtheorem{example}[thm]{Example}

 
\newcommand{\nc}{\newcommand}
\newcommand{\renc}{\renewcommand}
  \nc{\kac}{\kappa^C}
\nc{\bcom}{\todo[inline,color=red!40]}
\nc{\alg}{T}
\nc{\Lco}{L_{\la}}
\nc{\qD}{q^{\nicefrac 1D}}
\nc{\ocL}{M_{\la}}
\nc{\excise}[1]{}
\nc{\Dbe}{D^{\uparrow}}
\nc{\tr}{\operatorname{tr}}
\newcommand{\Mirkovic}{Mirkovi\'c\xspace}
\nc{\tla}{\mathsf{t}_\la}
\nc{\llrr}{\langle\la,\rho\rangle}
\nc{\lllr}{\langle\la,\la\rangle}
\nc{\K}{\mathbbm{k}}
\nc{\Stosic}{Sto{\v{s}}i{\'c}\xspace}
\nc{\cd}{\mathcal{D}}
\nc{\vd}{\mathbb{D}}
\nc{\sse}{{\mathsf{e}}}
\nc{\ssf}{{\mathsf{f}}}
\nc{\tsse}{\tilde{\mathsf{e}}}
\nc{\tssf}{\tilde{\mathsf{f}}}
\nc{\R}{\mathbb{R}}
\renc{\wr}{\operatorname{wr}}
  \nc{\Lam}[3]{\La^{#1}_{#2,#3}}
  \nc{\Lab}[2]{\La^{#1}_{#2}}
  \nc{\Lamvwy}{\Lam\Bv\Bw\By}
  \nc{\Labwv}{\Lab\Bw\Bv}
  \nc{\nak}[3]{\mathcal{N}(#1,#2,#3)}
  \nc{\hw}{highest weight\xspace}
  \nc{\al}{\alpha}
\nc{\thetitle}{\Mirkovic -Vilonen polytopes and
  Khovanov-Lauda-Rouquier algebras}
  \nc{\be}{\beta}
  \nc{\bM}{\mathbf{m}}
  \nc{\bkh}{\backslash}
  \nc{\Bi}{\mathbf{i}}
  \nc{\Bj}{\mathbf{j}}
   \nc{\Bk}{\mathbf{k}}
\nc{\RAA}{R^\A_A}
  \nc{\Bv}{\mathbf{v}}
  \nc{\Bw}{\mathbf{w}}
\nc{\Id}{\operatorname{Id}}
  \nc{\By}{\mathbf{y}}
\nc{\eE}{\EuScript{E}}
  \nc{\Bz}{\mathbf{z}}
  \nc{\coker}{\mathrm{coker}\,}
  \nc{\C}{\mathbb{C}}
  \nc{\ch}{\mathrm{ch}}
  \nc{\de}{\delta}
  \nc{\ep}{\epsilon}
  \nc{\Rep}[2]{\mathsf{Rep}_{#1}^{#2}}
  \nc{\Ev}[2]{E_{#1}^{#2}}
  \nc{\fr}[1]{\mathfrak{#1}}
  \nc{\fp}{\fr p}
  \nc{\fq}{\fr q}
  \nc{\fl}{\fr l}
  \nc{\fgl}{\fr{gl}}
 \nc{\fh}{\fr{h}}

\nc{\coa}{\mathbbm{a}}
\nc{\rad}{\operatorname{rad}}
\nc{\ind}{\operatorname{ind}}
  \nc{\GL}{\mathrm{GL}}
  \nc{\Hom}{\mathrm{Hom}}
  \nc{\im}{\mathrm{im}\,}
  \nc{\La}{\Lambda}
  \nc{\la}{\lambda}
  \nc{\mult}{b^{\mu}_{\la_0}\!}
  \nc{\mc}[1]{\mathcal{#1}}
  \nc{\om}{\omega}
\nc{\gl}{\mathfrak{gl}}
  \nc{\cF}{\mathcal{F}}
 \nc{\cC}{\mathcal{C}}
  \nc{\Vect}{\mathsf{Vect}}
 \nc{\modu}{\mathsf{mod}}
  \nc{\qvw}[1]{\La(#1 \Bv,\Bw)}
  \nc{\van}[1]{\nu_{#1}}
  \nc{\Rperp}{R^\vee(X_0)^{\perp}}
  \nc{\si}{\sigma}
  \nc{\croot}[1]{\al^\vee_{#1}}
\nc{\di}{\mathbf{d}}
  \nc{\SL}[1]{\mathrm{SL}_{#1}}
  \nc{\Th}{\theta}
  \nc{\vp}{\varphi}
  \nc{\wt}{\mathrm{wt}}
  \nc{\Z}{\mathbb{Z}}
  \nc{\Znn}{\Z_{\geq 0}}
  \nc{\ver}{\EuScript{V}}
  \nc{\Res}[2]{\operatorname{Res}^{#1}_{#2}}
  \nc{\edge}{\EuScript{E}}
  \nc{\Spec}{\mathrm{Spec}}
  \nc{\tie}{\EuScript{T}}
  \nc{\ml}[1]{\mathbb{D}^{#1}}
  \nc{\fQ}{\mathfrak{Q}}
        \nc{\fg}{\mathfrak{g}}
 \nc{\fn}{\mathfrak{n}}
  \nc{\Uq}{U_q(\fg)}
        \nc{\bom}{\boldsymbol{\omega}}
\nc{\bla}{{\underline{\boldsymbol{\la}}}}
\nc{\bmu}{{\underline{\boldsymbol{\mu}}}}
\nc{\bal}{{\boldsymbol{\al}}}
\nc{\bet}{{\boldsymbol{\eta}}}
\nc{\rola}{X}
\nc{\Sai}{\sigma}
\nc{\wela}{Y}
\nc{\fM}{\mathfrak{M}}
\nc{\fX}{\mathfrak{X}}
\nc{\fH}{\mathfrak{H}}
\nc{\fE}{\mathfrak{E}}
\nc{\fF}{\mathfrak{F}}
\nc{\fI}{\mathfrak{I}}
\nc{\qui}[2]{\fM_{#1}^{#2}}
\renc{\cL}{\mathcal{L}}
\nc{\ca}[2]{\fQ_{#1}^{#2}}
\nc{\Q}{\mathbb{Q}}
 \nc{\cat}{\mathcal{V}}
\nc{\cata}{\mathfrak{V}}
\nc{\pil}{{\boldsymbol{\pi}}^L}
\nc{\pir}{{\boldsymbol{\pi}}^R}
\nc{\cO}{\mathcal{O}}
\nc{\Ko}{\text{\Denarius}}
\nc{\Ei}{\fE_i}
\nc{\Fi}{\fF_i}
\nc{\fil}{\mathcal{H}}
\nc{\brr}[2]{\beta^R_{#1,#2}}
\nc{\brl}[2]{\beta^L_{#1,#2}}
\nc{\so}[2]{\EuScript{Q}^{#1}_{#2}}
\nc{\EW}{\mathbf{W}}
\nc{\rma}[2]{\mathbf{R}_{#1,#2}}
\nc{\Dif}{\EuScript{D}}
\nc{\MDif}{\EuScript{E}}
\renc{\mod}{\mathsf{mod}}
\nc{\modg}{\mathsf{mod}^g}
\nc{\nmod}{\mathsf{mod}^{fd}}
\nc{\id}{\operatorname{id}}
\nc{\DR}{\mathbf{DR}}
\nc{\End}{\operatorname{End}}
\nc{\Fun}{\operatorname{Fun}}
\nc{\Ext}{\operatorname{Ext}}
\nc{\tw}{\tau}
\nc{\A}{\EuScript{A}}
\nc{\Loc}{\mathsf{Loc}}
\nc{\eF}{\EuScript{F}}
\nc{\LAA}{\Loc^{\A}_{A}}
\nc{\perv}{\mathsf{Perv}}
\nc{\gfq}[2]{_{#1}^{#2}}
\nc{\qgf}[1]{A_{#1}}
\nc{\qgr}{\qgf\rho}
\nc{\tqgf}{\tilde A}
\nc{\Tr}{\operatorname{Tr}}
\nc{\Tor}{\operatorname{Tor}}
\nc{\cQ}{\mathcal{Q}}
\nc{\val}{\operatorname{val}}
\nc{\st}[1]{\Delta(#1)}
\nc{\cst}[1]{\nabla(#1)}
\nc{\ei}{\mathbf{e}_i}
\nc{\Be}{\mathbf{e}}
\nc{\Hck}{\mathfrak{H}}
\nc{\e}{{\tilde{e}}}
\nc{\f}{{\tilde{f}}}
\renc{\P}{\mathbb{P}}
\nc{\cI}{\mathcal{I}}
\nc{\coe}{\mathfrak{K}}
\nc{\pr}{\operatorname{pr}}
\nc{\bra}{\mathfrak{B}}
\nc{\rcl}{\rho^\vee(\la)}
\nc{\tU}{\mathcal{U}}
\nc{\RHom}{\mathrm{RHom}}
\nc{\tcO}{\tilde{\cO}}
\renc{\top}{\mathsf{top}}
\nc{\MV}{\mathcal{M\hspace{-.85mm}V}}
\nc{\QH}{\mathcal{K\!\!L\!R}}
\nc{\KLR}{\mathcal{K\!\!L\!R}}
\nc{\sfP}{\mathsf{P}}
\nc{\rk}{\operatorname{rk}}
\nc{\bpi}{\boldsymbol{\pi}}
\nc{\brho}{\boldsymbol{\rho}}
\nc{\bxi}{\boldsymbol{\xi}}

\nc{\cosoc}{\operatorname{cosoc}}
\nc{\soc}{\operatorname{soc}}
\nc{\asl}{\widehat{\mathfrak{sl}}}
\nc{\eps}{\varepsilon}
\nc{\omu}{\overline{\mu}}
\nc{\oa}{\overline{a}}
\nc{\olambda}{\overline{\lambda}}
\nc{\te}{\tilde{e}}
\nc{\tf}{\tilde{f}}
\newcommand{\bentodo}{\todo[inline,color=blue!20]}
\newcommand{\petertodo}{\todo[inline,color=green!20]}

\nc{\fin}{\text{fin}}
\nc{\scrL}{\mathscr{L}}
\numberwithin{equation}{section}
\renc{\theequation}{\arabic{section}.\arabic{equation}}

\renewcommand{\theenumi}{\roman{enumi}}
\renewcommand{\labelenumi}{(\theenumi)}


\newcommand{\arxiv}[1]{\href{http://arxiv.org/abs/#1}{\tt arXiv:\nolinkurl{#1}}}

\baselineskip=1.1\baselineskip

 \usetikzlibrary{decorations.pathreplacing,backgrounds,decorations.markings}
\tikzset{wei/.style={draw=red,double=red!40!white,double distance=1.5pt,thin}}
\tikzset{bdot/.style={fill,circle,color=blue,inner sep=3pt,outer sep=0}}
\begin{center}
\noindent {\large  \bf \Mirkovic -Vilonen polytopes and\\
  Khovanov-Lauda-Rouquier algebras}
\bigskip

  \begin{tabular}{c@{\hspace{15mm}}c}
     {\sc\large Peter Tingley}&{\sc\large Ben Webster}\\
 \it Department of Mathematics and Statistics,&   \it Department of Mathematics,\\ 
   \it Loyola University Chicago&  \it 
   University of Virginia\\
{\tt ptingley\,@\,luc.edu}&{\tt bwebster\,@\,virginia.edu}
 \end{tabular}
\vspace{3mm}
\end{center}
\bigskip
\begin{center}
  {\it Dedicated to the memory of Andrei Zelevinsky (1953-2013).}
\end{center}
\bigskip
{\small
\begin{quote}
\noindent {\em Abstract.}
We describe how Mirkovi\'c-Vilonen polytopes arise naturally from the categorification
of Lie algebras using Khovanov-Lauda-Rouquier algebras. 
This gives an explicit description of the unique
crystal isomorphism between simple representations of KLR
algebras and MV polytopes.  

MV polytopes, as defined from the geometry of the affine Grassmannian,
only make sense in finite type. Our construction on the other hand gives a map from the infinity crystal to
polytopes for all symmetrizable Kac-Moody algebras. However, to make
the map injective and have well-defined crystal operators on the
image, we must in general decorate the polytopes with some extra
information. We suggest that the resulting ``KLR polytopes" are the
general-type analogues of MV polytopes.

We give a combinatorial description of the resulting decorated polytopes in all affine cases,
and show that this recovers the affine MV polytopes recently defined
by Baumann, Kamnitzer and the first author in symmetric affine
types. We also briefly discuss the situation beyond affine type.
\end{quote}
}

\renc{\thethm}{\Alph{thm}}

\setcounter{equation}{0}
\tableofcontents

\section*{Introduction}

Let $\fg$ be a complex semi-simple Lie algebra. 
The crystal
$B(-\infty)$ is a combinatorial object associated to the algebra
$U^+(\fg)$.   This crystal has an axiomatic definition, but many explicit
realizations of it have appeared in the literature, and for many purposes it suffices to work with these. Here we consider the relationship between two such realizations:
\begin{enumerate}

\renewcommand{\theenumi}{\arabic{enumi}}
\item the set $B(-\infty)$ is in canonical bijection with the set $\QH$
  of simple gradable modules of Khovanov-Lauda-Rouquier (KLR)
  algebras, and
\item  the set $B(-\infty)$ is in canonical bijection the set $\MV$ of \Mirkovic
  -Vilonen polytopes.
\end{enumerate}

This certainly defines a bijection between $\KLR$ and $\MV$, but does not describe it explicitly. 
One of our main results is a simple description of this
bijection: There is a KLR algebra $R(\nu)$ attached to
each 
positive sum $\nu=\sum a_i\al_i$ of simple roots.  For any two such $\nu_1,\nu_2$, there is a natural inclusion
$R({\nu_1})\otimes R({\nu_2}) \hookrightarrow
R({\nu_1+\nu_2})$. Define the {\bf character polytope} $P_L$ of an $R(\nu)$-module $L$ to be the
convex hull  of the weights $\nu'$ such that $\Res{\nu}{\nu',\nu-\nu'}L\neq 0$.
\begin{thm} \label{th:ft-iso}
  The map $L \rightarrow P_L$
is the unique crystal isomorphism from $\QH$ to $\MV$. 
\end{thm}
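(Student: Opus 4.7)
The strategy is to show that $L\mapsto P_L$ is a crystal morphism $\QH\to\MV$ sending the unit (the trivial module over $R(0)$) to the point polytope $\{0\}$. Since $B(-\infty)$ admits a unique crystal endomorphism fixing the highest weight vector, any crystal morphism between two realizations that sends one distinguished generator to the other must coincide with the canonical bijection obtained via the identifications with $B(-\infty)$; this yields both existence and uniqueness in one stroke.

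The heart of the argument is to match Lusztig data. For a reduced expression $\Bi=(i_1,\ldots,i_N)$ of $w_0$, both sides carry an $\Bi$-Lusztig datum in $\Znn^N$: on $\MV$ it records the edge lengths of $P$ along the chain of vertices determined by $\Bi$; on $\QH$ it is computed inductively by
\[
n_k=\varepsilon_{i_k}\bigl(\te_{i_{k-1}}^{n_{k-1}}\cdots\te_{i_1}^{n_1}L\bigr).
\]
The crucial KLR input is the identity
\[
\varepsilon_i(L)=\max\bigl\{n\ge 0:\Res{\nu}{n\al_i,\nu-n\al_i}L\neq 0\bigr\},
\]
which, by the very definition of $P_L$, coincides with the length of the $\al_i$-edge of $P_L$ emanating from the origin. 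This handles $n_1$.

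The inductive step asserts that the $\al_{i_1}$-extremal face of $P_L$, translated to the origin, equals $P_{\te_{i_1}^{n_1}L}$. Concretely, $\Res{\nu}{n_1\al_{i_1},\nu-n_1\al_{i_1}}L$ has a unique simple top of the form $L(i_1)^{\boxtimes n_1}\boxtimes \te_{i_1}^{n_1}L$, and by transitivity of $\Res$ the weights $\nu'$ that saturate the $\al_{i_1}$-direction for $L$ correspond precisely to weights appearing in restrictions of $\te_{i_1}^{n_1}L$. Iterating along $\Bi$ matches the two Lusztig data term by term, which simultaneously shows that $P_L$ is a bona fide MV polytope (the data automatically satisfy the tropical Pl\"ucker relations inherited from $B(-\infty)$) and that $L\mapsto P_L$ intertwines every $\te_i$.

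The main obstacle is the face-identification in the inductive step: one must show that passing to the $\al_{i_1}$-extremal face of the convex hull $P_L$ corresponds exactly to the KLR crystal operation $\te_{i_1}^{n_1}$. This is a genuine compatibility between parabolic restriction in KLR algebras and the combinatorics of faces of MV polytopes, and presumably constitutes the main technical work of the paper; it is also what suggests why extra decoration of the polytopes is needed outside of finite type, where the face geometry of the character polytope is more delicate than a naive convex hull can record.
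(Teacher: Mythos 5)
There is a genuine gap, and it sits exactly at the step you yourself identify as the heart of the argument: the data you propose to match are not the same data. The quantities $n_k=\varepsilon_{i_k}\bigl(\tf_{i_{k-1}}^{\,n_{k-1}}\cdots\tf_{i_1}^{\,n_1}L\bigr)$ (your $\te$'s must be lowering operators for the recursion to terminate) are the \emph{string data} of $L$ with respect to $\Bi$ in the sense of Definition \ref{def-string}, not its Lusztig data, and these do not equal the edge lengths of the MV polytope along the $\Bi$-chain. Already for $\mathfrak{sl}_3$ with $\Bi=(1,2,1)$ and $L=\mathscr{L}_2$ the polytope is a single edge parallel to $\alpha_2=s_{i_1}s_{i_2}\alpha_{i_3}$, so its Lusztig datum is $(0,0,1)$, while your recursion produces $(0,1,0)$. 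The reason the induction breaks is that, while contracting the bottom $\alpha_{i_1}$-edge of $P_L$ does correspond to passing to $\tf_{i_1}^{\,n_1}L$ for a convex order with $\alpha_{i_1}$ minimal (this is Proposition \ref{polytope-crystal}), the \emph{next} edge of the $\Bi$-path is parallel to $s_{i_1}\alpha_{i_2}$, not to $\alpha_{i_2}$; reading off $\varepsilon_{i_2}$ of the stripped module measures the wrong edge. The correct recursion (Definition \ref{ctLusztig}) interposes a Saito reflection $\Sai_{i_1}^*$ after each stripping step, and proving that this recursion computes both the MV edge lengths (Proposition \ref{prop:cld-mv}, via Saito's PBW results and Kamnitzer's theorem) and the edge lengths of $P_L$ (Corollary \ref{cor:crystal-polytope}) is the actual technical content. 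The latter requires controlling how Saito reflections interact with induction products of cuspidal modules (Corollary \ref{saito-A}), which is where the cuspidal-decomposition machinery enters; none of this is supplied by, or follows formally from, the restriction-to-faces compatibility you describe.

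The surrounding frame of your proposal is sound: uniqueness is formal once the map is known to be a crystal morphism onto the MV realization; the base case $n_1=\varphi_{i_1}(L)$ equalling the length of the bottom $\alpha_{i_1}$-edge of $P_L$ is correct (up to left/right conventions); and the assertion that the restriction to the extremal face has simple top $\mathscr{L}_{i_1}^{\boxtimes n_1}\boxtimes \tf_{i_1}^{\,n_1}L$ is essentially Proposition \ref{prop:3irr}. But the claim that ``iterating along $\Bi$ matches the two Lusztig data term by term'' is false as stated, and with it the assertion that the tropical Pl\"ucker relations come for free. To repair the argument you must either insert the Saito reflections and prove their compatibility with the polytopes on both sides, or (as the paper does for uniqueness in Proposition \ref{prop:char-finite}) characterize the map by its behaviour under $\e_i$ \emph{and} $\Sai_i$ jointly.
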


We feel Theorem \ref{th:ft-iso} is interesting in its own right, but perhaps more important is the fact that $\QH$ naturally indexes $B(-\infty)$ for {\it any} symmetrizable Kac-Moody algebra.  Thus, one can try
to use the map above to {\it define} \Mirkovic -Vilonen polytopes outside of finite type. 
However, there are pairs of
non-isomorphic simples with the same polytopes; for $\mathfrak{g}=\asl_2$, in the notation of \eqref{eq:fac},
this happens for 
$\cL(2,2)$ and $\cL(2,1,1)$. Thus, the polytopes alone are not enough information to
parametrize $B(-\infty)$.

As suggested by Dunlap \cite{Dunlap10} and developed in \cite{BKT}, this problem can be overcome by decorating the edges of $P_L$ with extra information. In the current setting, 
the most natural data to associate to an edge is a ``semi-cuspidal"
representation of a smaller KLR algebra (see Definition \ref{cuspidal}). 
In complete generality, there are many different semi-cuspidal representations that can decorate a given edge, and we do not know a fully combinatorial description of the resulting object. 

For edges
parallel to real roots it turns out that there is only one possible semi-cuspidal
representation, and so it is safe to leave off the decoration. Thus,  in finite type the decoration is redundant. 

Next consider the case when $\fg$ is affine of rank $r+1$. Then the only non-real roots are multiples of $\delta$, so the only edges of the polytope that must be decorated are those parallel to $\delta$. The semi-cuspidal representations that can be
associated to such an edge are naturally
indexed by an $r$-tuple of partitions (see Corollary \ref{cor:commute}).  
In fact,
we can reduce the amount of information even further: as in
\cite{BKT}, the (possibly degenerate) $r$-faces of $P_L$ parallel to
$\delta$ are naturally indexed by the chamber coweights $\gamma$ of an
underlying finite type root system. 
Denote the face of $P_L$
corresponding to $\gamma$ by $P_L^\gamma$. We in fact decorate $P_L$
with just the data of a partition $\pi^\gamma$ for each chamber
coweight $\gamma$ (see Definition \ref{def:pig}) in such a way that,
for any edge $E$ parallel to $\delta,$
\begin{equation} \label{eq:cc}
E  \quad \text{ is a translate of } \quad 
 \sum_{ \gamma \; : \; E \subset P_L^\gamma} d_{\gamma} |\pi^\gamma| 
 \delta,
\end{equation}
where $d_{\gamma}$ are scalars attached to the facet
defined in Definition \ref{def:d}.
The representation attached to such an edge $E$ is
determined in a natural way by $\{ \pi^\gamma : E \subset P_L^\gamma \}$.

Define an {\bf affine pseudo-Weyl polytope}\footnote{In \cite{BKT},
  the analogous object is called a ``decorated GGMS
  (Gelfand-Goresky-MacPherson-Serganova) polyotpe.'' Since we work
  purely algebraicly without reference to the geometric structures
  studied in \cite{GGMS}, we think it more appropriate to follow the
  usage of \cite{Kamnitzer,BKMV}, and use ``pseudo-Weyl polytope.''} to be a pair
consisting of
\begin{itemize}
\item a polytope
$P$ in the root lattice of $\fg$ with all edges parallel to roots, and 
\item a choice of partition
$\pi^\gamma$ for each chamber coweight $\gamma$ of the underlying finite
type root system which satisfies condition \eqref{eq:cc} for each edge
parallel to $\delta$. 
\end{itemize} 
To each representation $L$ of $R$ we associate its {\bf 
  affine MV polytope} (see Definition \ref{def:aMV}), which is a special decorated affine
pseudo-Weyl polytope.  Let $P^{MV}$ be the set of these decorated polytopes.
We seek a combinatorial
characterization of $P^{MV}$. As in finite type, this can be done in terms of conditions on the 2-faces. 

For every 2-face $F$ of an affine pseudo-Weyl polytope, the roots parallel to $F$ form a rank 2 sub-root
system $\Delta_F$ of either finite or affine type.
If $\Delta_F$ is of affine type, then $F$ generally has two edges parallel to $\delta$, which are of the form
$E_\gamma=F\cap P_\gamma$ and $E_{\gamma'}=F\cap P_{\gamma'}$ for
unique chamber coweights $\gamma,\gamma'$.  One naive guess is that we
would obtain a rank-2 pseudo-Weyl polytope by decorating
these imaginary edges with $\pi^\gamma$ and $\pi^{\gamma'}$, but this fails to satisfy \eqref{eq:cc}, since $E_\gamma$ and
$E_{\gamma'}$ are too long.  Instead, $F$ is the Minkowski sum of the
line segment $ \Big(\sum_{\xi : F \subset
  P^\xi} d_\xi |\pi^\xi|\Big)\delta$ with a
decorated pseudo-Weyl polytope $\tilde F$, obtained by shortening $E_\gamma$ and
$E_{\gamma'}$ and decorating them with $\pi^\gamma$ and
$\pi^{\gamma'}$. We will show that:
      \begin{thm}\label{thmB}
        For $\fg$ an affine Lie algebra, the affine MV polytopes are
        precisely the decorated affine pseudo-Weyl polytopes where every
        2-dimensional face $F$ satisfies
        \begin{itemize}
        \item If $\Delta_F$ is a finite type root system, then $F$ is
          an MV polytope for that root system (i.e. it satisfies the
          tropical Pl\"ucker relations from \cite{Kamnitzer}).
  
        \item If $\Delta_F$ is of affine type, then $\tilde F$ is an
          MV polytope for that rank 2 affine algebra (either $\asl_2$ or
          $A_2^{(2)}$) as defined in \cite{BDKT}.
        \end{itemize}
      \end{thm}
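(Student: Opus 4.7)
The plan is to prove the two inclusions separately.

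For the forward direction, fix a simple $L$ and consider its decorated polytope $P_L$. Given a 2-face $F$ with associated rank-2 sub-root system $\Delta_F$, we produce a rank-2 KLR module whose character polytope realizes $F$ (or $\tilde F$ in the affine case). The key tool is a restriction functor: the sub-root system $\Delta_F$ determines a parabolic-like subalgebra of the KLR algebra, and taking a suitable summand of the restriction of $L$ yields a simple over a rank-2 KLR algebra. The partitions $\pi^\gamma$ at chamber coweights lying in the stabilizer of $F$ transfer to the decoration of this rank-2 polytope via the cuspidal decomposition. Theorem \ref{th:ft-iso} then yields the tropical Pl\"ucker relations in the finite case, while the main theorem of \cite{BDKT} yields the rank-2 affine condition for $\asl_2$ or $A_2^{(2)}$.

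For the reverse direction, let $P$ be any decorated affine pseudo-Weyl polytope satisfying both 2-face conditions. We endow the set of such decorated polytopes (at fixed weight) with a Kashiwara crystal structure and show it is isomorphic to $B(-\infty)$. The operators $\tilde e_i, \tilde f_i$ act by modifying $P$ at an $i$-extremal vertex using the local 2-face data, and we verify Kashiwara's axioms by reducing each check to a rank-2 assertion, where the conclusion is known from \cite{Kamnitzer} or \cite{BDKT}. Combining this with the forward direction---and using that both crystals are connected and graded by the same weight function---shows that every decorated polytope satisfying the 2-face conditions is the MV polytope of some simple $L$.

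The main obstacle is the well-definedness of the crystal operators on the reverse side: applying a Kashiwara operator changes $P$ locally, and we must check that the new polytope still satisfies the 2-face conditions at \emph{all} its 2-faces, not merely those adjacent to the vertex being modified. The delicate case arises for imaginary 2-faces in type $A_2^{(2)}$, where the rank-2 affine polytopes from \cite{BDKT} encode their partition data nontrivially, so a single modification of $P$ must consistently deform both the imaginary edge lengths and the associated partitions on every imaginary 2-face passing through the modified vertex. The global coherence of this deformation is transparent on the categorical side (via Corollary \ref{cor:commute} and the behavior of semi-cuspidal restrictions), but establishing it purely combinatorially from the 2-face conditions requires a careful propagation argument in the spirit of the one used for symmetric affine types in \cite{BKT}; this is where the bulk of the technical work lies.
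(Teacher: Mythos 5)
There is a genuine gap in your forward direction. You propose to realize a $2$-face $F$ of $P_L$ as the character polytope of a simple module over a \emph{rank-2} KLR algebra, obtained by ``taking a suitable summand of the restriction of $L$'' to a parabolic-like subalgebra attached to $\Delta_F$. No such functor is available: the module $L_F$ attached to a face by Proposition \ref{prop:3irr} is a semi-cuspidal module over $R(v_t-v_b)$ for the \emph{full} root system, and the paper explicitly remarks in \S\ref{ss:beyond-affine} that it knows of no functor relating the KLR algebra of a face to the lower-rank KLR algebra of the root system spanned by that face. For real faces one can sidestep this by a chain of Saito reflections that makes the simple roots of $\fg_c$ simple in $\Delta$ (Proposition \ref{prop:real-faces}), but for an imaginary $2$-face the two simple roots $\be_{\underline 0},\be_{\underline 1}$ of $\fg_{c_\Pi}$ cannot simultaneously be made simple in $\Delta$, so Theorem \ref{th:ft-iso} and the construction of \cite{BDKT} cannot be applied directly to a rank-2 module. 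The paper instead verifies, axiom by axiom, that $M\mapsto F_c(P_M)$ on the relevant component of the face crystal satisfies the abstract characterization of rank-2 affine MV polytopes (Theorem \ref{th:unique-aff}), reducing each axiom — which involves only one of $\be_{\underline 0},\be_{\underline 1}$ at a time — to a simple root via Lemma \ref{lem:ssd} and Corollary \ref{cor:polytope-Saito}.

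Your reverse direction also leaves its central difficulty unresolved: you acknowledge that well-definedness of the combinatorial crystal operators (preservation of the $2$-face conditions at \emph{all} faces after a local modification) ``is where the bulk of the technical work lies,'' but you do not supply that argument, and it is essentially the whole content of this half of the theorem. The paper avoids defining crystal operators on the combinatorial side altogether. Its argument is: (a) a decorated polytope satisfying the $2$-face conditions is uniquely determined by its Lusztig datum for one generic charge, because Lemma \ref{lem:gen-braid} lets one pass between any two paths $P^{\succ_c}$, $P^{\succ_{c'}}$ by moves across single $2$-faces, and each such move is determined by the $2$-face conditions; (b) every Lusztig datum is realized by some simple $L$ (Theorem \ref{th:semi-cuspidal}, Corollaries \ref{cor:semi-cuspidal-count} and \ref{cor:commute}); (c) every $P_L$ satisfies the $2$-face conditions. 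Since the KLR polytopes form a subset of the condition-satisfying polytopes and both inject into, while the former surject onto, the set of Lusztig data, the two sets coincide. If you want to salvage your plan, you would need either to carry out the propagation argument you defer, or to replace it with this counting-plus-uniqueness scheme; and in the forward direction you must replace the rank-2 restriction functor with the face-crystal verification of Theorem \ref{th:unique-aff}.
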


\noindent The description of rank 2 affine MV polytopes in \cite{BDKT}
is combinatorial, so Theorem \ref{thmB} gives a combinatorial
characterization of KLR polytopes in all affine cases.

In \cite{BKT}, analogues of MV polytopes were constructed in all symmetric affine types as decorated Harder-Narasimhan polytopes, and it was shown that these are characterized by their 2-faces. Thus Theorem \ref{thmB} also allows us to understand the relationship between our decorated polytopes and those defined in \cite{BKT}:

\begin{thm} \label{corB}
Assume $\fg$ is of affine type with symmetric Cartan matrix. Fix $b
\in B(-\infty)$ and let $L$ be the corresponding element of $\QH$. The
affine MV
polytope $P_L$ and the decorated Harder-Narasimhan polytope $HN_b$
from \cite{BKT} have identical underlying polytopes. Furthermore, 
for each chamber coweight $\gamma$ in the underlying finite type root
system, the partition $\lambda_\gamma$ decorating $HN_b$ as defined in \cite[Sections 1.5 and 7.6]{BKT} is the transpose of our $\pi^{\gamma}$. 
\end{thm}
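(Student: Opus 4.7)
The plan is to leverage Theorem~\ref{thmB}, which characterizes affine MV polytopes by 2-face conditions, together with the analogous 2-face characterization of BKT decorated HN polytopes noted in the paragraph preceding the theorem, to reduce the comparison to the rank-2 $\asl_2$ case. If I can show that $HN_b$, with each partition $\lambda_\gamma$ replaced by its transpose $\lambda_\gamma^t$, satisfies the 2-face conditions of Theorem~\ref{thmB}, then this transposed $HN_b$ is itself some affine MV polytope $P_{L'}$, and it remains to identify $L'$ with the KLR simple $L$ corresponding to $b$.

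I would first verify the two 2-face conditions of Theorem~\ref{thmB} for $HN_b$ with transposed decorations. The finite-type 2-faces of $HN_b$ are known to be finite MV polytopes by the structure theory in \cite{BKT}, which is exactly the first condition (and the decoration on such a 2-face is empty on both sides, so transposition is vacuous here). For affine 2-faces, the symmetric Cartan hypothesis on $\fg$ forces every rank-2 affine sub-root system $\Delta_F$ to be $\asl_2$. Thus the affine case of the 2-face condition reduces to the rank-2 assertion: the decorated rank-2 HN polytope of \cite{BKT} associated to a simple module over the relevant preprojective algebra, with both partitions transposed, coincides with the rank-2 affine MV polytope of \cite{BDKT} associated to the corresponding $\asl_2$-KLR simple.

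The rank-2 $\asl_2$ comparison is the technical core of the argument. On the BKT side, $\lambda_\gamma$ indexes a simple module over the preprojective algebra of the Jordan quiver; on our side, $\pi^\gamma$ indexes a semi-cuspidal representation of the imaginary part of the $\asl_2$-KLR algebra. Both sets are naturally parametrized by partitions, but through rather different combinatorial conventions, and the requisite transposition must be pinned down by tracking these conventions carefully. I expect this ultimately to be a manifestation of the standard transpose duality between polynomial representations and symmetric group modules, arising from an equivalence (or Schur-type functor) matching the imaginary cuspidal $\asl_2$-KLR modules with the preprojective side.

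Once the rank-2 claim is established, the 2-face verification is complete and Theorem~\ref{thmB} gives $HN_b^t \in P^{MV}$, say $HN_b^t = P_{L'}$. Identifying $L' = L$ then follows from the fact that both $b \mapsto HN_b$ and $L_b \mapsto P_{L_b}$ are crystal isomorphisms with $B(-\infty)$, combined with the uniqueness of such isomorphisms (transposition of partitions does not affect the crystal structure on the target, which is read off the edges at the lowest vertex, so $b \mapsto HN_b^t$ is still a crystal isomorphism). The main obstacle is the rank-2 step: this is the only point at which serious new computation is needed, and correctly matching the two partition conventions is the delicate issue.
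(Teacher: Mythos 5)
Your proposal follows essentially the same route as the paper's proof, so the main thing to compare is where the technical weight lands. The paper's argument is short because it cites \cite[Theorems 5.9 and 5.12]{MT??} for precisely the statement you identify as the core difficulty: that the Harder--Narasimhan polytopes of \cite{BKT}, with every $\lambda_\gamma$ transposed, satisfy the characterizing 2-face conditions of Theorem \ref{thmB}. Your reduction of that statement to the rank-2 $\asl_2$ comparison (via the 2-face characterization of HN polytopes in \cite{BKT}, and the observation that symmetric type rules out $A_2^{(2)}$ faces) is sound and is presumably how the cited result is proved; but your account of the rank-2 comparison itself --- ``I expect this to be a manifestation of transpose duality \dots arising from a Schur-type functor'' --- is a heuristic, not an argument, and this is exactly the content that \cite{MT??} supplies and that cannot be waved away: the two partition conventions really do differ by a transpose and verifying this requires tracking both combinatorial definitions explicitly. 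The concluding step matches the paper: once the transposed $HN_b$ is known to satisfy the conditions of Theorem \ref{thmB}, it equals $P_{L'}$ for some $L'$, and the resulting self-bijection of $B(-\infty)$ commutes with the $\tilde f_i$ (both sides act in the obvious way on Lusztig data for an order with $\al_i$ minimal, and the transposition does not interfere), hence is the identity by connectedness. The paper inserts an additional counting argument (Lusztig data versus the Kostant partition function, using Lemmas \ref{lem:skel} and \ref{lem:gen-braid}) to set up the bijection of underlying sets, but your shortcut through the ``precisely'' in Theorem \ref{thmB} accomplishes the same thing.
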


It is natural to ask for an intrinsic
characterization of the polytopes $P_L$ in the general Kac-Moody
case. We do not even have a conjecture for a true combinatorial
characterization, since the polytopes are decorated with
various semi-cuspidal representations, which at the moment are not
well-understood. 
Some difficulties that come up outside of affine type are discussed in \S\ref{ss:beyond-affine}.
However, our construction does still satisfy the
most basic properties one would expect, as we now summarize (see
Corollaries \ref{cor:bij1} and \ref{cor:bij2} for precise statements).
\begin{thm}\label{thmC}
  For $\fg$ an arbitrary symmetrizable Kac-Moody algebra, the map from
  $\QH$ to polytopes with edges labeled by semi-cuspidal representations is
  injective. Furthermore, for each convex order on roots, the elements of $\QH$ are parameterized by the possible tuples of
  semi-cuspidal representations of smaller KLR algebras decorating the edges along a corresponding path through the polytope, generalizing the
  parameterization of crystals in finite type by Lusztig data.
\end{thm}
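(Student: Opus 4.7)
The plan is to reduce both claims to the theory of cuspidal decompositions of simple KLR modules along a convex order, combined with the character-polytope formula of Theorem~\ref{th:ft-iso}. Fix a convex order $\prec$ on the positive roots of $\fg$. The Kleshchev--Ram--McNamara--Tingley--Webster theory, extended to arbitrary symmetrizable Kac-Moody type as developed earlier in the paper, attaches to every simple $L\in\QH$ a unique \emph{root partition}: a tuple $(L_{\be_1},\dots,L_{\be_k})$ of semi-cuspidal simples with $\wt(L_{\be_i})$ a positive multiple of a root $\be_i$, indexed in $\prec$-decreasing order, such that $L$ is the simple head of the convolution product $L_{\be_1}\circ\cdots\circ L_{\be_k}$. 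These tuples are in bijection with $\QH$, generalizing Lusztig's PBW parameterization.

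The first step is to match this algebraic decomposition with the geometry of $P_L$. Set $\nu_j=\sum_{i\le j}\wt(L_{\be_i})$. A Mackey-filtration analysis of the restrictions $\Res{\nu}{\nu_j,\nu-\nu_j}$ applied to the convolution $L_{\be_1}\circ\cdots\circ L_{\be_k}$ shows that each $\nu_j$ lies on $P_L$, and convexity of $\prec$ together with semi-cuspidality rules out any nonzero restriction at a weight strictly between consecutive $\nu_j$'s in the direction of $\be_j$. Hence the $\nu_j$ are vertices of $P_L$ and the segments $[\nu_{j-1},\nu_j]$ are edges, tracing a path from $0$ to $\wt(L)$ determined by $\prec$. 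The same Mackey argument identifies the semi-cuspidal decoration of the edge $[\nu_{j-1},\nu_j]$, as prescribed by Definition~\ref{cuspidal}, with the factor $L_{\be_j}$.

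Granting this identification, both conclusions follow formally. For injectivity: if two simples $L,L'$ have the same decorated polytope, then along the $\prec$-path they display the same tuple of semi-cuspidal decorations, so they share the same root partition and are therefore isomorphic. For the parameterization statement: the composition of $L\mapsto (P_L,\text{decorations})$ with the readout of edge labels along the $\prec$-path is by construction the map $L\mapsto (L_{\be_1},\dots,L_{\be_k})$, which is a bijection onto admissible tuples by the root-partition theorem. These are the precise contents of Corollaries~\ref{cor:bij1} and~\ref{cor:bij2}.

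The main obstacle is the first step: verifying that the algebraically defined path truly traces edges of $P_L$, and that the semi-cuspidal extracted from the edge via Definition~\ref{cuspidal} is really $L_{\be_j}$ rather than some subquotient. In finite type this is nearly automatic since semi-cuspidals for real roots are unique and one-dimensional in weight, but in general Kac-Moody type the imaginary-root directions admit entire families of semi-cuspidals, so one must control the head of the relevant restriction carefully. This obstruction is precisely the reason the decoration is needed in the first place, and handling it requires the compatibility between convex orders, shuffle-type restriction on convolution products, and the Mackey filtration set up in earlier sections of the paper.
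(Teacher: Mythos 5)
Your proposal is correct and follows essentially the same route as the paper: the uniqueness of semi-cuspidal decompositions for an arbitrary convex order (Theorem \ref{assignment}) combined with the identification of the edge labels along $P_L^\succ$ with the decomposition factors (Propositions \ref{prop:3irr} and \ref{prop:LA}). The ``Mackey-filtration analysis'' you invoke for the key matching step is exactly what the paper packages as the unmixing property (Lemmata \ref{lem:unmixing-quotient} and \ref{lem:is-u}), which simultaneously shows that the partial sums lie on $P_L$ and that the restriction to each edge recovers the corresponding semi-cuspidal factor.
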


As we were completing this paper, some independent work on
similar problems appeared: McNamara \cite{McN} proved a version of
Theorem \ref{thmC} in finite type (amongst other theorems on the structure
of these representations) and Kleshchev \cite{Kl} gave a
generalization of this to affine type. While there was some overlap
with the present paper, these other works
are focused on a single convex order, rather than giving a
description of how different orders interact as we do in Theorems
\ref{th:ft-iso}, \ref{thmB} and \ref{corB}.

\section*{Acknowledgements}
\label{sec:acknowledgements}

We thank Arun Ram for first suggesting this connection to us, Joel Kamnitzer and Dinakar
Muthiah for many interesting discussions, Monica Vazirani for pointing
out the example of \S\ref{sec:an-example}, Scott Carnahan for
directing us to a useful reference on Borcherds algebras
\cite{MO94595}, and Hugh Thomas for pointing out a minor error in our
discussion of convex orders for infinite root systems.
P.~T. was supported by NSF grants DMS-0902649, DMS-1162385 and DMS-1265555; B.~W. was supported by the NSF under Grant DMS-1151473
and the NSA under Grant H98230-10-1-0199.

\renc{\thethm}{\arabic{section}.\arabic{thm}}

\section{Background}

\subsection{Crystals} \label{ss:crystal}

Fix a symmetrizable Kac-Moody algebra $\fg$. Let $\Gamma = (I, E)$
be its Dynkin diagram and $U(\fg)$ its quantized universal
enveloping algebra. Let $\{ E_i, F_i : i \in I\}$ be the Chevalley generators, 
and $U^+(\fg)$
be the part of this algebra generated by the $E_i$. Let $P$ be the weight lattice, $\{\alpha_i \}$ the simple roots, $\{\alpha_i^\vee \}$ the simple co-roots, and $\langle \cdot, \cdot \rangle$ the pairing between weight space and coweight space. 
   
We are interested in the crystal $B(-\infty)$ associated with $U^+(\fg)$. This is a combinatorial object arising from the theory of crystal bases for the corresponding quantum group 
This section contains a brief explanation of the results we need, roughly following \cite{Kashiwara:1995} and \cite{Hong&Kang:2000}, to which we refer the reader for details. We start with a combinatorial notion of crystal that includes many examples which do not arise from representations, but which is easy to characterize. 

\begin{defn}\label{def:crystal} (see \cite[Section 7.2]{Kashiwara:1995}) A {\bf combinatorial crystal} is a set $B$ along with functions $\wt \colon B \to P$ (where $P$ is the weight
  lattice), and, for each $i \in I$, $\varepsilon_i, \varphi_i \colon B \to {\mathbb Z} \cup \{-\infty\}$ and $\e_i, \f_i: B \rightarrow B \sqcup \{ \emptyset \}$, such that
  \begin{enumerate}
  \item $\varphi_i(b) = \varepsilon_i(b) + \langle \wt(b), \alpha_i^\vee \rangle$.
  \item $\e_i$ increases $\varphi_i$ by 1, decreases $\eps_i$ by 1 and increases $\wt$ by $\alpha_i$.
  \item $\f_i b = b'$ if and only if $\e_i b' = b$.
  \item If $\varphi_i(b) = -\infty$, then $\e_i b = \f_i b = \emptyset$.
  \end{enumerate}
We often denote a combinatorial crystal simply by $B$, suppressing the other data.
\end{defn}

\begin{defn}
\label{def:lwcrystal}
A {\bf lowest weight} combinatorial crystal is a combinatorial crystal 
which has a distinguished element $b_-$ (the lowest weight element) such that 
\begin{enumerate}
\item The lowest weight element $b_-$ can be reached from any $b \in B$ by applying a sequence of $\f_i$  for various $i \in I$. 
\item For all $b \in B$ and all $i \in I$, $\varphi_i(b) = \max \{ n : \f_i^n(b) \neq \emptyset \}$.
\end{enumerate}
  \end{defn}
Notice that, for a lowest weight combinatorial crystal, the
functions $\varphi_i, \varepsilon_i$ and $ \wt$ are determined by the
$\f_i$ and the weight $\wt(b_-)$ of just the lowest weight
element. 

The following notion is not common in the literature, but will be very convenient. 
\begin{defn}
  A {\bf bicrystal} is a set $B$ with two different crystal structures
  whose weight functions agree.  We will always use the convention of
  placing a star superscript on all data for the second crystal
  structure, so $\e_i^*,\f_i^*,\varphi^*_i$, etc. 
  We say that an element of a bicrystal is {\bf lowest weight} if it is
killed by both $f_i$ and $f_i^*$ for all $i$.  
\end{defn}

We will consider one very important example of a bicrystal: $B(-\infty)$ along with
the usual crystal operators and Kashiwara's $*$-crystal operators, which are the conjugates $\e_i^*=* \e_i *, \f_i^*=* \f_i *$ of the usual operators by
Kashiwara's involution $*\colon B(-\infty) \to B(-\infty)$ (see \cite[2.1.1]{KasLit}). The involution $*$ is a crystal limit of a corresponding involution of the algebra $U^+(\fg)$, but it also has a simple combinatorial
definition in each of the models  we consider.

The following is a rewording of \cite[Proposition 3.2.3]{KS97} designed to make the roles of the usual crystal operators and the $*$-crystal operators more symmetric:

\begin{prop} \label{cor:comb-characterizaton2}
Fix a bicrystal $B$. Assume $(B, \e_i, \f_i)$ and $(B, \e_i^*, \f_i^*)$ are both lowest weight combinatorial crystals with the same lowest weight element $b_-$, where the other data is determined by setting $\wt(b_-)=0$. Assume further that, for all $i \neq j \in I$ and all $b \in B$,
\begin{enumerate}

\item \label{ccc0} $\e_i(b), \e_i^*(b) \neq 0$. 

\item \label{ccc1} $\e_i^*\e_j(b)= \e_j\e_i^*( b)$,

\item \label{ccc2}For all $b\in B$, $\varphi_i(b)+\varphi_i^*(b)- \langle \wt(b),
  \alpha_i^\vee \rangle\geq0$ 

\item \label{ccc3} If $\varphi_i(b)+\varphi_i^*(b)- \langle \wt(b), \alpha_i^\vee \rangle =0$ then $\e_i(b) = \e_i^*(b)$, 

\item \label{ccc4} If $\varphi_i(b)+\varphi_i^*(b)- \langle \wt(b), \alpha_i^\vee \rangle \geq 1$ then $\varphi_i^*(\e_i(b))= \varphi_i^*(b)$ and  $\varphi_i(e^*_i(b))= \varphi_i(b)$.

\item \label{ccc5} If $\varphi_i(b)+\varphi_i^*(b)- \langle \wt(b), \alpha_i^\vee \rangle \geq 2$ then  $\e_i \e_i^*(b) = \e_i^*\e_i(b)$. 

\end{enumerate}
then $(B, \e_i, \f_i) \simeq (B, \e_i^*, \f_i^*) \simeq B(-\infty)$, and
$\e_i^*= *\e_i *, \f_i^*=*\f_i*$, where $*$ is Kashiwara's
involution. Furthermore, these conditions are always satisfied by $B(-\infty)$ along with its operators $\e_i, \f_i, \e_i^*, \f_i^*$. 
\end{prop}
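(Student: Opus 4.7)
The plan is to deduce this proposition directly from \cite[Proposition 3.2.3]{KS97}, which it is explicitly labeled as a rewording of. The Kashiwara--Saito characterization is stated asymmetrically: one fixes the usual crystal structure on $B$, asks that there exist $\e_i^*,\f_i^*$ satisfying certain compatibility relations with $\e_i,\f_i$, and then concludes $B\cong B(-\infty)$ and that $\f_i^*=*\f_i*$. Our statement is the same conclusion, but the hypotheses have been symmetrized between the starred and unstarred operators (and sign-flipped so that the distinguished element is lowest rather than highest weight, which is purely cosmetic). So the proof is to verify that our hypotheses imply theirs on the nose, after which the conclusion on the isomorphism type, and the identification of $\f_i^*$ with Kashiwara's involution applied to $\f_i$, is immediate from \cite{KS97}.

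First I would observe that, since $(B,\e_i,\f_i)$ is a lowest weight combinatorial crystal with lowest weight $b_-$ of weight $0$, all of $\varphi_i,\varepsilon_i,\wt$ are determined by the unstarred operators together with $\wt(b_-)=0$, and likewise for the starred structure. This means the quantities $\varphi_i(b)+\varphi_i^*(b)-\langle\wt(b),\alpha_i^\vee\rangle$ appearing in conditions (iii)--(vi) are intrinsic and unambiguous. Condition (i) says no operator kills a non-lowest element prematurely, and combined with (ii), the commutation $\e_i^*\e_j=\e_j\e_i^*$ for $i\neq j$, this is exactly the basic compatibility that Kashiwara--Saito require between the two crystal structures. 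Conditions (iii)--(vi) are a case-by-case description of how $\e_i$ and $\e_i^*$ interact at a single node, broken up by the value of the non-negative integer $\varphi_i(b)+\varphi_i^*(b)-\langle\wt(b),\alpha_i^\vee\rangle$; these match precisely the bullet points of \cite[Proposition 3.2.3]{KS97} once one translates from highest to lowest weight.

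Second, I would verify the last sentence by recalling that $B(-\infty)$ itself satisfies all of (i)--(vi). Properties (i), (ii) are standard properties of $B(-\infty)$ (Kashiwara's $*$-operators commute with the usual ones across different indices, and neither operator kills a non-lowest-weight element). Properties (iii)--(vi) are exactly the content of \cite[Proposition 3.2.3]{KS97} applied in the other direction, or equivalently follow from explicit computation in, say, the PBW realization of $B(-\infty)$. Hence these conditions are consistent and non-vacuous.

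Finally, given a bicrystal $B$ satisfying (i)--(vi), both $(B,\e_i,\f_i)$ and $(B,\e_i^*,\f_i^*)$ are lowest-weight combinatorial crystals, so by \cite[Proposition 3.2.3]{KS97} each is isomorphic to $B(-\infty)$ via the unique map sending the respective lowest-weight elements to each other. Since both structures share the same lowest-weight element $b_-$ and the same weight function, the two isomorphisms are compatible, and the identification $\f_i^*=*\f_i*$ is then forced because Kashiwara's involution is characterized on $B(-\infty)$ exactly by exchanging the two crystal structures while fixing the lowest-weight vector. The only real subtlety — the main thing that takes care to check — is that the symmetric statement of (iii)--(vi) given here is genuinely equivalent to the asymmetric statement in \cite{KS97}; the trick is that (iv) together with (v) forces the roles of $\e_i$ and $\e_i^*$ to be interchangeable in the relevant arguments, so no generality is lost.
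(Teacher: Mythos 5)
Your overall strategy — reduce everything to \cite[Proposition 3.2.3]{KS97} — is the same as the paper's, but there is a genuine gap in how you propose to make contact with that result. The hypotheses of \cite[Proposition 3.2.3]{KS97} are not a list of commutation relations between $\e_i$ and $\e_i^*$ that conditions (i)--(vi) could "match on the nose"; the essential hypothesis there is the \emph{existence of a strict embedding} of crystals $\Psi_i\colon B\to B\otimes B_i$ for each $i$ (together with some normalization conditions at the distinguished element). So the substance of the proof is to \emph{construct} such a map and verify, using (i)--(vi), that it commutes with all crystal operators. The paper does exactly this: it defines
\[
b\ \mapsto\ (\f_i^*)^{\varphi_i^*(b)}(b)\ \otimes\ \e_i^{\varphi_i^*(b)}\,b_i\ \in\ B\otimes B_i,
\]
i.e.\ one strips off the entire $i$-th $*$-string and records its length in the $B_i$ factor. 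Conditions (ii)--(vi) are precisely what is needed to check that this map intertwines $\e_j,\f_j$ with the tensor-product rule on $B\otimes B_i$: condition (ii) handles $j\neq i$, and (iii)--(vi) handle the delicate case $j=i$, where the tensor-product rule branches according to the sign of $\varphi_i(b)+\varphi_i^*(b)-\langle\wt(b),\alpha_i^\vee\rangle$ relative to $0$, $1$, $2$. Your proposal never introduces $B\otimes B_i$ or this map, so the step "verify that our hypotheses imply theirs" has no content as written; asserting that (iii)--(vi) "are the bullet points of \cite{KS97}" is not correct.

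A secondary issue: your closing claim that $\f_i^*=*\f_i*$ is "forced because Kashiwara's involution is characterized by exchanging the two crystal structures" is close to circular — that the second structure is the $*$-conjugate of the first is part of what must be proved. The paper derives this from the uniqueness statement \cite[Theorem 3.2.2]{KS97} after the isomorphism $B\simeq B(-\infty)$ has been established; you should cite that uniqueness result rather than treating the characterization as already known.
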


\begin{proof}
We simply explain how \cite[Proposition 3.2.3]{KS97} implies our statement, referring the reader there for specialized notation. Define the map 
$$
\begin{aligned}
B & \rightarrow B  \otimes   B_i \\
b & \mapsto (\f_i^*)^{\varphi_i^*(b)}(b)  \otimes  \e_i^{\varphi_i^*(b)} b_i.
\end{aligned}
$$
One can check that our conditions imply all the conditions from
\cite[Proposition 3.2.3]{KS97}, so that result implies the crystal
structure on $B$ defined by $\e_i, \f_i$ is isomorphic to $B(-\infty)$. The remaining statements then follow from \cite[Theorem 3.2.2]{KS97}.
\end{proof}

The following is immediate from Proposition \ref{cor:comb-characterizaton2}, but perhaps organizes the information in an easier way:

\begin{cor} \label{cor:KS-diag}
For any $i \in I$, and any $b \in B(-\infty)$, the subset of $B(-\infty)$ generated by the operators $\e_i, \f_i, \e_i^*, \f_i^*$ is of the form:
\vspace{0.15cm}

\begin{equation*} \label{ii*-pic}
\setlength{\unitlength}{0.15cm}
\begin{tikzpicture}[xscale=0.45,yscale=-0.45, line width = 0.03cm]

\draw node at (10,5) {$\bullet$};

\draw node at (8,4) {$\bullet$};
\draw node at (12,4) {$\bullet$};
\draw node at (6,3) {$\bullet$};
\draw node at (10,3) {$\bullet$};
\draw node at (14,3) {$\bullet$};

\draw node at (4,2) {$\bullet$};
\draw node at (8,2) {$\bullet$};
\draw node at (12,2) {$\bullet$};
\draw node at (16,2) {$\bullet$};
\draw node at (2,1) {$\bullet$};
\draw node at (6,1) {$\bullet$};
\draw node at (10,1) {$\bullet$};
\draw node at (14,1) {$\bullet$};
\draw node at (18,1) {$\bullet$};

\draw node at (2,-1) {$\bullet$};
\draw node at (6,-1) {$\bullet$};
\draw node at (10,-1) {$\bullet$};
\draw node at (14,-1) {$\bullet$};
\draw node at (18,-1) {$\bullet$};

\draw [->, dotted] (10,5)--(8.2,4.1); 
\draw [->, dotted] (8,4)--(6.2,3.1); 
\draw [->, dotted] (12,4)--(10.2,3.1); 
\draw [->, dotted] (6,3)--(4.2,2.1); 
\draw [->, dotted] (10,3)--(8.2,2.1); 
\draw [->, dotted] (14,3)--(12.2,2.1); 

\draw [->, dotted] (4,2)--(2.2,1.1); 
\draw [->, dotted] (8,2)--(6.2,1.1); 
\draw [->, dotted] (12,2)--(10.2,1.1); 
\draw [->, dotted] (16,2)--(14.2,1.1); 
\draw [->] (10,5)--(11.8,4.1); 

\draw [->] (8,4)--(9.8,3.1); 
\draw [->] (12,4)--(13.8,3.1); 
\draw [->] (6,3)--(7.8,2.1); 
\draw [->] (10,3)--(11.8,2.1); 
\draw [->] (14,3)--(15.8,2.1); 
\draw [->] (4,2)--(5.8,1.1); 
\draw [->] (8,2)--(9.8,1.1); 
\draw [->] (12,2)--(13.8,1.1); 
\draw [->] (16,2)--(17.8,1.1); 

\draw [->, dashed] (2,1) --(2,-0.7);
\draw [->, dashed] (6,1) --(6,-0.7);
\draw [->, dashed] (10,1) --(10,-0.7);
\draw [->, dashed] (14,1) --(14,-0.7);
\draw [->, dashed] (18,1) --(18,-0.7);
\draw [->, dashed] (2,-1) --(2,-2.7);
\draw [->, dashed] (6,-1) --(6,-2.7); 
\draw [->, dashed] (10,-1) --(10,-2.7);
\draw [->, dashed] (14,-1) --(14,-2.7);
\draw [->, dashed] (18,-1) --(18,-2.7);

\end{tikzpicture}
\end{equation*}

\noindent where the solid and dashed arrows show the action of
$\e_i$, and the dotted or dashed arrows denote the action of $\e_i^*$.  Here the width of the diagram at the top is $-\langle \wt(b_v), \alpha_i^\vee \rangle$, where $b_v$ is the bottom vertex (in the example above the width is 4). \qed
\end{cor}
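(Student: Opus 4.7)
The plan is to extract the claimed structure directly from conditions~(iii)--(vi) of Proposition~\ref{cor:comb-characterizaton2}, using the defect
\[
c(b) := \varphi_i(b) + \varphi_i^*(b) - \langle \wt(b), \alpha_i^\vee \rangle \geq 0
\]
as the vertical coordinate measured from the widest row. Since $\e_i$ increases $\varphi_i$ by $1$ and $\langle \wt, \alpha_i^\vee \rangle$ by $2$, and preserves $\varphi_i^*$ whenever $c(b) \geq 1$ by~(v), one has $c(\e_i b) = c(b) - 1$ in that range, and symmetrically for $\e_i^*$. When $c(b) = 0$, (iv) gives $\e_i b = \e_i^* b$; combined with $\varphi_i^*(\e_i^* b) = \varphi_i^*(b) + 1$ this shows $c(\e_i b) = 0$, so the defect is preserved at $0$ under further applications of either operator. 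It also follows that $\e_i b \neq \e_i^* b$ whenever $c(b) \geq 1$, since otherwise $\varphi_i^*$ would both increase by $1$ and stay fixed.

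The bottom vertex $b_v$ is then identified as the terminal element of any descending $\f$-sequence from $b$: since $\f_i$ and $\f_i^*$ strictly decrease $\wt$ by $\alpha_i$ and the crystal is lowest weight, every such sequence terminates at an element satisfying $\varphi_i(b_v) = \varphi_i^*(b_v) = 0$, giving $c(b_v) = -\langle \wt(b_v), \alpha_i^\vee \rangle =: d$. To build the diamond region, I use condition~(vi), which gives $\e_i \e_i^* = \e_i^* \e_i$ on elements with $c \geq 2$, so $b_{a,b} := \e_i^a (\e_i^*)^b b_v$ is unambiguously defined for all $a, b \geq 0$ with $a + b \leq d - 1$; these elements are pairwise distinct since $\varphi_i(b_{a,b}) = a$ and $\varphi_i^*(b_{a,b}) = b$. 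This fills out the triangular array of rows $c = d, d-1, \dots, 1$ with sizes $1, 2, \dots, d$.

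To attach the widest row and the semi-infinite chains, I note that for each $c = 1$ element $b_{a, d-1-a}$ the images $\e_i b_{a, d-1-a}$ and $\e_i^* b_{a, d-1-a}$ are distinct, but applying~(vi) at the $c = 2$ element $b_{a, d-2-a}$ yields the overlap identity $\e_i b_{a, d-1-a} = \e_i^* b_{a+1, d-2-a}$. Hence the $d$ elements of row $c = 1$ produce exactly $d + 1$ distinct descendants on the widest row, naturally indexed by $\varphi_i \in \{0, 1, \dots, d\}$. Above that row, (iv) collapses $\e_i = \e_i^*$, and defect preservation produces disjoint semi-infinite chains (the pair $(\varphi_i, \varphi_i^*)$ distinguishes them) depicted by the dashed arrows.

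The main obstacle is the interlocking between row $c = 1$ and the widest row, where condition~(vi) is not directly available: one must invoke it one level up at $c = 2$ to establish the overlap identities above. Once this coincidence is verified, the rest of the picture follows mechanically, and the $c$-row counts, the labelling by $\varphi_i$, and the arrow structure all emerge directly from conditions~(iii)--(vi).
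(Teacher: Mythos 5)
Your proposal is correct and takes the same route as the paper, which simply asserts that the corollary is immediate from Proposition \ref{cor:comb-characterizaton2} and gives no further argument; your write-up is an accurate unpacking of that deduction, with the defect $c(b)$ organizing the rows, condition (vi) giving the commutation in the triangular region and the overlap identities feeding into the widest row, and conditions (iv)--(v) producing the collapsed semi-infinite chains. The only details left tacit (closure of the constructed set under the lowering operators, hence that it is the full generated subset, and the a posteriori uniqueness of $b_v$) follow routinely from what you have established.
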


We will also make use of Saito's crystal reflections from \cite{SaitoPBW}.

\begin{defn} \label{def:ref} Fix $b \in B(-\infty)$ with $\varphi_i^*(b)=0$. The {\bf Saito reflection} of $b$ is  $\Sai_i b=
  (\e_i^*)^{\epsilon_i(b)} \f_i^{\varphi_i(b)} b$.  There is also a dual notion of Saito reflection defined by 
  $\Sai_i^*(b) := *(\Sai_i (*b))$, or equivalently $\Sai^*(b) =  (\tilde e_i)^{\epsilon^*_i(b)}\tilde (f^*_i)^{\varphi^*_i(b)} b,$
  which is defined for those $b$ such that $\varphi_i(b)=0$.
  \end{defn}
\noindent  The operation $\Sai_i$ 
  does in fact reflect the weight of $b$ by $s_i$, as the name suggests (although this fails if the condition  $\varphi^*_i(b)=0$ does not hold).

 Finally, we need the notion of string data for an element of $B(-\infty)$.
 This appeared early on in the literature on
  crystals, implicitly in work of Kashiwara \cite{KasLit} 
  and more explicitly in work of Berenstein and Zelevinsky \cite{BZstring}. It was also studied in in the context of KLR algebras (i.e. the context we use) in \cite[\S
  3.2]{KLI} and \cite[\S 3.3]{CTP}.

 Choose a list ${\bf i}= i_1,i_2,\dots$ of simple roots in which each
  simple root occurs infinitely many times (for instance, one could choose an order on the roots and cycle). 
  
  \begin{defn} \label{def-string}
  For any $b \in B(-\infty)$ the {\bf string data} of $b$ with respect to ${\bf i}$ is the lexicographically maximal list of integers $(a_1,a_2, \dots)$ such that $\ldots \f_{i_2}^{\; a_2} \f_{i_1}^{\; a_1} b \neq \emptyset$.
  \end{defn}
  Clearly all but finitely many of the $a_k$ must be zero in any given string datum. Note also that the element $b$ can easily be recovered from its string datum: $b= \e_{i_1}^{\; a_{1}} \e_{i_2}^{\; a_{2}} \cdots b_-$. 
  
  \subsection{Convex orders and charges} \label{ss:convex}
A convex order on roots is generally defined to be a total order such that, if $\alpha, \beta$ and $\alpha+\beta$ are all roots, then $\alpha+\beta$ is between $\alpha$ and $\beta$. Here we need a more geometric definition, and we need to expand to have a notion of convex pre-orders. In fact, our definition makes sense for collections of vectors which do no necessarily come from root systems, and we will set it up in that generality. 
We will then see that in the case of finite type root systems our definition is equivalent to the usual one. 

For this section, fix a finite dimensional vector space $V$ and a set of vectors $\Gamma$ in $V$. 

\begin{defn} \label{def:convex} 
 A {\bf convex preorder} is a pre-order $\succ$ on $\Gamma$
  such that, 
  \begin{enumerate}
\item \label{cpo2} For any equivalence class $\mathscr{C}$, any $a \in
  \text{span}_{{\Bbb R}_{\geq 0}} \mathscr{C}$ and any non-zero $\,x
  \in \text{span}_{{\Bbb Z}_{\geq 0}} \{ \beta \in \Gamma \mid \beta
  \succ \mathscr{C}\}$, we have that 
$a + x \not \in  \text{span}_{{\Bbb Z}_{\geq 0}} \{ \beta \in \Gamma \mid \beta \preceq \mathscr{C}\}.$
\item \label{cpo3} For any equivalence class $\mathscr{C}$, any $a \in \text{span}_{{\Bbb R}_{\geq 0}} \mathscr{C}$ and any non-zero $\,x \in \text{span}_{{\Bbb Z}_{\geq 0}} \{ \beta \in \Gamma \mid \beta \prec \mathscr{C}\}$, we have that 
$a+ x \not \in  \text{span}_{{\Bbb Z}_{\geq 0}} \{ \beta \in \Gamma \mid \beta \succeq \mathscr{C}\}.$
\end{enumerate}
A {\bf convex order} is a convex pre-order which is a total order. 
\end{defn}

\begin{rem}
In the case of a total order, Definition \ref{def:convex} is equivalent to requiring that, for any $S, S' \subset \Gamma$ such that $\alpha \succ \alpha'$ for all $\alpha \in S, \alpha' \in S'$, $\text{span}_{{\mathbb R}_{\geq 0}} S \cap\text{span}_{{\mathbb R}_{\geq 0}} S' = \{ 0 \}.$ 
\end{rem}

\begin{lemma} \label{lem:convex}
  A pre-order $\succ$ on a countable set of vectors $\Gamma$ in a vector space $V$  
  is convex if and
  only if, for any equivalence class $\mathscr{C}$, there is a
  sequence of 
  cooriented hyperplanes $H_n\subset V$ for $n\in
  \Z_{>0}$ such that $\mathscr{C} \subset H_n$ for all $n$, and each
  $\al\in \Gamma$ lies
  \begin{itemize}
\item on the positive
    side of $H_n$ for $n\gg 0$ if $\al \succ \mathscr{C}$ and 
\item on the
    negative side of $H_n$ for $n\gg 0$ if $\al \prec \mathscr{C}$.
  \end{itemize}
\end{lemma}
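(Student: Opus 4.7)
The plan is to prove both directions of the equivalence. The ``if'' direction is almost immediate: suppose the sequence $\{H_n\}$ is given and that condition (i) of Definition \ref{def:convex} fails, giving an equality $a+x=z$ with $a\in\text{span}_{\mathbb{R}_{\geq 0}}\mathscr{C}$, $x=\sum n_i\beta_i\in\text{span}_{\mathbb{Z}_{\geq 0}}\{\beta\succ\mathscr{C}\}$ nonzero, and $z=\sum m_j\gamma_j\in\text{span}_{\mathbb{Z}_{\geq 0}}\{\gamma\preceq\mathscr{C}\}$, which involves only finitely many roots. Choose $n$ large enough that each of these roots lies on its predicted side of $H_n$, and let $\ell_n$ be the positive linear functional cutting out $H_n$, so $\ell_n|_{\text{span}\mathscr{C}}=0$. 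Applying $\ell_n$ to both sides of $a+x=z$ yields $0+(\text{strictly positive})\le 0$, a contradiction. The failure of condition (ii) is handled symmetrically.

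For the converse, fix an equivalence class $\mathscr{C}$, enumerate $\Gamma$ as $\alpha_1,\alpha_2,\dots$, and let $F_n=\{\alpha_1,\dots,\alpha_n\}$ with $F_n^{+}=F_n\cap\{\beta\succ\mathscr{C}\}$ and $F_n^{-}=F_n\cap\{\gamma\prec\mathscr{C}\}$. It suffices to produce for each $n$ a linear functional $\ell_n\colon V\to\mathbb{R}$ vanishing on $\text{span}_{\mathbb{R}}\mathscr{C}$, strictly positive on $F_n^+$, and strictly negative on $F_n^-$; setting $H_n=\ker\ell_n$ then meets the asymptotic requirement of the lemma, since any given $\alpha$ lies in $F_n$ for all but finitely many $n$. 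The existence of such an $\ell_n$ is a finite-dimensional LP question. By Gordan's theorem, applied in the quotient $V/\text{span}_{\mathbb{R}}\mathscr{C}$, such an $\ell_n$ exists unless there is a nonzero tuple $(\lambda_\beta,\mu_\gamma)\ge 0$ with
\[
w:=\sum_{\beta\in F_n^+}\lambda_\beta\beta-\sum_{\gamma\in F_n^-}\mu_\gamma\gamma\in\text{span}_{\mathbb{R}}\mathscr{C}.
\]

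I expect ruling out such a $w$ to be the main technical obstacle. The idea is to split $w=w^+-w^-$ with $w^{\pm}\in\text{span}_{\mathbb{R}_{\geq 0}}\mathscr{C}$ (by separating the $\mathscr{C}$-coefficients of $w$ by sign), which rewrites the relation as $w^-+\sum\lambda_\beta\beta=w^++\sum\mu_\gamma\gamma$. This is precisely the shape of a violation of condition (i) (when some $\lambda_{\beta_0}>0$) or of condition (ii) (when instead some $\mu_{\gamma_0}>0$), except that the $\lambda_\beta,\mu_\gamma$ and the $\mathscr{C}$-coefficients of $w^{\pm}$ need not be integers. To bridge this gap I will use that in the cases of interest $\Gamma$ lies in the root lattice of $\fg$ and $\mathscr{C}$ in its rational span, so the subspace of tuples $(\lambda,\mu)$ satisfying the displayed inclusion is cut out by rational linear equations; its non-negative cone therefore contains a rational point having the same support as any given real point, and clearing denominators gives an honest integer-coefficient violation of Definition \ref{def:convex}. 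This contradiction forces $\ell_n$ to exist, completing the construction of the separating hyperplane sequence.
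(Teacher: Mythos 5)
Your proof is correct and follows essentially the same route as the paper's: both pass to the quotient $V/\operatorname{span}_{\mathbb{R}}\mathscr{C}$, reduce to a finite subset of $\Gamma$, and extract the separating functional from a finite-dimensional separation theorem --- your appeal to Gordan's theorem is exactly the dual formulation of the paper's argument that the two polyhedral cones $C_1$, $C_2$ meet only at the origin and contain no lines, hence are separated by a hyperplane, with convexity used in the same way to rule out the dual obstruction. The only real divergence is that you explicitly bridge the integer coefficients of Definition \ref{def:convex} with the real coefficients produced by duality (via rationality of the root lattice), a point the paper's proof passes over silently when it moves from $\operatorname{span}_{\mathbb{Z}_{\geq 0}}$ to $\operatorname{span}_{\mathbb{R}_{\geq 0}}$; this extra care is welcome, though as you note it technically restricts your argument to the lattice setting, which is the only one the paper ever uses.
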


\begin{rem}
We need to allow a sequence of hyperplanes because $\Gamma$ may be infinite. 
\end{rem}

\begin{proof}
Fix any finite subset $U$ of $\Gamma \backslash \mathscr{C}$. Let $U_\pm$ denote the subsets $U$ consisting of vectors which are greater/less than $\mathscr{C}$ according to $\succ$. 
Consider the quotient  $\mathfrak{h}/\text{span} (\mathscr{C})$, and 
the cones  $C_1=\text{span}_{{\Bbb R}_{\geq 0}} \{ \bar \al  :
\al \in U_- \}$ and $C_2=\text{span}_{{\Bbb R}_{\geq 0} }\{ \bar \al
 : \al \in U_+ \}$ in this space.
Convexity implies that $\bar \alpha \neq 0$ for all $\al \in U$. 

Any point in $C_1 \cap C_2$ has a
preimage in 
$$\text{span}_{{\Bbb R}_{\geq 0}} [\mathscr{C} \cup U_+] \cap
\text{span}_{{\Bbb R}_{\geq 0}}[\mathscr{C} \cup U_-],$$ 
which by convexity must in fact
lie in $\text{span}_{{\Bbb R}_{\geq 0}}\mathscr{C}$. Hence $C_1 \cap C_2 = \{ 0 \}$. 

Similarly, neither
$C_1$ nor $C_2$ contains a line since if $x+y=0$ for $x,y\in C_1$,
then $x$ and $y$ have preimages in $x',y'\in \text{span}_{{\Bbb R}_{\geq 0}}
(U_+ \cup \mathscr{C})$ which we can choose so that $x'+y'\in \text{span}_{{\Bbb R}_{\geq 0}}
\mathscr{C}$.  Convexity thus implies that $x',y'\in \text{span}_{{\Bbb R}_{\geq 0}}
\mathscr{C}$, so $x=y=0$.  

Thus, $C_1$ and $C_2$ are closed finite polyhedral cones in a finite
dimensional vector space whose intersection consists exactly of the origin, neither of which contains a line.  Two such cones
are always separated by a hyperplane since their duals are full
dimensional and span the whole space, and therefore contain elements
in their interiors that sum to 0.  

The preimage of this hyperplane
in $\mathfrak{h}$ separates the elements of $U$ as desired; thus as we
let $U$ grow, we will obtain the desired sequence of hyperplanes.

It is easy to see that, if such a sequence exists for every equivalence
class $\mathscr{C}$, then the order must be convex. 
\end{proof}

We now define charges, which are our main tool for constructing and studying convex orders.

\begin{defn} \label{def:charge}
A {\bf charge} is a linear function $c\colon V\to\C$
such that the image $c(\Gamma)$ is contained in some open half-plane
defined by a line through the origin.
\end{defn}

Every charge defines a preorder $>_c $ on $\Gamma$ by setting
$\alpha \geq_c \beta$ if and only if $\arg(c(\alpha)) \geq
\arg(c(\beta))$, where $\arg$ is the usual argument function on the complex numbers,
taking a branch cut of $\log$ which does not lie in the positive span
of $c(\Gamma)$. This order is independent of the
position of the branch cut.
This preorder is clearly convex, and for generic $c$, it is a total order. 

\begin{lemma} \label{lem:orderexists}
Assume that $\Gamma$ is countable, that it does not contain any pair
of parallel vectors, and that $\Gamma$ is contained in an open half-space $H_+$. Then there is a convex total order on $\Gamma$.
\end{lemma}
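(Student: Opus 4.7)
The plan is to construct a suitable charge $c\colon V\to\C$ directly. By the definition given just before the statement, any charge induces a convex preorder, so the task reduces to producing a charge whose induced preorder is total.

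First I would fix a real linear functional $\ell\colon V\to\R$ that is strictly positive on the open half-space $H_+$; such an $\ell$ exists by definition of a half-space, and then $\ell(\gamma)>0$ for every $\gamma\in\Gamma$. For any real linear functional $m\colon V\to\R$, define $c_m(v)=\ell(v)+i\,m(v)$. Automatically $c_m(\Gamma)$ is contained in the open right half-plane $\{z\in\C:\operatorname{Re}(z)>0\}$, so $c_m$ is a charge in the sense of Definition \ref{def:charge}, regardless of the choice of $m$. This is the key device: by parametrizing charges this way, the half-space condition is built in for \emph{all} of $\Gamma$ at once, sidestepping the issue that $\Gamma$ may be infinite and that arbitrary small perturbations of a charge need not preserve the half-plane condition on an infinite set.

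Next I would identify the condition that $>_{c_m}$ is a total order. Two elements $\alpha,\beta\in\Gamma$ are equivalent under $>_{c_m}$ iff $\arg c_m(\alpha)=\arg c_m(\beta)$; since both values have positive real part, this is equivalent to the single real equation
\[
m(\alpha)\,\ell(\beta)-m(\beta)\,\ell(\alpha)=0,\qquad\text{i.e.}\qquad m\bigl(\ell(\beta)\alpha-\ell(\alpha)\beta\bigr)=0.
\]
For $\alpha\ne\beta$ in $\Gamma$, the vector $v_{\alpha,\beta}:=\ell(\beta)\alpha-\ell(\alpha)\beta$ is nonzero: if it were zero, then $\alpha$ and $\beta$ would be parallel, contradicting the hypothesis that $\Gamma$ contains no parallel pair. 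Hence the locus of "bad" $m$ causing $\alpha$ and $\beta$ to be equivalent is a single hyperplane in the finite-dimensional real vector space $V^*$.

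Finally, since $\Gamma$ is countable, the collection of pairs $\{\alpha,\beta\}\subset\Gamma$ with $\alpha\ne\beta$ is countable, so the total "bad set" is a countable union of proper hyperplanes in $V^*$. By the Baire category theorem (or simply because a finite-dimensional real vector space is not a countable union of proper affine subspaces), there exists an $m\in V^*$ outside this union. For such $m$, the charge $c_m$ induces a total convex preorder on $\Gamma$, which is the desired convex total order. The main subtlety, which I have addressed above, is arranging that the half-plane condition defining a charge holds uniformly over the infinite set $\Gamma$; once the parametrization $c_m=\ell+im$ is in hand, the rest is a routine genericity argument.
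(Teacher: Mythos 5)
Your proof is correct. Both you and the paper establish the lemma by exhibiting a charge whose induced preorder is total (convexity being automatic for any charge-induced preorder), so the overall strategy coincides; the difference is in the genericity mechanism. The paper fixes a basis adapted to $H_+$, notes that the coordinates of all elements of $\Gamma$ lie in a countable subfield $K\subset\R$, and chooses the values of the charge on the basis vectors to be linearly independent over $K$, so that equality of arguments forces two vectors of $\Gamma$ to be parallel. You instead fix the real part $\ell$ once and for all — which cleanly guarantees the half-plane condition for \emph{every} choice of imaginary part $m$ — and observe that for each non-parallel pair $\alpha\neq\beta$ the set of bad $m$ is the proper hyperplane $\{m: m(\ell(\beta)\alpha-\ell(\alpha)\beta)=0\}$ in $V^*$; countability of $\Gamma$ together with the fact that a finite-dimensional real vector space is not a countable union of proper hyperplanes then produces a good $m$. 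The two genericity arguments are interchangeable here; yours has the mild advantage of decoupling the half-plane constraint from the separation constraint and of replacing the field-theoretic independence computation by a standard Baire-type fact.
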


\begin{proof}
  Choose a basis $B=\{b_0,b_1,\dots, b_n\}$ such that $b_i$ for $i\geq
  0$ lies in the hyperplane $H=\partial H_+$, and $b_0$ lies in $H_+$.

We can define a charge $c$ by sending $b_i$ to elements of $\R$ and
$b_0$ to the upper half-plane. Since $\Gamma$ is countable, all the
coefficients of $\Gamma$ in terms of $B$ lie in a countable subfield $K$
of $\R$. Choose $a_0\in \C_+$ and $a_1,\dots, a_n\in \R$ such
that $\{\text{Re}(a_0),\text{Im}(a_0), a_1,\dots a_n\}$ is linearly independent
over $K$, and consider the charge defined by  $c(b_i)=a_i$. This sends no two elements of $\Gamma$ to points with the same argument,
since otherwise, writing the two vectors as $v= \sum v_i a_i, v'=\sum v'_i a_i$, we would have 
\[ \sum v_ia_i=\sum v_ic(b_i) =c(v)=p c(v_i')=\sum v'_ic(b_i) p =\sum pv_i'a_i\]
for some $p \in {\Bbb R}$. 
Comparing imaginary parts this is only possible if $v_0=pv_0'$, so
$p\in K$.  This implies that $v_i=pv_i'$ for all $i$ by the linear independence of
$\{\text{Re}(a_0), a_1,\dots a_n\}$, so $v$ and $v'$ are parallel, and we assumed $\Gamma$ does not contain parallel vectors.  Thus, $c$ defines a total order.
\end{proof}

\begin{lemma}\label{refinement}
Assume that $\Gamma$ is countable, that it does not contain any pair
of parallel vectors, and that $\Gamma$ lies in an open half-plane $H_+$. Then
every convex pre-order on $\Gamma$ can be refined to a convex order. Furthermore, this can be done by choosing any convex order on each equivalence class.
\end{lemma}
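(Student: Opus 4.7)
The plan is to invoke the hyperplane characterization from Lemma \ref{lem:convex} and combine it with a perturbation argument. Since each $\succ$-equivalence class $\mathscr{C}$ inherits the hypotheses of Lemma \ref{lem:orderexists} (countability, no parallel pairs, containment in the half-space $H_+$), it carries a convex total order; thus the existence statement reduces to the ``furthermore'' clause. So I will fix, for each $\succ$-class $\mathscr{C}$, an arbitrary convex total order $\succ_\mathscr{C}$ on $\mathscr{C}$, define $\succ'$ to agree with $\succ$ between classes and with $\succ_\mathscr{C}$ within each class, and show $\succ'$ is convex. Totality of $\succ'$ is immediate from the construction.

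To verify convexity of $\succ'$ via Lemma \ref{lem:convex}, for each $\alpha \in \Gamma$ I need to produce a sequence of cooriented hyperplanes through $\alpha$ that eventually separates every other element of $\Gamma$ correctly. Fix $\alpha$ with $\succ$-class $\mathscr{C}$ and a finite subset $U \subset \Gamma \setminus \{\alpha\}$. Applying Lemma \ref{lem:convex} to $\succ$ supplies a linear functional $\ell$ on $V$ that vanishes on $\mathscr{C}$, is strictly positive on $U \cap \{\succ \mathscr{C}\}$, and strictly negative on $U \cap \{\prec \mathscr{C}\}$. Applying Lemma \ref{lem:convex} to the convex total order $\succ_\mathscr{C}$ on $\mathscr{C}$ (whose equivalence classes are singletons) supplies a functional $\ell'$ on $V$ with $\ell'(\alpha) = 0$ that is strictly positive on $U \cap \mathscr{C} \cap \{\succ_\mathscr{C} \alpha\}$ and strictly negative on $U \cap \mathscr{C} \cap \{\prec_\mathscr{C} \alpha\}$.

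The key step is that $\ell + \epsilon \ell'$ for $\epsilon > 0$ sufficiently small defines a hyperplane through $\alpha$ that separates all of $U$ correctly at once: on $U \setminus \mathscr{C}$ the sign is controlled by $\ell$ (which is strictly nonzero there and dominates for small $\epsilon$), while on $U \cap \mathscr{C}$ the functional $\ell$ vanishes, so the sign is controlled by $\ell'$. Letting $U_n$ exhaust $\Gamma \setminus \{\alpha\}$ by finite subsets and taking $H_n$ to be the hyperplane so constructed for $U_n$, every $\beta \neq \alpha$ lies eventually on the correct side of $H_n$, and the backward direction of Lemma \ref{lem:convex} then delivers convexity of $\succ'$.

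The main obstacle will be this perturbation step: Lemma \ref{lem:convex} produces hyperplanes containing entire equivalence classes, so the functional $\ell'$ obtained for the class $\{\alpha\}$ inside $\mathscr{C}$ will generically not vanish on the rest of $\mathscr{C}$, and I must check that adding a small multiple of $\ell'$ to $\ell$ does not flip any of the finitely many strict signs of $\ell$ on $U \setminus \mathscr{C}$. This is the content of choosing $\epsilon$ small enough relative to the finite data $\{\ell(\beta),\ell'(\beta) : \beta \in U\}$; once that is granted, the rest is a routine assembly of Lemma \ref{lem:convex}.
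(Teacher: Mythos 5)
Your proof is correct, but it takes a genuinely different route from the paper's. The paper verifies Definition \ref{def:convex} for the refined order directly: for a singleton class $\{\be\}$ with $\be$ in the original class $\mathscr{D}$, it writes any $x\in \text{span}_{{\Bbb R}_{\geq 0}}\{\gamma\succ'\be\}$ as $x'+x''$ with $x'$ supported on roots $\succ\mathscr{D}$ and $x''$ on roots of $\mathscr{D}$ above $\be$, then splits into the cases $x'\neq 0$ (settled by convexity of $\succ$) and $x'=0$ (settled by convexity of the chosen order on $\mathscr{D}$). You instead build separating hyperplanes for the refined order by perturbing a functional $\ell$ for the coarse order by $\epsilon\ell'$ for the order within the class, and then invoke the converse direction of Lemma \ref{lem:convex}. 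Your perturbation step is sound: for finite $U$ the signs of $\ell$ on $U\setminus\mathscr{C}$ survive a small perturbation, and on $U\cap\mathscr{C}$ the sign is governed by $\ell'$ since $\ell$ vanishes there; exhausting $\Gamma\setminus\{\alpha\}$ by finite sets produces the required sequence $H_n$. Two small points to be aware of. First, you are using the ``if'' direction of Lemma \ref{lem:convex}, whereas the printed proof of that lemma only establishes the ``only if'' direction; that converse is routine (a violation of Definition \ref{def:convex} involves finitely many roots, which $H_n$ for $n\gg 0$ separates, forcing a sign contradiction), but since you lean on it you should at least record the argument. Second, make sure $\ell+\epsilon\ell'$ is actually nonzero (avoid the at most one bad value of $\epsilon$). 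What your approach buys is an explicit family of separating hyperplanes for the refined order, which is the kind of data used later (e.g.\ in Lemma \ref{lem:ect}); what the paper's approach buys is brevity and independence from the unproved half of Lemma \ref{lem:convex}.
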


\begin{proof}
Fix a convex pre-order $\succ$ on $\Gamma$  and an equivalence class $\mathscr{D}$. 
By Lemma \ref{lem:orderexists} we can choose a convex total order $>_\mathscr{D}$ on $\mathscr{D}$. Let $\succ'$ be the refinement of $\succ$ using the order $>_\mathscr{D}$ on $\mathscr{D}$. 
Definition \ref{def:convex} clearly holds for any class
$\mathscr{C}$ which doesn't lie in $\mathscr{D}$.  Thus, we may reduce
to the case where $\mathscr{C}=\{\be\}$ for some $\be\in \mathscr{D}$.

We need to show that if $x\in \text{span}_{{\Bbb R}_{\geq 0}}
\{\gamma\succ'\be \}$, then $x+\be\notin \text{span}_{{\Bbb R}_{\geq 0}}
\{\gamma\preceq'\be \}$.  We can write $x=x'+x''$ with $x'\in   \text{span}_{{\Bbb R}_{\geq 0}}
\{\gamma\succ\mathscr{D} \}$ and $x''\in   \text{span}_{{\Bbb R}_{\geq 0}}
\{\gamma>_{\mathscr{D}}\be \}$.  If $x'\neq 0$, then convexity implies
that \[x+\be=x'+(x''+\be)\notin    \text{span}_{{\Bbb R}_{\geq 0}}
\{\gamma\preceq\mathscr{D} \}\supset   \text{span}_{{\Bbb R}_{\geq 0}}
\{\gamma\preceq'\be\}.\]

On the other hand, if $x'=0$, then we have reduced to the same
situation using only roots from $\mathscr{D}$, so it follows from the
convexity of $>_{\mathscr{D}}$.
\end{proof}

Now fix a symmetrizable Kac-Moody algebra $\fg$ with root system $\Delta$ and Cartan subalgebra $\fh$. Let $\Delta_+^{\min}$ be the set of positive roots $\alpha$ such that $x \alpha$ is not a root for any $0 < x <1$ (this is all positive roots in finite type). From now on we will only consider convex orders on $\Delta_+^{\min}$. 
In this case the conditions of Lemmata \ref{lem:orderexists} and \ref{refinement} clearly hold. Notice also that, since any root can be expressed as a non-negative linear combination of simple roots, for any convex total order the minimal and maximal elements must be simple. 

We will need the following notion of ``reflection" for convex orders and charges.
\begin{defn}
Fix a convex preorder $\succ$ such that $\alpha_i$ is the unique lowest (reps. greatest) root. Define a new convex order $\succ^{s_i}$ by 
$$\be \succ \gamma
\Leftrightarrow s_i\be \succ^{s_i}s_i\gamma \quad \text{ if } \quad  \be, \gamma\neq
\al_i$$
and $\al_i$ greatest (resp. lowest) for $\succ^{s_i}$.

Similarly, for a charge $c$ such that $\arg \al_i$ is lowest (resp. greatest)
amongst positive roots, define a new charge $c^{s_i}$ by $c^{s_i}(\nu)
= c(s_i(\nu))$. 
\end{defn}

It is straightforward to check that reflections for charges
and convex orders are compatible in the sense that, for all charges
$c$ such that $\alpha_i$ is greatest or lowest, $(>_c)^{s_i}$ and
$>_{c^{s_i}}$ coincide.

The following result is well known with the usual definition of convex order. The fact that it holds for our definition as well shows that the two definitions agree in the case of convex orders on finite type root systems. 
\begin{prop} \label{prop:agrees-in finite-type}
Assume $\fg$ is of finite type. 
There is a bijection between convex orders on $\Delta_+$ and
expressions ${\bf i} = i_1 \cdots i_N$ for the longest word $w_0$,
which is given by sending ${\bf i}$ to the order 
$\alpha_{i_1} \succ s_{i_1} \alpha_{i_2} \succ s_{i_1} s_{i_2} \alpha_{i_3} \succ \cdots \succ s_{i_1} \cdots s_{i_{N-1}} \alpha_{i_N}.$ 
\end{prop}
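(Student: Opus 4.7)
The plan is to verify well-definedness, injectivity, and surjectivity of the map, with the main work in surjectivity. Recall the classical fact (Bourbaki VI.1.6) that for any reduced expression $i_1 \cdots i_m$ of $w \in W$, the roots $\beta_k := s_{i_1}\cdots s_{i_{k-1}}\alpha_{i_k}$ enumerate the inversion set of $w$ without repetition; for $w = w_0$ this is a permutation of $\Delta_+$, and each initial segment $\{\beta_1, \ldots, \beta_k\}$ is the inversion set of $w_k := s_{i_1}\cdots s_{i_k}$.

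For well-definedness, given a reduced expression the total order on $\Delta_+$ is convex in the sense of Definition \ref{def:convex} by Lemma \ref{lem:convex}: inversion sets of Weyl group elements are biconvex in finite type and hence separated from their complements in $\Delta_+$ by hyperplanes in $\mathfrak{h}^*$ (for instance, the hyperplane orthogonal to $\rho^\vee - w_{k-1}^{-1}\rho^\vee$), so small generic perturbations through each $\beta_k$ produce the required separating sequence. Injectivity then follows iteratively: the greatest root of $\succ$ determines $\alpha_{i_1}$, and $s_{i_1}$ applied to the remaining roots yields a shorter reduced-word problem for $s_{i_1}w_0$.

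For surjectivity, I first show the greatest element of any convex total order on $\Delta_+$ is a simple root: if $\beta = \sum_j c_j\alpha_j$ were non-simple and maximal, then each $\alpha_j$ with $c_j > 0$ satisfies $\alpha_j \prec \beta$, so Definition \ref{def:convex}(ii) with $\mathscr{C} = \{\beta\}$, $a = 0$, and $x = \beta$ gives the contradiction $\beta \notin \text{span}_{\mathbb{R}_{\geq 0}}\{\beta\}$. Call this greatest root $\alpha_{i_1}$. The principal technical step is to check that the reflected order $\succ^{s_{i_1}}$ remains convex: for equivalence classes of $\succ^{s_{i_1}}$ not equal to $\{\alpha_{i_1}\}$, I transport the separating hyperplanes of $\succ$ by $s_{i_1}$, noting that $s_{i_1}\alpha_{i_1} = -\alpha_{i_1}$ ensures $\alpha_{i_1}$ lands on the lesser side of each transported hyperplane as required in its new role; for the class $\{\alpha_{i_1}\}$ itself (now the least in $\succ^{s_{i_1}}$), the coweight $\rho^\vee - \omega_{i_1}^\vee$ provides a hyperplane vanishing on $\alpha_{i_1}$ and strictly positive on every other positive root (since $\langle\gamma, \rho^\vee - \omega_{i_1}^\vee\rangle$ counts the non-$\alpha_{i_1}$ contributions to $\gamma$). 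Given convexity of $\succ^{s_{i_1}}$, its greatest element is again simple, say $\alpha_{i_2}$, so $\beta_2 = s_{i_1}\alpha_{i_2}$; iterating $N := |\Delta_+|$ times yields a sequence $i_1, \ldots, i_N$ whose associated roots $\beta_k$ exhaust $\Delta_+$, so the word has length $\ell(w_0)$ and is necessarily a reduced expression for $w_0$. The main obstacle throughout is the convexity check for $\succ^{s_{i_1}}$: the hyperplane transport is conceptually clean, but the sign tracking as $\alpha_{i_1}$ swaps from greatest in $\succ$ to least in $\succ^{s_{i_1}}$ requires some care.
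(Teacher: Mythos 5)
Your proof is correct and follows essentially the same route as the paper's: one direction via separating hyperplanes coming from biconvexity of inversion sets (the paper uses the functionals $s_{i_1}\cdots s_{i_{r-1}}(\rho^\vee-\omega_{i_r}^\vee)$, which vanish exactly on $\beta_r$ and strictly separate the larger from the smaller roots with no perturbation needed; your $\rho^\vee - w_{k-1}^{-1}\rho^\vee$ is not quite such a functional, though the biconvexity you invoke does supply one), and the other direction by extracting the greatest root, which is necessarily simple, reflecting, and iterating. Your explicit verification that the maximal root is simple and that $\succ^{s_{i_1}}$ remains convex fills in steps the paper treats as established beforehand or absorbs into its definition of the reflected order.
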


\begin{proof}
First, fix a reduced expression. It is well known that
$$\{\alpha_{i_1}, s_{i_1} \alpha_{i_2},  s_{i_1} s_{i_2} \alpha_{i_3}, \cdots, s_{i_1} \cdots s_{i_{N-1}} \alpha_{i_N} \}$$ is an enumeration of the positive roots, so we have defined a total ordering on positive roots. 
 For any root $\be=s_{i_1} \cdots s_{i_{r-1}} \alpha_{i_r}$, the
 hyperplane defined by the zeros of  $s_{i_1} \cdots s_{i_{r-1}}( \rho^\vee-\om_{i_r}^\vee)$
 separates those larger than it from those smaller than it, so this
 order is convex by Lemma \ref{lem:convex}.

Now fix a convex order $\succ$. The greatest root must be a simple root $\alpha_{i_1}$.  The convex order $\succ^{s_{i_1}}$ as defined above also has a greatest root $\alpha_{i_2}$. Define $i_3$ in the same way using $\succ^{s_1s_2}$ and continue as many times as there
are positive roots.  The list $\alpha_{i_1}, s_{i_1}
\alpha_{i_2}, s_{i_1} s_{i_2} \alpha_{i_3}, \dots, s_{i_1} \cdots
s_{i_{N-1}} \alpha_{i_N}$ is a complete, irredundant list of positive
roots.  This implies that ${\bf i} $ is a reduced expression for $w_0$.
Furthermore, if we apply the procedure in the statement to create an order on positive roots from this expression, we clearly end up with our original convex order.
\end{proof}

Of course, if $\fg$ is of infinite type, the technique in the proof of Proposition \ref{prop:agrees-in finite-type} will result not in a reduced word for the longest element
(which does not exist), but an infinite reduced word $i_1,i_2,i_3,
\dots$ in $I$ as well as a dual sequence $\dots, i_{-3},i_{-2},i_{-1}$
constructed from looking at lowest elements.  The corresponding lists
of roots 
\[\alpha_{i_1}\succ s_{i_1}
\alpha_{i_2}\succ s_{i_1} s_{i_2} \alpha_{i_3}\succ \cdots \qquad\text{and}\qquad \cdots \succ  s_{i_{-1}} s_{i_{-2}} \alpha_{i_{-3}}\succ  s_{i_{-1}}
\alpha_{i_{-2}}\succ\alpha_{i_{-1}} \] are totally ordered, but don't contain every root. We call the roots that appear in the list $\alpha_{i_1}\succ s_{i_1}
\alpha_{i_2}\succ s_{i_1} s_{i_2} \alpha_{i_3}\succ \cdots$ {\bf accessible from below}, and those in the other list {\bf accessible from above}. The terminology is due to the fact that the roots in the first list will correspond to edges near the bottom of the MV polytope, and those in the second list will correspond to edges near the top. The roots that are accessible from above or below are exactly those that are finitely far from one end of the order $\succ$. 

\begin{rem}
 In the
affine case, for most convex orders, only $\delta$ is neither accessible from above nor accessible from below; this happens exactly for the one-row orders from
\cite{Ito}, which includes all orders induced by charges. In more general types, one typically misses many roots, including many real roots. In many cases,
one even misses simple roots.  
\end{rem}

\begin{defn}\label{ctLusztig}
Fix a convex order $\succ$. For each $b \in B(-\infty)$ and
each real root $\al$ which is accessible from above, define an integer $\coa_{\al}^\succ(b)$ by setting $\coa^\succ_{\al_i}(b)=\varphi_i(b)$ if $\alpha_i$ is minimal for $\succ$,
  and \[\coa^\succ_{\al}(b)=\coa_{s_i\al}^{\succ^{s_i}}(\Sai_i^*(\tf_i^{\varphi_i(b)}b))\]
  for all other accessible from above roots.

  Similarly, define $\coa^{\succ }_{\al}$ for all $\alpha$
  which are accessible from below by 
  $\coa^{\succ}_{\al_i}(b)=\varphi^*_i(b)$ if $\alpha_i$ is maximal for $\succ$,
  and \[\coa^{\succ}_{\al}(b)=\coa_{s_i\al}^{\succ^{s_i}}(\Sai_i((\tf^*_i)^{\varphi^*_i(b)}b))\]
  for other accessible from below roots.
  
 We call the collection $\{ \coa^\succ_\alpha(b) \}_{\alpha \in \Delta_+^\text{min}}$ the {\bf crystal-theoretic Lusztig data} for $b$ with respect to $\succ$.
\end{defn}

In infinite types, no root is accessible both from above and below,
but in finite type all are. Thus, in order to justify our
notation, we must prove that the two definitions we
have given for $\coa^{\succ }_{\al}$ agree. In fact, we now show that both agree with the
exponents in Lusztig's PBW basis element corresponding to $b$ for the
reduced expression of $w_0$ giving the convex order $\succ$. This
connection explains the term ``Lusztig data.''

\begin{prop} \label{prop:cld-mv}
In finite type, for any $b \in B(-\infty)$, the two definitions of $\coa_\al^\succ(b)$ in Definition \ref{ctLusztig} agree. Furthermore:
\begin{enumerate}
\item Let $\Bi$ be the reduced expression for $w_0$ corresponding to
  the convex order $\succ$. Then Lusztig's PBW monomial corresponding
  to $b$ as in \cite[Proposition 8.2]{Lusbraid}
is $F_{\beta_1}^{\coa_{\beta_1}^\succ(b)} F_{\beta_2}^{\coa_{\beta_2}^\succ(b)} \cdots F_{\beta_N}^{\coa_{\beta_N}^\succ(b)}.$

\item For all $\alpha$, the
geometric Lusztig data $a^\succ_\alpha(P)$ of the $MV$ polytope corresponding to $b$ agrees with $\coa_\al^\succ(b)$. 
\end{enumerate}
\end{prop}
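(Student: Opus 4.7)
The plan is to prove part (i) by induction on the position of $\al$ in $\succ$, counted from the minimum; this matches the depth of the recursion in the ``from above'' prescription of Definition \ref{ctLusztig}. The agreement of the two definitions of $\coa^\succ_\al(b)$ and part (ii) will then follow as corollaries.

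For the base case of part (i), the minimum of $\succ$ must be a simple root $\al_{i_1}$, and \cite[Prop.~8.2]{Lusbraid} identifies the leading PBW exponent of $b$ with $\varphi_{i_1}(b) = \coa^\succ_{\al_{i_1}}(b)$. For the inductive step, let $\Bi = i_1\cdots i_N$ be the reduced expression for $w_0$ realizing $\succ$ via Proposition \ref{prop:agrees-in finite-type}, and let $\Bi' = i_2 i_3\cdots i_N i_1^*$, where $i_1^* = -w_0(i_1)$. One verifies that $\Bi'$ is a reduced expression for $w_0$ realizing the reflected convex order $\succ^{s_{i_1}}$, and that the corresponding list of positive roots is $\be'_k = s_{i_1}\be_{k+1}$ for $1\le k \le N-1$, with $\be'_N = \al_{i_1}$. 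Saito's theorem \cite{SaitoPBW} identifies $\Sai_{i_1}^*$ with the combinatorial shadow on $B(-\infty)$ of Lusztig's braid operator $T_{i_1}$; on PBW monomials, $T_{i_1}$ converts the basis indexed by $\Bi$ (after deletion of the leading factor $F_{\be_1}$) into the basis indexed by $\Bi'$. Since $\tf_{i_1}^{\varphi_{i_1}(b)}$ accomplishes exactly the removal of this leading factor at the crystal level, setting $b' = \Sai_{i_1}^*\tf_{i_1}^{\varphi_{i_1}(b)} b$ gives that the $k$-th PBW exponent of $b$ with respect to $\Bi$ equals the $(k-1)$-th PBW exponent of $b'$ with respect to $\Bi'$ for each $k\ge 2$. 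The inductive hypothesis applied to $(\Bi', b', \succ^{s_{i_1}})$ then yields $\coa^{\succ^{s_{i_1}}}_{\be'_{k-1}}(b') = \coa^{\succ^{s_{i_1}}}_{s_{i_1}\be_k}(b')$, which equals $\coa^\succ_{\be_k}(b)$ by Definition \ref{ctLusztig}, closing the induction.

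For the agreement of the two definitions of $\coa^\succ_\al$, the ``from below'' procedure admits a symmetric induction starting from the maximum of $\succ$ and using the dual identification of the trailing PBW exponent with $\varphi^*_{i_N}(b)$; both inductions compute the exponents of the single, well-defined Lusztig PBW monomial for $b$, just reading it from opposite ends of $\Bi$, and so must agree. Part (ii) follows immediately by combining part (i) with Kamnitzer's identification \cite{Kamnitzer} of the geometric Lusztig data of an MV polytope with the PBW exponents of the corresponding element of $B(-\infty)$. The main technical obstacle is the inductive step in part (i), specifically checking that the version of $\Sai_i^*$ in Definition \ref{def:ref} is exactly the crystal operator that Saito proved lifts $T_i$, and unwinding the bookkeeping on reduced words and roots under reflection; once that compatibility is pinned down, the recursion in Definition \ref{ctLusztig} mirrors the recursive structure of PBW bases under braid operators essentially by construction.
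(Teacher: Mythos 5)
Your proposal is correct and follows essentially the same route as the paper: the paper's proof likewise applies Saito's \cite[Proposition 3.4.7]{SaitoPBW} repeatedly to show that each recursive definition of $\coa^\succ_\al(b)$ reads off the exponents of Lusztig's PBW monomial (one definition from each end), whence the two agree and satisfy (i), and then invokes Kamnitzer's identification of the geometric Lusztig data with PBW exponents for (ii). Your write-up simply makes the induction and the reduced-word bookkeeping explicit where the paper compresses them into a single sentence.
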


\begin{proof}
It follows by applying \cite[Proposition 3.4.7]{SaitoPBW} repeatedly
that the definitions of $\coa_\al^\succ(b)$ both read off the exponent
of $F_\alpha$ in Lusztig's PBW monomial corresponding to $b$ for the order $\succ$ (with one definition one starts reading from the right of
the monomial and with the other one starts reading from the
left). Hence they agree and satisfy (i). It is shown in \cite{Kamnitzer, Kamnitzer2} that $a_\alpha^\succ
(P)$ also satisfies (i), from which it is immediate that $a^\succ_\alpha(P)= \coa_\al^\succ(b)$.
\end{proof}

The following notion of compatibility allows us to study an arbitrary convex order on $\Delta_+^\text{min}$ using charges. 

\begin{defn} \label{def:c-comp} Fix a triple $(\mathscr{C}, \succ,n)$,
  where $\succ$ is a convex pre-order on $\Delta_+^{min}$,
  $\mathscr{C}$ is an equivalence class for $\succ$, and $n>0$. 
A charge $c$ is said to be {\bf $(\boldsymbol{\mathscr{C}},
  \boldsymbol\succ,\mathbf{n})$ compatible} if all roots in $\mathscr{C}$ have the same argument with respect to $>_c$ and, for all $\beta \in
\Delta_+$ of depth $\leq n$, we have $\mathscr{C} \prec \beta$ if and only if $ \mathscr{C} <_c \beta$ and $\mathscr{C} \succ \beta$ if and only if $ \mathscr{C} >_c \beta$.
\end{defn}

\begin{lemma} \label{lem:ect}
For every triple $(\mathscr{C}, \succ,n)$ as in Definition \ref{def:c-comp} there is a
$(\mathscr{C}, \succ,n)$ compatible charge. 
\end{lemma}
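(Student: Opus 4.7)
The plan is to construct the charge explicitly in the form $c(v) = -\ell(v) + i\mu(v)$, where $\ell$ is a real linear functional realizing a separating hyperplane for $\succ$ at the class $\mathscr{C}$, and $\mu$ is a linear functional strictly positive on every simple root.

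First, I would invoke Lemma \ref{lem:convex}, which for the class $\mathscr{C}$ produces a sequence of cooriented hyperplanes $H_m \supset \mathrm{span}(\mathscr{C})$ such that each root of $\Delta_+^{\min}$ eventually lies on the correct side. Since the set of positive roots of depth $\leq n$ is finite, for $m$ sufficiently large the single hyperplane $H := H_m$ correctly separates all of these roots (positive side for $\alpha \succ \mathscr{C}$, negative for $\alpha \prec \mathscr{C}$, and $\mathscr{C} \subset H$). Let $\ell \colon V \to \mathbb{R}$ be a linear functional with $\ker \ell = H$, positive on the positive side.

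Next I would build $\mu$. The simple roots $\{\alpha_i\}_{i \in I}$ are linearly independent in $V$, so there is a linear functional $\mu \colon V \to \mathbb{R}$ with $\mu(\alpha_i) > 0$ for all $i \in I$. Since every positive root is a nonnegative integer combination of simple roots, $\mu$ is strictly positive on all of $\Delta_+$, in particular on $\Delta_+^{\min}$.

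Finally, setting $c(v) = -\ell(v) + i\mu(v)$, I would verify:
(i) $\mathrm{Im}(c) = \mu > 0$ on $\Delta_+^{\min}$, so $c(\Delta_+^{\min})$ lies in the open upper half-plane, confirming that $c$ is a charge (with branch cut for $\arg$ taken in the lower half-plane);
(ii) every $\alpha \in \mathscr{C}$ satisfies $c(\alpha) = i\mu(\alpha) \in i\mathbb{R}_{>0}$, so all of $\mathscr{C}$ shares the common argument $\pi/2$;
(iii) for $\beta$ of depth $\leq n$ with $\beta \succ \mathscr{C}$, the inequality $\ell(\beta) > 0$ places $c(\beta)$ in the open upper-left quadrant, giving $\arg(c(\beta)) > \pi/2$, hence $\beta >_c \mathscr{C}$; the case $\beta \prec \mathscr{C}$ is symmetric. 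The only point requiring care is extracting a single hyperplane from the sequence given by Lemma \ref{lem:convex}, which is routine since we only ever need to separate finitely many roots correctly.
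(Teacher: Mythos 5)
Your proposal is correct and follows essentially the same route as the paper: both extract a single hyperplane $H_m$ from Lemma \ref{lem:convex} that correctly separates the finitely many roots of depth $\leq n$, and then take a charge whose real part vanishes exactly on that hyperplane (with the coorientation chosen so that the positive side lands at argument greater than $\pi/2$). Your version merely makes explicit the imaginary part (any functional positive on the simple roots, e.g.\ $\rho^\vee$) and the sign conventions, which the paper leaves implicit.
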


\begin{proof}
  Choose a sequence of hyperplanes $H_m$ as in Lemma \ref{lem:convex},
  and choose $m$ large enough that all $\beta$'s of depth $\leq n$ are
  on the correct side of $H_m$. One can choose a charge $c$ such that
  $H_m$ is the inverse image of the imaginary line, and such that some
  root $\alpha \succ \mathscr{C}$ has argument greater then
  $\pi/2$. This must in fact be a $(\mathscr{C}, \succ,n)$ compatible
  charge.
\end{proof}

\begin{lemma} \label{lem:i-order}
Fix a convex pre-order $>$ and a cooriented
hyperplane $H$ such that $\al>\be$ whenever $\al$ is on the
positive side of $H$ and $\be$ is on the negative.  Consider a simple
root $\alpha_i$ on the positive side of $H$. Then there is a convex pre-order $\succ$ such that:
\begin{itemize}
\item $\alpha_i$ is the unique maximal root, and
\item  for all
$\alpha, \beta \in \Delta_+^{min}$ with $\alpha$ on the non-positive side of $H$,
$\alpha \succ \beta$ if and only of $\alpha > \beta,$ and $\alpha
\prec \beta$ if and only if $\alpha < \beta$. That is, the relative
position of any root on the non-positive side of $H$ with any other root remains unchanged.
\end{itemize}
If $>$ is a total order then $\succ$
can be taken to be a total order as well.
\end{lemma}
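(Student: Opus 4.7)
The plan is to produce $\succ$ by direct construction and verify it using the hyperplane characterization in Lemma~\ref{lem:convex}. Set $\succ$ so that $\alpha_i$ is the unique maximum; on $\Delta_+^{\min}\setminus\{\alpha_i\}$, the equivalence classes and their mutual order match those of $>$; and every root on the positive side of $H$ is $\succ$-above every root on the negative side. This set-theoretic definition clearly satisfies the two bulleted conditions (the second one being immediate since $\succ$ and $>$ agree on relative positions involving any root of $\Delta_+^-$), so the issue is convexity.

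The key auxiliary tool is a linear functional $\psi$ with $\psi(\alpha_i)>0$ and $\psi(\beta)\leq 0$ for every $\beta\in\Delta_+^{\min}\setminus\{\alpha_i\}$. Such a $\psi$ exists because $\alpha_i$ is simple: setting $\psi(\alpha_i)=1$ and $\psi(\alpha_j)=-1$ for $j\neq i$ works, since any $\beta\in\Delta_+^{\min}\setminus\{\alpha_i\}$ has a positive integer coefficient of some $\alpha_j$ with $j\neq i$ in its expansion in simple roots. To verify convexity via Lemma~\ref{lem:convex}, I would produce separating hyperplane sequences class by class. For a class $\mathscr{C}\subset\Delta_+^-$, its $\succ$-above and $\succ$-below sets coincide with the corresponding $>$-sets (since the setup preserves relative positions involving $\mathscr{C}$), so the hyperplanes from the convexity of $>$ still serve. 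For the top class $\{\alpha_i\}$, one can use the constant hyperplane $\ker(\chi)$, where $\chi(\alpha_i)=0$ and $\chi(\alpha_j)=-1$ for $j\neq i$; then $\chi(\alpha_i)=0$ and $\chi(\beta)<0$ for every other $\beta\in\Delta_+^{\min}$. For a class $\mathscr{C}\subset\Delta_+^+\setminus\{\alpha_i\}$, I would take the hyperplanes $H_n^{\mathscr{C}}$ from $>$, defined by functionals $\phi_n$, and tilt: replace $\phi_n$ by $\phi_n+\epsilon_n\psi$ after first adjusting $\psi$ to vanish on $\mathrm{span}(\mathscr{C})$ while preserving the sign conditions on the other roots. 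Because $\psi\leq 0$ on $\Delta_+^{\min}\setminus\{\alpha_i\}$, such a tilt can only push other roots toward the negative side, so roots strictly on the negative side of $H_n^\mathscr{C}$ stay there; and a suitably chosen $\epsilon_n$ moves $\alpha_i$ to the positive side (when $\phi_n(\alpha_i)<0$) without dislodging any other positive-side root, using the flexibility in choosing the sequence $H_n^\mathscr{C}$.

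For the total-order case, the construction above yields a convex pre-order $\succ$, and Lemma~\ref{refinement} refines it to a convex total order without disturbing the singleton class $\{\alpha_i\}$, which remains the unique maximum. The main obstacle lies in the tilting step for classes in $\Delta_+^+\setminus\{\alpha_i\}$: one must choose $\epsilon_n>0$ small enough that no positive-side root is pushed across the perturbed hyperplane, yet large enough to bring $\alpha_i$ to the positive side, and simultaneously arrange that the modified $\psi$ vanishes on $\mathrm{span}(\mathscr{C})$ while retaining the sign conditions. Carrying this out uniformly as $n$ grows and $\mathscr{C}$ varies is the delicate part; it is handled by exploiting the considerable freedom left by Lemma~\ref{lem:convex} in choosing the sequences $H_n^\mathscr{C}$, together with the simple-root extremality of $\alpha_i$ that underlies the existence of $\psi$ in the first place.
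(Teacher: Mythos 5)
There is a genuine gap: the preorder you construct is not convex in general. You define $\succ$ by moving $\alpha_i$ to the top while keeping the relative order of \emph{all} other roots exactly as in $>$. But the lemma only asserts (and only can assert) that relative positions involving roots on the \emph{negative} side of $H$ are preserved; the roots on the positive side of $H$ other than $\alpha_i$ must in general be reordered among themselves. Concretely, take $\fg=\mathfrak{sl}_3$, let $>$ be the convex order $\alpha_2>\alpha_1+\alpha_2>\alpha_1$, let $H=\ker\rho^\vee$ cooriented so that every positive root lies on the positive side (the separation hypothesis on $>$ and $H$ is then vacuous), and let $\alpha_i=\alpha_1$. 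Your construction yields $\alpha_1\succ\alpha_2\succ\alpha_1+\alpha_2$, which is not convex: $\alpha_1+\alpha_2\in\operatorname{span}_{\mathbb{Z}_{\geq0}}\{\alpha_1,\alpha_2\}$ with both $\alpha_1,\alpha_2$ strictly above it, violating Definition~\ref{def:convex}. Indeed the only convex order on $A_2$ with $\alpha_1$ maximal is $\alpha_1\succ\alpha_1+\alpha_2\succ\alpha_2$, which does \emph{not} restrict to $>$ on $\{\alpha_2,\alpha_1+\alpha_2\}$. This is also where your hyperplane-tilting step breaks: for the class $\mathscr{C}=\{\alpha_2\}$, any functional $\phi'$ vanishing on $\alpha_2$ satisfies $\phi'(\alpha_1+\alpha_2)=\phi'(\alpha_1)$, so no perturbation can move $\alpha_1$ to the positive side while keeping $\alpha_1+\alpha_2$ on the negative side. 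The tension you flagged as "the delicate part" is not merely delicate — it is unresolvable for the order you are trying to certify.

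The paper's proof avoids this by letting the homotopy itself dictate the new order on the positive side of $H$: with $f$ a defining functional for $H$, it sets $f_t=tf+(1-t)s_i\rho^\vee$, where the $t=0$ functional is (up to the paper's sign convention) positive on $\alpha_i$ and negative on every other positive root, because $s_i$ permutes $\Delta_+\setminus\{\alpha_i\}$. Each positive-side root $\beta\neq\alpha_i$ then has a unique crossing time $t(\beta)$, and one orders these roots by $t(\beta)$, puts $\alpha_i$ on top, and keeps the $>$-order on $\{f\leq 0\}$; every equivalence class and initial/final segment is then cut out by a hyperplane or inherited from $>$, giving convexity, and Lemma~\ref{refinement} handles the total-order refinement exactly as you propose. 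A secondary point: your explicit $\psi$ with $\psi(\alpha_i)=1$ and $\psi(\alpha_j)=-1$ need not be $\leq 0$ off $\alpha_i$ (in $G_2$ with $\alpha_i$ the short simple root, $\psi(\alpha_1+3\alpha_2)=2>0$); the correct functional is again the one built from $s_i\rho^\vee$. But fixing $\psi$ would not repair the argument, since the target order itself is wrong.
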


\begin{proof}
Let $f$ be any function whose vanishing locus is $H$ with the correct
coorientation.  Consider the function $f_t=tf+(1-t) s_i \rho^\vee$ for
$t \in [0,1]$.  At $t=0$, the only root on which $f_t$ has a positive
value is $\al_i$; for every other root $\be$ on the positive side of $H$,
there is a unique $t(\be)\in (0,1)$ such that $f_t(\be)=0$.

Define 
$\alpha \succeq \beta$ if
\begin{enumerate}
\item $\alpha \geq \beta$ and $f(\beta) \leq 0$, or

\item $f(\alpha), f(\beta) >0$ and $t(\alpha) \leq t(\beta)$, or

\item $\alpha=\alpha_i$. 
\end{enumerate}
This is a convex pre-order since all equivalence classes and initial/final
segments are either defined as the vectors lying in or on one side of
a  hyperplane, or as equivalence classes or segments for $>$.  Certainly the
relative position of any root on the negative side of $H$ with any other root agrees with the relative
position for $>. $  

By Lemma \ref{refinement}, if the order $>$ is total, we
can refine $\succ$ to a total
order by using $>$ to order within each equivalence class.
\end{proof}

  \subsection{Pseudo-Weyl polytopes} \label{ss:PW}

\begin{defn}
  A {\bf pseudo-Weyl polytope} is a convex polytope $P$ in $\fh^*$ 
  with all edges parallel to roots.
\end{defn}

\begin{defn}
For a pseudo-Weyl polytope $P$, let $\mu_0(P)$ be the vertex of $P$ such that $\langle \mu_0(P), \rho^\vee \rangle$ is lowest, and $\mu^0(P)$ the vertex where this is highest (these are vertices as for all roots $\langle \alpha, \rho^\vee \rangle \neq 0$). 
\end{defn}

\begin{lemma} \label{lem:ispath}
Fix a pseudo-Weyl polytope $P$ and a convex order $\succ$ on $\Delta_+^{min}$. There is a unique path $P^\succ$ through the 1-skeleton of $P$ from $\mu_0(P)$ to $\mu^0(P)$ which passes through at most one edge parallel to each root, and these appear in decreasing order according to $\succ$ as one travels from $\mu_0(P)$ to $\mu^0(P)$.
\end{lemma}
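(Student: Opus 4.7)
The plan is to project $P$ linearly to a convex polygon in the complex plane using a charge compatible with $\succ$, and identify $P^\succ$ with the unique lift of one of the two boundary arcs.

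Only finitely many positive roots $S \subset \Delta_+^{\min}$ appear as edge directions of $P$, and $\succ$ restricts to a total order on $S$. By Lemma~\ref{lem:ect}, I would choose a charge $c : \mathfrak{h}^* \to \mathbb{C}$ compatible with $\succ$ on $S$, and rotate so that $\arg(c(\alpha)) \in (0,\pi)$ for every $\alpha \in S$. Then $\mathrm{Im}(c)$ is strictly positive on every positive root in $S$, so both $\langle \cdot,\rho^\vee\rangle$ and $\mathrm{Im}(c)$ strictly increase along any edge of $P$ traversed in its positive-root direction; in particular the unique extrema of $\mathrm{Im}(c)$ on $P$ coincide with $\mu_0$ and $\mu^0$. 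Within the open cone of $\succ$-compatible charges, I can further choose $c$ generically so that $c$ is injective on vertices of $P$ and the arguments $\arg c(\alpha)$ for $\alpha \in S$ are pairwise distinct.

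The image $c(P) \subset \mathbb{C}$ is a convex polygon with $c(\mu_0)$ and $c(\mu^0)$ as its extremal (bottom and top) vertices. Its boundary splits into two monotone arcs from $c(\mu_0)$ to $c(\mu^0)$; on one arc the outgoing edge arguments strictly decrease, and by compatibility with $\succ$ this corresponds to traversing edges parallel to roots in $\succ$-decreasing order. Each such boundary edge is parallel to a unique $c(\alpha)$ with $\alpha \in S$, and by genericity its preimage in $P$ is a face all of whose edges are parallel to $\alpha$; any such face is one-dimensional (since a face of dimension $\geq 2$ would have edges in at least two distinct directions), so the preimage is a single edge of $P$. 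Lifting the decreasing arc edge-by-edge thus produces a unique connected path $P^\succ$ in the $1$-skeleton of $P$ from $\mu_0$ to $\mu^0$ whose edges are parallel to distinct roots in $\succ$-decreasing order. Uniqueness follows: any path satisfying the lemma's conditions projects under $c$ to a monotone path on $\partial c(P)$ with strictly decreasing edge arguments, which must be the decreasing arc, whose lift is unique.

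The main obstacle is the genericity step: one must simultaneously arrange $\succ$-compatibility (an open condition on charges) and the two genericity conditions (injectivity of $c$ on vertices of $P$, and pairwise distinct arguments of $c(\alpha)$ for $\alpha \in S$), each the complement of a finite union of proper subvarieties in the space of charges. All are open conditions whose intersection with the $\succ$-compatible cone remains non-empty, so this is straightforward to arrange. The delicate point to check carefully is that each boundary edge of $c(P)$ pulls back to a single edge of $P$ rather than a higher-dimensional face; this follows from the dimension argument above, provided the generic choice of $c$ really separates all $c(\alpha)$ by argument.
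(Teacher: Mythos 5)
Your strategy (project to the plane and read off a boundary arc) is close in spirit to the paper's, but it hinges on a step that does not go through: you need a charge $c$ whose induced order $>_c$ agrees with $\succ$ on the whole finite set $S$ of edge directions of $P$. Lemma \ref{lem:ect} does not supply this — it only produces a charge compatible with $\succ$ \emph{relative to a single equivalence class} $\mathscr{C}$, i.e.\ it gets the position of each root relative to $\mathscr{C}$ right, not the total order on $S$. Worse, no such charge need exist: a charge realizes a convex order only when the separating hyperplanes for all the initial/final segments can be chosen to contain a common codimension-two subspace (the kernel of $c$), and this ``coherence'' can fail even for finite sets of roots. The paper is explicit that charges yield only a proper subclass of convex orders (see the remark on one-row orders after Definition \ref{ctLusztig}, and the caveat before Lemma \ref{lem:gen-braid}), and $S$ can be the full set of positive roots of a finite-type subsystem, so your genericity argument — which only perturbs within the cone of compatible charges — cannot rescue the construction when that cone is empty.

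The paper's proof sidesteps this by replacing the one-parameter family of functionals coming from rotating a charge (which is forced to live in the two-plane spanned by $\mathrm{Re}(c)$ and $\mathrm{Im}(c)$) with a \emph{piecewise-linear} path $\phi_t$ in all of $\fh$, interpolating from $\rho^\vee$ to $-\rho^\vee$ through functionals $\phi_k$ that separate $\{\be_1,\dots,\be_k\}$ from $\{\be_{k+1},\dots,\be_r\}$; each $\phi_k$ exists individually by convexity of $\succ$, with no requirement that they be coplanar. The minimizing locus of $\phi_t$ then sweeps out exactly the desired path. If you want to keep your picture, you should either restrict to orders $>_c$ coming from charges (which is all the paper ever needs downstream, cf.\ Lemma \ref{lem:skel}), or reformulate the sweep using the paper's piecewise-linear family. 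Separately, your uniqueness argument asserts that any qualifying path projects onto the boundary arc of $c(P)$; for $\dim P>2$ a path in the $1$-skeleton can project into the interior of the polygon, so this needs an extra argument (e.g.\ that the successive partial sums $\mu_0(P)+\sum_{\gamma\succeq\be}a_\gamma\gamma$ are forced to be the minimizing vertices of the $\phi_t$).
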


\begin{proof}
Let $\{ \be_1, \be_2, \ldots, \be_r  \} \in \Delta_+^{min}$ be the minimal roots that are parallel to edges in $P$, ordered by $\be_1 \succ \be_2 \succ \cdots \succ \be_r$. Since $\succ$ is convex, for each $1 \leq k \leq r-1$, one can find $\phi_k \in \fh$ such that $\langle \be_r, \phi_k \rangle > 0$ for $r\leq k$, and $\langle \be_r, \phi_k \rangle  < 0$ for $r > k$. Let
$\phi_0= \rho^\vee$ and $\phi_r= - \rho^\vee$. 
Construct a path $\phi_t$ in coweight space for $t$ ranging from $0$ to $r$ by, for $t= k+q$ for $0 \leq q <1$ letting $\phi_t= (1-q) \phi_k+q\phi_{k+1}$. 
As $t$ varies from $0$ to $r$, the locus in the polytope where $\phi_t$ takes on its lowest value is generically a vertex of $P$, but occasionally defines an edge. The set of edges that come up is the required path. 
\end{proof}

\begin{defn} \label{def:geom-LD}
Fix a pseudo-Weyl polytope $P$ and a convex order $\succ$. For each $\alpha \in \Delta_+^{min}$, define $a^\succ_\alpha(P)$ to be the unique non-negative number such that the edge in $P^\succ$ parallel to $\alpha$ is a translate of $a^\succ_\alpha(P) \alpha$. We call the collection $\{ a^\succ_\alpha(P) \}$ the {\bf geometric Lusztig data} of $P$ with respect to $\succ$. 
\end{defn}

\begin{lemma} \label{lem:skel}
Let $P$ be a pseudo-Weyl polytope and $E$ an edge of $P$. Then there
exists a charge $c$ such that $>_c$ is a total order and $E \subset P^{>_c}$. In particular, a pseudo-Weyl polytope $P$ is uniquely determined by its geometric Lusztig data with respect to all convex orders $>_c$ coming from charges. 
\end{lemma}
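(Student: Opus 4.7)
The plan is to choose a coweight $\phi$ so that $E$ is exactly the face of $P$ minimizing $\langle\cdot,\phi\rangle$, build a charge $c$ from $\phi$ and $\rho^\vee$ with $c(\alpha)$ purely imaginary (where $\alpha$ is the direction of $E$), and then realize $P^{>_c}$ via a continuous coweight sweep whose midpoint crosses $E$.

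Let $\alpha\in\Delta_+^{\min}$ be the direction of $E$. The cone $N(E)$ of coweights $\phi\in\fh$ whose minimizing face on $P$ contains $E$ has nonempty relative interior, consisting of those $\phi$ for which the minimizing face equals $E$; any such $\phi$ satisfies $\langle\alpha,\phi\rangle=0$. Choose $\phi$ in this relative interior generically, so that for every pair of distinct $\beta,\gamma\in\Delta_+^{\min}$ the determinant $\langle\beta,\phi\rangle\langle\gamma,\rho^\vee\rangle-\langle\beta,\rho^\vee\rangle\langle\gamma,\phi\rangle$ is nonzero. Since distinct minimal positive roots are not parallel, each such vanishing defines a proper affine hyperplane in the relative interior of $N(E)$; as $\Delta_+^{\min}$ is countable, a generic $\phi$ avoiding all these exists.

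Define $c\colon\fh^*\to\C$ by $c(v):=\langle v,\phi\rangle+i\langle v,\rho^\vee\rangle$. Since $\langle\cdot,\rho^\vee\rangle$ is strictly positive on every positive root, $c$ sends $\Delta_+^{\min}$ into the open upper half-plane, so $c$ is a charge; the genericity of $\phi$ makes $>_c$ a total order. In particular $c(\alpha)=i\langle\alpha,\rho^\vee\rangle$ is purely imaginary, while every other $c(\gamma)$ has nonzero real part.

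Consider the family of coweights $\phi_t:=\cos(\pi t)\rho^\vee+\sin(\pi t)\phi$ for $t\in[0,1]$, so $\phi_0=\rho^\vee$, $\phi_1=-\rho^\vee$, and $\phi_{1/2}=\phi$. A direct trigonometric computation gives $\langle\beta,\phi_t\rangle=|c(\beta)|\sin(\arg c(\beta)+\pi t)$, so the hyperplane $\langle\beta,\phi_t\rangle=0$ is crossed exactly at $t=1-\arg c(\beta)/\pi$. As $t$ runs from $0$ to $1$, the face of $P$ minimizing $\phi_t$ therefore sweeps out a path from $\mu_0(P)$ to $\mu^0(P)$ in the 1-skeleton of $P$ crossing root-parallel edges in strictly decreasing $\arg c(\cdot)$-order, i.e.\ in decreasing $>_c$-order. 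By the uniqueness assertion in Lemma~\ref{lem:ispath} this sweep path equals $P^{>_c}$. At $t=1/2$ the $\alpha$-hyperplane is crossed, and since $\phi_{1/2}=\phi$ attains its minimum on $P$ precisely along $E$, the edge traversed at this moment is $E$, so $E\subset P^{>_c}$. The second assertion then follows: by the first part every edge of $P$ occurs in $P^{>_c}$ for some charge $c$, so the geometric Lusztig data for all charge orders together record the direction and length of every edge of $P$, which, combined with the basepoint $\mu_0(P)$, reconstructs $P$.
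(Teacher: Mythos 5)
Your proof is correct and follows essentially the same route as the paper: choose a functional $\phi$ cutting out $E$ (so $\langle\alpha,\phi\rangle=0$), perturb to a generic charge with imaginary part given by $\rho^\vee$, and observe that the resulting path $P^{>_c}$ passes through $E$; you simply make explicit, via the sweep $\phi_t$ and the uniqueness in Lemma~\ref{lem:ispath}, what the paper leaves as ``for generic $f$, the charge $c_f$ satisfies the required conditions.'' One small point worth adding: to see that each determinant locus is a proper hyperplane \emph{inside} $\alpha^\perp$ (not merely in $\fh$), note that the vector $\langle\gamma,\rho^\vee\rangle\beta-\langle\beta,\rho^\vee\rangle\gamma$ pairs to zero with $\rho^\vee$, so it cannot be a nonzero multiple of $\alpha$, and non-parallelism of $\beta,\gamma$ then gives properness.
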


\begin{proof} Since
$E$ is an edge of $P$, there is a functional $\phi \in \fh$ such
that $$E = \{ p \in P: \langle p, \phi \rangle\: \text{ is greatest} \}.$$ Since $P$ is a pseudo-Weyl polytope, $E$ is parallel to some root $\beta$, and so $\langle \beta, \phi \rangle=0$. Furthermore, $\phi$ may be chosen so that $\langle \beta', \phi \rangle \neq 0$ for all other $\beta'$ which are parallel to edges of $P$. For any linear function $f: \fh \rightarrow {\mathbb R}$ such that $f(\Delta_+) \subset {\mathbb R}_+,$ define a charge $c_f$ by 
$$c_f(p)= \phi(p) + f(p) i.$$
For generic $f$, the charge $c_f$ satisfies the required conditions. 
\end{proof}
The following should be thought of as a general-type analogue of the fact that, in finite type, any reduced expression for $w_0$ can be obtained from any other by a finite number of braid moves. In fact, this statement can be generalized to include all convex orders, not just those coming from charges, but we only need the simpler version. 

There is a natural height function on any pseudo-Weyl polytope given by $\rho^\vee$. Thus for each face there is a notion of top and bottom vertices. 

\begin{lemma} \label{lem:gen-braid}
Let $P$ be a pseudo-Weyl polytope and $c,c'$ two generic charges. Then there is a sequence of generic charges $c_0, c_1, \ldots c_k$ such that $P^{>_{c_0}}= P^{>_{c}}$, $P^{>_{c_k}}=P^{{>_{c'}}},$ and, for all $k \leq j <k$,  $P^{>_{c_j}}$ and $P^{>_{c_{j+1}}}$ differ by moving from the bottom vertex to the top vertex of a single 2-face of $P$ in the two possible directions. 
\end{lemma}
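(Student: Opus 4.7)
The plan is to deform $c$ continuously to $c'$ through the space of charges, and to show that each ``critical moment'' of the deformation corresponds to a 2-face flip. Let $S\subset\Delta_+^{\min}$ denote the finite set of roots parallel to some edge of $P$. By Lemma \ref{lem:ispath}, the path $P^{>_c}$ depends only on the restriction of the total order $>_c$ to $S$, i.e., on the cyclic order of $\{\arg c(\alpha)\}_{\alpha\in S}$. The space $\mathcal C$ of all charges is path connected: given any two charges, one can first continuously rotate $c$ by $e^{i\theta}$ (which moves the associated half-plane with it and so preserves being a charge) until the half-planes agree with that of $c'$, and then linearly interpolate. Within $\mathcal C$, the locus of non-generic charges at which some pair $\alpha,\beta\in S$ have equal argument is a finite union of real-codimension-$1$ hypersurfaces $W_{\alpha,\beta}$.

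By a small perturbation of the interpolating path, I would arrange that it meets $\cup W_{\alpha,\beta}$ transversally at finitely many times $t_1<\cdots <t_m$, each on a single $W_{\alpha_j,\beta_j}$; crossings of walls coming from roots outside $S$ do not change $P^{>_c}$ and can be ignored. Choosing a generic charge $c_j$ in each open interval between successive critical times yields a sequence $c_0=c,c_1,\dots, c_m=c'$ with the property that $>_{c_{j-1}}$ and $>_{c_j}$ differ, on $S$, only by transposing $\alpha_j$ and $\beta_j$. In particular $\alpha_j,\beta_j$ are adjacent in the induced order on $S$ on both sides of the crossing, hence are consecutive edges of both paths $P^{>_{c_{j-1}}}$ and $P^{>_{c_j}}$. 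Writing $v$ for the vertex just before this pair, the relevant segments are $v\to v+a\alpha_j\to v+a\alpha_j+b\beta_j$ and $v\to v+b'\beta_j\to v+b'\beta_j+a'\alpha_j$; since the two paths coincide outside this segment, reach a common subsequent vertex, and $\alpha_j,\beta_j$ are linearly independent, we get $a=a'$ and $b=b'$.

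It remains to identify the parallelogram with a genuine 2-face of $P$. At the critical charge $c_{t_j}$, the roots $\alpha_j$ and $\beta_j$ share a common argument, so after rotating so that this argument is zero, the functional $\phi=\Im c_{t_j}$ vanishes on $\alpha_j$ and $\beta_j$ and, by transversality, is nonzero on every other root of $S$. Hence the face $F$ of $P$ on which $-\phi$ achieves its maximum is at most $2$-dimensional, with all its edges parallel to $\alpha_j$ or $\beta_j$. Tracking the sweep-out of Lemma \ref{lem:ispath} for a charge just on either side of $c_{t_j}$ shows that this maximum degenerates from the vertex $v$ into a $2$-dimensional face containing $v$ and the four edges traversed by $P^{>_{c_{j-1}}}$ and $P^{>_{c_j}}$ near $v$. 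Thus $F$ is exactly the parallelogram in question, with $v$ its bottom vertex and $v+a\alpha_j+b\beta_j$ its top, and $P^{>_{c_{j-1}}},P^{>_{c_j}}$ traverse $F$ from bottom to top in the two possible directions, as required.

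The main obstacle is the last step: verifying that the parallelogram produced by the combinatorial swap really is a $2$-face of $P$, rather than merely sitting inside a larger face. This is handled by using the critical charge itself to exhibit the parallelogram as the locus where a linear functional achieves its maximum on $P$, with transversality ensuring that no additional roots of $S$ contribute extra edges.
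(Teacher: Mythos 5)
Your overall strategy is the same as the paper's: interpolate between the two charges, perturb so the critical times are isolated, and insert a generic charge between consecutive critical times. Your last paragraph, using the imaginary part of the critical charge as a functional that cuts out the relevant face of $P$, is a nice way of making explicit something the paper leaves implicit. However, there is a genuine gap in the middle of the argument.

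The problem is the claim that the path of charges can be perturbed to meet $\bigcup W_{\alpha,\beta}$ transversally ``each on a single $W_{\alpha_j,\beta_j}$,'' so that consecutive orders on $S$ differ by a single adjacent transposition. These walls are not in general position: if $\alpha,\beta\in S$ and also $\alpha+\beta\in S$ (which happens whenever a $2$-face of $P$ has three or more edge directions, e.g.\ a hexagonal face in an $A_2$ plane), then $\arg c(\alpha)=\arg c(\beta)$ forces $\arg c(\alpha+\beta)$ to equal it as well, so $W_{\alpha,\beta}\subset W_{\alpha,\alpha+\beta}\cap W_{\beta,\alpha+\beta}$ and no perturbation can separate these crossings. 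At such a critical time the entire order on the roots of $S$ lying in that $2$-plane reverses at once, the two paths differ by traversing the two sides of a polygon that is generally \emph{not} a parallelogram, and your deduction ``$a=a'$, $b=b'$'' and the identification of $F$ with ``the parallelogram in question'' do not apply. The correct genericity condition --- the one the paper imposes --- is only that at each critical time the roots of $\Delta^{\mathrm{res}}$ sharing the critical argument span a $2$-dimensional subspace (however many of them there are); your functional argument in the final paragraph then still identifies the corresponding $2$-face, but the combinatorial ``transposition'' picture must be replaced by a full reversal of the order within that plane.
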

 
\begin{proof}
Let $\Delta^{res}$ be the set of root directions that appear as edges in $P$. 
For $0 \leq t \leq 1$, let $c_t = (1-t) c+ t c'$. Clearly this is a charge. We can deform $c,c'$ slightly, without changing the order of any of the roots in $\Delta^{res}$, such that 
\begin{itemize}
\item For all but finitely many $t$, $c_t$ induces a total order on $\Delta^{res}.$

\item For those $t$ where $c_t$ does not induce a total order, there is exactly one argument $0 < a_t < \pi$ such that more then one root in $\Delta^{res}$ has argument $a_t$. Furthermore, the span of the roots with argument  $a_t$ is 2 dimensional.
\end{itemize}
Denote the values of $t$ where $c_t$ does not induce a total order by $\vartheta_1, \ldots \vartheta_{k-1}$. Fix $t_1, \ldots, t_k$ with
$$0 = t_0 < \vartheta_1 < t_1 < \vartheta_2 \ldots < t_{k-1} < \vartheta_{k-1} < t_k =1.$$
Then $c_j = c_{t_j}$ is the required sequence. 
\end{proof}

\subsection{Finite type MV polytopes}

Mirkovi\'c-Vilonen (MV) polytopes are polytopes in the weight space of a complex-simple Lie algebra which first arose as moment map images of the MV cycles in the affine Grassmannian, as studied by  
 \Mirkovic  and Vilonen \cite{MV}. 
Anderson \cite{JAnderson} and Kamnitzer
\cite{Kamnitzer, Kamnitzer2} developed a realization of $B(-\infty)$
using these polytopes as the underlying set. 
Here we will not
need details of these constructions, but will only use certain characterization theorems.

The following is discussed implicitly in \cite{BKMV}.

\begin{prop} \label{prop:char-finite} For finite type $\fg$,
there is a unique map $b \rightarrow P_b$ from $B(-\infty)$ to pseudo-Weyl polytopes such that
\begin{enumerate}
\item \label{ccf1} $\wt(b)= \mu^0(P_b)- \mu_0(P_b)$. 

\item  \label{ccf2} If $\succ$ is a convex order with minimal root $\alpha_i$, then, for all $\beta \neq \alpha_i$, $a_\beta^{\succ}(P_{\e_i(b)})=  a_\beta^{\succ}(P_b)$, and 
$a_{\alpha_i}^{\succ}(P_{\e_i(b)})=  a_{\alpha_i}^{\succ}(P_b)+1$.

\item \label{ccf3}  If $\succ$ is a convex order with minimal root $\alpha_i$ and $\varphi_i(P_b)=0$, then, for all $\beta \neq \alpha_i$, $a_\beta^\succ(P_b) = a^{\succ^{s_i}}_{s_i(\beta)}(P_{\Sai_i b})$ and $a^{\succ^{s_i}}_{\alpha_i}(P_{\Sai_i b})=0.$ 

\end{enumerate}
Here $\sigma_i$ is the Saito reflection from Definition \ref{def:ref}.
This map is the unique bicrystal isomorphism between $B(-\infty)$ and  the set of MV polytopes.
\end{prop}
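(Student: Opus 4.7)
The plan is to establish existence and uniqueness of $b\mapsto P_b$ separately, and then deduce the bicrystal isomorphism claim. For existence I would take $b\mapsto P_b$ to be the Anderson--Kamnitzer realization of $B(-\infty)$ by MV polytopes from \cite{JAnderson, Kamnitzer}, for which condition (i) is a standard feature of the construction. To verify (ii) and (iii) I would invoke Proposition \ref{prop:cld-mv}(ii), which identifies the geometric Lusztig data $a^\succ_\al(P_b)$ with the crystal-theoretic data $\coa^\succ_\al(b)$ of Definition \ref{ctLusztig}. The base clause $\coa^\succ_{\al_i}(b)=\varphi_i(b)$ of that definition (when $\al_i$ is minimal for $\succ$) directly yields (ii), since the remaining components $\coa_\be^\succ$ depend only on $\f_i^{\varphi_i(b)}b$, which is unchanged when $b$ is replaced by $\e_i b$; and the inductive clause $\coa^\succ_\al(b)=\coa^{\succ^{s_i}}_{s_i\al}(\Sai_i^*\f_i^{\varphi_i(b)}b)$, specialized to $\varphi_i(b)=0$, gives (iii).

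For uniqueness, suppose $b\mapsto P_b$ and $b\mapsto P'_b$ both satisfy (i)--(iii). The strategy is to prove, by induction along the recursion defining $\coa^\succ_\al$, that for every convex order $\succ$ coming from a charge and every $\al\in\Delta_+^{min}$, the conditions force $a^\succ_\al(P_b)=\coa^\succ_\al(b)$; Lemma \ref{lem:skel} then forces $P_b=P'_b$. Concretely, when $\al_i$ is minimal for $\succ$, condition (ii) iterated $\varphi_i(b)$ times (together with (i) applied to $\f_i^{\varphi_i(b)}b$) forces $a^\succ_{\al_i}(P_b)=\varphi_i(b)$; for $\al\neq\al_i$, (ii) reduces $a^\succ_\al(P_b)$ to $a^\succ_\al(P_{\f_i^{\varphi_i(b)}b})$, and then (iii) rewrites this as $a^{\succ^{s_i}}_{s_i\al}(P_{\Sai_i^*\f_i^{\varphi_i(b)}b})$, matching Definition \ref{ctLusztig} exactly so the inductive hypothesis applies. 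The final bicrystal isomorphism claim then follows because the MV polytope model already carries a known bicrystal structure satisfying (i)--(iii), so any bicrystal isomorphism must coincide with the one we produced.

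The main obstacle is arranging the uniqueness induction carefully: since Saito reflections preserve the height of $\wt(b)$, one cannot induct on weight alone. The induction must instead follow the recursion of Definition \ref{ctLusztig}, which terminates in finite type because at each step the root $\al$ is moved one position closer to the maximum of the current convex order, and there are only finitely many positive roots to iterate through.
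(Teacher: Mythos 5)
Your proposal takes essentially the same route as the paper: existence comes down to Saito's compatibility of PBW monomials with the crystal operators together with Kamnitzer's identification of geometric Lusztig data with PBW exponents (precisely the content of Proposition \ref{prop:cld-mv}), and uniqueness is proved by unwinding the recursion of Definition \ref{ctLusztig} via conditions (ii) and (iii). The paper packages the uniqueness step as a minimal-counterexample induction in reverse-lexicographic order on the tuples of Lusztig data and then uses that a pseudo-Weyl polytope is determined by its Lusztig data for all convex orders; this is the same mechanism as your root-by-root induction combined with Lemma \ref{lem:skel}.

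One step in your uniqueness argument is under-justified. You assert that condition (ii) iterated $\varphi_i(b)$ times, ``together with (i) applied to $b'=\f_i^{\varphi_i(b)}b$,'' forces $a^\succ_{\alpha_i}(P_b)=\varphi_i(b)$. Condition (i) only pins down the total weight $\mu^0(P_{b'})-\mu_0(P_{b'})$, and a pseudo-Weyl polytope with the correct total weight can still carry a nonzero $\alpha_i$-edge at the bottom, compensated by its other edges; so (i) alone does not give $a^\succ_{\alpha_i}(P_{b'})=0$. The missing input is the final clause of (iii), namely $a^{\succ^{s_i}}_{\alpha_i}(P_{\Sai_i b'})=0$: applying (i) to both $P_{b'}$ and $P_{\Sai_i b'}$, using the first clause of (iii) to transport the remaining edge lengths, and using that the Saito reflection satisfies $\wt(\Sai_i b')=s_i\wt(b')$, one gets
\[
s_i\wt(b')=\sum_{\beta\neq\alpha_i}a^\succ_\beta(P_{b'})\,s_i\beta=s_i\bigl(\wt(b')-a^\succ_{\alpha_i}(P_{b'})\,\alpha_i\bigr),
\]
whence $a^\succ_{\alpha_i}(P_{b'})=0$. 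With this correction the base case closes up and the rest of your recursion goes through as in the paper.
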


\begin{proof}
The first step is to show that there is at most one map $b \rightarrow P_b$ satisfying the conditions. To see this we proceed by induction. Consider the reverse-lexicographical order on collections of integers ${\bf a}= (a_{k})_{1 \leq k \leq N}$. Assume ${\bf a}$ is minimal such that, for some convex order $$\beta_1 \succ \beta_2 \succ \cdots \cdots \succ \beta_N,$$ and for two maps $b \rightarrow P_b$ and $b \rightarrow P'_b$ satisfying the conditions, 
$a_{\beta_k}^{\succ}(P_b)= a_k $ for all $k$, but $a_{\beta_k}^{\succ}(P'_b)\neq a_k$ for some $k$. 

If $a_N \neq 0$ we can reduce to a smaller such example using condition (ii). Otherwise, as long as some $a_k \neq 0$, we can reduce to a smaller such example using (iii). Clearly the map is unique if all $a_k=0$, so this proves uniqueness. 

It remains to show that $b \rightarrow MV_b$ does satisfy the conditions. But this is immediate from \cite[Proposition 3.4.7]{SaitoPBW} and the fact that the integers $a^\succ_{\beta_k}(MV_b)$ agree with the exponents in the Lusztig's PBW monomial corresponding to $b$, which is shown in \cite[Theorem 7.2]{Kamnitzer}.
\end{proof}

We also need the following standard facts about MV polytopes:
\begin{thm} [\mbox{\cite[Theorem D]{Kamnitzer}}] \label{th:ft-2-face-char}
The MV polytopes are exactly those pseudo-Weyl polytopes such that all 2-faces are MV polytopes for the corresponding rank 2 root system.
\qed
\end{thm}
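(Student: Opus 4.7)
The forward implication is the easier half. If $P = P_b$ for some $b \in B(-\infty)$ and $F$ is a 2-face, one selects a convex order $\succ$ such that the edges of $F$ appear consecutively in the path $P^\succ$ of Lemma \ref{lem:ispath}. By Lemma \ref{lem:gen-braid} (or a direct argument in finite type using Lusztig's braid relations), the restriction of the Lusztig data to the roots in the rank 2 sub-root system $\Delta_F$ should match the Lusztig data of some $b_F \in B(-\infty)(\Delta_F)$, and by Proposition \ref{prop:char-finite} applied in rank 2 this forces $F$ to be the MV polytope attached to $b_F$ in that root system.

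For the converse, suppose $P$ is a pseudo-Weyl polytope whose every 2-face is a rank 2 MV polytope. The plan is to construct a candidate $b \in B(-\infty)$ from the Lusztig data of $P$ with respect to one convex order, and then prove that $P = P_b$ by using Lemma \ref{lem:skel} to reduce to checking equality of geometric Lusztig data for all convex orders coming from charges. Fix a generic charge $c$ and let $\succ$ be the corresponding convex order, with associated reduced expression $\Bi = i_1 \cdots i_N$ for $w_0$ given by Proposition \ref{prop:agrees-in finite-type}. The numbers $(a^\succ_{\beta_k}(P))_{k=1}^N$ are the exponents of a PBW monomial, and by Proposition \ref{prop:cld-mv}(i) this monomial is of the form $F_{\beta_1}^{\coa_{\beta_1}^\succ(b)} \cdots F_{\beta_N}^{\coa_{\beta_N}^\succ(b)}$ for a unique $b \in B(-\infty)$.

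It remains to show $P = P_b$, for which by Lemma \ref{lem:skel} it suffices to verify $a^{>_{c'}}_\alpha(P) = a^{>_{c'}}_\alpha(P_b)$ for every generic charge $c'$. By Lemma \ref{lem:gen-braid}, any two such charges are connected by a chain $c = c_0, c_1, \ldots, c_k = c'$ in which $c_j$ and $c_{j+1}$ differ only by transposing the order of roots in a single 2-face $F_j$ of $P$. So the claim reduces to showing that the Lusztig data transforms correctly across each step. But this transformation involves only the edges of the 2-face $F_j$, namely the roots in $\Delta_{F_j}$, and is governed by the change-of-PBW-basis formula in rank 2, which by Proposition \ref{prop:cld-mv}(ii) applied in rank 2 is precisely the tropical Plücker relation satisfied by MV polytopes for $\Delta_{F_j}$. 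Since $F_j$ is a rank 2 MV polytope by hypothesis, its geometric Lusztig data for the two relevant rank 2 orders satisfy the tropical Plücker relation, so the transformation matches that of $P_b$, and induction along the chain yields $a^{>_{c'}}_\alpha(P) = a^{>_{c'}}_\alpha(P_b)$ for all $\alpha$.

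The main obstacle is the last paragraph: one must carefully match the local change-of-basis transformation between the two convex orders at step $j$ with the data encoded by the 2-face $F_j$, and verify that the 2-face condition really is the global integrability condition obtained from all such local moves. This amounts to checking that the moves in Lemma \ref{lem:gen-braid} generate enough transitions to propagate the match from a single convex order to all of them, which in turn requires knowing that the 2-faces parallel to \emph{every} rank 2 sub-root system are controlled, not merely those appearing in the path for one fixed charge. In finite type this follows because every pair of non-parallel roots spans a rank 2 sub-system realized by some 2-face visited along an appropriate path, so the inductive propagation along chains of charges does reach every $c'$.
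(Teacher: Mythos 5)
The paper does not actually prove this statement: Theorem \ref{th:ft-2-face-char} is quoted verbatim from \cite[Theorem D]{Kamnitzer} and closed immediately with a QED symbol, so there is no in-paper argument to compare yours against. What you have written is a reconstruction of Kamnitzer's theorem from the ingredients the paper does record (Propositions \ref{prop:char-finite} and \ref{prop:cld-mv}, Lemmas \ref{lem:skel} and \ref{lem:gen-braid}). Your converse direction is a reasonable sketch of the standard argument: fix one convex order, read off a PBW monomial and hence a candidate $b$, and propagate the equality of geometric and crystal-theoretic Lusztig data to all other charges by walking across 2-faces, using that the rank-2 change of PBW basis coincides with the rank-2 MV (tropical Pl\"ucker) transition. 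The points needing care there --- that between consecutive charges $c_j$, $c_{j+1}$ only one 2-plane meeting $\Delta^{res}$ in two or more roots gets reversed, that braid moves on planes carrying at most one edge direction of $P$ act trivially on both the polytope data and the PBW exponents, and that the two paths around $F_j$ really bound a genuine 2-face of $P$ to which the hypothesis applies --- are supplied by Lemma \ref{lem:gen-braid} together with elementary rank-2 computations, so that half is acceptable modulo writing those details out.

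The forward direction, by contrast, contains a genuine gap. The sentence ``the restriction of the Lusztig data to the roots in $\Delta_F$ should match the Lusztig data of some $b_F$'' is precisely the statement to be proven, and nothing you cite establishes it: Lemma \ref{lem:gen-braid} only relates different paths on a fixed polytope, and Proposition \ref{prop:char-finite} applied in rank 2 tells you what the rank-2 MV polytopes are, not that your face is one of them. To close this you need either (a) the Berenstein--Zelevinsky/tropical-Pl\"ucker formulation, in which the 2-face condition is literally the system of rank-2 relations cutting out MV polytopes among pseudo-Weyl polytopes, or (b) a Saito-reflection argument: apply a sequence $\Sai_{i_1}\cdots\Sai_{i_k}$ moving the face $F$ to the bottom of the polytope so that the simple roots of $\Delta_F$ become simple roots of $\Delta$, and then identify $F$ with the MV polytope of the restricted rank-2 crystal element using Proposition \ref{prop:char-finite}(iii). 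The latter is exactly the strategy the paper itself uses for the analogous KLR statement in Proposition \ref{prop:real-faces}, and it transfers to the MV setting via Theorem \ref{th:ft-iso}. Without one of these, the forward implication is asserted rather than proven.
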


\begin{thm}  [\mbox{\cite[4.2]{Kamnitzer}}]
An MV polytope is uniquely determined by its geometric Lusztig data with respect to any one convex order on positive roots. \qed
\end{thm}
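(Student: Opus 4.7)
The plan is to leverage the identification, recorded in Proposition \ref{prop:cld-mv}, between the geometric Lusztig data of an MV polytope $P_b$ and the exponents in Lusztig's PBW monomial for the corresponding element $b\in B(-\infty)$. Since Lusztig's PBW monomials, with respect to a fixed reduced expression for $w_0$, form a basis of $U^-(\fg)$, the exponents determine the basis element uniquely, and hence determine $b$. The map $b\mapsto P_b$ of Proposition \ref{prop:char-finite} is a bijection, so $P_b$ is recovered from $b$, and the claim follows.

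More concretely, I would fix a convex order $\succ$ together with its reduced expression $\mathbf{i} = i_1 i_2 \cdots i_N$ for $w_0$, as supplied by Proposition \ref{prop:agrees-in finite-type}. Given the geometric Lusztig data $\{a_{\beta_k}^\succ(P)\}_{k=1}^N$, one reads these integers as the exponents in a PBW monomial and inverts Lusztig's bijection to obtain $b\in B(-\infty)$; then necessarily $P = P_b$. This reduces the geometric uniqueness to the algebraic fact that PBW monomials are linearly independent.

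An alternative, more geometric route bypasses PBW bases and uses instead Theorem \ref{th:ft-2-face-char} directly. By Lemma \ref{lem:gen-braid}, any two generic charge-induced convex orders can be joined by a sequence of moves in which consecutive orders differ only by a flip across a single 2-face. On each such 2-face the polytope is an MV polytope for the corresponding rank 2 root system, so the tropical Pl\"ucker relations for rank 2 (established in \cite{Kamnitzer}) express the Lusztig data on one side of the flip in terms of the data on the other side. Propagating these relations from $\succ$ through the entire sequence recovers the Lusztig data with respect to every charge-induced convex order, at which point Lemma \ref{lem:skel} uniquely determines $P$.

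The main obstacle in either approach is the same: one needs a calculus for moving Lusztig data between different convex orders. In the algebraic approach this calculus is hidden in Saito's reflection formula for PBW bases, invoked via Proposition \ref{prop:cld-mv}; in the geometric approach it is the rank 2 tropical Pl\"ucker relations. Both are available in \cite{Kamnitzer, SaitoPBW}, which is why the theorem is cited rather than reproved here.
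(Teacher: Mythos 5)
The paper offers no proof of this statement---it is quoted verbatim from \cite[4.2]{Kamnitzer} with a \qed---so there is nothing internal to compare against; the question is only whether your argument is sound. Your first (PBW) route is correct and is essentially Kamnitzer's own: every MV polytope is $P_b$ for a unique $b\in B(-\infty)$, Proposition \ref{prop:cld-mv} identifies the geometric Lusztig data of $P_b$ for $\succ$ with the exponent vector of Lusztig's PBW monomial for $b$ with respect to the reduced word attached to $\succ$ by Proposition \ref{prop:agrees-in finite-type}, and Lusztig's parametrization of $B(-\infty)$ by exponent vectors in $\Z_{\geq 0}^N$ is a bijection, so the data determines $b$ and hence $P_b$. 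The only thing worth making explicit is that last bijectivity claim (injectivity of $b\mapsto$ exponents), which is exactly the statement that distinct canonical basis elements have distinct leading PBW monomials; it is standard but it is the load-bearing step, not merely the linear independence of the PBW monomials.

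Your second, ``geometric'' route has a gap as stated. Lemma \ref{lem:gen-braid} and Lemma \ref{lem:skel} only let you propagate Lusztig data among convex orders induced by \emph{charges}, whereas the theorem asserts uniqueness for \emph{any one} convex order. In finite type of rank $\geq 3$ not every convex order (equivalently, not every reduced word for $w_0$) is realized by a charge, i.e.\ by a linear projection to the plane; the paper itself remarks after Lemma \ref{lem:skel} that the braid-move statement ``can be generalized to include all convex orders'' but proves only the charge version. So if the given order $\succ$ is not charge-induced, your propagation argument never gets started. To repair it you would need either that generalization of Lemma \ref{lem:gen-braid}, or an argument that the Lusztig datum for an arbitrary $\succ$ determines the datum for some charge-induced order. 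Since your first route already closes the proof, this is a flaw in the alternative only, not in the proposal as a whole.
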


\subsection{Rank 2 affine MV polytopes}
We briefly review the MV polytopes associated to the affine root
systems $\asl_2$ and $A_2^{(2)}$ in \cite{BDKT}, and recall a
characterization of the resulting polytopes developed in \cite{MT??}.

The $\asl_2$ and $A_2^{(2)}$ root systems correspond to the affine Dynkin diagrams 

\begin{center}
\begin{tikzpicture}
\draw 
(-0.1, 0.2) --(0.7,0.2)
(-0.1, 0.1) --(0.7,0.1)

(-0.15,0.15) --(0,0.3)
(-0.15,0.15) --(0,0)

(0.75,0.15) --(0.6,0.3)
(0.75,0.15) --(0.6,0)
;

\draw node at (-0.4, 0.15) {$\bullet$};
\draw node at (1, 0.15) {$\bullet$};

\draw node at (-0.4, -0.35) {$0$};

\draw node at (1.15, -0.35) {$1 \;\;,$};

\draw node at (-1.5, 0) {$\asl_2:$};

\end{tikzpicture}
\qquad \qquad
\begin{tikzpicture}

\draw (0, 0.3) --(0.7,0.3)
(-0.1, 0.2) --(0.7,0.2)
(-0.1, 0.1) --(0.7,0.1)
(0, 0) --(0.7,0)

(-0.15,0.15) --(0.1,0.4)
(-0.15,0.15) --(0.1,-0.1)
;

\draw node at (-0.4, 0.15) {$\bullet$};
\draw node at (1, 0.15) {$\bullet$};

\draw node at (-0.4, -0.35) {$0$};

\draw node at (1.15, -0.35) {$1 \;\;.$};

\draw node at (-1.5, 0) {$A_2^{(2)}:$};

\end{tikzpicture}
\end{center}

\noindent The corresponding symmetrized Cartan matrices are 
$$\asl_2: \quad N = 
\left(
\begin{array}{rr}
2 & -2 \\
-2 & 2
\end{array}
\right), \qquad
A_2^{(2)}: \quad N = 
\left(
\begin{array}{rr}
2 & -4 \\
-4 & 8
\end{array}
\right).
$$
Denote the simple roots by $\alpha_0, \alpha_1$, where in the case of $A_2^{(2)}$ the short root is $\alpha_0$.  Define $\delta = \alpha_0+ \alpha_1$ for $\asl_2$ and $\delta =  2 \alpha_0+ \alpha_1$ for $A_2^{(2)}$. 

The dual Cartan subalgebra $\mathfrak{h}^*$ of $\fg$ is a three dimensional vector space containing $\alpha_0, \alpha_1$. This has a standard non-degenerate bilinear form $(\cdot, \cdot)$ such that 
$(\alpha_i, \alpha_j) = N_{i,j}.$
Notice that $(\alpha_0, \delta)= (\alpha_1, \delta)=0$. Fix  fundamental coweights $\omega_0, \omega_1$ which satisfy $( \alpha_i, \omega_j) = \delta_{i,j}$, where we are identifying coweight space with weight space using $(\cdot, \cdot)$. 

The set of positive roots for $\asl_2$ is
\begin{equation} \label{eq:roots}
\{\alpha_0, \alpha_0+\delta, \alpha_0+2\delta, \ldots \} \sqcup \{\alpha_1, \alpha_1+\delta, \alpha_1+2\delta, \ldots \} \sqcup \{\delta, 2\delta, 3\delta \ldots \},
\end{equation}
where the first two families consist of real roots and the third family of imaginary roots. 
The set of positive roots for $A_2^{(2)}$ is 
\begin{equation}
\Delta^+_{re} = \{   \alpha_0+ k  \delta,   \alpha_1+2 k  \delta,   \alpha_0+  \alpha_1+k  \delta, 2 \alpha_0+ (2k+1) \delta \mid k \geq 0 \} \quad \text{and} \quad \Delta_{im}^+= \{ k \delta \mid k \geq 1 \},
\end{equation} 
where $\Delta^+_{re}$ consists of real roots and $\Delta^+_{im}$ of imaginary roots.
We draw these as
\begin{center}
\begin{tikzpicture}[scale=0.3]

\draw[line width = 0.05cm, ->] (0,0)--(3,1);
\draw[line width = 0.05cm, ->] (0,0)--(-3,1);

\draw[line width = 0.05cm, ->] (0,0)--(3,3);
\draw[line width = 0.05cm, ->] (0,0)--(-3,3);

\draw[line width = 0.05cm, ->] (0,0)--(3,5);
\draw[line width = 0.05cm, ->] (0,0)--(-3,5);

\draw[line width = 0.05cm, ->] (0,0)--(3,7);
\draw[line width = 0.05cm, ->] (0,0)--(-3, 7);

\draw[line width = 0.05cm, ->] (0,0)--(0,2);
\draw[line width = 0.05cm, ->] (0,0)--(0,4);
\draw[line width = 0.05cm, ->] (0,0)--(0,6);

\draw node at (0,8) {.};
\draw node at (0,7.6) {.};
\draw node at (0,7.2) {.};

\draw node at (-3,8.8) {.};
\draw node at (-3,8.4) {.};
\draw node at (-3,8) {.};

\draw node at (3,8.8) {.};
\draw node at (3,8.4) {.};
\draw node at (3,8) {.};

\draw node at (-4.9,8.8) {.};
\draw node at (-4.9,8.4) {.};
\draw node at (-4.9,8) {.};

\draw node at (4.9,8.8) {.};
\draw node at (4.9,8.4) {.};
\draw node at (4.9,8) {.};

\draw node at (-4,1) {{\small $ \alpha_0$}};
\draw node at (-4.8,3) {{\small $\alpha_0+\delta$}};
\draw node at (-5.1,5) {{\small $\alpha_0+2\delta$}};
\draw node at (-5.1,7) {{\small $\alpha_0+3\delta$}};

\draw node at (4,1) {{\small $\alpha_1$}};
\draw node at (4.8,3) {{\small $\alpha_1+\delta$}};
\draw node at (5.1,5) {{\small $\alpha_1+2\delta$}};
\draw node at (5.1,7) {{\small $\alpha_1+3\delta$}};

\draw node at (0,9) {{\small $k \delta$}};

\draw node at (0,-2.5) {$\asl_2$};

\end{tikzpicture}
$\qquad \qquad$
\begin{tikzpicture}[scale=0.25]

\draw[line width = 0.05cm, ->] (0,0)--(-3,1);

\draw[line width = 0.05cm, ->] (0,0)--(6,2);
\draw[line width = 0.05cm, ->] (0,0)--(3,3);
\draw[line width = 0.05cm, ->] (0,0)--(0,4);
\draw[line width = 0.05cm, ->] (0,0)--(-3,5);
\draw[line width = 0.05cm, ->] (0,0)--(-6,6);

\draw[line width = 0.05cm, ->] (0,0)--(3,7);
\draw[line width = 0.05cm, ->] (0,0)--(0,8);
\draw[line width = 0.05cm, ->] (0,0)--(-3,9);

\draw[line width = 0.05cm, ->] (0,0)--(6,10);
\draw[line width = 0.05cm, ->] (0,0)--(3,11);
\draw[line width = 0.05cm, ->] (0,0)--(0,12);
\draw[line width = 0.05cm, ->] (0,0)--(-3,13);
\draw[line width = 0.05cm, ->] (0,0)--(-6,14);

\draw node at (0,16) {.};
\draw node at (0,15.6) {.};
\draw node at (0,15.2) {.};

\draw node at (-4,1) {{\small $ \alpha_0$}};
\draw node at (-8.5,6) {{\small $2 \alpha_0 +  \delta$}};
\draw node at (-9.1,14) {{\small $2 \alpha_0 + 3  \delta$}};

\draw node at (7,2) {{\small $ \alpha_1$}};
\draw node at (8.8,10) {{\small $ \alpha_1 + 2  \delta$}};

\draw node at (0,13.5) {{\small $k  \delta$}};

\draw node at (0,-2.5) {$A_2^{(2)}$};

\end{tikzpicture}
\end{center}

\begin{defn} Label the positive real roots by $r_k, r^k$ for $k \in {\mathbb Z}_{>0}$ by:
\begin{itemize}
\item For $\asl_2$:
$r_k = \alpha_1 + (k-1) \delta$ and $r^k = \alpha_0 + (k-1) \delta$. 

\item For $A_2^{(2)}$: 
 $$
 r_k=\begin{cases}
 \tilde\alpha_1+(k-1)\tilde\delta&\text{if $k$ is odd,}\\
 \tilde\alpha_0+\tilde\alpha_1+\frac{k-2}2\tilde\delta&\text{if $k$ is even,}
 \end{cases}
 \qquad
 r^k=\begin{cases}
 \tilde\alpha_0+\frac{k-1}2\tilde\delta&\text{if $k$ is odd,}\\
 2\tilde\alpha_0+(k-1)\tilde\delta&\text{if $k$ is even.}
 \end{cases}
$$
\end{itemize}
\end{defn}
\noindent There are exactly two convex orders on $\Delta_+^{min}$: the order $\succ_+$
$$ r_1 \succ_+ r_2 \succ_+ \cdots \succ_+ \delta \succ_+ \cdots \succ_+ r^2 \succ_+ r^1,$$
and the reverse of this order, which we denote by $\succ_-$.

\begin{defn} \label{def:DPW-r2a}
A {\bf rank 2 affine decorated pseudo-Weyl polytope} is a pseudo-Weyl polytope along with a choice of two partitions
$a_\delta= (\lambda_1 \geq  \lambda_2 \geq \cdots)$ and $\oa_\delta = (\olambda_1 \geq \olambda_2 \geq \cdots)$ such that $\mu^\infty - \mu_\infty = |a_\delta| \delta$ and  $\omu^\infty - \omu_\infty = |\oa_\delta| \delta$. Here $|a_\delta|= \lambda_1 + \lambda_2+ \cdots$ and $ |\oa_\delta|= \olambda_1 + \olambda_2 + \cdots$ and $\mu_\infty, \omu_\infty, \mu^\infty, \omu^\infty$ are as in Figure \ref{MVpoly}.
\end{defn}

\begin{defn}
The right Lusztig data of a decorated pseudo-Weyl polytope $P$ is the refinement  ${\bf a}= ( a_\alpha )_{\alpha \in \Delta_+^\text{min}}$ of the Lusztig data from \S\ref{ss:PW} with respect to $\succ_+$ (which records the lengths of the edges parallel to each root up one side of $P$), where, for $\alpha \neq \delta$, $a_\alpha = a^{\succ_+}_\alpha(P)$, and $a_\delta$ is the partition from Definition \ref{def:DPW-r2a}. Similarly the left Lusztig data is
 ${\bf \oa}=( \oa_\alpha )_{\alpha \in \Delta_+^\text{min}}$ where, for $\alpha \neq \delta,$ $ \oa_\alpha = a^{\succ_-}_\alpha(P)$, and $\oa_\delta$ is as in Definition \ref{def:DPW-r2a}.
\end{defn}

\begin{figure}[ht]

\begin{center}
\begin{tikzpicture}[yscale=0.2, xscale=0.6]

\draw [line width = 0.01cm, color=gray] (-1.5,2.5) -- (3.5, 7.5);
\draw [line width = 0.01cm, color=gray] (-2,4) -- (4, 10);
\draw [line width = 0.01cm, color=gray] (-2.5,5.5) -- (4, 12);
\draw [line width = 0.01cm, color=gray] (-3.3333,8.6666) -- (4, 16);
\draw [line width = 0.01cm, color=gray] (-3.6666,10.3333) -- (4, 18);
\draw [line width = 0.01cm, color=gray] (-4.25,13.75) -- (4, 22);
\draw [line width = 0.01cm, color=gray] (-4.5,15.55) -- (4, 24);
\draw [line width = 0.01cm, color=gray] (-4.75,17.25) -- (4, 26);
\draw [line width = 0.01cm, color=gray] (-5,19) -- (4, 28);
\draw [line width = 0.01cm, color=gray] (-5,21) -- (4, 30);
\draw [line width = 0.01cm, color=gray] (-5,23) -- (4, 32);
\draw [line width = 0.01cm, color=gray] (-5,25) -- (4, 34);
\draw [line width = 0.01cm, color=gray] (-4.6666,29.3333) -- (3.6666, 37.6666);
\draw [line width = 0.01cm, color=gray] (-4.3333,31.6666) -- (3.33333, 39.3333);

\draw [line width = 0.01cm, color=gray] (1,1) -- (-2,4);
\draw [line width = 0.01cm, color=gray]  (2,2)-- (-3,7);
\draw [line width = 0.01cm, color=gray] (2.5,3.5) -- (-3.5,9.5);
\draw [line width = 0.01cm, color=gray] (3.3333,6.6666) -- (-4.3333,14.3333);
\draw [line width = 0.01cm, color=gray] (3.6666,8.3333) -- (-4.6666,16.6666);
\draw [line width = 0.01cm, color=gray] (4,12) -- (-5,21);
\draw [line width = 0.01cm, color=gray] (4,14) -- (-5,23);
\draw [line width = 0.01cm, color=gray] (4,16) -- (-5,25);
\draw [line width = 0.01cm, color=gray] (4,18) -- (-5,27);

\draw [line width = 0.01cm, color=gray] (4,20) -- (-4.75,28.75);
\draw [line width = 0.01cm, color=gray] (4,22) -- (-4.5,30.5);
\draw [line width = 0.01cm, color=gray] (4,24) -- (-4.25,32.25);
\draw [line width = 0.01cm, color=gray] (4,28) -- (-3.5,35.5);

\draw [line width = 0.01cm, color=gray] (4,32) -- (-2,38);
\draw [line width = 0.01cm, color=gray] (4,34) -- (-1,39);
\draw [line width = 0.01cm, color=gray] (4,36) -- (0,40);
\draw [line width = 0.01cm, color=gray] (3.5,38.5) -- (1,41);

\draw [line width = 0.01cm, color=gray] (-3,7) -- (4, 14);
\draw [line width = 0.01cm, color=gray] (-4,12) -- (4, 20);

\draw [line width = 0.01cm, color=gray] (-3,37) -- (4, 30);
\draw [line width = 0.01cm, color=gray] (-4,34) -- (4, 26);
\draw [line width = 0.01cm, color=gray] (-5,27) -- (4, 18);

\draw (0,0) node {$\bullet$};

\draw (-1,1) node {$\bullet$};
\draw (-3,7) node {$\bullet$};
\draw (-4,12) node  {$\bullet$};
\draw (-5,19) node  {$\bullet$};
\draw (-5,23) node {$\bullet$};
\draw (-5,25) node {$\bullet$};
\draw  (-5,27) node {$\bullet$};
\draw (-4,34) node {$\bullet$};
\draw (-3,37) node {$\bullet$};
\draw  (2,42)  node {$\bullet$};

\draw (2,2)  node {$\bullet$};
\draw (3,5) node {$\bullet$};
\draw (4,10) node {$\bullet$};
\draw (4,28) node {$\bullet$};

\draw (4,28) node {$\bullet$};
\draw (4,32) node {$\bullet$};
\draw (4,34) node {$\bullet$};
\draw (4,36) node {$\bullet$};
\draw (3,41) node {$\bullet$};

\draw [line width = 0.04cm] (0,0)--(-1,1);
\draw  [line width = 0.04cm] (-1,1)--(-3,7);
\draw [line width = 0.04cm] (-3,7)--(-4,12);
\draw [line width = 0.04cm] (-4,12)--(-5,19);
\draw [line width = 0.04cm] (-5,19)--(-5,27);
\draw [line width = 0.04cm] (-5,27)--(-4,34);
\draw [line width = 0.04cm] (-4,34)--(-3,37);
\draw [line width = 0.04cm](-3,37)--(2,42);

\draw [line width = 0.04cm](0,0)--(2,2);
\draw [line width = 0.04cm] (2,2)--(3,5);
\draw [line width = 0.04cm] (3,5)--(4,10);
\draw [line width = 0.04cm] (4,10)--(4,36);
\draw [line width = 0.04cm](4,36)--(3,41);
\draw [line width = 0.04cm] (3,41)--(2,42);

\draw [line width = 0.01cm, color=gray] (-1,1) -- (3,5);
\draw [line width = 0.01cm, color=gray] (3,5) -- (-4,12);
\draw [line width = 0.01cm, color=gray] (4,10) -- (-5,19);
\draw [line width = 0.01cm, color=gray] (-5,27) -- (4,36);
\draw [line width = 0.01cm, color=gray] (-4,34) -- (3,41);

\draw (1.2,-0.35) node {\tiny $\alpha_1$};
\draw (3,2.7) node {\tiny $\alpha_1+\delta$};
\draw (4.4,7) node {\tiny $\alpha_1+2\delta$};
\draw (4.6,23) node {\tiny $\delta$};
\draw (4.4,39) node {\tiny $\alpha_0+2\delta$};
\draw (2.8,42.5) node {\tiny $\alpha_0$};

\draw (-0.5,41) node {\tiny $\alpha_1$};
\draw (-4.15,36.3) node {\tiny $\alpha_1+\delta$};
\draw (-5.5,31) node {\tiny $\alpha_1+3\delta$};
\draw (-5.5,23) node {\tiny $\delta$};
\draw (-5.5,15) node {\tiny $\alpha_0+3\delta$};
\draw (-4.5,9) node {\tiny $\alpha_0+2\delta$};
\draw (-3,3.5) node {\tiny $\alpha_0+\delta$};
\draw (-0.7,-0.7) node {\tiny $\alpha_0$};

\draw(5,10) node {$ \mu_\infty$};
\draw(5,36) node {$\mu^\infty$};
\draw(-6,19) node {$\overline \mu_\infty $};
\draw(-6,27) node {$\overline \mu^\infty $};

\draw[line width = 0.03cm, ->] (0,-4)--(1,-3);
\draw[line width = 0.03cm, ->] (0,-4)--(-1,-3);
\draw (1.5,-2.6) node {$\alpha_1$};
\draw (-1.5,-2.6) node {$\alpha_0$};

\end{tikzpicture}

\end{center}

\caption{\label{MVpoly}An $\asl_2$ MV polytope. 
The partitions labeling the vertical edges are indicated by including extra vertices on the vertical edges, such that the edge is cut into the pieces indicated by the partition. 
The root parallel to each non-vertical edge is indicated. The Lustig data ${\bf a}= {\bf a}^{\succ_+}$ records the path on the right side, and ${\bf \oa}= {\bf a}^{\succ_-}$ records the path on the left. Hence
$$ \hspace{-3cm} a_{\alpha_1}=2, \; a_{\alpha_1+\delta}=1, \; a_{\alpha_1+2\delta} =1,\; a_\delta=(9,2,1,1), \; a_{\alpha_0+2\delta}=1, \; a_{\alpha_0}=1, $$
$$\hspace{-2.5cm} \oa_{\alpha_0} =1,\;  \oa_{\alpha_0+\delta} = 2, \; \oa_{\alpha_0+2\delta}=1, \; \oa_{\alpha_0+3\delta} = 1, \; \oa_\delta=(2,1,1), \; \oa_{\alpha_1+3\delta}=1, \; \oa_{\alpha_1+\delta}=1, \; \oa_{\alpha_1}=5,$$
and all others are $0$. }

\end{figure}

In \cite{BDKT}, the first author and collaborators combinatorially
define a set $\MV$ of decorated pseudo-Weyl polytopes, which they call
{\bf rank 2 affine MV polytopes}. 
We will not need the details of this construction, but will instead use the following result from \cite{MT??}.

Assume $\fg$ is of rank 2 affine type. Define $\ell_0$ and $\ell_1$ by $\delta= \ell_0 \alpha_0+ \ell_1 \alpha_1$ (so $\ell_0=\ell_1=1$ for $\asl_2$, and $\ell_0=2, \ell_1=1$ for $A_2^{(2)}$). 

\begin{thm} \cite[Theorem 3.11]{MT??} \label{th:unique-aff} There is
  a unique map $b \rightarrow P_b$ from $B(-\infty)$ to type $\fg$
  decorated pseudo-Weyl polytopes (considered up to translation) such
  that, for all $b \in B(-\infty)$, the following hold.
    \begin{enumerate}
    \item \label{CC1-aff} $\wt(b)= \mu^0(P_b)- \mu_0(P_b)$.

    \item[(ii.1)] \label{CC2-aff}
        $a_{\alpha_0}(P_{\e_0 b}) = a_{\alpha_0}(P_b)+1$, and for
        all other root directions $a_\alpha(P_{\e_0 b})=
        a_\alpha(P_b)$; 
\item[(ii.2)] $\oa_{\alpha_1}(P_{\e_1 b}) =
        \oa_{\alpha_1}(P_b)+1$, and for all other root directions
        $\oa_\alpha(P_{\e_1 b})= \oa_\alpha(P_b)$;
\item[(ii.3)]
        $a_{\alpha_1}(P_{\e_1^*b}) = a_{\alpha_1}(P_b)+1$, and for all
        other root directions $a_\alpha(P_{\e_1^*b})= a_\alpha(P_b)$;
\item [(ii.4)]
        $\oa_{\alpha_0}(P_{\e_0^*b}) = \oa_{\alpha_0}(P_b)+1$, and for
        all other root directions $\oa_\alpha(P_{\e_0^*b})=
        \oa_\alpha(P_b)$.
    
  \hspace{-0.65in} Let $\Sai_0, \Sai_1$ denote Saito's reflections.

    \item[(iii.1)] \label{CC4-aff}
    If $a_{\alpha_0}(P_b)=0$, then for all $\alpha \neq
        \alpha_0$, $a_\alpha(P_b) =
        \oa_{s_0(\alpha)}(P_{\Sai_0(b)})$ and $\oa_{\alpha_0}(P_{\Sai_0(b)})=0$; 
\item[(iii.2)] if
        $\oa_{\alpha_1}(P_b)=0$, then for all $\alpha \neq \alpha_1$,
        $\oa_\alpha(P_b) = a_{s_1(\alpha)}(P_{\Sai_1(b)})$ and $a_{\alpha_1}(P_{\Sai_1(b)})=0$; 
\item[(iii.3)] if
        $\oa_{\alpha_0}(P_b)=0$, then for all $\alpha \neq \alpha_0$,
        $\oa_\alpha(P_b) = a_{s_0(\alpha)}(P_{\Sai^*_0(b)})$ and $a_{\alpha_0}(P_{\Sai_0^*(b)})=0$; 
\item[(iii.4)] if
        $a_{\alpha_1}(P_b)=0$, then for all $\alpha \neq \alpha_1$,
        $a_\alpha(P_b) = \oa_{s_1(\alpha)}(P_{\Sai^*_1(b)})$ and $\oa_{\alpha_1}(P_{\Sai_1^*(b)})=0$.

    \item[(iv)] \label{CC5-aff} If $a_\beta(P_b)=0$ for all real roots
      $\beta$ and $a_\delta(P_b)= \lambda \neq 0$ then:
      \[\oa_{\alpha_1}(P_b)=\ell_1 \lambda_1;\quad
      \oa_\delta(P_{b})=\lambda \backslash \lambda_1;\quad
      \oa_{\alpha_0}(P_{b})
      =\ell_0\lambda_1;\]
\[ \oa_\beta(P_b)=0\text{ for all other }\beta \in \tilde
      \Delta_+.\] 
    \end{enumerate}
    This map sends $b$ to the corresponding affine MV polytope in the realization of $B(-\infty)$ from \cite{BDKT}. In particular, the image is exactly $\MV$. \qed
  \end{thm}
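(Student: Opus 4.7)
The plan is to prove uniqueness by induction on the weight complexity of $b$, and then establish existence by verifying that the map $b\mapsto P_b^{\mathrm{BDKT}}$ of \cite{BDKT} satisfies axioms (i)--(iv).

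For uniqueness, I would induct on $-\langle\wt(b),\rho^\vee\rangle$. The base case $b=b_-$ is determined by (i), which forces $P_{b_-}$ to be a single point with trivial partition decorations. For $b\neq b_-$, I distinguish two regimes according to whether any of $a_{\alpha_0}(P_b)$, $\oa_{\alpha_1}(P_b)$, $a_{\alpha_1}(P_b)$, $\oa_{\alpha_0}(P_b)$ is positive. In the \emph{real regime}, where at least one of these quantities is positive, I first bootstrap (say) $a_{\alpha_0}(P_b)>0\Rightarrow\varphi_0(b)>0$ using (ii.1): the only way to produce a polytope with positive $a_{\alpha_0}$ via (ii.1) is to have $b=\e_0(b')$ for some $b'$, in which case $\f_0b\neq\emptyset$. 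Then (ii.1) reads $P_b$ off $P_{\f_0 b}$ (which is known by induction, as $\f_0 b$ has smaller height) by extending the $\alpha_0$-edge. Analogous applications of (ii.2)--(ii.4) recover the other three simple-root edges on both sides, hence the full right and left Lusztig data of $P_b$.

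In the \emph{imaginary regime}, where all four simple-root edge quantities vanish, I invoke the Saito reflection conditions. For instance, (iii.1) identifies the right Lusztig data of $P_b$ with the left Lusztig data of $P_{\Sai_0 b}$ via $s_0$, which permutes positive real roots as $r_k\mapsto r^{k+1}$ and $r^k\mapsto r_{k-1}$ for $k\geq 2$, while fixing $\delta$. If $P_b$ has any real-root edge (e.g. an $r^2$-edge on the right), then $P_{\Sai_0 b}$ acquires a corresponding $r_1=\alpha_1$-edge on the left, so $\oa_{\alpha_1}(P_{\Sai_0 b})>0$ and we are back in the real regime. Iterating (iii) and (ii) thus strictly reduces the total real-edge content of $P_b$, and after finitely many steps we arrive at a polytope whose only edges are parallel to $\delta$. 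In this configuration, condition (iv) explicitly determines the left decoration $(\oa_{\alpha_0},\oa_\delta,\oa_{\alpha_1})$ from the right partition $a_\delta$, completing the determination of $P_b$.

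For existence, I would check that the BDKT polytopes satisfy (i)--(iv): (i) is the weight-preservation built into the bicrystal isomorphism $B(-\infty)\cong\MV$ of \cite{BDKT}; (ii.1)--(ii.4) are the defining formulas for the four crystal operators $\e_0,\e_1,\e_0^*,\e_1^*$ on $\MV$ in terms of Lusztig data; (iii.1)--(iii.4) express compatibility with Saito reflections, encoded in the reflection maps of \cite{BDKT}; and (iv) is the explicit ``purely imaginary'' base case of their combinatorics, governed by a partition-conjugation rule linking the two decorations.

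The principal obstacle is ensuring the Saito-reflection cascade in the imaginary regime terminates. Since $\Sai_0$ does not directly lower $-\langle\wt(b),\rho^\vee\rangle$ (indeed $\wt(\Sai_0 b)=s_0\wt(b)$), one must track a refined complexity measure --- for example, the lexicographic pair consisting of height together with the total real-edge length --- and show that each complete cycle of reflection-plus-(ii)-reductions strictly decreases this measure. Combined with the finiteness of the edge data of any pseudo-Weyl polytope, this gives a well-founded induction, and is the affine analogue of the termination argument in the proof of Proposition \ref{prop:char-finite} for finite-type MV polytopes.
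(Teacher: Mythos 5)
The paper itself does not prove this statement: it is imported verbatim from \cite[Theorem 3.10]{MT??} and stated without proof, so there is no internal argument to compare yours against. Judged on its own terms, your sketch follows the right general template --- the minimal-counterexample/induction scheme used for Proposition \ref{prop:char-finite} in finite type, plus an existence check against the \cite{BDKT} construction --- but it has concrete gaps, and they sit exactly where the work in \cite{MT??} lies.

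First, the two partitions $a_\delta$ and $\oa_\delta$ are \emph{independent} decorations: the right Lusztig datum determines the underlying polytope, hence $|\oa_\delta|$, but not the partition $\oa_\delta$ itself. Each of (ii.1)--(ii.4) propagates only one side's datum, so in your ``real regime'' step, applying (ii.1) to $P_{\f_0 b}$ pins down the right datum of $P_b$ but says nothing about $\oa_\delta(P_b)$; the claim that you thereby recover ``the full right and left Lusztig data'' does not follow, and you must show every $b$ connects to the purely imaginary case (iv) through moves that determine both decorations. Second, your bootstrap $a_{\alpha_0}(P_b)>0\Rightarrow \f_0 b\neq\emptyset$ is not a consequence of the axioms as stated: they say how the data changes under $\e_i$, but do not a priori force $a_{\alpha_0}(P_b)=\varphi_0(b)$ for an arbitrary map satisfying (i)--(iv); that identity must be carried along as part of the induction. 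Third, the Saito cascade: (iii.1) presupposes $\varphi_0^*(b)=0$ for $\Sai_0 b$ to be defined (Definition \ref{def:ref}), which you never verify, and your proposed complexity measure (height, then total real-edge length) is not shown to decrease --- $s_0$ generally \emph{increases} $\rho^\vee$-height, and a reflection can lengthen real edges elsewhere on the polytope, so well-foundedness remains open. Finally, for existence, verifying (iii) and (iv) for the BDKT polytopes is not a formality; it is essentially the main computation of \cite{MT??}. So the proposal is a reasonable outline, but not yet a proof.
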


\begin{rem} \label{rem-lda}
Theorem \ref{th:unique-aff} implies that, for any rank-2 affine MV polytope and any convex order $\succ$, the crystal theoretic Lusztig data $\coa^\succ_\alpha$ agrees with the geometric Lusztig data $a^\succ_\alpha$ for the corresponding MV polytope for all accessible roots $\alpha$. In fact, it follows from Corollary \ref{cor:crystal-polytope} below that this remains true in higher rank affine cases.
\end{rem}

\subsection{Khovanov-Lauda-Rouquier algebras} The construction in this section is due to  \cite{KLI,Rou2KM}
 for Kac-Moody algebras,
and was extended to the case of Borcherds algebras in \cite{KOP}.  

The Khovanov-Lauda-Rouquier (KLR) algebra is 
built out of generic string diagrams, i.e. immersed
1-dimensional sub-manifolds of $\R^2$ whose boundary lies on the lines
$y=0$ and $y=1$, where each string (i.e each immersed copy of the interval) projects
homeomorphically to $[0,1]$ under the projection to the $y$-axis (so in particular there are no closed loops). These
are assumed to be generic in the sense that 
\begin{itemize}

\item no points lie on 3 or more components
\item no components intersect non-transversely.
\end{itemize}
Each string is labeled with a simple root of the corresponding
Kac-Moody algebra, and each string is allowed to carry dots at
any point where it does not intersect another (but with only finitely
many dots in each diagram).  All diagrams are considered
up to isotopy preserving all these conditions.

Define a product on the space of $\K$-linear combinations of these diagrams, where the product $ab$ of two diagrams is
formed by stacking $a$ on top of $b$, shrinking
vertically by a factor of 2, and smoothing kinks; if the labels of the
line $y=0$ for $a$ and $y=1$ for $b$ cannot be isotoped to match, the
product is 0.

This product gives the space of $\K$-linear combinations of these diagrams the structure of an algebra, which has the following generators: For each sequence $\Bi=(i_1,\dots, i_n)$ of nodes of the Dynkin diagram,
\begin{itemize}
\item The idempotent $e_{\Bi}$ which is straight lines labeled with $(i_1,\dots, i_n)$.

\item The element $y_k^\Bi$ which is just straight lines with a dot on
the $k$th strand.

\item The element $\psi_k^\Bi$ which is a crossing of the $i$ and
$i+1$st strand.  
\end{itemize}

 \begin{equation*}
    \tikz{
      \node[label=below:{$e_{\Bi}$}] at (-4.5,0){ 
        \tikz[very thick,xscale=1.2]{
          \draw (-.5,-.5)-- (-.5,.5) node[below,at start]{$i_1$};
          \draw (0,-.5)-- (0,.5) node[below,at start]{$i_2$};
          \draw (1.5,-.5)-- (1.5,.5) node[below,at start]{$i_n$};
          \node at (.75,0){$\cdots$};
        }
      };
      \node[label=below:{$y_k^\Bi$}] at (0,0){ 
        \tikz[very thick,xscale=1.2]{
          \draw (-.5,-.5)-- (-.5,.5) node[below,at start]{$i_1$};
          \draw (.5,-.5)-- (.5,.5) node [midway,fill=black,circle,inner
          sep=2pt]{} node[below,at start]{$i_j$};
          \draw (1.5,-.5)-- (1.5,.5) node[below,at start]{$i_n$};
          \node at (1,0){$\cdots$};
          \node at (0,0){$\cdots$};
        }
      };
      \node[label=below:{$\psi_k^\Bi$}] at (4.5,0){ 
        \tikz[very thick,xscale=1.2]{
          \draw (-.5,-.5)-- (-.5,.5) node[below,at start]{$i_1$};
          \draw (.1,-.5)-- (.9,.5) node[below,at start]{$i_j$};
          \draw (.9,-.5)-- (.1,.5) node[below,at start]{$i_{j+1}$};
          \draw (1.5,-.5)-- (1.5,.5) node[below,at start]{$i_n$};
          \node at (1,0){$\cdots$};
          \node at (0,0){$\cdots$};
        }
      };
    }
  \end{equation*}

\begin{figure}
\begin{equation*}
    \begin{tikzpicture}[scale=1.01]
      \draw[very thick](-4,0) +(-1,-1) -- +(1,1) node[below,at start]
      {$i$}; \draw[very thick](-4,0) +(1,-1) -- +(-1,1) node[below,at
      start] {$j$}; \fill (-4.5,.5) circle (3pt);
      \node at (-2,0){=}; \draw[very thick](0,0) +(-1,-1) -- +(1,1)
      node[below,at start] {$i$}; \draw[very thick](0,0) +(1,-1) --
      +(-1,1) node[below,at start] {$j$}; \fill (.5,-.5) circle (3pt);
      \node at (4,0){unless $i=j$};
    \end{tikzpicture}
  \end{equation*}
  \begin{equation*}
    \begin{tikzpicture}[scale=1]
      \draw[very thick](-4,0) +(-1,-1) -- +(1,1) node[below,at start]
      {$i$}; \draw[very thick](-4,0) +(1,-1) -- +(-1,1) node[below,at
      start] {$i$}; \fill (-4.5,.5) circle (3pt);
      \node at (-2,0){=}; \draw[very thick](0,0) +(-1,-1) -- +(1,1)
      node[below,at start] {$i$}; \draw[very thick](0,0) +(1,-1) --
      +(-1,1) node[below,at start] {$i$}; \fill (.5,-.5) circle (3pt);
      \node at (2,0){+}; \draw[very thick](4,0) +(-1,-1) -- +(-1,1)
      node[below,at start] {$i$}; \draw[very thick](4,0) +(0,-1) --
      +(0,1) node[below,at start] {$i$};
    \end{tikzpicture}
  \end{equation*}
 \begin{equation*}
    \begin{tikzpicture}[scale=1]
      \draw[very thick](-4,0) +(-1,-1) -- +(1,1) node[below,at start]
      {$i$}; \draw[very thick](-4,0) +(1,-1) -- +(-1,1) node[below,at
      start] {$i$}; \fill (-4.5,-.5) circle (3pt);
            \node at (-2,0){=}; \draw[very thick](0,0) +(-1,-1) -- +(1,1)
      node[below,at start] {$i$}; \draw[very thick](0,0) +(1,-1) --
      +(-1,1) node[below,at start] {$i$}; \fill (.5,.5) circle (3pt);
      \node at (2,0){+}; \draw[very thick](4,0) +(-1,-1) -- +(-1,1)
      node[below,at start] {$i$}; \draw[very thick](4,0) +(0,-1) --
      +(0,1) node[below,at start] {$i$};
    \end{tikzpicture}
  \end{equation*}
  \begin{equation*}
    \begin{tikzpicture}[very thick,scale=1]
      \draw (-2.8,0) +(0,-1) .. controls (-1.2,0) ..  +(0,1)
      node[below,at start]{$i$}; \draw (-1.2,0) +(0,-1) .. controls
      (-2.8,0) ..  +(0,1) node[below,at start]{$i$}; \node at (-.5,0)
      {=}; \node at (0.4,0) {$0$};
\node at (1.5,.05) {and};
    \end{tikzpicture}
\hspace{.4cm}
    \begin{tikzpicture}[very thick,scale=1]

      \draw (-2.8,0) +(0,-1) .. controls (-1.2,0) ..  +(0,1)
      node[below,at start]{$i$}; \draw (-1.2,0) +(0,-1) .. controls
      (-2.8,0) ..  +(0,1) node[below,at start]{$j$}; \node at (-.5,0)
      {=};

\draw (1.8,0) +(0,-1) -- +(0,1) node[below,at start]{$j$};
      \draw (1,0) +(0,-1) -- +(0,1) node[below,at start]{$i$}; 
\node[inner xsep=10pt,fill=white,draw,inner ysep=8pt] at (1.4,0) {$Q_{ij}(y_1,y_2)$};
    \end{tikzpicture}
  \end{equation*}
  \begin{equation*}
    \begin{tikzpicture}[very thick,scale=1]
      \draw (-3,0) +(1,-1) -- +(-1,1) node[below,at start]{$k$}; \draw
      (-3,0) +(-1,-1) -- +(1,1) node[below,at start]{$i$}; \draw
      (-3,0) +(0,-1) .. controls (-4,0) ..  +(0,1) node[below,at
      start]{$j$}; \node at (-1,0) {=}; \draw (1,0) +(1,-1) -- +(-1,1)
      node[below,at start]{$k$}; \draw (1,0) +(-1,-1) -- +(1,1)
      node[below,at start]{$i$}; \draw (1,0) +(0,-1) .. controls
      (2,0) ..  +(0,1) node[below,at start]{$j$}; \node at (5,0)
      {unless $i=k\neq j$};
    \end{tikzpicture}
  \end{equation*}
  \begin{equation*}
    \begin{tikzpicture}[very thick,scale=1]
      \draw (-3,0) +(1,-1) -- +(-1,1) node[below,at start]{$i$}; \draw
      (-3,0) +(-1,-1) -- +(1,1) node[below,at start]{$i$}; \draw
      (-3,0) +(0,-1) .. controls (-4,0) ..  +(0,1) node[below,at
      start]{$j$}; \node at (-1,0) {=}; \draw (1,0) +(1,-1) -- +(-1,1)
      node[below,at start]{$i$}; \draw (1,0) +(-1,-1) -- +(1,1)
      node[below,at start]{$i$}; \draw (1,0) +(0,-1) .. controls
     (2,0) ..  +(0,1) node[below,at start]{$j$}; \node at (2.8,0)
      {$+$};        \draw (6.2,0)
      +(1,-1) -- +(1,1) node[below,at start]{$i$}; \draw (6.2,0)
      +(-1,-1) -- +(-1,1) node[below,at start]{$i$}; \draw (6.2,0)
      +(0,-1) -- +(0,1) node[below,at start]{$j$}; 
\node[inner ysep=8pt,inner xsep=5pt,fill=white,draw,scale=.8] at (6.2,0){$\displaystyle \frac{Q_{ij}(y_3,y_2)-Q_{ij}(y_1,y_2)}{y_3-y_1}$};
    \end{tikzpicture}
  \end{equation*}
\caption{The relations of the KLR algebra.  
\vspace{2.5cm}
$\mbox{}$
}
\label{quiver-hecke}
\end{figure}

In order to arrive at the KLR algebra $R$, we must impose the relations shown in
Figure \ref{quiver-hecke}. All
of these relations are local in nature, that is, if we recognize a
small piece of a diagram which looks like the LHS of a relation, we
can replace it with the RHS, leaving the rest unchanged.  The
relations depend on a choice of a polynomial $Q_{ij}(u,v)\in \K[u,v]$
for each pair $i\neq j$. Let $C=(c_{ij})$ be the Cartan matrix of $\fg$ and $d_i$
be co-prime integers so that $d_jc_{ij}=d_ic_{ji}$. We assume each polynomial is homogeneous
of degree $\langle\al_i,\al_j\rangle= -2d_jc_{ij}=-2d_ic_{ji}$ where
$u$ has degree $2d_i$ and $v$ degree $2d_j$.   We will always
assume that the leading order of $Q_{ij}$ in $u$ is $-c_{ji}$, and
that $Q_{ij}(u,v)=Q_{ji}(v,u)$. 

\begin{rem}
Since we use some results from \cite{LV}, we could constrain ourselves
to the cases they consider, where $Q_{ij}=u^{-c_{ji}}+v^{-c_{ij}}$;
however, nothing about their results depends on this choice, and for
some purposes, it seems to be better to consider a different one. For
example, this is necessary in order to define isomorphisms between KLR algebras and
Hecke algebras as in \cite{BKKL}, to define isomorphisms to
convolution algebras as in \cite{VV}, or to define a relationship to
R-matrices as in \cite{SKK}.

While some things are quite
sensitive to the choice of $\K$ and $Q_{ij}$ (for example, the
dimensions of simple $R$-modules), none of the theorems we prove will
depend on it; the reader is free to imagine that we have chosen their
favorite field and worked with it throughout.
\end{rem} 

Since the diagrams allowed in $R$ never change the sum of the simple
roots labeling the strands, $R$ breaks up as a direct sum of algebras
$R\cong \oplus_{\nu \in Q^+} R(\nu)$,
where $Q^+$ is the positive part of the root lattice, and for $\nu =
\sum a_i \alpha_i$, $R(\nu)$ is the span of the diagrams with exactly $a_i$ strings colored with each simple root $\alpha_i$.  In particular,
for any simple $R$-module $L$, there is a unique $\nu$ such that
$R(\nu)\cdot L=L$.  We call this the {\bf weight} of $L$.  We let
$\mathscr{L}_i$ denote the unique 1-dimensional simple module of $R({\al_i})$.

It is shown in \cite[2.5]{KLI} that, for all $\nu$, 
\begin{equation} \label{eq:basis}
\left\{ \psi_\sigma \left( \prod_{k=1}^n (y_k^\Bi)^{r_k}  \right) e_{\Bi} \mid \wt(\Bi)= \nu, r_1, \ldots, r_n \geq 0, \sigma \in S_n \right\}
\end{equation}
is a basis for $R(\nu)$, where, for each permutation $\sigma$, $\psi_\sigma$ is an arbitrarily chosen diagram
which permutes its strands as the permutation $\sigma$ with no double crossings.

\subsection{Crystal structure on KLR modules} \label{ssec:cs1}

\begin{defn}[\mbox{see \cite[\S2.5]{KLI}}] \label{rem:char} 
 The ``character'' of a KLR modules $M$ is 
\[\ch(M)=\sum_{\Bi}\dim (e_{\Bi}M)\cdot w[\Bi],\] an element of
$\cF$, the abelian group freely generated by words in the nodes
of the Dynkin diagram.  
\end{defn}

In \cite[1.1.4]{LV}, Lauda and Vazirani define an automorphism
$\sigma\colon R \to R$ which up to sign reflects the diagrams through
the vertical axis.   We let $M^\sigma$ denote the twist of an
$R$-module by this automorphism.

For any two positive elements $\mu, \nu$ in the root lattice there is
an inclusion $R(\mu) \otimes R(\nu) \hookrightarrow R(\mu+\nu)$ given
by horizontal juxtaposition; let $e_{\mu, \nu}$ denote the image of
the identity of $R(\mu) \otimes R(\nu)$ under this map. Let
\[\Res{\mu+\nu}{\mu,\nu}(M)=\Res{R(\mu+\nu)}{R(\mu)\otimes R(\nu)}(M)=e_{\mu, \nu}M\quad \text{ and } \quad 
\operatorname{Ind}^{\mu+\nu}_{\mu,\nu}(M)=R(\mu+\nu)\otimes_{R(\mu) \otimes R(\nu)}M\] denote the functors of restriction and
extension of scalars along this map. Note that, since the unit $e_{\mu, \nu}$ of $R(\mu) \otimes R(\nu)$ is not the same as the unit of $R(\mu+\nu)$, the underlying vector space of $\Res{\mu+\nu}{\mu,\nu}(M)$ is not $M$ but rather $e_{\mu,\nu} M$. 

\begin{defn}
Fix representations $L$ of $R(\mu) $ and $L'$ of $R(\nu)$; Define 
 \[L \circ L':=\operatorname{Ind}_{\mu,\nu}^{\mu+\nu} (L \boxtimes L').\] See \cite[\S 2.6]{KLI} for a more extensive discussion of this functor.
\end{defn}

\begin{defn}\label{integrate-out}
  For any $R(\nu'')$ module $M$ and $R(\nu)$ module $N$, let \[M\triangleleft
  N=\Hom_{R({\nu''})}(M,\Res{\nu}{\nu',\nu''} N),\]
  \[ N\triangleright M=\Hom_{ R({\nu''})}(
  M,\Res{\nu}{\nu'',\nu'} N).\]  Note that these are right adjoint to $\circ$
  in the sense that, for any $R(\nu)$ module $K$, 
  \[\Hom(M\circ N, K)\cong \Hom(M,K\triangleright
  N)\cong \Hom(N,M\triangleleft K).\]
\end{defn}

As shown in \cite[2.20]{KLI}, it follows from \eqref{eq:basis} that
\[\ch(M_1\circ M_2)=\ch(M_1) * \ch(M_2)\]  
where the product on the right is the usual shuffle product.

\begin{defn}
Let $R\text{-nmod}$ be the category of finite dimensional $R$-modules
on which all the $y_k^\Bi$'s
act nilpotently.
\end{defn}
The simple modules in $R\text{-nmod}$ coincide with the gradable modules considered in \cite{LV}; when $Q_{ij}$ is
appropriately homogeneous, the algebra $R$ can be graded as in
\cite[(9)]{KLII}, and in this case a simple can be given a compatible grading if
and only if it lies in $R\text{-nmod}$.

\begin{defn}
Let $\KLR$ be the set of isomorphism classes of simple modules in  $R\text{-nmod}$. 
\end{defn}

The following result of Lauda and Vazirani is crucial to us:

\begin{prop} (\cite[Section 5.1]{LV}) \label{prop:cos}
  The set $\KLR$ carries a bicrystal
  structure with operators defined by 
\[\tilde{e}_i L= \operatorname{cosoc} (L\circ \mathscr{L}_i) \qquad
\tilde{f}_i L= \operatorname{soc} (L\triangleright \mathscr{L}_i)),\]
\[\tilde{e}_i^* L= \operatorname{cosoc} (\mathscr{L}_i \circ L) \qquad
\tilde{f}_i^* L= \operatorname{soc}( \mathscr{L}_i \triangleleft L)),\]
and this bicrystal is isomorphic to $B(-\infty)$. The map
$(-)^\sigma\colon \QH\to\QH$ is intertwined with the Kashiwara
involution of $B(-\infty)$.
\end{prop}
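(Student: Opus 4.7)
The plan is to proceed in three stages: first verify that the operators are well-defined on $\QH$, then identify the resulting bicrystal with $B(-\infty)$ via the characterization in Proposition \ref{cor:comb-characterizaton2}, and finally analyze the $\sigma$-twist. The bulk of the technical work is already done in \cite{LV}, so I would emphasize how to assemble their results along with the adjunction from Definition \ref{integrate-out} and the characterization theorem.

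First, I would show that for any simple $L \in R(\nu)\text{-nmod}$ and any $i \in I$, the induced module $L \circ \mathscr{L}_i$ has one-dimensional cosocle, so $\tilde e_i L$ is a well-defined simple module. The standard route, carried out in detail in \cite{LV}, is to show that the number $\varepsilon_i(L):= \max\{n \geq 0 : \Res{\nu}{\nu - n\al_i, n\al_i} L \neq 0\}$ equals the multiplicity of the cosocle in $L \circ \mathscr{L}_i$, using the basis \eqref{eq:basis} to analyze ``$i$-strings'' at the rightmost strand. The adjunction
\[
\Hom(L \circ \mathscr{L}_i, K) \cong \Hom(L, K \triangleright \mathscr{L}_i)
\]
coming from Definition \ref{integrate-out} then identifies $\tilde f_i$ as a partial inverse to $\tilde e_i$, and the axioms of a combinatorial crystal (Definition \ref{def:crystal}) follow from the behavior of $\varepsilon_i$ under these operations. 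An identical analysis with strands added on the left (or equivalently by transport along $\sigma$) gives the $*$-operators.

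Second, to identify the bicrystal $(\QH, \tilde e_i, \tilde f_i, \tilde e_i^*, \tilde f_i^*)$ with $B(-\infty)$ I would apply Proposition \ref{cor:comb-characterizaton2} to the lowest weight element $\mathscr{L}_\emptyset \in R(0)\text{-nmod}$. Nonvanishing (condition (i)) is immediate from the cosocle construction. Condition (ii), the commutation $\tilde e_i^* \tilde e_j = \tilde e_j \tilde e_i^*$ for $i \neq j$, reduces to a Mackey-style analysis of $\Res{\nu + \al_i + \al_j}{\al_i, \nu, \al_j}(\mathscr{L}_i \circ L \circ \mathscr{L}_j)$: since the leftmost and rightmost strands are labeled by distinct simple roots, no braid or KLR relation relates them, and one extracts commuting cosocles. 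Conditions (iii)--(vi) are the parallel-strand inequality $\varphi_i(L) + \varphi_i^*(L) - \langle \wt(L), \al_i^\vee \rangle \geq 0$ and its near-equality refinements; these translate, via the character map of Definition \ref{rem:char} and the shuffle-product formula, into classical statements about the occurrence of the word $i^{\varphi_i^*(L)} \cdot \Bi(L) \cdot i^{\varphi_i(L)}$ in $\ch(\mathscr{L}_i^{\varphi_i^*} \circ L \circ \mathscr{L}_i^{\varphi_i})$, with the equality case forced by the Serre-type structure of the simple $R(m\al_i)$-modules (which are powers of $\mathscr{L}_i$).

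Third, the assertion that $(-)^\sigma$ realizes Kashiwara's involution is immediate once the previous step is in place: $\sigma$ reflects diagrams left-to-right, hence swaps $L \circ \mathscr{L}_i$ with $(\mathscr{L}_i \circ L^\sigma)^\sigma$ and therefore intertwines $\tilde e_i$ with $\tilde e_i^*$ and $\tilde f_i$ with $\tilde f_i^*$. This is exactly the characterizing property of $*$ given by Proposition \ref{cor:comb-characterizaton2}. The main obstacle is verifying the delicate conditions (v)--(vi), where one must show that when $\varphi_i + \varphi_i^* - \langle \wt, \al_i^\vee \rangle \geq 2$ the operators $\tilde e_i$ and $\tilde e_i^*$ actually commute as functors on cosocles, not merely numerically; this is precisely where the analysis of endomorphisms of $\mathscr{L}_i \circ L \circ \mathscr{L}_i$ via the KLR relations of Figure \ref{quiver-hecke} is needed, and is the content of \cite[Section 5]{LV}.
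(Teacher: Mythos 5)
Your proposal is correct and follows essentially the same route as the paper, which offers no independent proof of this proposition but simply cites \cite[Section 5.1]{LV} together with the Kashiwara--Saito characterization (Proposition \ref{cor:comb-characterizaton2}); your three stages are a faithful reconstruction of that argument. The one wrinkle the paper addresses explicitly (Remark \ref{rem:def-change}) is that \cite{LV} define $\tilde f_i L$ as $\operatorname{soc}(L \triangleright R(\al_i))$ rather than $\operatorname{soc}(L \triangleright \mathscr{L}_i)$; your adjunction argument, which identifies $\tilde f_i$ as the partial inverse of $\tilde e_i$, covers this point equally well since both modules have the same simple socle.
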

\begin{rem}
As in \S\ref{ss:crystal}, since $B(-\infty)$ is a lowest weight combinatorial crystal, the functions $\varphi_i,\varphi^*_i, \varepsilon_i, \varepsilon_i^*$ are all determined by the action of the $\tilde f_i, \tilde f_i^*$. The first two also have intrinsic meaning:
$$\varphi_i(L) = \text{max} \{ n: \Res{R(\wt(L))}{R(\wt(L)-n \alpha_i) \otimes R(n \alpha_i)} (L) \neq 0 \}, \; \varphi^*_i(L) = \text{max} \{ n: \Res{R(\wt(L))}{R(n \alpha_i) \otimes R(\wt(L)-n \alpha_i)} L \neq 0 \}.$$
 \end{rem}
\begin{rem}
  Our conventions are dual to those of \cite{LV},
  since we consider $B(-\infty)$ 
  rather than $B(\infty)$. 
\end{rem}
\begin{rem}\label{rem:def-change}
  The keen eyed reader will note that the operator
  $\tilde{f}_i$ in \cite{LV} was defined slightly differently.  In our notation, it
  was defined to be $\operatorname{soc} (L\triangleright R(\al_i))$ as opposed to $\operatorname{soc} (L\triangleright \mathscr{L}_i)$.
  However, $L\triangleright \mathscr{L}_i$ is a submodule of
  $L\triangleright R(\al_i)$ via the map induced by the surjection
  $R(\al_i)\to \mathscr{L}_i$ and $L\triangleright R(\al_i)$ has a simple socle, so they in fact have the same socle.

  We prefer the definition of $\tilde f_i$ above since it generalizes more
  readily to the face crystals defined in \S\ref{sec:cryst-corr-face}, and because it uses the adjoint functor
  to that in the definition of $\te_i$. The latter imbalance could also be
  corrected by defining $\te_iL$ to be $\operatorname{cosoc} (L\circ
  R(\al_i))$.
\end{rem}

We also need the following simplified version of the the Lauda-Vazirani jump
lemma from \cite[Lemmata 6.5 \& 6.7]{LV}. Converted into our conventions:
\begin{lemma} \label{lem:jump}
Fix $L \in \KLR$. The quantity $\text{jump}_i(L)= \varphi_i(L) + \varphi_i^*(L) - \langle \wt(L), \alpha_i \rangle$ is always non-negative. Furthermore, if $\text{jump}_i(L)=0$, then $$\te_i(L)=\te_i^*(L) = L \circ \mathscr{L}_i= \mathscr{L}_i \circ L.$$
\end{lemma}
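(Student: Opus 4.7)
The plan is to prove both statements following the approach of \cite[Lemmata 6.5 \& 6.7]{LV}, which I outline here in our conventions.

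The first step is to rephrase $\varphi_i$ and $\varphi_i^*$ concretely. Frobenius reciprocity for the adjunction of Definition \ref{integrate-out} shows that $\varphi_i(L)$ equals the largest integer $n$ such that $\mathscr{L}_i^{\circ n}$ appears as a subquotient on the right of $\Res{\wt(L)}{\wt(L)-n\al_i, n\al_i} L$; equivalently, by Definition \ref{rem:char}, it is the largest $n$ such that some word in $\ch(L)$ ends with $n$ consecutive $i$'s. Dually, $\varphi_i^*(L)$ is the largest such $n$ for words beginning with $i$'s.

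Setting $p = \varphi_i(L)$ and $q = \varphi_i^*(L)$, I would restrict $L$ along the three-step parabolic $R(q\al_i) \otimes R(\wt(L) - (p+q)\al_i) \otimes R(p\al_i)$ and apply a Mackey-type analysis. Using the basis \eqref{eq:basis} together with the crossing relations from Figure \ref{quiver-hecke} (specifically, that a double crossing of two $i$-strands vanishes and a single crossing is nilpotent modulo dots), one bounds the ``overlap'' of the extremal $i$-blocks in $\ch(L)$. Combined with the identity $\langle \wt(L), \al_i \rangle = 2a_i - \sum_{j \neq i} c_{ij} a_j$, where $\wt(L) = \sum_j a_j \al_j$, this bookkeeping yields the desired inequality $p + q \geq \langle \wt(L), \al_i \rangle$.

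When equality holds, the bound above is saturated and there is no room for a proper submodule in $L \circ \mathscr{L}_i$: any such submodule would force the character of $\cosoc(L \circ \mathscr{L}_i)$ to contain no word ending in $p + 1$ copies of $i$, contradicting that $\varphi_i(L \circ \mathscr{L}_i) = p + 1$. Hence $L \circ \mathscr{L}_i$ is simple, giving $\te_i L = L \circ \mathscr{L}_i$, and by the symmetric argument on the left, $\te_i^* L = \mathscr{L}_i \circ L$. To identify these two simples directly, I would produce a non-zero homomorphism $L \circ \mathscr{L}_i \to \mathscr{L}_i \circ L$ via the adjunction isomorphism $\Hom(L \circ \mathscr{L}_i, \mathscr{L}_i \circ L) \cong \Hom(L, (\mathscr{L}_i \circ L) \triangleright \mathscr{L}_i)$; saturation of the jump inequality guarantees that the ``diagonal'' term in the Mackey filtration of $(\mathscr{L}_i \circ L) \triangleright \mathscr{L}_i$ survives and provides a copy of $L$, yielding a non-zero map, which by simplicity of both sides must be an isomorphism.

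The main obstacle is the Mackey bookkeeping in the second step: one must carefully track the mixing terms where $i$-strands migrate through the middle block of the three-step restriction, and this requires detailed use of the KLR relations (in particular the form of $Q_{ii}$ and the diagrammatic basis), together with the nilpotency of the generators in $R$-$\nmod$.
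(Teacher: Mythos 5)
The paper does not actually prove this lemma: it is imported, after a change of conventions, from Lauda--Vazirani \cite[Lemmata 6.5 \& 6.7]{LV}, so you are in effect reproving their jump lemma from scratch. Your sketch identifies the right circle of ideas, but both load-bearing steps have genuine gaps. For the inequality $\varphi_i(L)+\varphi_i^*(L)\geq\langle\wt(L),\al_i^\vee\rangle$: this does not follow from ``Mackey bookkeeping'' on the three-step restriction. Non-vanishing of $e_{q\al_i,\,\wt(L)-(p+q)\al_i,\,p\al_i}L$ only gives an \emph{upper} bound on $p+q$ (by the number of $i$-strands), and no amount of tracking double crossings of $i$-strands produces the required \emph{lower} bound $2a_i+\sum_{j\neq i}c_{ij}a_j$ (note also the sign: since $c_{ij}\leq 0$ for $j\neq i$, your displayed formula for $\langle\wt(L),\al_i^\vee\rangle$ has the wrong sign on the off-diagonal terms). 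The inequality is exactly condition (iii) of Proposition \ref{cor:comb-characterizaton2}, i.e.\ it encodes the interaction of the $i$- and $i^*$-string structures on $B(-\infty)$ (Corollary \ref{cor:KS-diag}); in \cite{LV} it is extracted from the $\mathfrak{sl}_2$-categorification results on cyclotomic quotients, not from character combinatorics. Given Proposition \ref{prop:cos}, the non-negativity and the identity $\tilde e_i(L)=\tilde e_i^*(L)$ when the jump vanishes follow at once from Proposition \ref{cor:comb-characterizaton2}; the only genuinely module-theoretic content of the lemma is that the inductions $L\circ\mathscr{L}_i$ and $\mathscr{L}_i\circ L$ are already simple.

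That is precisely where your argument is weakest. The deduction ``a proper submodule of $L\circ\mathscr{L}_i$ would force the character of $\cosoc(L\circ\mathscr{L}_i)$ to contain no word ending in $p+1$ copies of $i$'' is not valid: the cosocle is a quotient, it always equals $\tilde e_i L$ and always realizes $\varphi_i=p+1$, and a proper submodule sitting below it is invisible to this count. What is actually needed (and what \cite{LV} prove) is a multiplicity-one statement for $\tilde e_i L$ in $L\circ\mathscr{L}_i$ together with control of $\varphi_i^*$ on the remaining composition factors, so that when $\text{jump}_i(L)=0$ any extra factor would violate the non-negativity of its own jump. Your final adjunction step has a similar soft spot: knowing that $L\boxtimes\mathscr{L}_i$ occurs as a subquotient of the Mackey filtration of the restriction of $\mathscr{L}_i\circ L$ does not by itself yield a nonzero element of $\Hom(L,(\mathscr{L}_i\circ L)\triangleright\mathscr{L}_i)$ --- you need that copy to lie in the socle, which is where the identification of induction with coinduction \cite[2.2]{LV} (used in the paper in the proof of Proposition \ref{prop:induce-with-lowest}) enters. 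As written, the proposal is a plan whose two critical steps are asserted rather than established.
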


\section{Cuspidal decompositions}
\label{sec:c-good-words}

Kleshchev and Ram's work \cite{KlRa} 
uses Lyndon word combinatorics to parameterize the simple gradable KLR modules (in finite type) by a tuple of integers, one for each positive root. That is, they parameterize the simples by data which looks like Lusztig data (and in fact is Lusztig data, with respect to an appropriate reduced expression of $w_0$). Their construction however only sees the Lusztig data for certain reduced words, or equivalently certain convex orders. 
We now extend this to obtain a Lusztig datum for any convex order. We can no longer use the combinatorics on words that they develop, and instead our main tool is the notion of a cuspidal representation with respect to a charge (see Definition \ref{def:charge}). We also develop this for all symmetrizable Kac-Moody algebras, not just finite type. 

\subsection{Cuspidal decompositions for charges}

Let $\Bi=i_1\cdots i_n$ be a word in the nodes of the Dynkin diagram
and let $\al_{\Bi}=\sum_{k=1}^n\al_{i_k}$.  Fix a charge $c$, and consider the preorder $>$ on positive elements of the root lattice
induced by taking arguments with respect to this charge, as in \S\ref{ss:convex}.
\begin{defn}
  The {\bf top} of a word $\Bi$ is the maximal element which appears as
  the sum of a proper left prefix of the word; that
  is \[\top(\Bi)=\max_{1\leq j<n}\al_{i_1\cdots i_j}.\]
  We call a word in the simple roots {\bf $c$-cuspidal} if $\top(\Bi)
  <\al_{\Bi}$ and {\bf $c$-semi-cuspidal} if $\top(\Bi)\leq \al_{\Bi}$
\end{defn}

\begin{rem}
Geometrically, we can visualize a word as a path in the weight
lattice, and then picture its image in the complex plane under $c$.
A word is $c$-cuspidal if this path stays strictly clockwise of the line
from the beginning to the end of the word and $c$-semi-cuspidal if stays
weakly clockwise of this line, as shown in Figure \ref{cuspidal-picture}.
\end{rem}

\begin{figure}
  \centering
  \tikz{
\node at (-3,0) {
\tikz[very thick]{
\draw[dashed] (0,0) -- (-2,3);
\draw[->] (0,0) -- (1,1);
\draw[->] (1,1) -- (-.5,1.5);
\draw[->] (-.5,1.5)--(0,2.2);
\draw[->] (0,2.2)--(-2,3);
}
};
\node at (0,0) {
\tikz[very thick]{
\draw[dashed] (0,0) -- (-1,3.5);
\draw[->] (0,0) -- (.8,.6);
\draw[->] (.8,.6) -- (-.5,1.75);
\draw[->] (-.5,1.75)--(0,2.2);
\draw[->] (0,2.2)--(-1,3.5);
}
};
\node at (3,0) {
\tikz[very thick]{
\draw[dashed] (0,0) -- (-1.5,3);
\draw[->] (0,0) -- (.8,.6);
\draw[->] (.8,.6) -- (-1.2,1.5);
\draw[->] (-1.2,1.5)--(0,2.2);
\draw[->] (0,2.2)--(-1.5,3);
}
};
}
  \caption{Examples of  $c$-cuspidal, $c$-semi-cuspidal, and
    non-$c$-semi-cuspidal paths.}
\label{cuspidal-picture}
\end{figure}

\begin{defn}\label{cuspidal}
  The {\bf top} of a module $L \in R(\nu)\text{-nmod}$ is the
  maximum among the tops of all $\Bi$ such that $e_{\Bi}M\neq 0$.  We
  call a simple module $L$ {\bf cuspidal} if $\top(L)<\nu$,
  and {\bf semi-cuspidal} if $\top(L)\leq\nu$.
\end{defn}

Obviously, a representation is (semi-)cuspidal if and only if
all words which appear in its character are (semi-)cuspidal. 

\begin{thm}\label{th:semi-cuspidal}
  Fix an arbitrary charge $c$. If $L_1,\dots,L_h \in \KLR$ are $c$-semi-cuspidal with $\wt(L_1)>_c
  \cdots >_c
  \wt(L_h)$, then $L_1\circ\cdots \circ L_h$ has a unique
  simple quotient.
  Furthermore, every $L \in \KLR$ appears this way for a unique
  sequence of semi-cuspidal representations. 
\end{thm}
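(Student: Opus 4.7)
The plan is to prove the theorem's two assertions separately: first that $L_1\circ\cdots\circ L_h$ has a unique simple quotient, and then that every $L\in\QH$ arises in this way for a unique tuple. Set $\nu=\sum_j\wt(L_j)$ throughout.

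\emph{Simple head of the product.} I would compute $\Res{\nu}{\wt(L_1),\nu-\wt(L_1)}(L_1\circ\cdots\circ L_h)$ via the Mackey filtration for restriction of an induced KLR module. Its summands are indexed by tuples $(\mu_1,\dots,\mu_h)$ in $Q^+$ with $\mu_j\leq\wt(L_j)$ and $\sum_j\mu_j=\wt(L_1)$, each built from the factors $\Res{\wt(L_j)}{\mu_j,\wt(L_j)-\mu_j}(L_j)$. Semi-cuspidality of $L_j$ forces $\mu_j\leq_c\wt(L_j)$ whenever the corresponding restriction is nonzero. Applying the linear charge $c$ to the equation $\sum_j c(\mu_j)=c(\wt(L_1))$ under the strict chain $\wt(L_1)>_c\cdots>_c\wt(L_h)$ then rigidly forces $\mu_1=\wt(L_1)$ and $\mu_j=0$ for $j\geq 2$: for $j\geq 2$ nonzero, $c(\mu_j)$ lies strictly clockwise of the ray through $c(\wt(L_1))$, and no combination of such vectors inside a half-plane can sum to a point on that ray. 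Iterating gives $\Res{\nu}{\wt(L_1),\dots,\wt(L_h)}(L_1\circ\cdots\circ L_h)\cong L_1\boxtimes\cdots\boxtimes L_h$ as a simple module over the tensor-product algebra. Since restriction is exact, any nontrivial direct-sum decomposition of the head of $L_1\circ\cdots\circ L_h$ would induce a surjection from this simple module onto multiple copies of itself, which is impossible; hence the head is simple.

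\emph{Unique cuspidal decomposition.} Given $L\in\QH$ of weight $\nu$, let $\mu$ be $c$-maximal and, among $c$-maximal weights, $Q^+$-maximal with $\Res{\nu}{\mu,\nu-\mu}(L)\neq 0$, and pick any simple submodule $L_1\boxtimes L'$ of this restriction. The $c$-maximality forces $L_1$ to be semi-cuspidal, since any prefix sum of a word in $\ch(L_1)$ gives a weight with $\Res{\nu}{\cdot,\cdot}(L)\neq 0$ and hence $c$-argument at most that of $\mu$. The $Q^+$-maximality ensures every weight appearing in a nontrivial restriction of $L'$ has $c$-argument strictly less than that of $\mu$, because otherwise that weight could be absorbed into $\mu$ to produce a $Q^+$-strictly larger weight at the same maximal $c$-argument with $\Res(L)\neq 0$. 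Induction on the height of $\nu$ applied to $L'$ yields a unique semi-cuspidal decomposition $L'=\cosoc(L_2\circ\cdots\circ L_h)$ with $\wt(L_2)>_c\cdots>_c\wt(L_h)$, all $<_c\wt(L_1)$, whose concatenation decomposes $L$. Uniqueness of the whole tuple follows by applying the first assertion (any decomposition of $L$ has $L_1$ determined by the maximal piece of $\Res L$), then iterating on $L'$.

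\emph{The main obstacle} is the Mackey--convexity rigidity step in the first part: establishing that among the many Mackey summands indexed by $(\mu_1,\dots,\mu_h)$, only the diagonal $(\wt(L_1),0,\dots,0)$ contributes at left-weight $\wt(L_1)$. This relies essentially on linearity of $c$ together with the strict chain of $c$-arguments, and on the fact that semi-cuspidality bounds \emph{partial} weights of each $L_j$ rather than only its total weight. A secondary subtle point is the refined $Q^+$-maximality used in the second part, which is what upgrades the inductive decomposition's inequalities from $\leq_c$ to strict $<_c$; this is also the precise place where the distinction between semi-cuspidal and cuspidal matters, since otherwise consecutive pieces at the same $c$-argument would have to be merged into a single semi-cuspidal module.
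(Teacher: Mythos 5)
Your proposal is correct and follows essentially the same route as the paper: your Mackey-filtration-plus-charge-rigidity computation is exactly the paper's ``unmixing'' lemma (proved there via the shuffle-product description of characters), the unique-simple-quotient step is the same generation-by-the-idempotent argument, and peeling off the top factor via the argument-maximal, then $Q^+$-maximal, prefix weight is the paper's inductive argument (which uses greatest height in place of your $Q^+$-maximality, to the same effect). The only cosmetic difference is that the paper additionally proves the restriction $\Res{\nu}{\mu,\nu-\mu}L$ is itself simple, which makes the identification of the top factor and the uniqueness step slightly cleaner than your choice of an arbitrary simple submodule, though your version can be completed as sketched.
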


We delay the proof of
Theorem \ref{th:semi-cuspidal} while we introduce a more general compatibility
condition on representations and prove some preliminary results.
\begin{defn} \label{def:unmixing} For $L_1, \ldots, L_h \in R\text{-nmod}$, 
  we call $(L_1,\dots,L_h)$ {\bf unmixing} if
  \[\Res{\nu_1+\cdots+\nu_h}{\nu_1,\dots,\nu_h}(L_1\circ \cdots \circ L_h)=L_1\boxtimes \cdots \boxtimes L_h.\]
\end{defn}
The notion of unmixing is important because of the following fact.

\begin{lemma}\label{lem:unmixing-quotient}
  If $(L_1,\dots,L_h) \subset \KLR^h$ is unmixing, then $L_1\circ
  \cdots \circ L_h$ has a unique simple quotient. We denote this by $A(L_1,\dots, L_h)$.
\end{lemma}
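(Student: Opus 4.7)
The plan is to exploit the fact that induction and restriction are an adjoint pair, together with the structural property that $M := L_1 \circ \cdots \circ L_h = \operatorname{Ind}^{\nu}_{\nu_1,\dots,\nu_h}(L_1 \boxtimes \cdots \boxtimes L_h)$ is generated as an $R(\nu)$-module by its ``top'' weight space $e_{\nu_1,\dots,\nu_h} M = \Res{\nu}{\nu_1,\dots,\nu_h} M$. This is the standard interaction of extension of scalars with the idempotent $e_{\nu_1,\dots,\nu_h}$, and does not require the unmixing hypothesis.

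Next I would invoke the unmixing hypothesis to identify that top space explicitly: $\Res{\nu}{\nu_1,\dots,\nu_h} M = L_1 \boxtimes \cdots \boxtimes L_h$, which is simple as an $R(\nu_1) \otimes \cdots \otimes R(\nu_h)$-module because each $L_i$ is. The key step is then to show that every proper submodule $N \subset M$ satisfies $e_{\nu_1,\dots,\nu_h} N = 0$. Indeed, $\Res{\nu}{\nu_1,\dots,\nu_h}$ is exact, so $e_{\nu_1,\dots,\nu_h} N$ is a sub-bimodule of the simple module $L_1 \boxtimes \cdots \boxtimes L_h$, hence is either $0$ or equal to the whole thing; in the latter case $N$ contains a generating set of $M$, forcing $N = M$, a contradiction.

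From here the conclusion is immediate. The sum $N_{\max}$ of all proper submodules of $M$ still has $e_{\nu_1,\dots,\nu_h} N_{\max} = 0$, so $N_{\max}$ is proper, and hence is the unique maximal submodule of $M$. Therefore $M/N_{\max}$ is the unique simple quotient of $M$, which we denote by $A(L_1,\dots,L_h)$.

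I do not expect any real obstacle here; the statement is essentially formal once the interplay between $\operatorname{Ind}$, $\Res$, and the idempotent $e_{\nu_1,\dots,\nu_h}$ is set up. The only subtle point worth stating carefully is that ``$M$ is generated by $e_{\nu_1,\dots,\nu_h} M$,'' which follows from the construction of $\operatorname{Ind}$ as $R(\nu) \otimes_{R(\nu_1)\otimes\cdots\otimes R(\nu_h)} (-)$; alternatively one can phrase the uniqueness argument entirely via Frobenius reciprocity, observing that any simple quotient $K$ of $M$ must satisfy $\Hom(L_1 \boxtimes \cdots \boxtimes L_h, \Res K) \neq 0$ with $\Res K$ a quotient of the simple module $L_1 \boxtimes \cdots \boxtimes L_h$, forcing $\Res K = L_1 \boxtimes \cdots \boxtimes L_h$ and pinning down $K$ uniquely.
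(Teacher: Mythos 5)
Your proof is correct and follows essentially the same route as the paper: both arguments rest on the two facts that $L_1\circ\cdots\circ L_h$ is generated by the image of the idempotent $e_{\nu_1,\dots,\nu_h}$ and that (by unmixing) this image is the simple module $L_1\boxtimes\cdots\boxtimes L_h$, so that a submodule is proper exactly when it is killed by $e_{\nu_1,\dots,\nu_h}$, and the sum of proper submodules is therefore proper. The paper compresses this into the statement that any nonzero vector in the image of $e_{\nu_1,\dots,\nu_h}$ generates the whole module, but the content is identical to your step showing $e_{\nu_1,\dots,\nu_h}N=0$ for every proper $N$.
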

\begin{proof}
  Let $e$ denote the idempotent in $R(\nu)$ projecting to
  $\Res{\nu_1+\cdots+\nu_h}{\nu_1,\dots,\nu_h}(-)$.  Then $L_1\circ
  \cdots \circ L_h$ is generated by any non-zero vector in the image
  of $e$; thus, a submodule $M\subset L_1\circ
  \cdots \circ L_h$ is proper if and only if it is killed by $e$.  It follows that the sum of any
  two proper submodules is still killed by $e$, and thus again
  proper.  There is thus a unique maximal proper submodule of $L_1\circ
  \cdots \circ L_h$, so it has a unique simple quotient.  
\end{proof}

\begin{lemma} \label{lem:um2}
If $(L_1 \circ \cdots \circ L_{k-1}, L_k \circ \cdots \circ L_h)$ is unmixing for all $2 \leq k \leq h$
then $(L_1,\dots,L_h)$ is unmixing. 
\end{lemma}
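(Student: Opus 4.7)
The plan is to translate the unmixing condition into a statement about shuffle products of characters, verify that statement using the hypotheses, and then upgrade from characters to an isomorphism of modules via Frobenius reciprocity.

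First, I would recall the identity $\ch(L_1\circ\cdots\circ L_h)=\ch(L_1)*\cdots *\ch(L_h)$ from \cite[2.20]{KLI}. Evaluating at a concatenated word $\Bi_1\Bi_2\cdots\Bi_h$ with $\wt(\Bi_j)=\nu_j$ expresses the character of $\Res{\nu}{\nu_1,\dots,\nu_h}(L_1\circ\cdots\circ L_h)$ at $(\Bi_1,\dots,\Bi_h)$ as a sum $\sum_\sigma\prod_i\ch(L_i)(\sigma_i)$ over all shuffle decompositions $\sigma=(\sigma_1,\dots,\sigma_h)$ of $\Bi_1\cdots\Bi_h$ into subwords of weights $(\nu_1,\dots,\nu_h)$. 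At the character level, unmixing of $(L_1,\dots,L_h)$ is equivalent to the assertion that every term in this sum except the trivial one $\sigma_i=\Bi_i$ vanishes.

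Next, I would unpack the hypothesis at each $k$: by the same shuffle formula, the equality $\ch(\Res{\nu}{\nu_1+\cdots+\nu_{k-1},\nu_k+\cdots+\nu_h}(L_1\circ\cdots\circ L_h))(\Bj,\Bj')=\ch(L_1\circ\cdots\circ L_{k-1})(\Bj)\ch(L_k\circ\cdots\circ L_h)(\Bj')$ for all words $\Bj,\Bj'$ amounts to the vanishing of $\prod_i\ch(L_i)(\sigma_i)$ for every shuffle decomposition of $\Bj\Bj'$ that fails to respect the boundary between $\Bj$ and $\Bj'$, using non-negativity of character values to pass from an equality of sums to vanishing of individual terms. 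Specializing $\Bj=\Bi_1\cdots\Bi_{k-1}$ and $\Bj'=\Bi_k\cdots\Bi_h$ and running $k$ over $\{2,\dots,h\}$, I conclude that any shuffle decomposition of $\Bi_1\cdots\Bi_h$ contributing nonzero must respect every one of the $h-1$ boundaries between consecutive blocks. A simple induction on the block index shows that such a decomposition is forced to be the trivial one, whence $\ch(\Res{\nu}{\nu_1,\dots,\nu_h}(L_1\circ\cdots\circ L_h)) = \ch(L_1\boxtimes\cdots\boxtimes L_h)$.

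Finally, to promote this character equality to an isomorphism of modules, I appeal to Frobenius reciprocity. Since $\operatorname{Ind}^{\nu}_{\nu_1,\dots,\nu_h}(L_1\boxtimes\cdots\boxtimes L_h)=L_1\circ\cdots\circ L_h$, the identity endomorphism corresponds to a nonzero morphism $L_1\boxtimes\cdots\boxtimes L_h\to\Res{\nu}{\nu_1,\dots,\nu_h}(L_1\circ\cdots\circ L_h)$. The source is simple (we assume the base field is chosen so that outer tensor products of absolutely simple modules remain simple), so this map is injective, and the equality of characters forces it to be an isomorphism. The only subtle point in the entire argument is the combinatorial observation that a shuffle decomposition respecting every block boundary must be trivial, and this is the main obstacle one has to verify carefully; everything else is bookkeeping with the shuffle product and non-negativity.
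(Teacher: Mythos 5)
Your proof is correct and is essentially the paper's own argument: the paper phrases it contrapositively (a shuffle that mixes the factors must move a strand of some $L_j$ to the right, hence survives the restriction at the single boundary after block $j$, contradicting the hypothesis at $k=j+1$), while you run the same shuffle-product bookkeeping directly, checking all $h-1$ boundaries and inducting to force the trivial decomposition. One small remark: the modules in Definition \ref{def:unmixing} are not assumed simple, so your appeal to simplicity in the last step is out of place — but it is also unnecessary, since the unit map $L_1\boxtimes\cdots\boxtimes L_h\to\Res{\nu_1+\cdots+\nu_h}{\nu_1,\dots,\nu_h}(L_1\circ\cdots\circ L_h)$, $v\mapsto 1\otimes v$, is always injective ($R(\nu)$ being free over the parabolic subalgebra by \eqref{eq:basis}), so the character equality alone forces the isomorphism.
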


\begin{proof}
Assume $(L_1,\dots,L_h)$ is not unmixing, so
$$\Res{\nu_1+\cdots+\nu_h}{\nu_1,\dots,\nu_h}(L_1\circ \cdots \circ L_h) \neq L_1\boxtimes \cdots \boxtimes L_h.$$
Then there is some shuffle that non-trivially mixes the factors and survives in the restriction. This involves shuffling at least one strand in some factor $L_j$ to the right, and it survives in the restriction
$$\Res{\nu_1+\cdots+\nu_h}{\nu_1+ \cdots + \nu_j,\nu_{j+1} +\dots+\nu_h}(L_1\circ \cdots \circ L_h).$$
Thus the pair $(L_1 \circ \cdots \circ L_{k-1}, L_k \circ \cdots \circ L_h)$ is not unmixing. 
\end{proof}

\begin{lemma} \label{lem:gum}
A pair $(L_1, L_2)$ of representations in $R\text{-nmod}$ is unmixing
if and only if there are no words $\Bi', \Bi'', \Bj', \Bj''$ with
$e_{\Bi'\Bi''} L_1 \neq 0, e_{\Bj'\Bj''} L_2\neq 0$ and $\alpha_{\Bi''}=\alpha_{\Bj'}$. 
\end{lemma}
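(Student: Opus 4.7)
The plan is to translate everything to characters. Recall that $\ch(L_1\circ L_2)=\ch(L_1)*\ch(L_2)$ is the shuffle product, and that the character of $\Res{\nu_1+\nu_2}{\nu_1,\nu_2}(L_1\circ L_2)=e_{\nu_1,\nu_2}(L_1\circ L_2)$ is obtained from this by keeping only those words $\Bi\Bj$ with $\wt(\Bi)=\nu_1$ and $\wt(\Bj)=\nu_2$. There is always a natural injection $L_1\boxtimes L_2\hookrightarrow \Res{\nu_1+\nu_2}{\nu_1,\nu_2}(L_1\circ L_2)$ coming from the unit of the induction--restriction adjunction (its image is the submodule generated by the ``trivial'' vectors $1\otimes(v_1\otimes v_2)$, which one identifies using the basis \eqref{eq:basis} with the identity-permutation diagram). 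Hence $(L_1,L_2)$ is unmixing if and only if these two modules have the same character.

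The key step is to expand the shuffle product. A word occurring in $\ch(L_1)*\ch(L_2)$ of the form $\Bi\Bj$ with $\wt(\Bi)=\nu_1$ and $\wt(\Bj)=\nu_2$ arises as an interleaving of some $\Bk$ appearing in $\ch(L_1)$ with some $\Bl$ appearing in $\ch(L_2)$. Cutting $\Bk$ and $\Bl$ at the place where they cross the boundary at position $|\nu_1|$ in $\Bi\Bj$, write $\Bk=\Bi'\Bi''$ and $\Bl=\Bj'\Bj''$; the length constraint $\al_{\Bi'}+\al_{\Bj'}=\nu_1=\al_{\Bi'}+\al_{\Bi''}$ forces $\al_{\Bi''}=\al_{\Bj'}$, and conversely every choice of such a quadruple contributes. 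The trivial contribution with $\Bi''=\Bj'=\emptyset$ reproduces exactly $\ch(L_1)\cdot\ch(L_2)=\ch(L_1\boxtimes L_2)$.

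Any quadruple with $\al_{\Bi''}=\al_{\Bj'}\neq 0$ therefore adds a strictly positive contribution. Since all shuffle multiplicities are non-negative integers no cancellation is possible, so the restricted character strictly exceeds $\ch(L_1\boxtimes L_2)$ if and only if some such nontrivial quadruple exists with $e_{\Bi'\Bi''}L_1\neq 0$ and $e_{\Bj'\Bj''}L_2\neq 0$. This is exactly the condition in the statement, proving the lemma. The only point one really has to be careful with is the injectivity of the map $L_1\boxtimes L_2\hookrightarrow \Res{\nu_1+\nu_2}{\nu_1,\nu_2}(L_1\circ L_2)$, which is the main ``structural'' input; the rest is a direct combinatorial manipulation of shuffles.
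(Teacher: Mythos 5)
Your proof is correct and follows essentially the same route as the paper's: identify the character of $\Res{\nu_1+\nu_2}{\nu_1,\nu_2}(L_1\circ L_2)$ with the weight-$(\nu_1,\nu_2)$ part of the shuffle product $\ch(L_1)*\ch(L_2)$, decompose each contributing shuffle by cutting at position $|\nu_1|$ into a quadruple $(\Bi',\Bi'',\Bj',\Bj'')$ with $\alpha_{\Bi''}=\alpha_{\Bj'}$, and observe that only the trivial quadruples reproduce $\ch(L_1\boxtimes L_2)$. Your explicit appeal to the injection $L_1\boxtimes L_2\hookrightarrow\Res{\nu_1+\nu_2}{\nu_1,\nu_2}(L_1\circ L_2)$ and to positivity of shuffle multiplicities just makes precise what the paper leaves implicit.
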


\begin{proof}
  By \eqref{eq:basis} the multiplicity of a word $\Bk$
  in the character of the induction $\Res{\nu_1+\nu_2}{\nu_1,\nu_2}
  (L_1 \circ L_2)$ is the number of ways of writing $\Bk$ as a shuffle
  of a word in the character of $L_1$ with a word in the character of
  $L_2$. Now $\Bk$ must be of the form $\Bk'\Bk''$ with
  $\alpha_{\Bk'}=\nu_1$ and $\alpha_{\Bk''}= \nu_2$, so $\Bk$ is of
  the form $\text{Sh}(\Bi' , \Bj') \text{Sh}'(\Bi'', \Bj'')$, where
  $\Bi= \Bi'\Bi''$, $\Bj=\Bj'\Bj''$, $\text{Sh}, \text{Sh}'$ are
  shuffles, and $\alpha_{\Bj'}=\alpha_{\Bi''}$. The condition in the
  lemma exactly forces both $\Bj'$ and $\Bi''$ to be the trivial word,
  and hence $\Bk$ itself is a trivial shuffle of words in the
  character of $L_1$ and $L_2$. But being unmixing exactly means that
  all words in the character of of $\Res{\nu_1+\nu_2}{\nu_1,\nu_2}
  (L_1 \circ L_2)$ are in fact in $L_1 \boxtimes L_2$, so the lemma
  follows .
\end{proof}

\begin{lemma}\label{lem:semi-cuspidal-unmixing}
  If $L_1,\dots,L_h \in \KLR$ are semi-cuspidal with $\wt(L_1)>
  \cdots >
  \wt(L_h)$, then the $h$-tuple $(L_1,\dots,L_h)$ is unmixing.
\end{lemma}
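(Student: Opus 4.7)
The plan is to reduce to the two-factor case via Lemma~\ref{lem:um2} and then apply the word-level criterion of Lemma~\ref{lem:gum}. Concretely, it suffices to show that each pair $(L_1\circ\cdots\circ L_{k-1},\,L_k\circ\cdots\circ L_h)$ is unmixing for $2\le k\le h$. So I would fix such a $k$ and assume toward a contradiction that there exist words $\Bi'\Bi''$ with $e_{\Bi'\Bi''}(L_1\circ\cdots\circ L_{k-1})\neq 0$ and $\Bj'\Bj''$ with $e_{\Bj'\Bj''}(L_k\circ\cdots\circ L_h)\neq 0$ satisfying $\alpha_{\Bi''}=\alpha_{\Bj'}\neq 0$.

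Set $\nu_j:=\wt(L_j)$. Since the character of an induction product is the shuffle product of the factor characters, one can write $\Bi'\Bi''$ as a shuffle of words $\Bi_j$ with $e_{\Bi_j}L_j\neq 0$ for $1\le j\le k-1$, and the cut between $\Bi'$ and $\Bi''$ induces a splitting $\Bi_j=\Bi_j^{(1)}\Bi_j^{(2)}$ of each. The semi-cuspidality of $L_j$ gives $\arg c(\alpha_{\Bi_j^{(1)}})\le \arg c(\nu_j)$ whenever $\Bi_j^{(1)}$ is nonempty. The complementary geometric statement, which needs to be verified carefully, is that $\arg c(\alpha_{\Bi_j^{(2)}})\ge \arg c(\nu_j)$ whenever $\Bi_j^{(2)}$ is nonempty: rotating coordinates so $c(\nu_j)$ is positive real, the prefix sits weakly below the real axis by semi-cuspidality, hence the suffix $c(\nu_j)-c(\alpha_{\Bi_j^{(1)}})$ sits weakly above. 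Applying the analogous analysis to $\Bj'\Bj''$ yields $\arg c(\alpha_{\Bj_j^{(1)}})\le \arg c(\nu_j)$ for $k\le j\le h$.

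To finish, I would use that all nonzero images under $c$ of positive root lattice elements lie in a common open half-plane, so the argument of a sum lies between the minimum and maximum of the arguments of the nonzero summands. Since $\nu_1>_c\cdots>_c\nu_h$, this yields
\[\arg c(\alpha_{\Bi''})\;\ge\;\min_{1\le j\le k-1}\arg c(\nu_j)\;=\;\arg c(\nu_{k-1})\]
(the minimum is taken over those $j$ for which $\alpha_{\Bi_j^{(2)}}\neq 0$, and $\alpha_{\Bi''}\neq 0$ guarantees at least one such $j$); symmetrically, $\arg c(\alpha_{\Bj'})\le \arg c(\nu_k)$. The strict inequality $\arg c(\nu_{k-1})>\arg c(\nu_k)$ then forces $\arg c(\alpha_{\Bi''})>\arg c(\alpha_{\Bj'})$, contradicting $\alpha_{\Bi''}=\alpha_{\Bj'}$.

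The main obstacle I anticipate is the suffix-argument estimate $\arg c(\alpha_{\Bi_j^{(2)}})\ge\arg c(\nu_j)$: while the geometric picture is compelling, one must verify it uniformly across the degenerate configurations (empty prefix or suffix, or prefix whose image under $c$ is a nonnegative multiple of $c(\nu_j)$), and to check that the only equality cases in the final chain of inequalities are accounted for, so that the strict gap between $\nu_{k-1}$ and $\nu_k$ really propagates to a strict gap between $\arg c(\alpha_{\Bi''})$ and $\arg c(\alpha_{\Bj'})$.
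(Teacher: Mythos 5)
Your proposal is correct and follows the same route as the paper: reduce to pairs via Lemma \ref{lem:um2} and verify the word-level criterion of Lemma \ref{lem:gum}, which the paper simply declares ``immediate from the definition of cuspidal representation.'' Your prefix/suffix argument estimate (a suffix of a semi-cuspidal word has $c$-argument at least that of the whole word, since the argument of a sum of vectors in an open half-plane lies between the arguments of the summands) is exactly the missing detail, and it does hold uniformly in the degenerate cases you flag, so the strict gap between $\arg c(\nu_{k-1})$ and $\arg c(\nu_k)$ propagates as you claim.
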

\begin{proof}
It is immediate from the definition of cuspidal representation that, for all $k$, the pair $(L_1 \circ \cdots \circ L_{k-1}, L_k \circ\cdots \circ L_h)$ satisfies the conditions of Lemma \ref{lem:gum} and is thus unmixing. The lemma is then immediate by Lemma \ref{lem:um2}
\end{proof}

\begin{lemma} \label{lem:umis}
Assume $L$ is simple and every composition factor of $
\Res{\nu_1+\nu_2}{\nu_1,\nu_2} L$ is of the form $L' \boxtimes L''$ for an unmixing pair $L' \boxtimes L''$. Then $ \Res{\nu_1+\nu_2}{\nu_1,\nu_2} L$ is in fact simple.
\end{lemma}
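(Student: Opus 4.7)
My plan is to exploit the adjunction between induction and restriction, together with the exactness of $\Res{}{}$ (which is just multiplication by the idempotent $e_{\nu_1,\nu_2}$), to squeeze $\Res{\nu_1+\nu_2}{\nu_1,\nu_2}L$ between a simple $P$-module and itself.

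First, I would note that since $L$ is simple and hence finite-dimensional, $\Res{\nu_1+\nu_2}{\nu_1,\nu_2}L = e_{\nu_1,\nu_2}L$ is a finite-dimensional $R(\nu_1)\otimes R(\nu_2)$-module. If it were zero we would be done, so assume not; then the hypothesis tells us every composition factor is of the form $L'\boxtimes L''$ for an unmixing pair $(L',L'')$, and in particular the socle is nonzero. Pick any simple submodule $M\subset \Res{\nu_1+\nu_2}{\nu_1,\nu_2}L$, and write $M\cong L'\boxtimes L''$.

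Next I would apply Frobenius reciprocity. Since $L'\circ L'' = \operatorname{Ind}^{\nu_1+\nu_2}_{\nu_1,\nu_2}(L'\boxtimes L'')$ is by definition the left adjoint of $\Res{\nu_1+\nu_2}{\nu_1,\nu_2}$, the inclusion $L'\boxtimes L''\hookrightarrow \Res{\nu_1+\nu_2}{\nu_1,\nu_2}L$ corresponds to a nonzero homomorphism $\varphi\colon L'\circ L''\to L$. Because $L$ is simple, $\varphi$ is surjective.

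The last step is to apply the functor $\Res{\nu_1+\nu_2}{\nu_1,\nu_2}$ to $\varphi$. This functor is exact (it is simply multiplication by the idempotent $e_{\nu_1,\nu_2}$), so we obtain a surjection
\[
\Res{\nu_1+\nu_2}{\nu_1,\nu_2}(L'\circ L'')\;\twoheadrightarrow\; \Res{\nu_1+\nu_2}{\nu_1,\nu_2}L.
\]
By the unmixing hypothesis on $(L',L'')$, the left-hand side equals $L'\boxtimes L''$, which is simple. Since $\Res{\nu_1+\nu_2}{\nu_1,\nu_2}L$ is nonzero, this surjection must be an isomorphism, and hence $\Res{\nu_1+\nu_2}{\nu_1,\nu_2}L\cong L'\boxtimes L''$ is simple. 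There is no real obstacle here; the only thing to be careful about is that the appropriate adjunction really is $(\operatorname{Ind},\Res)$ on the nose (not twisted), which follows immediately from the definition $L'\circ L'' = R(\nu_1+\nu_2)\otimes_{R(\nu_1)\otimes R(\nu_2)}(L'\boxtimes L'')$ and the fact that $\Res{\nu_1+\nu_2}{\nu_1,\nu_2}$ is defined via the same inclusion of algebras.
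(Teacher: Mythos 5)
Your proof is correct and follows essentially the same route as the paper: pick a simple constituent $L'\boxtimes L''$ of the restriction, use Frobenius reciprocity to get a surjection $L'\circ L''\twoheadrightarrow L$, then apply the exact restriction functor and invoke unmixing. The only cosmetic difference is that you take a simple submodule where the paper takes a simple quotient; your choice is the one that matches the adjunction direction $(\operatorname{Ind},\Res)$ on the nose, so the argument goes through without issue.
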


\begin{proof}
Choose a simple quotient $L' \boxtimes L''$ of $\Res{\nu_1+\nu_2}{\nu_1,\nu_2} L$. Then there is a non-zero map $\phi: L' \circ L'' \rightarrow L$, and since $L$ is simple the image is all of $L$. Since $(L' , L'')$ is unmixing, 
\begin{equation*}
\Res{\nu_1+\nu_2}{\nu_1,\nu_2}  L=\Res{\nu_1+\nu_2}{\nu_1,\nu_2} \im \phi \simeq  L' \boxtimes L''. \qedhere
\end{equation*} \end{proof}

\begin{proof}[Proof of Theorem \ref{th:semi-cuspidal}]
By Lemmata \ref{lem:unmixing-quotient} and
\ref{lem:semi-cuspidal-unmixing},  the induction $L_1\circ\cdots \circ L_h$ has a unique
  simple quotient. 
It remains to show that every simple appears in this way for a unique
sequence of semi-cuspidals.

Fix a simple $L$. 
Consider the maximum argument $\text{arg}_{\operatorname{max}}$ of any prefix of any word in the character of $L$. 
Let $\nu_1$ be the element of the root lattice of
greatest height such that $\text{arg}_{\operatorname{max}}$ is achieved by a
prefix of weight $\nu_1$.  We proceed by induction on the
height of $\nu-\nu_1$.  If $\nu=\nu_1$, then $L$ is semi-cuspidal, and
we are done.

By assumption, $\Res{\nu}{\nu_1,\nu-\nu_1}L\neq 0$. Every
composition factor $L' \boxtimes L''$ must have the property that
no word in the character of $L''$ has a prefix with argument $\geq \arg \nu_1$, as otherwise we
could find $\nu'_1 > \nu_1$ with at least as big an argument.
Also, no word in the character of $L'$ can have a prefix of argument $> \nu_1$, which implies that no word in this character can have a proper suffix of argument $\leq \nu_1$. It
follows by Lemma \ref{lem:gum} that $(L', L'')$ is unmixing, and so by
Lemma \ref{lem:umis}, the module $\Res{\nu}{\nu_1,\nu-\nu_1}L$ is in fact a single simple $L' \boxtimes L''$. Then $L' \circ L''$ has a unique simple quotient by Lemma \ref{lem:unmixing-quotient}, and this admits a non-trivial map to $L$, so it must be $L$. 

By the inductive assumption, $L''=A(L_2,\dots, L_h)$ for some semi-cuspidals satisfying the conditions, and $\wt(L_2)$ must have argument less than $\wt(L')$ by the maximality of $\text{arg}_{\text{max}}$ and $\nu_1$. Thus $L=A(L',L_2,\dots,
L_h)$, so every simple has the desired form.  

It remains to show uniqueness. If $L=A(L_1',\dots, L_p')$ for some other cuspidal
simples with $\wt(L_1')> \cdots > \wt(L_p')$, then, by the maximality of the argument of
$\nu_1$, either $\wt(L'_1)$ has argument less then $\nu_1$, or $\wt(L'_1) =r \nu_1$ for $r\leq1$. There is a word in $L$ with weight $\nu_1$ so, unless we are in the case where $\wt(L'_1) =\nu_1$, there must be a prefix of some $L'_i$, $i \geq 2$, with argument $\geq \nu_1 \geq \wt(L'_1)$, contradicting the fact that $L'_i$ is $c$-semi-cuspidal with $\wt(L'_1) >_c \wt(L'_i)$. But then the argument above shows that $L_1'=L'$, and by induction the two lists of simples in fact   agree. 
\end{proof}

\begin{defn}
  For a fixed charge $c$ and simple $L$, we call the tuple of simples $(L_1,\dots, L_h)$ associated to $L$ by Theorem \ref{th:semi-cuspidal} the {\bf c-semi-cuspidal decomposition} of $L$.
\end{defn}

\begin{cor}\label{cor:semi-cuspidal-count}
Fix a charge $c$.
  The number of $c$-semi-cuspidals of weight $\nu$ in $\KLR$ is 
  \[\sum_{\substack{\nu=\be_1+\cdots+\be_n\\\arg c(\be_i)=\arg
      c(\nu)}} \prod_{i=1}^n m_{\be_i},\]
       the sum over the distinct ways
 of writing $\nu$ as a sum of positive roots
  $\be_*$ which all satisfy $\arg c(\be_i)=\arg c(\nu)$ of the
  product of the root multiplicities.
\end{cor}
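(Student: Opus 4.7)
My plan is to deduce the count from Theorem~\ref{th:semi-cuspidal} by passing to formal characters and comparing with the standard generating function for $U^+(\fg)$. Write $s(\mu)$ for the number of $c$-semi-cuspidals in $\KLR$ of weight $\mu$. Theorem~\ref{th:semi-cuspidal} provides a bijection between $\KLR_\nu$ and tuples of $c$-semi-cuspidals $(L_1,\dots,L_h)$ with $\arg c(\wt L_1) > \cdots > \arg c(\wt L_h)$ and $\sum_i \wt L_i = \nu$. Summing first over the sequence of weights $\mu_i = \wt L_i$ and then over the choices of $L_i$ with those weights yields
\[
|\KLR_\nu| \;=\; \sum_{\substack{\nu = \mu_1 + \cdots + \mu_h \\ \arg c(\mu_1) > \cdots > \arg c(\mu_h)}} \prod_{i=1}^h s(\mu_i).
\]

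Each element of $Q^+$ decomposes uniquely as $\nu = \sum_\theta \mu_\theta$, grouping its positive-root constituents by argument under $c$. This lets me factor the formal character of $\KLR$ by argument:
\[
\sum_\nu |\KLR_\nu|\, e^\nu \;=\; \prod_\theta \Bigl( \sum_{\mu:\,\arg c(\mu)=\theta} s(\mu)\, e^\mu \Bigr).
\]
On the other hand, by Proposition~\ref{prop:cos} the bijection $\KLR \simeq B(-\infty)$ gives $|\KLR_\nu| = \dim U^+(\fg)_\nu$, so the left-hand side equals the Kostant partition function series
\[
\prod_{\alpha \in \Delta^+}(1-e^\alpha)^{-m_\alpha} \;=\; \prod_\theta \prod_{\alpha:\,\arg c(\alpha)=\theta}(1-e^\alpha)^{-m_\alpha}.
\]
Matching the two factorizations angle-by-angle (legitimate because the elements of $Q^+$ supported on a single ray form a direct summand of $Q^+$) produces
\[
\sum_{\mu:\,\arg c(\mu)=\theta} s(\mu)\, e^\mu \;=\; \prod_{\alpha:\,\arg c(\alpha)=\theta}(1-e^\alpha)^{-m_\alpha}.
\]

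To finish, I extract the coefficient of $e^\nu$ from the right-hand side by expanding each factor geometrically and collecting the contributions indexed by how often each positive root is used; reorganizing by the multiset of roots appearing in a decomposition reproduces the combinatorial sum in the statement. The main technical subtlety is the angle-by-angle matching of the two generating functions, which rests on convexity of the preorder induced by $c$: positive roots with distinct arguments generate linearly independent submonoids of $Q^+$, so the character factors uniquely by argument. Once this is in place the rest is a formal expansion.
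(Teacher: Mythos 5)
Your proposal is, at bottom, the same argument as the paper's: both rest on the bijection of Theorem \ref{th:semi-cuspidal}, the identification of the number of simples of weight $\nu$ with the Kostant partition function, and an induction on height. The paper runs the induction directly on $\rho^\vee(\nu)$, noting that the simples whose semi-cuspidal decomposition has at least two parts account, by the inductive hypothesis, for exactly the terms of the Kostant sum in which the $\be_i$ do not all share an argument; you have packaged the same bookkeeping as a generating-function identity. That packaging is fine, but the justification you give for its crucial step is wrong as stated.

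The problem is the claim that positive roots with distinct arguments generate linearly independent submonoids of $Q^+$, so that ``the character factors uniquely by argument.'' This is false: already for $\mathfrak{sl}_3$ with a generic charge the roots $\al_1$, $\al_2$, $\al_1+\al_2$ have three distinct arguments, yet $\al_1+\al_2$ lies both on its own ray and in the sum of the monoids generated by $\al_1$ and $\al_2$, so $Q^+$ is not the direct sum of its single-argument pieces and a weight of argument $\theta$ can perfectly well be a sum of weights of other arguments. The angle-by-angle matching of the two factorizations is nevertheless correct, but it has to be proved by induction on $\langle\nu,\rho^\vee\rangle$: assuming the two factors attached to each argument agree in all heights below $n$, take $\nu$ of height $n$ with $\arg c(\nu)=\theta_0$; in either product, every contribution to the coefficient of $e^\nu$ coming from two or more nontrivial factors involves only weights of height strictly less than $n$, so these contributions agree by the inductive hypothesis, forcing the single-factor contributions --- the coefficients of $e^\nu$ in the two $\theta_0$-factors --- to coincide as well. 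With that substitution your argument closes up and is essentially identical to the proof in the paper.
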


\begin{proof}
  We proceed by induction on
  $\rho^\vee(\nu)$. If $\nu$ is a simple root, then the
  statement is obvious, providing the base case. 
 
 In general
  \begin{equation*} \label{eq:cp}
  \dim U(\fn)_\nu =\sum_{\substack{\nu=\be_1+\cdots+\be_n}} \prod_{i=1}^n m_{\be_i},
  \end{equation*}
so this is the number of isomorphism classes of simples in  $R(\nu)\text{-nmod}$. By the inductive assumption and Theorem \ref{th:semi-cuspidal}, the number of these simples that have a semi-cuspidal decomposition with at least two parts accounts for all the terms where the $c(\beta_j)$ do not all have the same argument. Thus the remaining terms give the number of semi-cuspidal simples. 
\end{proof}

\begin{cor}\label{cor:finite-type}
  If $\fg$ is finite type and $c$ is a charge such that $\arg c(\alpha) \neq \arg c(\beta)$ for all $\alpha \neq \beta \in \Delta_+$, then there is a unique cuspidal representation $\mathscr{L}_\al$
  of $R(\al)$ for each positive root $\al$, and no others. 
\end{cor}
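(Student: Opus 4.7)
The plan is to deduce Corollary \ref{cor:finite-type} from the semi-cuspidal count of Corollary \ref{cor:semi-cuspidal-count} by determining precisely which semi-cuspidals are cuspidal in the finite-type, generic-$c$ setting. Since every cuspidal is semi-cuspidal by definition, Corollary \ref{cor:semi-cuspidal-count} provides an upper bound. In finite type $m_\beta=1$ for every positive root $\beta$, and the genericity hypothesis forces any decomposition $\nu=\beta_1+\cdots+\beta_n$ with $\arg c(\beta_i)=\arg c(\nu)$ to have all $\beta_i$ equal to the unique positive root of that argument (if one exists). The count is therefore $1$ when $\nu=n\beta$ for some $n\geq 1$ and $0$ otherwise. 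Since finite-type root systems are reduced, these weights split cleanly into positive roots ($n=1$) and non-roots $n\beta$ with $n\geq 2$. It thus suffices to prove (i) the unique semi-cuspidal of weight $\alpha\in\Delta_+$ is cuspidal, and (ii) the unique semi-cuspidal of weight $n\beta$ with $n\geq 2$ is not.

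For (i), I would argue by contradiction. Suppose $\mathscr{L}_\alpha$ is semi-cuspidal but not cuspidal; then $\Res{\alpha}{\nu',\alpha-\nu'}\mathscr{L}_\alpha\neq 0$ for some $\nu'$ with $0<\mathrm{ht}(\nu')<\mathrm{ht}(\alpha)$ and $\arg c(\nu')=\arg c(\alpha)$. Taking a simple quotient $L'\boxtimes L''$ of this restriction and applying Frobenius reciprocity yields a surjection $L'\circ L''\twoheadrightarrow\mathscr{L}_\alpha$. Examine the semi-cuspidal decompositions $L'=A(M_1,\dots,M_p)$ and $L''=A(N_1,\dots,N_q)$ provided by Theorem \ref{th:semi-cuspidal}. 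When $p=q=1$, both $L'$ and $L''$ are semi-cuspidals whose weights have argument $\arg c(\alpha)$, so by genericity they are integer multiples $m\alpha$ and $k\alpha$; but then $\alpha=m\alpha+k\alpha$ with $m,k\geq 1$ forces $m+k=1$, contradicting reducedness. Otherwise the factor $P$ of overall maximal argument among $M_1,N_1$ has $\arg c(\wt P)>\arg c(\alpha)$; I would then verify using Lemma \ref{lem:gum} that $(P,P')$ is unmixing, where $P'$ is the remainder of the induced composition (suffixes of words in $P$ have argument $\geq\arg c(\wt P)$ while prefixes of words in $P'$ have strictly smaller argument). Lemma \ref{lem:unmixing-quotient} then guarantees that the generating idempotent image cannot be killed by the kernel of the surjection onto $\mathscr{L}_\alpha$, so $\Res{\alpha}{\wt P,\alpha-\wt P}\mathscr{L}_\alpha\neq 0$, producing a prefix of argument strictly greater than $\arg c(\alpha)$, contradicting semi-cuspidality.

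For (ii), by step (i) the cuspidal $\mathscr{L}_\beta$ exists; cuspidality guarantees (via Lemma \ref{lem:gum}) that $(\mathscr{L}_\beta,\mathscr{L}_\beta)$ is unmixing, and iterating through Lemma \ref{lem:um2} shows that the $n$-fold tuple $(\mathscr{L}_\beta,\dots,\mathscr{L}_\beta)$ is unmixing. By Lemma \ref{lem:unmixing-quotient}, $\mathscr{L}_\beta^{\circ n}$ has a unique simple quotient $L$, which is semi-cuspidal of weight $n\beta$ (the induced module is itself semi-cuspidal, since any shuffled prefix has argument at most $\arg c(\beta)$, and semi-cuspidality is preserved under passage to subquotients), hence must coincide with the unique semi-cuspidal at $n\beta$. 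The same unmixing forces $\Res{n\beta}{\beta,(n-1)\beta}L\neq 0$, so $L$ has a word with a prefix of weight $\beta$; since $\arg c(\beta)=\arg c(n\beta)$, this $L$ is not cuspidal.

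The main obstacle is step (i), and within it the sub-case where one of the semi-cuspidal decompositions of $L'$ or $L''$ is non-trivial: choosing the correct factor of maximal argument in the combined composition, verifying unmixing with the remainder via Lemma \ref{lem:gum}, and extracting the non-vanishing restriction of $\mathscr{L}_\alpha$ from the unmixing-quotient machinery. The rest is routine bookkeeping with the semi-cuspidal tools developed in Section \ref{sec:c-good-words}.
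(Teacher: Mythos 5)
Your overall strategy is the same as the paper's: use Corollary \ref{cor:semi-cuspidal-count} to see that semi-cuspidals occur only in weights $n\beta$ for $\beta\in\Delta_+$ and are unique there, then show the weight-$\beta$ one is cuspidal and the weight-$n\beta$ one ($n\ge 2$) is not, the latter via the induction $\mathscr{L}_\beta^{\circ n}$ exactly as in the paper. Your step (ii) is fine. Your step (i) is a more careful expansion of what the paper dispatches in one sentence (``there is no element of the root lattice on the line from $0$ to $\alpha$''), and your instinct to route the argument through the semi-cuspidal decompositions of $L'$ and $L''$ is the right one, since the genericity hypothesis only constrains arguments of \emph{roots}, not of arbitrary lattice elements.

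However, one branch of your step (i) has a genuine gap. When the maximal-argument factor $P$ is $N_1$, i.e.\ comes from the \emph{second} tensor factor $L''$, Lemma \ref{lem:unmixing-quotient} does not give you $\Res{\alpha}{\wt P,\alpha-\wt P}\mathscr{L}_\alpha\neq 0$: that lemma controls the restriction of the unique simple quotient only in the \emph{original} factor order, and in the surjection $M_1\circ\cdots\circ M_p\circ N_1\circ\cdots\circ N_q\twoheadrightarrow\mathscr{L}_\alpha$ the module $N_1$ sits in the middle, so a prefix of a word of $N_1$ contributes a prefix of $\mathscr{L}_\alpha$ of weight $\nu'+\wt(N_1)$, not $\wt(N_1)$; unmixingness of $(N_1,\text{rest})$ does not by itself produce a surjection $N_1\circ(\text{rest})\twoheadrightarrow\mathscr{L}_\alpha$. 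The fix is that you never need $N_1$: if $p=1$ then $L'$ itself is semi-cuspidal of weight $\nu'$ with $\arg c(\nu')=\arg c(\alpha)$, forcing $\nu'=m\alpha$ with $m\ge 1$ and contradicting $\nu'\lneq\alpha$; if $p\ge 2$ then $\Res{\nu'}{\wt M_1,\,\nu'-\wt M_1}L'\neq 0$ already gives $\Res{\alpha}{\wt M_1,\,\alpha-\wt M_1}\mathscr{L}_\alpha\neq 0$ (a prefix of a prefix is a prefix), and since $\arg c(\wt M_1)>\arg c(\alpha)$ this contradicts semi-cuspidality of $\mathscr{L}_\alpha$ directly. (A second, minor point: a simple \emph{quotient} of the restriction gives, by the adjunction, a map $\mathscr{L}_\alpha\hookrightarrow L''\circ L'$ rather than a surjection $L'\circ L''\twoheadrightarrow\mathscr{L}_\alpha$; take a simple submodule of the restriction, or note that either direction suffices for the character containment you need.)
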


\begin{proof}
By Corollary \ref{cor:semi-cuspidal-count} the only $\nu$ for which there is a semi-cuspidal representation are $\nu= k \alpha$ for some $k \geq 1$ and $\alpha \in \Delta_+$, and in all these cases there is only one isomorphism class $L_{k \alpha}$ of semi-cuspidal representation. The semi-cuspidal representation $L_\alpha$ of dimension $\alpha$ must in fact be cuspidal, since there is no
element of the root lattice on the line from 0 to $\alpha$.

For $k \geq 2$, $L_\alpha^{\circ k}$ is semi-cuspidal, so every
composition factor must be the unique semi-cuspidal $L_{k \alpha}$ of
weight $k \alpha$. Since $L_\alpha^{\circ k}$ is clearly only
semi-cuspidal, the representation $L_{k \alpha}$ cannot be cuspidal. 
\end{proof}

\begin{rem}
For minimal roots (i.e. roots $\alpha$ such that $x \alpha$ is not a root for any $0 < x <1$; see section \ref{ss:convex}), the same arguments used in the proof of Corollary \ref{cor:finite-type} show
that the root multiplicity coincides with the number of cuspidal
representations. However, this is not true for other roots. In \S\ref{sec:an-example} we give an example where this is false for
$\mathfrak{\widehat{sl}}_2$ with $\nu=2\delta$.
\end{rem}

\subsection{Cuspidal decompositions for general convex orders}

We now develop a generalized notion of cuspidal representation and cuspidal decomposition, where we allow any convex order on $\Delta_+^{min}$, not just those coming from charges. 

\begin{defn} \label{def:bicon-comp}
Fix a convex pre-order $\succ$ on $\Delta_+^{min}$.
We say $L \in \KLR$ is {\bf $\succ$-(semi)-cuspidal} if $\wt(L)= \nu\in
  \text{span}_{\mathbb{R}_{\geq 0}}\mathscr{C}$ for some $\succ$-equivalence
  class $\mathscr{C}$ and $L$ is $c$-(semi)-cuspidal for some $(\mathscr{C}, \succ,\langle \nu, \rho^\vee \rangle)$-compatible charge $c$ (see Definition \ref{def:c-comp}).
\end{defn}

\begin{prop} A module 
$L \in \KLR$ with  $\wt(L)= \nu\in
  \text{span}_{\mathbb{R}_{\geq 0}}\mathscr{C}$ is {\bf $\succ$-(semi)-cuspidal} if and only if $L$ is
  $c$-(semi)-cuspidal for all $(\mathscr{C}, \succ,\langle \nu, \rho^\vee \rangle)$-compatible charges $c$.
\end{prop}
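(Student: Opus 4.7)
The plan is to prove both implications. The $(\Leftarrow)$ direction will be immediate, since Lemma~\ref{lem:ect} guarantees that at least one $(\mathscr{C},\succ,n)$-compatible charge exists (with $n=\langle\nu,\rho^\vee\rangle$), so the ``for all'' hypothesis applies in particular to one such charge, giving $\succ$-cuspidality by definition. All the work goes into the other direction.

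For $(\Rightarrow)$ in the semi-cuspidal case, I will fix a compatible $c_0$ witnessing $\succ$-semi-cuspidality of $L$, let $c$ be any other compatible charge, and suppose toward contradiction that $L$ is not $c$-semi-cuspidal. Theorem~\ref{th:semi-cuspidal} will then produce a $c$-semi-cuspidal decomposition $L=A(L_1,\dots,L_h)$ with $h\geq 2$, and the construction in its proof identifies $\wt(L_1)$ as a prefix weight of some word in $\ch(L)$ satisfying $\arg c(\wt L_1)>\arg c(\nu)$. Since $L_1$ is $c$-semi-cuspidal, Corollary~\ref{cor:semi-cuspidal-count} supplies a decomposition $\wt(L_1)=\sum_i\beta_i$ into positive roots with $\arg c(\beta_i)=\arg c(\wt L_1)$. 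Each such $\beta_i$ has depth at most $\langle\wt L_1,\rho^\vee\rangle\leq n$, so compatibility of $c$ forces $\beta_i\succ\mathscr{C}$, and then compatibility of $c_0$ forces $\arg c_0(\beta_i)>\arg c_0(\mathscr{C})=\arg c_0(\nu)$. A positive combination of vectors strictly above the $\nu$-ray stays strictly above it, so $\arg c_0(\wt L_1)>\arg c_0(\nu)$, which will contradict $c_0$-semi-cuspidality since $\wt L_1$ is a prefix weight of $L$.

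For the strict cuspidal case, I will apply the semi-cuspidal version first to obtain $c$-semi-cuspidality, then assume for contradiction a proper prefix weight $\mu$ with $\arg c(\mu)=\arg c(\nu)$. Choosing a simple composition factor $L'\boxtimes L''$ of $\Res{\nu}{\mu,\nu-\mu}(L)$, one checks $L'$ remains $c$-semi-cuspidal because every prefix of a word in $\ch(L')$ is a prefix of a word in $\ch(L)$. Applying Corollary~\ref{cor:semi-cuspidal-count} to $L'$ will write $\mu=\sum_i\beta_i$ as a sum of positive roots of depth $\leq n$ with $\arg c(\beta_i)=\arg c(\mathscr{C})$; compatibility of $c$ upgrades this to $\beta_i\in\mathscr{C}$, and compatibility of $c_0$ then gives $\arg c_0(\beta_i)=\arg c_0(\nu)$ for all $i$, hence $\arg c_0(\mu)=\arg c_0(\nu)$, contradicting strict $c_0$-cuspidality of $L$.

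The crux, and the step most likely to need care, is Corollary~\ref{cor:semi-cuspidal-count}: it converts $c$-semi-cuspidality of a module into a decomposition of its weight as a sum of positive roots of bounded depth sitting on a prescribed ray. Once this is in hand, $(\mathscr{C},\succ,n)$-compatibility of both charges transports the positional information between $c$ and $c_0$, and the contradiction falls out.
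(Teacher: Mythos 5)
Your proof is correct, and its skeleton is the same as the paper's: fix the witnessing charge, take an arbitrary compatible charge, extract from a failure of (semi-)cuspidality an offending weight realized by actual roots of depth $\leq n$, and then use the two iff's in Definition~\ref{def:c-comp} to transport the position of those roots relative to $\mathscr{C}$ from one charge to the other, reaching a contradiction. Where you differ is in the reduction-to-roots step. The paper refines the arbitrary charge's preorder to a total convex order (via Lemma~\ref{refinement}) and takes the weight of the first factor of the resulting semi-cuspidal decomposition, which is automatically a multiple of a single root, so compatibility applies to it directly; the cuspidal and semi-cuspidal cases are then handled uniformly. You instead keep the $c$-semi-cuspidal decomposition itself, and invoke Corollary~\ref{cor:semi-cuspidal-count} to write $\wt(L_1)$ (resp.\ the offending prefix weight $\mu$ in the strict case) as a sum of positive roots all on one ray, transporting each summand separately and then using that a sum of vectors lying strictly (resp.\ weakly) above the $\nu$-ray within the charge's half-plane stays strictly (resp.\ weakly) above it. Both devices work; yours costs the extra counting corollary and a two-case split (with the cuspidal case layered on the semi-cuspidal one), but avoids refining the preorder. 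Two small points of hygiene: the phrase ``$\beta_i\in\mathscr{C}$'' in your strict case is slightly stronger than what compatibility of $c$ literally gives you (namely $\beta_i\not\succ\mathscr{C}$ and $\beta_i\not\prec\mathscr{C}$), but the transfer to $c_0$ only needs the latter, so nothing breaks; and the step ``a positive combination of vectors strictly above the ray stays strictly above it'' does rely on the half-plane condition in Definition~\ref{def:charge}, which you correctly have available.
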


\begin{proof}
 Assume that $L$ is $\succ$-cuspidal, and let $c$ be the $(\mathscr{C},
 \succ,\langle \nu, \rho^\vee \rangle)$ compatible charge from Definition \ref{def:bicon-comp}. Let
 $c'$ be another $(\mathscr{C}, \succ,\langle \nu, \rho^\vee \rangle)$-compatible charge, and assume $L$
 is not cuspidal for $c'$. Thus, there exists a weight $\be$  with $\be
 >_{c'}\mathscr{C} $ such that $\Res{\nu}{\be;\nu-\be}L\neq 0$.  In
 fact, we can assume that $\be$ is a root, by refining $
 >_{c'}$ to a
 total convex order and letting $\be$ be the first root in the
 semi-cuspidal decomposition of $L$ for the refined order.  

 Since $c'$ is $(\mathscr{C}, \succ,\langle \nu, \rho^\vee \rangle)$-compatible, this implies that $\be \succeq
  \al$. Since $c$ is also \mbox{$(\mathscr{C}, \succ,\langle \nu, \rho^\vee \rangle)$} compatible, this implies
  $\be\geq_{c} \mathscr{C}$ as well. But $L$ is $c$-cuspidal, 
  so $\Res{\nu}{\be;\nu-\be}L\neq 0$ is a contradiction. Thus $L$ is in fact cuspidal for $c'$ as well. 
  
  The same
  argument carries through for semi-cuspidality.
\end{proof}
\begin{cor} \label{cor:numcusp}
 For any convex order $\succ$ on $\Delta_+^{\min}$, the number of $\succ$-semi-cuspidal representations of weight $\nu$ is
 \[\sum_{\substack{\nu=\be_1+\cdots+\be_n\\ \be_i\in \mathscr{C} }} \prod_{i=1}^n m_{\be_i},\]
 the sum  of the
  product of the root multiplicities over the distinct ways of writing $\nu$ as a sum of positive roots
  which lie in a single equivalence class $\mathscr{C}$ for the preorder. 
In particular, if $\fg$ is finite type then there is a unique $\succ$-cuspidal $L \in \KLR$ of weight $\alpha$ for each positive root $\al$, and no others. 
\end{cor}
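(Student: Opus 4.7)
The plan is to reduce the count to Corollary~\ref{cor:semi-cuspidal-count} by choosing a compatible charge. Because $\succ$ is a convex total order on $\Delta_+^{\min}$, every equivalence class $\mathscr{C}$ is a singleton $\{\beta_0\}$, so the existence of a $\succ$-semi-cuspidal of weight $\nu$ forces $\nu \in \mathbb{R}_{\geq 0}\beta_0$; that is, $\nu = k\beta_0$ for some $\beta_0 \in \Delta_+^{\min}$ and $k \in \mathbb{Z}_{\geq 0}$. If $\nu$ is not of this form then both sides of the asserted formula vanish, so we may assume $\nu = k\beta_0$.

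For such $\nu$, Lemma~\ref{lem:ect} produces a $(\{\beta_0\}, \succ, \langle\nu, \rho^\vee\rangle)$-compatible charge $c$, and the proposition immediately following Definition~\ref{def:bicon-comp} ensures that the $\succ$-semi-cuspidals of weight $\nu$ coincide with the $c$-semi-cuspidals of weight $\nu$. Corollary~\ref{cor:semi-cuspidal-count} then counts the latter as $\sum \prod m_{\beta_i}$ over decompositions $\nu = \beta_1 + \cdots + \beta_n$ into positive roots with $\arg c(\beta_i) = \arg c(\nu)$.

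The heart of the argument is to match the two index sets. Any $\beta_i$ entering such a decomposition is a positive root with $\langle \beta_i,\rho^\vee\rangle \leq \langle \nu,\rho^\vee\rangle$, so we may write $\beta_i = m_i\gamma_i$ with $\gamma_i \in \Delta_+^{\min}$, $m_i \geq 1$, and $\langle \gamma_i,\rho^\vee\rangle \leq \langle\nu,\rho^\vee\rangle$. Linearity of $c$ yields $\arg c(\gamma_i) = \arg c(\beta_i) = \arg c(\beta_0)$, so the compatibility condition on $c$ at the minimal root $\gamma_i$ rules out both $\gamma_i \succ \beta_0$ and $\gamma_i \prec \beta_0$; since $\succ$ is total we conclude $\gamma_i = \beta_0$. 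Hence every $\beta_i$ is a positive integer multiple of $\beta_0$, and so parallel to $\nu$. The converse direction is immediate from linearity of $c$, so the two index sets coincide and the counts match term by term.

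For the finite type assertion, $\Delta_+^{\min} = \Delta_+$ is reduced, so for a positive root $\alpha$ the only positive root parallel to $\alpha$ is $\alpha$ itself; the sum for $\nu = \alpha$ therefore collapses to a single term with $m_\alpha = 1$, yielding a unique $\succ$-semi-cuspidal of weight $\alpha$, which is actually cuspidal because there are no lattice points strictly between $0$ and $\alpha$ on the ray $\mathbb{R}_{\geq 0}\alpha$ (exactly as in the proof of Corollary~\ref{cor:finite-type}). In finite type the only other weights admitting any $\succ$-semi-cuspidal are the multiples $k\alpha$ with $k \geq 2$, and for these the character of $L_\alpha^{\circ k}$ exhibits a prefix of weight $(k-1)\alpha$ with the same argument as $k\alpha$, so the unique semi-cuspidal of weight $k\alpha$ is not cuspidal. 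The single nontrivial step in the whole argument is the use of compatibility of $c$ to cut the condition $\arg c(\beta_i) = \arg c(\nu)$ down to parallelism with $\nu$; this is where the totality of $\succ$ (as opposed to a mere convex pre-order) is essential.
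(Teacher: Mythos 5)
Your proof is correct and takes essentially the same route as the paper's: the paper's own proof is a one-line reduction to Corollaries \ref{cor:semi-cuspidal-count} and \ref{cor:finite-type} via a $(\mathscr{C},\succ,\langle \nu,\rho^\vee\rangle)$-compatible charge, and you simply spell out the index-set matching (that compatibility forces any root of argument $\arg c(\nu)$ and depth $\leq \langle\nu,\rho^\vee\rangle$ to be a multiple of the minimal root $\beta_0$) which the paper leaves implicit.
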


\begin{proof}
This follows immediately from Corollary \ref{cor:semi-cuspidal-count} and Corollary \ref{cor:finite-type} using some

$(\mathscr{C},\succ,\langle \nu, \rho^\vee \rangle)$-com\-pat\-ible charge $c$. 
\end{proof}

\begin{lemma} \label{lem:is-u}
  Fix a convex pre-order $\succ$. Any $h$-tuple $L_1,\dots,L_h$ of $\succ$-semi-cuspidal representations with $\wt(L_1)\succ
  \cdots \succ
  \wt(L_h)$ is unmixing.
\end{lemma}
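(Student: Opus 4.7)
The plan is to reduce to Lemma \ref{lem:semi-cuspidal-unmixing} by producing a single charge $c$ that is simultaneously $(\mathscr{C}_i,\succ,n)$-compatible for every $i=1,\dots,h$, where $\mathscr{C}_i$ denotes the $\succ$-equivalence class with $\wt(L_i)\in\text{span}_{\R_{\geq 0}}\mathscr{C}_i$ (the hypothesis $\wt(L_1)\succ\cdots\succ\wt(L_h)$ means precisely that $\mathscr{C}_1\succ\cdots\succ\mathscr{C}_h$). I will take $n=\sum_i\langle\wt(L_i),\rho^\vee\rangle$, which bounds the depth of every weight that occurs as a prefix of a word in the character of any $L_j$.

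To construct such a $c$, Lemma \ref{lem:convex} provides for each $i$ a cooriented hyperplane $H_i\subset V$ containing $\text{span}_{\R}\mathscr{C}_i$ and correctly separating those roots of depth $\leq n$ outside $\mathscr{C}_i$ according to their position relative to $\mathscr{C}_i$ in $\succ$. Coordinating these choices---by a perturbation/genericity argument in the spirit of Lemma \ref{lem:orderexists}---one arranges that the intersection $K=\bigcap_i H_i$ has codimension exactly two in $V$. Composing $V\twoheadrightarrow V/K$ with a linear identification $V/K\xrightarrow{\sim}\C$ then yields a charge $c$ in which each $c(\mathscr{C}_i)$ lies on a single ray, with the angular ordering $\arg c(\mathscr{C}_1)>\cdots>\arg c(\mathscr{C}_h)$. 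By the proposition following Definition \ref{def:bicon-comp}, each $L_i$ is $c$-semi-cuspidal, and since $\wt(L_1)>_c\cdots>_c\wt(L_h)$ strictly by construction, Lemma \ref{lem:semi-cuspidal-unmixing} applies directly to show that $(L_1,\dots,L_h)$ is unmixing.

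The main obstacle is the coordination step in the charge construction: Lemma \ref{lem:ect} only produces a compatible charge one class at a time, whereas here we need a single charge compatible with all $h$ classes simultaneously. The geometric content is that the hyperplanes $H_i$ from Lemma \ref{lem:convex} can be chosen to share a common codimension-two subspace, and while each individual $H_i$ has substantial flexibility within the space of valid separating hyperplanes containing $\text{span}_{\R}\mathscr{C}_i$, carefully verifying that this flexibility can be exploited uniformly in $i$ is where the technical core of the proof will lie.
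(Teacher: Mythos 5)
The gap is exactly the ``coordination step'' you flag, and it is not merely technical: the single charge you need does not exist in general. Your construction requires the separating hyperplanes $H_1,\dots,H_h$ of Lemma \ref{lem:convex} --- each of which must correctly separate \emph{all} roots of depth $\leq n$ from its class $\mathscr{C}_i$, since that is what $(\mathscr{C}_i,\succ,n)$-compatibility (Definition \ref{def:c-comp}) demands before the proposition following Definition \ref{def:bicon-comp} lets you conclude that $L_i$ is $c$-semi-cuspidal --- to share a common codimension-two subspace $K=\ker c$. When $\succ$ is a total order and the $L_i$ are cuspidals attached to many distinct roots, this forces $c$ to induce the order $\succ$ on all roots of the relevant depths, i.e.\ to realize $\succ$ globally by a charge. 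But convex orders need not be realizable by charges: already for finite root systems of moderate rank there are convex orders (reduced words for $w_0$) induced by no linear projection to $\C$ --- this is the non-realizability of certain simple allowable sequences --- and this is precisely why the paper sets up compatibility one equivalence class at a time (Lemma \ref{lem:ect}) instead of asserting a global charge. So Lemma \ref{lem:semi-cuspidal-unmixing} cannot be invoked wholesale, and the perturbation argument you gesture at cannot produce the required $K$.

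The paper's proof sidesteps the simultaneity problem via Lemma \ref{lem:um2}: unmixing of the $h$-tuple follows from unmixing at each of the $h-1$ split points, and for the split at position $r$ one only needs a charge $c$ compatible with the single class $\mathscr{C}_r$ containing $\wt(L_r)$, which Lemma \ref{lem:ect} does supply. With respect to that one charge, every suffix weight of each $L_i$ with $i\leq r$ has argument at least $\arg c(\mathscr{C}_r)$, while every prefix weight of each $L_j$ with $j>r$ has argument strictly smaller; hence no suffix of the left block matches a prefix of the right block, and Lemma \ref{lem:gum} gives unmixing of that split. To repair your argument you would have to replace the global charge by this split-by-split use of locally compatible charges --- at which point you have reproduced the paper's proof.
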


\begin{proof}
Fix $1 \leq r \leq h-1$. Let $\wt (L_r) \in \text{span} \mathscr{C} $ for some equivalence class $\mathscr{C}$, and choose a $(\mathscr{C}, \succ,\langle \nu, \rho^\vee \rangle)$ compatible charge $c$. Then, for any $i \leq r, j >r$, the $c$-cuspidal decomposition for any $L_i$ with $i \leq r$ only involves representations of weight $\geq_c \wt(L_r)$, and the $c$-cuspidal decomposition of $L_j$ for $j>r$ only involve representations of weight $>_c \mathscr{C}$. Hence there can be no suffix of $L_i$ with the same weight as a prefix of $L_j$, so $(L_i, L_j)$ is unmixing by Lemma \ref{lem:gum}. This holds for all $r$, so the lemma follows by Lemma \ref{lem:um2}.  
\end{proof}

\begin{thm} \label{assignment}
 Fix a convex pre-order $\succ$. If $L_1,\dots,L_h \in \KLR$ are $\succ$-semi-cuspidal and satisfy $\wt(L_1)\succ
  \cdots \succ
  \wt(L_h)$ then $L_1 \circ \cdots \circ L_h$
  has a unique simple quotient.   Furthermore, every gradable simple appears this way for a unique
  sequence of semi-cuspidal representations. \end{thm}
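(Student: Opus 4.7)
The existence and simplicity of the quotient $A(L_1,\ldots,L_h)$ is immediate: Lemma~\ref{lem:is-u} shows any $\succ$-ordered tuple of $\succ$-semi-cuspidals is unmixing, and Lemma~\ref{lem:unmixing-quotient} then produces the unique simple quotient. The heart of the argument is the existence and uniqueness of the decomposition of an arbitrary gradable simple $L$, and the plan is to mimic the proof of Theorem~\ref{th:semi-cuspidal} by reducing to it via a well-chosen compatible charge.

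For existence, I would induct on $\rho^\vee(\wt L)$. Let $\nu=\wt(L)$ and let $\mathscr{C}$ be the maximal $\succ$-equivalence class such that some non-trivial prefix of a word in $\ch(L)$ has weight in $\text{span}_{\R_{\geq 0}}\mathscr{C}$; among such prefix weights choose $\nu_1$ of maximal $\rho^\vee$-height. Using Lemma~\ref{lem:ect}, pick a $(\mathscr{C},\succ,\rho^\vee(\nu))$-compatible charge $c$ and apply Theorem~\ref{th:semi-cuspidal} to produce the unique $c$-semi-cuspidal decomposition $L=A(L'_1,\ldots,L'_p)$. Compatibility of $c$ together with the maximality of $\mathscr{C}$ and $\nu_1$ force $\wt(L'_1)=\nu_1$, so $\wt(L'_1)\in\text{span}_{\R_{\geq 0}}\mathscr{C}$. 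Let $k$ be the largest index with $\wt(L'_k)\in\text{span}_{\R_{\geq 0}}\mathscr{C}$ and set $L_1:=A(L'_1,\ldots,L'_k)$ and $L'':=A(L'_{k+1},\ldots,L'_p)$. Lemmata~\ref{lem:is-u}, \ref{lem:um2} and \ref{lem:umis} together give $\Res{\nu}{\mu,\nu-\mu}L\cong L_1\boxtimes L''$ where $\mu=\sum_{i\leq k}\wt(L'_i)$, so both factors are simple.

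The crucial step is verifying that $L_1$ is $\succ$-semi-cuspidal. Its weight $\mu$ lies in $\text{span}_{\R_{\geq 0}}\mathscr{C}$, so one need only show $L_1$ is $c$-semi-cuspidal. Since $\ch(L_1)\subseteq\ch(L'_1)*\cdots*\ch(L'_k)$ (shuffle product), every word in $\ch(L_1)$ arises as a shuffle of $c$-semi-cuspidal words; any prefix of such a shuffle decomposes as a sum of prefixes of the shuffled words, and each such prefix lies in the convex cone $\{v:\arg c(v)\leq\arg c(\mathscr{C})\}$, so the sum does too. The inductive hypothesis applied to the strictly shorter simple $L''$ yields a $\succ$-semi-cuspidal decomposition $L''=A(L_2,\ldots,L_m)$ with weights in classes strictly $\prec\mathscr{C}$, and $L=A(L_1,L_2,\ldots,L_m)$ is the required decomposition.

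For uniqueness, given any $\succ$-semi-cuspidal decomposition $L=A(M_1,\ldots,M_g)$, the maximality of $\mathscr{C}$ forces $\wt(M_1)\in\text{span}_{\R_{\geq 0}}\mathscr{C}$ and hence $M_1$ to be $c$-semi-cuspidal. Refining each $M_j$ by its own $c$-semi-cuspidal decomposition and concatenating produces a $c$-semi-cuspidal decomposition of $L$, which by the uniqueness clause of Theorem~\ref{th:semi-cuspidal} must coincide with $(L'_1,\ldots,L'_p)$; thus $M_1=L_1$ and induction finishes. The main technical obstacle is the grouping step --- verifying that the induction of consecutive $c$-semi-cuspidals sharing an argument has a simple quotient which is still semi-cuspidal; this is handled cleanly by the shuffle-product character formula together with convexity of the half-space cone cut out by $c$.
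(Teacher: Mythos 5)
Your overall strategy is the paper's: get the simple quotient from Lemmata \ref{lem:is-u} and \ref{lem:unmixing-quotient}, then reduce the decomposition of a general simple to Theorem \ref{th:semi-cuspidal} by choosing a compatible charge for the top equivalence class and inducting on weight. One structural remark: the step you single out as ``the main technical obstacle'' --- grouping the consecutive $c$-semi-cuspidal factors whose weights lie in $\text{span}_{\R_{\geq 0}}\mathscr{C}$ --- is vacuous. A $(\mathscr{C},\succ,n)$-compatible charge assigns every element of $\text{span}_{\R_{\geq 0}}\mathscr{C}$ the same argument, while the factors of a $c$-semi-cuspidal decomposition have strictly decreasing arguments; hence $k=1$, $L_1=L'_1$ is already a single $c$-semi-cuspidal factor, and it is $\succ$-semi-cuspidal directly from Definition \ref{def:bicon-comp}. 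Your shuffle-product argument is harmless but unnecessary. (Working with the class $\mathscr{C}$ rather than first refining to a total order, as the paper does, is a reasonable alternative and handles genuine pre-orders slightly more directly.)

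The uniqueness step, however, has a real gap. You assert that refining each $M_j$ by its own $c$-semi-cuspidal decomposition and concatenating yields \emph{the} $c$-semi-cuspidal decomposition of $L$, so that the uniqueness clause of Theorem \ref{th:semi-cuspidal} applies. But $c$ is only required to be compatible with the single class $\mathscr{C}$: if $\wt(M_2)$ and $\wt(M_3)$ lie in classes $\mathscr{D}\succ\mathscr{D}'$ both below $\mathscr{C}$, nothing forces $\arg c(\mathscr{D})>\arg c(\mathscr{D}')$, so the concatenated list need not have strictly decreasing $c$-arguments and need not be a $c$-semi-cuspidal decomposition at all. The repair is to match only the first factor, as the paper does: show $\wt(M_1)\in\text{span}_{\R_{\geq 0}}\mathscr{C}$ and that its height equals that of $\nu_1$ (a prefix weight of larger argument, or of the same argument and larger height, would contradict your choice of $\mathscr{C}$ and $\nu_1$ --- this short supporting argument is needed in both the existence and uniqueness halves and should not be delegated to the concatenation); then the simplicity of $\Res{\nu}{\mu,\nu-\mu}L$ forces $M_1\cong L_1$ and $A(M_2,\dots,M_g)\cong L''$, and the inductive hypothesis applied to $L''$, with a fresh compatible charge for \emph{its} top class, completes the uniqueness.
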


\begin{proof}
By Lemmata \ref{lem:unmixing-quotient} and \ref{lem:is-u}, $L_1 \circ \cdots \circ L_h$ has a unique simple quotient.
Now we must show that every simple $L$ is of this form for a unique
$h$-tuple $L_1, \ldots, L_h$.  We can assume that we are dealing with
a total order; otherwise we can refine to a total order, and
define $L_i$ as unique quotients of the inductions of semi-cuspidals
for this finer order of each equivalence class. We proceed by induction on weight. 
 
Consider $\al\in \Delta^{min}_+$ greatest in the order $\succ$
such that
$\Res{\nu}{m\al,\nu-m\al}L\neq 0$ for some $m \geq 1$. Fix an $(\alpha, \succ,\langle \nu, \rho^\vee \rangle)$ compatible charge $c$, and consider the $c$-cuspidal decomposition $L= A(L_1, \ldots L_h)$. If $h=1$ then $L$ is $\succ$-semi-cuspidal, and we are done. 

Otherwise $\wt (L_1)= r \alpha$ for some $r>0$. By induction 
$L'= A(L_2, \ldots, L_h)$ has a unique $\succ$-cuspidal decomposition $L'= A(L'_2, \ldots, L'_s)$ and, for all $j \geq 2$, $L_j'$ satisfies $\al \succ \wt(L_j')$. Hence $L= A(L_1, L_2', \ldots, L_s')$ is an expression of the desired form. 

By Theorem \ref{th:semi-cuspidal} we know the $c$-cuspidal decomposition of $L$ is unique, so for any other such
expression $L=A(L_1'',\dots, L_p'')$, we must have $L_1\cong L_1''$, and uniqueness follows using induction again.
\end{proof}

\begin{rem}
Theorem \ref{assignment} is a generalization of \cite[Theorem 7.2]{KlRa}, which
gives exactly the same sort of description of all simple modules, but 
only applies to the convex orders arising from Lyndon words.  
The finite-type case of Theorem \ref{assignment} (and thus
Corollary \ref{cor:finite-type}) has been shown independently by
McNamara \cite[3.1]{McN}, and this has been extended to affine type by Kleshchev in \cite{Kl}.
\end{rem}

We call the sequence $L_1, \ldots, L_h$ associated to $L \in \KLR$ by Theorem \ref{assignment} the {\bf semi-cuspidal decomposition} of $L$ with respect to $\succ$.

\begin{prop} \label{prop:tpi}
  For any convex order, and any real root $\al$, the iterated
  induction 
  $\scrL^n_\al=\scrL_\al\circ \cdots \circ \scrL_\al$ irreducible, and hence is the unique
  irreducible semi-cuspidal module $\scrL_{n\al}$ of weight $n\al$.
\end{prop}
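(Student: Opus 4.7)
The plan is to reduce to the case $\al=\al_i$ a simple root via reflection of convex orders, and to handle the simple-root case inductively using the jump lemma (Lemma \ref{lem:jump}).

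First I would treat $\al=\al_i$ by induction on $n$. The base case $n=1$ is trivial. For the inductive step, assume $\scrL_i^{n-1}$ is simple; under the bicrystal isomorphism $\QH\cong B(-\infty)$ of Proposition \ref{prop:cos}, this module corresponds to $\te_i^{\,n-1}b_-$, which sits at the top of both its $i$-string and its $i^*$-string through the lowest weight element. Consequently $\varphi_i(\scrL_i^{n-1})=\varphi_i^*(\scrL_i^{n-1})=n-1$, while $\langle(n-1)\al_i,\al_i^\vee\rangle=2(n-1)$, giving $\mathrm{jump}_i(\scrL_i^{n-1})=0$. Lemma \ref{lem:jump} then yields that $\scrL_i\circ\scrL_i^{n-1}=\scrL_i^n$ is simple, providing the identification with $\scrL_{n\al_i}$.

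For a general real root $\al$, write $\al=s_{i_1}\cdots s_{i_k}\al_j$ in the Weyl group, and pick a convex order $\succ$ with $\al_j$ minimal. Reflecting $\succ$ in stages by $s_{i_1},\dots,s_{i_k}$ (using the reflection of convex orders introduced after Lemma \ref{lem:ect}) produces a convex order $\succ'$ for which $\al$ is minimal. By Corollary \ref{cor:numcusp}, the cuspidal $\scrL_\al$ and the unique semi-cuspidal of weight $n\al$ are independent of the convex order, so it is enough to show $\scrL_\al^n$ is simple for this particular $\succ'$. I would construct a bijection between $\succ$-cuspidal simples and $\succ'$-cuspidal simples (of weights not parallel to the initial simple roots of the two orders) compatible with Saito's crystal reflection $\sigma_j$ of Definition \ref{def:ref} and with the $\circ$-product; this transports $\scrL_{\al_j}^n$ to $\scrL_\al^n$ and preserves simplicity.

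The main obstacle is implementing the reflection correspondence at the categorical level rather than purely on $B(-\infty)$. As a fallback, note that $\scrL_\al^n$ is automatically semi-cuspidal (its character is a shuffle of cuspidal words parallel to $\al$), so by Corollary \ref{cor:numcusp} all of its composition factors are isomorphic to the unique semi-cuspidal $\scrL_{n\al}$; length one can then be extracted by computing $\dim\End(\scrL_\al^n)=1$ via the adjunction $\Hom(M\circ N,K)\cong\Hom(N,M\triangleleft K)$ applied inductively, together with the $\sigma$-symmetry from Proposition \ref{prop:cos} to match socle and cosocle of $\scrL_\al^n$.
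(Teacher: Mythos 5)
Your base case ($\al=\al_i$ simple) is correct: the jump-lemma argument does show $\scrL_i^{\circ n}$ is simple, and it is a clean alternative to quoting the standard nilHecke fact. The problem is the reduction from a general real root to a simple one. Saito reflection, as constructed in the paper (Definition \ref{def:ref}, Proposition \ref{prop:SaiA}, Corollary \ref{saito-A}), is a bijection on \emph{isomorphism classes of simples}; it is not a functor on modules. It will happily transport the simple $\scrL_{n\al_j}$ to the simple $\scrL_{n\al}$, but it says nothing about the \emph{length} of the module $\scrL_\al^{\circ n}$, which is exactly what is at stake: by Corollary \ref{cor:numcusp} every composition factor of $\scrL_\al^{\circ n}$ is already known to be $\scrL_{n\al}$, so the only issue is whether $\scrL_\al^{\circ n}$ is a nontrivial self-extension of $\scrL_{n\al}$. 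Moreover, the compatibility of $\Sai_i$ with $\circ$ (Lemma \ref{lem:saito-unmixing} / Corollary \ref{saito-A}) applies only to \emph{unmixing} tuples, and $(\scrL_\al,\dots,\scrL_\al)$ is never unmixing, since $\Res{2\al}{\al,\al}(\scrL_\al\circ\scrL_\al)$ is at least twice the size of $\scrL_\al\boxtimes\scrL_\al$. So "transports $\scrL_{\al_j}^n$ to $\scrL_\al^n$ and preserves simplicity" is precisely the missing step, not a routine implementation detail. (A categorical lift of $\Sai_i$ that would make this work is exactly what the paper says is only available via Kato's quiver-variety functors in symmetric type.)

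Your fallback does not close the gap either. Knowing all composition factors are $\scrL_{n\al}$, it would indeed suffice to show $\dim\End(\scrL_\al^{\circ n})=1$, but the adjunction gives $\End(\scrL_\al^{\circ n})\cong\Hom_{R(\al)\otimes R((n-1)\al)}(\scrL_\al\boxtimes\scrL_\al^{\circ(n-1)},\Res{n\al}{\al,(n-1)\al}\scrL_\al^{\circ n})$, and the Mackey filtration of that restriction has $n$ layers isomorphic to $\scrL_\al\boxtimes\scrL_\al^{\circ(n-1)}$ (each factor can contribute its full weight to the prefix), so the naive bound is $n$, not $1$. The paper avoids reflections and endomorphism rings entirely: it takes the string datum $(a_1,a_2,\dots)$ of $\scrL_\al$, observes that the lexicographically maximal word $\cdots i_2^{na_2}i_1^{na_1}$ in $\ch(\scrL_\al^{\circ n})$ must be the string word of $\scrL_{n\al}$ with multiplicity $(na_1)!(na_2)!\cdots$ there, and then computes via the shuffle product that this word occurs in $\ch(\scrL_\al^{\circ n})$ with exactly the same multiplicity $(a_1!)^n(a_2!)^n\cdots\prod_j (na_j)!/(a_j!)^n=(na_1)!(na_2)!\cdots$; hence $[\scrL_\al^{\circ n}:\scrL_{n\al}]=1$. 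Some such quantitative character count (or an equivalent multiplicity-one argument) is what your write-up still needs.
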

\begin{proof}
  By  Corollary \ref{cor:semi-cuspidal-count}, any composition factor of $\scrL_\al\circ \cdots \circ \scrL_\al$ must be
  semi-cuspidal of weight $n\al$, and furthermore there is only one
  semi-cuspidal simple $\scrL_{n \al}$ of this weight. Thus we need only show that $\scrL_\al\circ \cdots \circ \scrL_\al$
  cannot be an iterated extension of many copies of $\scrL_{n \al}$. 

  Choose a list $i_1,i_2,\dots$ of simple roots in which each
  simple root occurs infinitely many times. 
  Consider the string data (see Definition \ref{def-string}) of the simple $\scrL_\al$,
  considered as an element of $B(-\infty)$. By the definition of the crystal operators, this is the lexicographically maximal list of integers $(a_1,a_2, \dots)$ such that $\cdots
  i_2^{a_2}i_1^{a_1}$ occurs in the character of $\scrL_\al$.  

  The word $\cdots i_2^{na_2}i_1^{na_1}$ occurs in the
  character of $\scrL^n_\al$, and thus in the character of $\scrL_{n \al}$. Furthermore, this is the maximal word in lexicographic order in $\scrL^n_\al$, so it must be the string data of $\scrL_{n\al}$.
  
  A simple inductive argument shows that the restriction of $\scrL_\al$ to $\cdots \otimes
  R_{a_2\al_{i_2}}\otimes R_{a_1\al_{i_1}}$ is a tensor product of
  irreducible modules over nilHecke algebras, and so the word
  $\cdots i_2^{a_2}i_1^{a_1}$ occurs with multiplicity $a_1!a_2!\cdots$ (see \cite[3.7(1)]{KLI} for details). Similarly, the
  multiplicity in $\scrL_{n\al}$ of $\cdots i_2^{na_2}i_1^{na_1}$ is
  $(na_1)!(na_2)!\cdots $.

  On the other hand, the multiplicity of $\cdots i_2^{na_2}i_1^{na_1}$ in $\scrL^n_\al$
  can be computed using shuffle product.  Any word in the character
  $\scrL^n_\al$ which ends with $na_1$ instances of $i_1$ must come from  shuffling $n$ words where the final number of $i_1$'s sum to at
  least 
  $n\al_i$. By the lex-maximality of the string
  data no word in $\scrL_{\al}$ can end with more than $a_1$
  instances of $i_1$, so we can only achieve this by shuffling $n$ words
  that end in $a_1$ instances of $i_1$. Proceeding by induction, we can
  only arrive at $\cdots i_2^{na_2}i_1^{na_1}$ by shuffling $n$ copies
  of $\cdots i_2^{a_2}i_1^{a_1}$.  In each $\scrL_\al$, the multiplicity
  of this word is $a_1!a_2!\cdots
  $ as argued above. For each $j$, there are
  $(na_j)!/(a_j!)^n$ ways of shuffling the letters $i_j$ from that index together. Thus the multiplicity of
  $\cdots i_2^{na_2}i_1^{na_1}$  in the character of $\scrL^n_\al$ is also
  $$(a_1!)^n(a_2!)^n\cdots  \frac{(na_1)!}{(a_1!)^n} \frac{(na_2)!}{(a_2!)^n} \cdots = (na_1)!(na_2)!\cdots .$$  
  Comparing characters shows that $\scrL^n_\al$ can only
  contain one copy of $\scrL_{n\al}$ as a composition factor, completing the proof. 
\end{proof}

\begin{rem}
  The argument in the proof of Proposition \ref{prop:tpi} also shows that, in general, the induction $M\circ
  N$ of two simples contains a unique composition factor whose string data is the sum of those for
  $M$ and $N$; interestingly, this gives a new proof that the set of
  string parametrizations is a semi-group (in finite type it is the integral
  points of a cone).  This same argument is given by Kleshchev \cite[2.31]{Kl}
\end{rem}

\subsection{Saito reflections on \texorpdfstring{$\QH$}{KLR}}
We now discuss how the Saito
reflection from \S\ref{ss:crystal} works when the underlying set
of $B(-\infty)$ is identified with $\QH$, and specifically how it
interacts with the operation of induction.

\begin{lemma}\label{lem:unmixing}
Assume that $(L_1,L_2)$ is an unmixing pair (see
Definition \ref{def:unmixing}) with unique simple quotient $L$, and that $\varphi_i^*(L_1)=\varphi_i^*(L_2)=0$. Then, for all $n \geq 0$, $(\te_i^*)^nL$ is the unique simple quotient of 
  \begin{equation*}
    L^{(n)}=
    \begin{cases}
       (\te_i^*)^nL_1\circ L_2 & n\leq \epsilon_i(L_1)\\
      (\te_i^*)^{\epsilon_i(L_1)} L_1\circ
      (\te_i^*)^{n-\epsilon_i(L_1)} L_2 & n> \epsilon_i(L_1).
    \end{cases}
  \end{equation*}
  Similarly, if $\varphi_i(L_1)=\varphi_i(L_2)=0$, then  $(\te_i)^nL$ is the unique simple quotient of 
  \begin{equation*}
    \begin{cases}
       L_1\circ (\te_i)^n L_2 & n\leq \epsilon_i^*(L_2)\\
      (\te_i)^{n-\epsilon_i^*(L_2)} L_1\circ
      (\te_i)^{\epsilon_i^*(L_2)} L_2 & n> \epsilon_i^*(L_2).
    \end{cases}
  \end{equation*}
\end{lemma}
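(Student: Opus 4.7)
The plan is to proceed by induction on $n$, with the base case $n=0$ being exactly the hypothesis that $L$ is the unique simple quotient of $L_1 \circ L_2$.

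For the inductive step in the first regime $n < \epsilon_i(L_1)$, I exploit the definition $\te_i^* N = \cosoc(\mathscr{L}_i \circ N)$ together with the exactness of induction. The surjection $\mathscr{L}_i \circ (\te_i^*)^n L_1 \twoheadrightarrow (\te_i^*)^{n+1} L_1$ yields a surjection $\mathscr{L}_i \circ L^{(n)} \twoheadrightarrow L^{(n+1)}$. By the inductive hypothesis, $L^{(n)}$ surjects onto $(\te_i^*)^n L$, so $\mathscr{L}_i \circ L^{(n)}$ also surjects onto $\mathscr{L}_i \circ (\te_i^*)^n L$, whose cosoc is $(\te_i^*)^{n+1} L$. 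Thus $(\te_i^*)^{n+1} L$ is a simple quotient of $L^{(n+1)}$. For uniqueness, any simple quotient $M$ of $L^{(n+1)}$ pulls back to one of $\mathscr{L}_i \circ L^{(n)}$; the adjunction $\Hom(\mathscr{L}_i \circ N, M) \cong \Hom(N, \mathscr{L}_i \triangleleft M)$ combined with the inductive uniqueness of the simple quotient of $L^{(n)}$ and the socle characterization $\tf_i^* M = \soc(\mathscr{L}_i \triangleleft M)$ forces $M \cong (\te_i^*)^{n+1} L$.

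The crux, which I expect to be the main obstacle, is the transition at $n = \epsilon_i(L_1)$. I plan to show that at this threshold the module $N_1 := (\te_i^*)^{\epsilon_i(L_1)} L_1$ satisfies $\text{jump}_i(N_1) = 0$; indeed the value $\epsilon_i(L_1)$ should be precisely chosen so that after this many applications of $\te_i^*$ the jump inequality of Lemma~\ref{lem:jump} becomes an equality. Consequently, Lemma~\ref{lem:jump} gives $\mathscr{L}_i \circ N_1 = N_1 \circ \mathscr{L}_i$, so one may commute the extra $\mathscr{L}_i$ past $N_1$ and rewrite $\mathscr{L}_i \circ L^{(\epsilon_i(L_1))} = \mathscr{L}_i \circ N_1 \circ L_2 = N_1 \circ \mathscr{L}_i \circ L_2$. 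Mapping through the surjection $\mathscr{L}_i \circ L_2 \twoheadrightarrow \te_i^* L_2$ then identifies $N_1 \circ \te_i^* L_2$ as a quotient of $\mathscr{L}_i \circ L^{(\epsilon_i(L_1))}$, which in turn admits $(\te_i^*)^{\epsilon_i(L_1)+1} L$ as a simple quotient. Uniqueness follows from the same adjunction argument as before, once one verifies that the pair $(N_1, \te_i^* L_2)$ is still unmixing --- which uses the hypotheses $\varphi_i^*(L_1) = \varphi_i^*(L_2) = 0$ via Lemma~\ref{lem:gum}, since these ensure that neither factor has any $i$-strand that can be shuffled to bridge the interface.

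Once the transition is in hand, the regime $n > \epsilon_i(L_1)$ is handled by a parallel induction on the $L_2$ factor, using the surjection $\mathscr{L}_i \circ (\te_i^*)^{n-\epsilon_i(L_1)} L_2 \twoheadrightarrow (\te_i^*)^{n-\epsilon_i(L_1)+1} L_2$ and the same adjunction to conclude uniqueness at each step. The symmetric statement (with $\varphi_i(L_1) = \varphi_i(L_2) = 0$, $\te_i$ acting on the right factor, and threshold $\epsilon_i^*(L_2)$) then follows by an entirely analogous argument with left/right and starred/unstarred operators exchanged throughout.
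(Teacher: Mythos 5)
Your constructive half --- the surjections $\mathscr{L}_i\circ L^{(n-1)}\twoheadrightarrow L^{(n)}$ in the first regime, and the use of Lemma \ref{lem:jump} to commute $\mathscr{L}_i$ past $(\te_i^*)^{\epsilon_i(L_1)}L_1$ at the threshold --- is exactly what the paper does. The gap is in how you establish \emph{uniqueness} of the simple quotient. Your key claim that the pair $\bigl((\te_i^*)^{\epsilon_i(L_1)}L_1,\ \te_i^*L_2\bigr)$ is unmixing is false in general, and the reason you give for it does not apply: after applying $\te_i^*$ the first factor satisfies $\varphi_i^*\bigl((\te_i^*)^{\epsilon_i(L_1)}L_1\bigr)=\epsilon_i(L_1)$, so it certainly acquires words beginning with $i$; and in any case unmixing is a condition on \emph{all} suffix weights of the first factor versus \emph{all} prefix weights of the second (Lemma \ref{lem:gum}), not only those proportional to $\al_i$. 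Concretely, take $\fg=\mathfrak{sl}_3$, $i=1$, $L_1$ the simple with character $w[21]$, and $L_2=\mathscr{L}_2$: the hypotheses of the lemma hold, $\epsilon_1(L_1)=0$, and the pair at the threshold is $(L_1,\te_1^*\mathscr{L}_2)$ with characters $w[21]$ and $w[12]$, which is not unmixing since $\al_1$ (and also $\al_1+\al_2$) is both a suffix weight of the first factor and a prefix weight of the second. Your adjunction argument for uniqueness in the first regime is also under-justified: a nonzero map $L^{(n)}\to\mathscr{L}_i\triangleleft M$ need not have simple image, so the simplicity of $\soc(\mathscr{L}_i\triangleleft M)$ does not immediately force $M\cong(\te_i^*)^{n+1}L$.

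The paper's fix is a single global observation that replaces both of your uniqueness arguments: since $\varphi_i^*(L_1)=\varphi_i^*(L_2)=0$, no word in the character of $L_1$ or $L_2$ begins with $i$, so the \emph{triple} $(\mathscr{L}_i^n,L_1,L_2)$ is unmixing and $\mathscr{L}_i^n\circ L_1\circ L_2$ has a unique simple quotient by Lemma \ref{lem:unmixing-quotient}. Your surjections exhibit each $L^{(n)}$ as a quotient of $\mathscr{L}_i\circ L^{(n-1)}$ and hence, iterating, as a quotient of $\mathscr{L}_i^n\circ L_1\circ L_2$; a nonzero quotient of a module with a unique simple quotient again has a unique simple quotient, and identifying it with $(\te_i^*)^nL$ then follows from $\te_i^*(-)=\cosoc(\mathscr{L}_i\circ -)$ exactly as you argue. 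In short, the unmixing you need is that of the original pair augmented by the $\mathscr{L}_i$ factors on the left, not that of the pair obtained after applying $\te_i^*$ to the factors.
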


\begin{proof}
Since there are no words in the character of $L_1$ or $L_2$ beginning
with $i$, the triple $(\scrL_i^n,L_1,L_2)$ is unmixing.  By Lemma
\ref{lem:unmixing-quotient}, the induction
$\scrL_i^n\circ L_1\circ L_2$ has a unique simple
quotient.
Thus, if we define a surjective map $\scrL_i\circ
  L^{(n-1)}\to L^{(n)}$, this will show by induction that $L^{(n)}$
  has a unique simple quotient, and that this is
  $(\te_i^*)^nL$.

If $n\leq \epsilon_i(L_1)$, then the map is the obvious one.  If
$n>\epsilon_i(L_1)$, then by the Lauda-Vazirani jump lemma (our Lemma \ref{lem:jump}),  
 \[\scrL_i\circ
(\te_i^*)^{\epsilon_i(L_1)} L_1\cong (\te_i^*)^{\epsilon_i(L_1)}
L_1\circ \scrL_i,\] 
so we have that \[\scrL_i\circ
  L^{(n-1)}\cong (\te_i^*)^{\epsilon_i(L_1)} L_1\circ \scrL_i\circ
  (\te_i^*)^{n-1-\epsilon_i(L_1)} L_2\] which has an obvious surjective
  map  to $L^{(n)}$.  
  The second statement follows by a symmetric argument.
\end{proof}

If $\varphi_i^*(L_1)=\varphi_i^*(L_2)=0$, then any
composition factor $L$ of $L_1\circ L_2$ also has $\varphi_i^*(L)=0$ by \cite[2.18]{KLI}, so Saito reflection of $L$ makes sense.

\begin{lemma}\label{lem:saito-unmixing}
  If $(L_1,L_2)$ is an unmixing pair in $\KLR^2$ such that $\varphi_i^*(L_1)=\varphi_i^*(L_2)=0$, and $(\Sai_i(L_1),\Sai_i(L_2))$ is also an  unmixing
  pair, then $\Sai_i(A(L_1, L_2))= A(\Sai_i(L_1), \Sai_i(L_2))$.
  
  More generally, if $(L_1, \ldots, L_h)$ is unmixing with $\varphi_i^*(L_i)=0$ for all $i$, and $(\Sai L_1, \ldots, \Sai L_h)$ is also unmixing, then 
  $\Sai_i(A(L_1, \ldots, L_h))= A(\Sai_i(L_1), \ldots, \Sai_i(L_h))$. 
\end{lemma}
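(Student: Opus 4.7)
The plan is to prove the $h=2$ case first using Lemma \ref{lem:unmixing}, and then iterate inductively for general $h$.

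For the pair case, let $L=A(L_1,L_2)$ and $L'=A(\sigma_i L_1,\sigma_i L_2)$. The hypothesis $\varphi_i^*(L_j)=0$ implies $\varphi_i^*(L)=0$ via the cited result \cite[2.18]{KLI}, so $\sigma_i L$ is defined. Dually, the defining formula $\sigma_i L_j=(\tilde e_i^*)^{\varepsilon_i(L_j)}\tilde f_i^{\varphi_i(L_j)} L_j$ combined with the jump-commutativity of Proposition \ref{cor:comb-characterizaton2} forces $\varphi_i(\sigma_i L_j)=0$, and hence $\varphi_i(L')=0$ by the analog of \cite[2.18]{KLI}. In particular, the second half of Lemma \ref{lem:unmixing} applies to the pair $(\sigma_i L_1,\sigma_i L_2)$.

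To verify $\sigma_i L = L'$, I would identify both elements by comparing their positions in the $\tilde e_i$-string of $B(-\infty)$. Apply $(\tilde e_i)^n$ to $L'$ using the second half of Lemma \ref{lem:unmixing}: this gives an explicit formula for $(\tilde e_i)^n L'$ as the unique simple quotient of an induction involving $(\tilde e_i)$-shifted copies of the $\sigma_i L_j$. Expanding each factor via $\sigma_i L_j = (\tilde e_i^*)^{\varepsilon_i(L_j)}\tilde f_i^{\varphi_i(L_j)} L_j$ and commuting $\tilde e_i$ past $\tilde e_i^*$ using Proposition \ref{cor:comb-characterizaton2}(vi) and Lemma \ref{lem:jump} (whose hypotheses $\mathrm{jump}_i\geq 2$ are supplied by the nontrivial $\tilde f_i$-action left over after applying the factors' individual Saito reflections), each factor reduces to $(\tilde e_i^*)^{\varepsilon_i(L_j)}\tilde f_i^{\varphi_i(L_j)-?}L_j$ for the correct shift. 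Choosing $n=\varphi_i(L)=\varphi_i(L_1)+\varphi_i(L_2)$ then identifies $(\tilde e_i)^n L'$ with $(\tilde e_i^*)^{\varepsilon_i(L)}L$, which has the same $\tilde e_i$-string position (and weight) as $(\tilde e_i)^n \sigma_i L$. Since agreement in one $\tilde e_i$-string position plus matching weight forces equality of elements of $B(-\infty)$, we conclude $L'=\sigma_i L$.

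The inductive step from the pair case to general $h$ is routine. Set $N:=A(L_2,\ldots,L_h)$ so that $A(L_1,\ldots,L_h)=A(L_1,N)$ by the uniqueness of simple quotients. The hypothesis $\varphi_i^*(N)=0$ follows from iterated application of \cite[2.18]{KLI}, and the pair $(L_1,N)$ is unmixing because the characters of $N$ form a subset of those of $L_2\circ\cdots\circ L_h$, so Lemma \ref{lem:gum}'s word-level criterion is inherited from the unmixing hypothesis on $(L_1,L_2,\ldots,L_h)$. The analogous reasoning on the Saito-reflected side gives that $(\sigma_i L_1,\sigma_i N)$ is unmixing. Combining the inductive hypothesis $\sigma_i N=A(\sigma_i L_2,\ldots,\sigma_i L_h)$ with the pair case applied to $(L_1,N)$ yields the desired identity.

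The main obstacle is the bookkeeping in the pair case: we must repeatedly commute $\tilde e_i$ past $\tilde e_i^*$ inside the factors, which requires verifying $\mathrm{jump}_i\geq 2$ at every step. This positivity should follow from the Saito-reflection formula ensuring that we always have strict slack $\varphi_i(L_j)+\varepsilon_i(L_j)>0$ available, except in trivial cases where $\sigma_i L_j = L_j$ and the identity becomes immediate. The additivity identities $\varphi_i(L)=\varphi_i(L_1)+\varphi_i(L_2)$ and $\varepsilon_i(L)=\varepsilon_i(L_1)+\varepsilon_i(L_2)$, needed to match the exponent counts on the two sides, are extracted from Lemma \ref{lem:unmixing} itself and the additivity of weight under induction.
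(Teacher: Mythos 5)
Your overall strategy is the same as the paper's: apply Lemma \ref{lem:unmixing} to express $(\te_i^*)^{\bullet}A(L_1,L_2)$ and $(\te_i)^{\bullet}A(\Sai_iL_1,\Sai_iL_2)$ as simple quotients of inductions, check that $\te_i^{\varphi_i(L_j)}\Sai_iL_j\cong(\te_i^*)^{\epsilon_i(L_j)}L_j$ factor by factor, deduce that the two elements meet at the top of the $i$-diamond, and then descend; the reduction from general $h$ to $h=2$ via $A(L_1,\dots,L_h)=A(L_1,A(L_2,\dots,L_h))$ also matches the paper. However, there is a genuine gap in the pair case: you assert the additivity $\varphi_i(L)=\varphi_i(L_1)+\varphi_i(L_2)$ (equivalently $\epsilon_i(L)=\epsilon_i(L_1)+\epsilon_i(L_2)$) for $L=A(L_1,L_2)$, claiming it is ``extracted from Lemma \ref{lem:unmixing} itself.'' Lemma \ref{lem:unmixing} gives no such thing, and the identity is false in general — one only has $\varphi_i(L)\le\varphi_i(L_1)+\varphi_i(L_2)$, since $\Res{\nu}{\nu-n\al_i,n\al_i}L$ is a quotient of the corresponding restriction of $L_1\circ L_2$, and the cosocle can lose $i$-suffixes. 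This additivity is load-bearing for you: with $n=\varphi_i(L_1)+\varphi_i(L_2)$ you get $(\te_i)^nL'=(\te_i^*)^{\epsilon_i(L_1)+\epsilon_i(L_2)}L$, whereas $(\te_i)^{\varphi_i(L)}\Sai_iL=(\te_i^*)^{\epsilon_i(L)}L$; these exponents need not agree, so your claim that both sides land at ``the same $\te_i$-string position'' does not follow as written. The ``$?$'' left in your exponent bookkeeping is a symptom of the same unresolved discrepancy.

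The repair is exactly what the paper does: keep the exponents $\varphi_i(L_1)+\varphi_i(L_2)$ and $\epsilon_i(L_1)+\epsilon_i(L_2)$ throughout, and bridge the gap to $\varphi_i(L)$, $\epsilon_i(L)$ using only the additivity of $\wt$ under induction, i.e. $\varphi_i(L_1)+\varphi_i(L_2)-\varphi_i(L)=\epsilon_i(L_1)+\epsilon_i(L_2)-\epsilon_i(L)$. Concretely, write
$\Sai_iL\cong\tf_i^{\varphi_i(L)}(\te_i^*)^{\epsilon_i(L)}L
\cong\tf_i^{\varphi_i(L_1)+\varphi_i(L_2)}\te_i^{\varphi_i(L_1)+\varphi_i(L_2)-\varphi_i(L)}(\te_i^*)^{\epsilon_i(L)}L
\cong\tf_i^{\varphi_i(L_1)+\varphi_i(L_2)}(\te_i^*)^{\epsilon_i(L_1)+\epsilon_i(L_2)}L$,
using the commutation identities $\tf_i^n(\te_i^*)^{\epsilon_i(M)}M\cong(\te_i^*)^{\epsilon_i(M)}\tf_i^nM$ and $\te_i^n(\te_i^*)^{\epsilon_i(M)}M\cong(\te_i^*)^{\epsilon_i(M)+n}M$ (valid for $\tf_i^*M=0$, by Proposition \ref{cor:comb-characterizaton2}), rather than the ad hoc appeal to Lemma \ref{lem:jump} with jump $\ge 2$ at every step. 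With that substitution your argument closes; without it, the key identification is unjustified.
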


\begin{proof}
Let $L=A(L_1, L_2)$ and $L'= A(\Sai_i(L_1), \Sai_i(L_2))$; note that
these are both simple. 
It follows from Proposition \ref{cor:comb-characterizaton2} that, for any $M \in \KLR$ with $\tf_i^*(M)=0$, and any $n \geq 0$, 
\begin{equation}
\varphi_i^*((\te^*_i)^{n}M)+
\varphi_i((\te^*_i)^{n}M)-\langle \wt((\te^*_i)^{n}M),\al_i^\vee\rangle=\max(0,\epsilon_i(M)-n), 
\end{equation}
\begin{equation}
\tf_i^n(\te^*_i)^{\epsilon_i(M)}M \cong
(\te^*_i)^{\epsilon_i(M)}\tf_i^n M \qquad \text{and} \qquad  \te_i^n(\te^*_i)^{\epsilon_i(M)}M \cong
(\te^*_i)^{\epsilon_i(M)+n} M\label{eq:2}.
\end{equation}

By Lemma \ref{lem:unmixing}, $(\te^*_i)^{\epsilon_i(L_1)+\epsilon_i(L_2)}L$  is the unique simple quotient
of $(\te^*_i)^{\epsilon_i(L_1)}L_1\circ
(\te_i^*)^{\epsilon_i(L_2)}L_2$,  and $\te_i^{\varphi_i(L_1)+\varphi_i(L_2)}L'$ is the unique simple quotient of 
\begin{align}
\te_i^{\varphi_i(L_1)+\varphi_i(L_2)-\varepsilon_i^*(\Sai_i L_2)}\Sai_i L_1\circ
\te_i^{\varepsilon_i^*(\Sai_i L_2)}\Sai_i L_2 = 
\te_i^{\varphi_i(L_1)}\Sai_i L_1\circ
\te_i^{\varphi_i(L_2)}\Sai_i L_2,
\end{align}  
where these two expression agree because, by Corollary \ref{cor:KS-diag}, we see $\varepsilon_i^*(\Sai_i L_j) = \varphi_i(L_j).$
By the definition of Saito
reflection (Definition \ref{def:ref}),  
\begin{equation*}
\te_i^{\varphi_i(L_j)}\Sai_i
L_j\cong \te_i^{\varphi_i(L_j)} (\te^*_i)^{\epsilon_i(L_j)}\tf_i^{\varphi_i(L_j)}  L_j\cong   \te_i^{\varphi_i(L_j)} \tf_i^{\varphi_i(L_j)}  (\te^*_i)^{\epsilon_i(L_j)} L_j\cong (\te^*_i)^{\epsilon_i(L_j)} L_j,
\end{equation*}
where the middle step uses \eqref{eq:2}. Thus
 \begin{equation}\label{eq:1}
  (\te^*_i)^{\epsilon_i(L_1)+\epsilon_i(L_2)}L=(\te_i)^{\varphi_i(L_1)+\varphi_i(L_2)}L'.
  \end{equation}
It follows that
\begin{align*}
  \Sai_iL& \cong  (\te^*_i)^{\epsilon_i(L)} \tf_i^{\varphi_i(L)} L &&\text{by Definition}\\
  & \cong \tf_i^{\varphi_i(L)} (\te^*_i)^{\epsilon_i(L)} L &&\text{by \eqref{eq:2}}\\
&\cong\tf_i^{\varphi_i(L_1)+\varphi_i(L_2)}\te_i^{\varphi_i(L_1)+\varphi_i(L_2)-\varphi_i(L)}(\te^*_i)^{\epsilon_i(L)}L \\
&\cong\tf_i^{\varphi_i(L_1)+\varphi_i(L_2)}\te_i^{\epsilon_i(L_1)+\epsilon_i(L_2)-\epsilon_i(L)}(\te^*_i)^{\epsilon_i(L)}L
&&\text{by additivity of weights} \\
&\cong
\tf_i^{\varphi_i(L_1)+\varphi_i(L_2)}(\te^*_i)^{\epsilon_i(L_1)+\epsilon_i(L_2)}L
&&\text{by \eqref{eq:2}}\\
&\cong
\tf_i^{\varphi_i(L_1)+\varphi_i(L_2)}\te_i^{\varphi_i(L_1)+\varphi_i(L_2)}L'
&&\text{by \eqref{eq:1}}\\
&=L'. 
\end{align*}
This completes the proof.

The iterated statement follows by a simple induction, since we have $A(L_1, \ldots, L_h)= A(L_1, A(L_2, \ldots, L_h))$. 
\end{proof}

\begin{prop} \label{prop:SaiA}
Fix a $L \in \KLR$ with $\tilde{f}_i^*L=0$, and let
$(L_1,\dots, L_h)$ be its semi-cuspidal decomposition with respect to a fixed convex pre-order
$\succ$ with $\al_i \succ \wt(L_1)$. Then the $\succ^{s_i}$-semi-cuspidal decomposition of $\Sai_i(L)$ is $(\Sai_iL_1,\dots, \Sai_iL_h)$.  In particular, $\Sai_i$ defines a bijection between $\succ$-semi-cuspidas with $\tilde{f}_i^*L=0$ and $\succ^{s_i}$-semi-cuspidals with $\tilde{f}_iL=0$. The inverse of this bijection is $\Sai_i^*$. 
\end{prop}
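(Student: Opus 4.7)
The plan is to induct on $h$, combining the iterated form of Lemma \ref{lem:saito-unmixing} with the uniqueness assertion in Theorem \ref{assignment}. First, I verify the preliminary fact that each $L_j$ satisfies $\varphi_i^*(L_j) = 0$, so $\Sai_i$ is defined on each factor. This follows from $\succ$-semi-cuspidality: because $\wt(L_j) \preceq \wt(L_1) \prec \alpha_i$, every prefix of a word in the character of $L_j$ has weight $\preceq \wt(L_j)$ and in particular cannot equal $\alpha_i$, so no word in the character of $L_j$ begins with the letter $i$. The same reasoning shows $\varphi_i^*$ vanishes on $A(L_2, \dots, L_h)$ and similar cosocles of inductions of the $L_j$'s, since their characters are dominated by the corresponding shuffle products.

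The base case $h = 1$ asks that if $L$ is $\succ$-semi-cuspidal with $\alpha_i \succ \wt(L)$ and $\varphi_i^*(L) = 0$, then $\Sai_i L$ is $\succ^{s_i}$-semi-cuspidal. I would argue by contradiction: suppose the $\succ^{s_i}$-semi-cuspidal decomposition of $\Sai_i L$ has length $k \geq 2$, say $\Sai_i L = A(M_1, \dots, M_k)$. Two structural inputs conspire. First, a general property of Saito reflection visible from the diagram of Corollary \ref{cor:KS-diag} gives $\varphi_i(\Sai_i L) = 0$, i.e.\ no word in the character of $\Sai_i L$ ends in $i$. Second, unmixing of the decomposition embeds $M_1 \boxtimes \cdots \boxtimes M_k$ in the iterated restriction of $\Sai_i L$, propagating this vanishing down to each $M_j$ and yielding $\varphi_i(M_j) = 0$, so each $\Sai_i^* M_j$ is defined. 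The dual form of Lemma \ref{lem:saito-unmixing} obtained via Kashiwara's involution $*$ then produces $L = \Sai_i^* \Sai_i L = A(\Sai_i^* M_1, \dots, \Sai_i^* M_k)$, whose weights $s_i \wt(M_1), \dots, s_i \wt(M_k)$ form a strictly decreasing chain in $\succ$, contradicting that $L$ is itself a single $\succ$-semi-cuspidal.

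For the inductive step $h \geq 2$, I write $L = A(L_1, L^\flat)$ with $L^\flat = A(L_2, \dots, L_h)$. The induction hypothesis, applied to $L^\flat$, gives $\Sai_i L^\flat = A(\Sai_i L_2, \dots, \Sai_i L_h)$ as its $\succ^{s_i}$-semi-cuspidal decomposition, while the base case handles $L_1$. The weights $s_i \wt(L_1), \dots, s_i \wt(L_h)$ are strictly decreasing in $\succ^{s_i}$, so by Lemma \ref{lem:is-u} the tuple $(\Sai_i L_1, \dots, \Sai_i L_h)$ is unmixing; since the character of $\Sai_i L^\flat$ is contained in that of $\Sai_i L_2 \circ \cdots \circ \Sai_i L_h$, the pair $(\Sai_i L_1, \Sai_i L^\flat)$ is also unmixing. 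Applying Lemma \ref{lem:saito-unmixing} to $(L_1, L^\flat)$ then yields
\[\Sai_i L = A(\Sai_i L_1, \Sai_i L^\flat) = A(\Sai_i L_1, \dots, \Sai_i L_h),\]
which is the required $\succ^{s_i}$-semi-cuspidal decomposition by the uniqueness in Theorem \ref{assignment}. The bijection statement follows from $\varphi_i(\Sai_i L) = 0$, and running the same argument with $\Sai_i^*$ and the order $\succ^{s_i}$ in place of $\Sai_i$ and $\succ$ supplies the two-sided inverse.

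The principal obstacle is the base case $h = 1$: it requires Saito reflection to preserve the character-level condition of semi-cuspidality, and the delicate point is propagating the vanishing $\varphi_i(\Sai_i L) = 0$ through the unmixing restrictions down to every factor $M_j$, so that $\Sai_i^*$ can be applied factor-wise and the uniqueness of the $\succ$-semi-cuspidal decomposition of $L$ forces $k = 1$.
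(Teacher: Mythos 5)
Your overall architecture (reduce to showing the reflected tuple is unmixing, then invoke Lemma \ref{lem:saito-unmixing} and the uniqueness in Theorem \ref{assignment}) matches the paper's, and your inductive step is fine \emph{given} the base case. But the base case is where the real difficulty lives, and your treatment of it is circular. You write $\Sai_i L = A(M_1,\dots,M_k)$ with $k\geq 2$ and then apply ``the dual form of Lemma \ref{lem:saito-unmixing}'' to conclude $L = \Sai_i^*\Sai_i L = A(\Sai_i^* M_1,\dots,\Sai_i^* M_k)$. That lemma, however, has \emph{two} unmixing hypotheses: it requires not only that $(M_1,\dots,M_k)$ be unmixing (which you get from Lemma \ref{lem:is-u}) but also that the \emph{reflected} tuple $(\Sai_i^* M_1,\dots,\Sai_i^* M_k)$ be unmixing. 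The only tool available for verifying that is Lemma \ref{lem:is-u} again, which would require knowing that each $\Sai_i^* M_j$ is $\succ$-semi-cuspidal --- i.e.\ exactly the (dual of the) statement your base case is trying to prove. Without that hypothesis you cannot conclude that $\boxtimes_j \Sai_i^* M_j$ survives in the restriction of $L$, so the final contradiction (a nonzero restriction of $L$ at a weight above its class) is not established. Your preliminary observations ($\varphi_i^*(L_j)=0$, $\varphi_i(\Sai_i L)=0$, and $\varphi_i(M_j)=0$ for each factor) are all correct, but they do not break the circularity.

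The paper resolves precisely this point by a different mechanism: a double induction on \emph{height} with the two interleaved statements $(c_m)$ and $(d_m)$, closed by a counting argument. From $(d_m)$ and Lemma \ref{lem:saito-unmixing} one only deduces the ``easy'' direction --- that $\Sai_i$ sends \emph{non}-semi-cuspidals to non-semi-cuspidals (their length-$\geq 2$ decompositions reflect to length-$\geq 2$ decompositions by the lower-height cases). Then Corollary \ref{cor:numcusp} shows the numbers of $\succ$-semi-cuspidals of weight $\nu$ and of $\succ^{s_i}$-semi-cuspidals of weight $s_i\nu$ agree, and since $\Sai_i$ is a bijection between the simples killed by $\tilde f_i^*$ and those killed by $\tilde f_i$, the pigeonhole principle forces semi-cuspidals to map to semi-cuspidals. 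To repair your proof you would need to import this counting step (or an equivalent substitute) into your base case; as written, the argument does not go through.
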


\begin{proof}
By Lemma \ref{refinement}, we can refine our chosen pre-order to a
total order, and the result for the pre-order is implied by the result
for this refined one.  Thus, we may assume we have a total order.

Choose a $(\mathscr{C}, \succ,\langle \wt(L), \rho^\vee \rangle)$ compatible charge $c$, where $\mathscr{C}$ is the equivalence class such that $\wt(L_1)$ is in its positive span. 
By applying Lemma \ref{lem:i-order} to $>_c$ and the hyperplane defined by $\arg(c(\nu))= \arg(c(\alpha))$, we may assume that $\alpha_i$ is greatest, since otherwise we can change the convex order without affecting the relative order of any pair of roots one of which is $\preceq \wt(L_1)$, and hence without affecting the cuspidal decomposition. 

By Lemma
\ref{lem:saito-unmixing}, it suffices to show that $(\Sai_iL_1,\dots, \Sai_iL_h)$ is unmixing. 
We proceed by induction, considering the two statements:
\begin{itemize}
\item [$(c_m)$] for any root $\al\neq \al_i$ of height at most $m$, any convex order
  $\succ$ such that $\al_i$ is greatest, and any simple $L$ which is
  $\succ$-semi-cuspidal of weight $\al$, the Saito reflection
  $\Sai_iL_\al$ is $\succ^{\alpha_i}$-semi-cuspidal of weight $s_i\al$.
\item [$(d_m)$] for any weight $\nu$ of height $m$, and any
  simple $L$ which is weight $\nu$, Proposition \ref{prop:SaiA} holds.
\end{itemize}

$(c_m)\Rightarrow (d_m)$: Fix $L$ of weight $\nu$ and let $L= A(L_1, \ldots L_h)$ be the
$\succ$-semi-cuspidal decomposition for $L$ with respect to
$\succ$. By $(c_m)$,  for all $1 \leq j \leq h$, the modules $\Sai_i L_j$ are semi-cuspidal with respect to $\succ^{s_i}$, and certainly $\wt(\Sai_i(L_1)) \succ^{s_i} \cdots \succ^{s_i} \wt( \Sai_i( L_h))$, so they are unmixing by Lemma \ref{lem:is-u}. Hence $(d_m)$ holds by Lemma \ref{lem:saito-unmixing}.

$(c_m) \text{ and } (d_m)  \Rightarrow (c_{m+1})$: Fix $\nu$ with height $m+1$. By Corollary \ref{cor:numcusp}, the sets of $\succ$-semi-cuspidals of weight $\nu$ and $\succ^{s_i}$-semi-cuspidals of weight $s_i\nu$
have the same number, and we know $\Sai_i$ is a bijection between the
set of simples $L'$ with $\tilde{f}_i^*L'=0$ and those with
$\tilde{f}_iL'=0$. Using $(d_m)$ and Lemma \ref{lem:saito-unmixing}, we see that if $L$
satisfies $\tilde{f}_i^*L=0$ and is not
semi-cuspidal, then $\Sai_iL$ will likewise not be semi-cuspidal.  The
pigeonhole principle thus implies that if $L$ {\it is} semi-cuspidal,
then $\Sai_iL$ must be as well.

The result follows by induction, using the trivial statement $c_0$ as the base case. 
\end{proof}

Putting Proposition \ref{prop:SaiA} another way:

\begin{cor}\label{saito-A} Fix a convex pre-order $\succ$ and assume that 
  $L_1,\dots,L_h$ are $\succ$-semi-cuspidal representations with
  $\al_i \succ\wt(L_1)\succ
  \cdots \succ
  \wt(L_h)$.  Then 
$$
  \Sai_iA(L_1,\dots, L_h)\cong A(\Sai_iL_1,\dots, \Sai_iL_h).  \qed
$$
\end{cor}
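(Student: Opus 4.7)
The plan is to derive Corollary \ref{saito-A} directly from Proposition \ref{prop:SaiA}. By Theorem \ref{assignment}, the module $L := A(L_1, \ldots, L_h)$ is a well-defined simple whose $\succ$-semi-cuspidal decomposition is exactly $(L_1, \ldots, L_h)$. Thus the only thing to verify before invoking Proposition \ref{prop:SaiA} is the condition $\tilde f_i^* L = 0$.

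To confirm this, I will show that no word in the character of $L$ begins with the letter $i$; in view of Definition \ref{integrate-out} and the fact that $\scrL_i$ is (up to isomorphism) the unique simple $R(\al_i)$-module in $R(\al_i)\text{-nmod}$, this is equivalent to $\tilde f_i^* L = 0$. For each $j$, since $L_j$ is $\succ$-semi-cuspidal with $\wt(L_j) \prec \al_i$, Lemma \ref{lem:ect} supplies a compatible charge $c_j$ whose compatibility covers $\al_i$ (its depth $1$ does not exceed $\langle \wt(L_j), \rho^\vee \rangle$), giving $\al_i >_{c_j} \wt(L_j)$. Any word $\Bi$ in the character of $L_j$ starting with $i$ would contain $\al_i$ as a proper prefix sum, and hence would have $\top(\Bi) \geq_{c_j} \al_i >_{c_j} \wt(L_j)$, contradicting the $c_j$-semi-cuspidality of $L_j$. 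So no word of any $L_j$ begins with $i$, and by the shuffle-product formula $\ch(L_1 \circ \cdots \circ L_h) = \ch(L_1) * \cdots * \ch(L_h)$, the same is true of the induction and of its quotient $L$.

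With $\tilde f_i^* L = 0$ in hand, Proposition \ref{prop:SaiA} identifies the $\succ^{s_i}$-semi-cuspidal decomposition of $\Sai_i L$ as $(\Sai_i L_1, \ldots, \Sai_i L_h)$, which by Theorem \ref{assignment} means exactly that $\Sai_i L \cong A(\Sai_i L_1, \ldots, \Sai_i L_h)$. I anticipate no serious obstacle here: the corollary is essentially a convenient repackaging of Proposition \ref{prop:SaiA}, and the only substantive step is the short character-level verification above.
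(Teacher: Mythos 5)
Your proof is correct and takes the same route as the paper, which states this corollary as an immediate rephrasing of Proposition \ref{prop:SaiA} (with no written proof); the only content to supply is the hypothesis $\tilde f_i^*L=0$, which you verify cleanly via the character/prefix argument using a compatible charge. This matches the justification the paper itself uses elsewhere (e.g. in the proof of Lemma \ref{le:saito-to-simple}, where $\varphi_i^*(L)=0$ is deduced from $\mathscr{L}_{\al_i}$ not appearing at the start of the cuspidal decomposition).
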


\begin{rem}
As was recently explained by Kato \cite{Kato??}, in symmetric finite type there are in fact equivalences of categories
\begin{equation}
\left\{ 
\begin{array}{l}
L \in R\text{-nmod}: \Res{\wt(L)}{\alpha_i,\wt(L)-\alpha_i}(L)=0
\end{array}
\right\}
\leftrightarrow
\left\{ 
\begin{array}{l}
L \in R\text{-nmod}: \Res{\wt(L)}{\wt(L)-\alpha_i, \alpha_i}(L)=0
\end{array}
\right\}
\end{equation}
which induce Saito reflections on the set of simples. Kato's proof uses the geometry of quiver varieties, which is why it is only valid in symmetric type, but it seems likely that
there is an algebraic version of Kato's functor as well, which should extend his result to all symmetrizable types. We feel this should give an alternative and perhaps more satisfying explanation for Proposition \ref{prop:SaiA} and Corollary \ref{saito-A}.
\end{rem}

Corollary \ref{saito-A} is a very important technical tool for us. In particular
it often allows us to reduce questions about cuspidal representations to the
case where the root is simple, using the following.

\begin{lemma} \label{le:saito-to-simple}
Fix a simple $L$ and a convex pre-order $\succ$, and assume the semi-cuspidal decomposition of $L$ is 
$L = A(L_1, \ldots, L_h).$

If $L_1=\mathscr{L}_{n\al}$ for some real root $\al$ which is accessible from below then there is a finite sequence $\sigma_{i_1}, \dots, \sigma_{i_k}$ of Saito reflections such that $s_{i_k} \cdots s_{i_1} \alpha$ is a simple root $\alpha_m$, for each $j$ we have $\varphi_j^*(\sigma_{i_{j-1}} \cdots \sigma_{i_1} L)=0$, and
 \[\sigma_{i_k} \cdots \sigma_{i_1} L=A(\mathscr{L}_{\al_m}^{n},\dots, \sigma_{i_k} \cdots \sigma_{i_1} L_h).\]
 In particular, this holds for all $\alpha$ in finite type, and all $\alpha \succ \delta$ in affine type.

If instead  $L_h=\mathscr{L}_{p\be}$ where $\beta$ is accessible from above, then there is a similar list of
dual Saito reflections $\sigma_{i_1}^*, \dots, \sigma_{i_h}^*$ with
\[\sigma_{i_h}^* \cdots \sigma_{i_1}^*
L=A(\sigma_{i_k}^* \cdots \sigma_{i_1}^*
  L_1,\dots, \mathscr{L}_{\al_\ell}^{p}).\]
   In particular, this holds for all $\beta$ in finite type, and all $\beta \prec \delta$ in affine type.
\end{lemma}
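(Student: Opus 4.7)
The proof is an induction along the accessibility sequence for $\alpha$; the second claim is mirror-symmetric. Since $\alpha$ is accessible from below for $\succ$, there is a sequence $i_1,\ldots,i_k$ such that $\alpha_{i_1}$ is the greatest root for $\succ$, each $\alpha_{i_j}$ is the greatest root for $\succ^{s_{i_1}\cdots s_{i_{j-1}}}$, and $\alpha=s_{i_1}\cdots s_{i_k}\alpha_{i_{k+1}}$, i.e.\ $s_{i_k}\cdots s_{i_1}\alpha=\alpha_{i_{k+1}}=:\alpha_m$. Set $\beta^{(j)}:=s_{i_j}\cdots s_{i_1}\alpha$ (so $\beta^{(0)}=\alpha$, $\beta^{(k)}=\alpha_m$), $L^{(0)}:=L$, and inductively $L^{(j)}:=\sigma_{i_j}L^{(j-1)}$. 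The inductive claim, for $0\le j\le k$, is that $\sigma_{i_j}$ is defined on $L^{(j-1)}$ and that $L^{(j)}$ has $\succ^{s_{i_1}\cdots s_{i_j}}$-semi-cuspidal decomposition with first factor $\mathscr{L}_{n\beta^{(j)}}$ and remaining factors $\sigma_{i_j}\cdots\sigma_{i_1}L_2,\ldots,\sigma_{i_j}\cdots\sigma_{i_1}L_h$.

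The heart of the argument is the verification $\varphi_{i_j}^*(L^{(j-1)})=0$, which is what is needed for $\sigma_{i_j}$ to be defined. By the inductive hypothesis $L^{(j-1)}=A(\mathscr{L}_{n\beta^{(j-1)}},M_2,\ldots,M_h)$. Since $j-1<k$, the root $\beta^{(j-1)}$ is not the first element of the accessibility list for $\succ^{s_{i_1}\cdots s_{i_{j-1}}}$, so $\beta^{(j-1)}\prec\alpha_{i_j}$ strictly, and likewise every $\wt(M_r)$ is strictly less than $\alpha_{i_j}$. Choosing a charge $c$ compatible with this convex order, the semi-cuspidality of each factor forces any prefix $\gamma$ of a word in the factor to satisfy $\arg c(\gamma)\le \arg c(\text{weight of factor})<\arg c(\alpha_{i_j})$, so no factor admits a word beginning with $\alpha_{i_j}$. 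Since words in $L^{(j-1)}$ are shuffles of words from the factors, none begins with $\alpha_{i_j}$, and the Lauda--Vazirani description of $\varphi_{i_j}^*$ gives $\varphi_{i_j}^*(L^{(j-1)})=0$. Corollary \ref{saito-A} (whose hypothesis $\alpha_{i_j}\succ\wt(\mathscr{L}_{n\beta^{(j-1)}})$ is the strict inequality just noted) together with Proposition \ref{prop:SaiA} then yield
\[
L^{(j)}=A\bigl(\sigma_{i_j}\mathscr{L}_{n\beta^{(j-1)}},\sigma_{i_j}M_2,\ldots,\sigma_{i_j}M_h\bigr)
\]
as a $\succ^{s_{i_1}\cdots s_{i_j}}$-semi-cuspidal decomposition. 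The first factor has weight $n\beta^{(j)}$ on the real root $\beta^{(j)}$; by Corollary \ref{cor:numcusp} together with Proposition \ref{prop:tpi}, the unique semi-cuspidal of such a weight is $\mathscr{L}_{n\beta^{(j)}}\cong \mathscr{L}_{\beta^{(j)}}^{\circ n}$, so $\sigma_{i_j}\mathscr{L}_{n\beta^{(j-1)}}=\mathscr{L}_{n\beta^{(j)}}$, closing the induction. After the $k$-th step, $\beta^{(k)}=\alpha_m$ is simple and the decomposition has the desired form.

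The ``in particular'' assertions follow from the standard description of roots accessible from below: in finite type every positive root occurs in the accessibility sequence of the reduced expression for $w_0$ giving $\succ$, while in affine type the real roots $\succ\delta$ are exactly those accessible from below. The second statement, with $L_h=\mathscr{L}_{p\beta}$ and $\beta$ accessible from above, is proved by the mirror argument: one inducts along the dual sequence of \emph{lowest} roots in the reflected orders, applies dual Saito reflections $\sigma_{i_j}^*$, and verifies $\varphi_{i_j}(L^{(j-1)})=0$ by the dual prefix/suffix computation, invoking the dual halves of Corollary \ref{saito-A} and Proposition \ref{prop:SaiA}. The main obstacle in both directions is this prefix/suffix vanishing step, where the semi-cuspidality condition must be leveraged against the extremality of the simple root being reflected; everything else is routine bookkeeping.
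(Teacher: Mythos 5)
Your proof is correct and follows essentially the same route as the paper's: induct along the accessibility sequence (equivalently, on the number of roots $\succ\alpha$), verify $\varphi^*_{i_j}=0$ at each stage from the semi-cuspidal decomposition so that the Saito reflection is defined, transport the decomposition via Corollary \ref{saito-A}/Proposition \ref{prop:SaiA}, and identify the first factor using the uniqueness of the semi-cuspidal module of weight $n\beta$ from Proposition \ref{prop:tpi}. The only cosmetic difference is that the paper dispatches the second statement by noting the two halves are swapped by the Kashiwara involution rather than rerunning the mirror argument, and your character-level justification of the $\varphi^*_{i_j}=0$ step is a more detailed version of the paper's one-line assertion.
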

\begin{proof}
The two statements are swapped by the Kashiwara involution, so we need
only prove the first.  We proceed by induction on the number of 
positive roots $\eta \succ\al$ (which is 
  finite because $\alpha$ is accessible from below). The case when $\alpha$ is greatest with respect to $\succ$ and hence is a simple root is trivially true; so assume that for some $k \geq 1$ the statement is known for all pairs consisting of a root $\alpha$ and a preorder $\succ$ with at most $k-1$ positive
  roots $\eta \succ\al$. 

Fix $\succ$ and $\alpha$ with exactly $k$ roots $\succ \alpha$. Let $\al_{i_1}$ be the greatest root (which is
  necessarily simple).   Then $\vp^*_{i_1}(L)=0$, since
  $\mathscr{L}_{\al_{i_1}}$ does not appear in its cuspidal
  decomposition, and so we can apply Corollary \ref{saito-A}. This reduces to the same questions with $\succ^{s_i}$, and $s_i(\alpha)$ and $\Sai_{i_1}L$.
  Furthermore, the positive roots $\be\succ_{s_i} s_{i_1}\al $
  are exactly those of the form $\be=s_i\be'$ for $\be'\succ\al$ with 
  $\be'\neq \al_{i_1}$, so there are one fewer of these then for $\al$
  and $\succ$, and the induction proceeds until we have found the
  desired sequence.  

By Proposition \ref{prop:SaiA},
  the modules $(\sigma_{i_k} \cdots \sigma_{i_1} L_1,\dots,
  \sigma_{i_k} \cdots \sigma_{i_1} L_h)$ are the semi-cuspidal
  decomposition of $L$ with respect to the convex order $\succ^{s_{i_k} \cdots
    s_{i_1}}=(\cdots (\succ^{s_{i_k}})^{s_{i_2}}\cdots )^{s_{i_1}}.$
  Since $s_{i_k} \cdots
    s_{i_1}\al_{i_1}=\al_m$, we have that $\sigma_{i_k} \cdots
    \sigma_{i_1} L_1\cong \mathscr{L}_{\al_m}^n$.
\end{proof}

\section{KLR polytopes and MV polytopes}
\label{sec:KLR-MV}

\subsection{KLR polytopes} \label{ss:klrp}

\begin{defn}
  For each $L \in \KLR$, the {\bf character polytope} $P_L$ is the convex hull
 of the weights $\nu'$ such that
$\Res{\nu}{{\nu'}, {\nu-\nu'}}L\neq0$.
\end{defn}

\begin{rem} Recalling the definition of the character $\ch(L)$ of $L$ from Remark \ref{rem:char},  we can think of every word $\Bi$ appearing in $\ch(L)$
as a path in $\fh^*$; the polytope $P_L$ can also be described as the
convex hull of all these paths. This explains our terminology. 
\end{rem}

\begin{example}
  Let $\fg=\mathfrak{sl}_3$, and $\nu=2\al_1+\al_2$. Consider 
  the algebra $R(\nu)$, where we take
  $Q_{12}(u,v)= u+v$. Then $R(\nu)$ has 2 simple modules. In fact, it turns out that the
 module $\scrL_1\circ \scrL_2 \circ \scrL_1$ is semi-simple, and these are the two simple summands. Specifically, the subspace $L'$ spanned by the three diagrams
  \begin{equation*}
    \begin{tikzpicture}[very thick,scale=1]

      \draw (0,0)--(1,2) node[below,at start]{$1$}; 
      \draw (1,0)--(0,2) node[below,at start]{$2$};
      \draw (2,0)--(2,2) node[below,at start]{$1$};

      \draw (5,0)--(7,2) node[below,at start]{$1$}; 
      \draw (6,0)--(5,2) node[below,at start]{$2$};
      \draw (7,0)--(6,2) node[below,at start]{$1$};
      
      \draw (10,0)--(12,2) node[below,at start]{$1$}; 
      \draw (11,0) .. controls (10,1) and (10,1) .. (11,2) node[below,at start]{$2$};
      \draw (12,0)--(10,2) node[below,at start]{$1$};
      
        \end{tikzpicture}
  \end{equation*}
 is one of the summands, and the other is spanned by the three diagrams obtained from these by flipping about a vertical axis. 

The characters of these modules are 
\[\ch(L)=2 w[112] + w[121]\qquad \ch(L')=2w[211] +
w[121].\]
The Kashiwara involution switches these simples.  From the characters,
we can read off their character polytopes:
\nc{\opp}{.577350269}
\[
\begin{tikzpicture}
  \node(bba) [label=below:$P_L$]  at (-2,0){
\tikz[very thick,scale=1] {
\draw[dashed,thin] (-1,\opp) -- (0,2*\opp);
\draw (0,0)-- node[midway,below
  left]{$2\al_1$} (-2,2*\opp)-- node[midway,above left]{$\al_2$} (-1,3*\opp)
  node[inner sep=2pt, fill=black, at end] {}; \draw (-1,3*\opp)
  --node[midway,above right]{$\al_1$}(0,2*\opp) -- node[midway,right]{$\al_1+\al_2$}(0,0)
  node[inner sep=2pt,
  fill=black, at end] {};}
};
\node(abb) [label=below:$P_{L'}$] at (2,0){
\tikz[very thick,scale=1] {
\draw[dashed,thin] (-1,\opp) -- (0,2*\opp);
\draw (0,0)-- node[midway,below
  left]{$\al_1$} (-1,1*\opp)-- node[midway, left]{$\al_1+\al_2$} (-1,3*\opp)
  node[inner sep=2pt, fill=black, at end] {}; \draw (-1,3*\opp) --
  node[midway,above right]{$2\al_1$} (1,1*\opp) --node[midway,below right]{$\al_2$} (0,0)
  node[inner sep=2pt,
  fill=black, at end] {};}
};
\end{tikzpicture}
\]
\end{example}

The polytopes $P_L$ live in the $\mathfrak{h}^*$, which has
a natural height function given by pairing with $\rho^\vee$. This orients each edge of the polytope, and
gives every face $F$ a highest vertex $v_t$ and lowest vertex $v_b$.
We associate a KLR algebra $R_F$ to each face $F$ by $R_F:=R({v_t-v_b})$ and consider the subalgebra $R({v_b}) \otimes R_F\otimes R({\nu-v_t})$ of $R(\nu)$. Let $\Res{\nu}{F}$ be the functor restricting to this subalgebra.
\begin{prop} \label{prop:3irr}
  For any simple $L$ and face $F$ of $P_L$, the restriction $\Res\nu F L$ is simple and thus the outer tensor of three simples
  $L'\boxtimes L_F\boxtimes L''$.
  Furthermore, $L$ is the unique simple quotient of $L' \circ L_F \circ L''$. 
\end{prop}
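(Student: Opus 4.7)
My plan is to realize the desired triple factorization as a coarsening of a semi-cuspidal decomposition of $L$ with respect to a convex order adapted to $F$, and to separately establish simplicity of the restriction by iterating Lemma \ref{lem:umis}. Since $F$ is a face of $P_L$, choose a linear functional $\phi\in\fh$ strictly maximized on $F$ at value $m$; declaring $\be\succ\gamma$ when $\phi(\be)>\phi(\gamma)$ gives a convex preorder on $\Delta_+^{min}$ with three equivalence classes, which I refine to a convex total order using Lemma \ref{refinement} with a small $\rho^\vee$-perturbation inside the middle class to distinguish $v_b$ from $v_t$. By Theorem \ref{assignment} the semi-cuspidal decomposition $L = A(L_1,\dots,L_h)$ for this order partitions into three chunks according to the sign of $\phi(\wt(L_i))$; I claim the partial sums $\nu_1, \nu_2$ at the chunk boundaries equal $v_b$ and $v_t$, since $v_b\in P_L$ realizes $\phi = m$ as a prefix weight of some word in the character of $L$, the decomposition's partial sums attain the extremal $\phi$-value at the first chunk boundary, and the $\rho^\vee$-tiebreaking isolates the specific vertex. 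Setting $L' := A(L_1,\dots,L_j)$, $L_F := A(L_{j+1},\dots,L_k)$, $L'' := A(L_{k+1},\dots,L_h)$, Lemma \ref{lem:is-u} gives unmixedness of the full tuple, the easy coarsening direction of Lemma \ref{lem:um2} passes this to the triple $(L', L_F, L'')$, and Lemma \ref{lem:unmixing-quotient} identifies $L$ as the unique simple quotient of $L' \circ L_F \circ L''$, since the surjection $L_1 \circ \cdots \circ L_h \twoheadrightarrow L$ factors through it.

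To see that $\Res{\nu}{F}L$ itself is simple I iterate Lemma \ref{lem:umis}. Any composition factor of $\Res{\nu}{v_b,\nu-v_b}L$ has the form $X\boxtimes N$ with $X$ of weight $v_b$; by Lemma \ref{lem:gum}, $(X,N)$ is unmixing iff there is no nonzero $\mu$ that is simultaneously a suffix weight of a word in the character of $X$ and a prefix weight of a word in the character of $N$. Any such $\mu$ would produce, via a concatenation $\Bi\cdot\Bj\in\ch(L)$, prefixes of weights $v_b\pm\mu$ both lying in $P_L$, and then $v_b = \tfrac{1}{2}\bigl((v_b-\mu)+(v_b+\mu)\bigr)$ contradicts the extremality of $v_b$ as a vertex of $P_L$ unless $\mu = 0$. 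Lemma \ref{lem:umis} therefore gives $\Res{\nu}{v_b,\nu-v_b}L \cong L' \boxtimes M$ for simple $M$; observing that $P_M = P_L - v_b$ (by Frobenius reciprocity) has $v_t - v_b$ as a vertex, the same argument applied to $M$ yields $\Res{\nu-v_b}{v_t-v_b,\nu-v_t}M \cong L_F \boxtimes L''$, so $\Res{\nu}{F}L \cong L' \boxtimes L_F \boxtimes L''$ as required.

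The main obstacle I anticipate is the claim in the first paragraph that the semi-cuspidal partial sums land exactly at $v_b$ and $v_t$. Intuitively the cuspidal decomposition traces a path through $P_L$ from $0$ to $\nu$, with the three chunks moving in the positive-$\phi$, zero-$\phi$, and negative-$\phi$ directions respectively, so the middle chunk must lie in the hyperplane $\phi = m$; but rigorously establishing that this maximum $\phi$-value is attained by the decomposition's partial sums, and that the tiebreaking isolates $v_b$ and $v_t$ specifically among points of $F$, requires a careful compatibility argument between cuspidal decompositions and faces of the character polytope. Once this geometric input is in place, the unmixedness and unique-simple-quotient assertions follow directly from the cited lemmas, while the simplicity of the restriction in the second paragraph proceeds independently via the vertex-extremality midpoint argument.
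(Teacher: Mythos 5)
Your overall strategy---cut $L$ along a semi-cuspidal decomposition adapted to $F$ and use unmixing to identify the restriction and the unique simple quotient---is the paper's strategy, and your second paragraph is a genuinely different (and correct) route to simplicity of $\Res{\nu}{F}L$: the midpoint/extremality argument at the vertex $v_b$ combined with Lemmata \ref{lem:gum} and \ref{lem:umis} is a nice alternative to the paper's observation that $\Res{\nu}{F}L$ is a nonzero quotient of the simple module $\Res{\nu}{F}(L'\circ L_F\circ L'')=L'\boxtimes L_F\boxtimes L''$. But the first paragraph has a genuine error: the relation $\be\succ\gamma\iff\phi(\be)>\phi(\gamma)$ neither has three equivalence classes (its classes are the level sets of $\phi$) nor is it a convex preorder. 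For instance in $\asl_2$ with $\phi(\al_0)=5$, $\phi(\al_1)=-1$ one has $\phi(\delta)=4$ and $\phi(\al_0+\delta)=9$, so taking $\mathscr{C}=\{\al_0\}$, $a=\al_0$, $x=\delta\prec\mathscr{C}$ gives $a+x=\al_0+\delta\succ\mathscr{C}$, violating Definition \ref{def:convex}(ii); the three-class preorder by $\operatorname{sgn}\phi$ fails similarly (with $\phi(\al_0)=1$, $\phi(\al_1)=-1$ take $a=\al_0$, $x=\delta$). So Lemma \ref{refinement} and Theorem \ref{assignment} do not apply to your order. The repair is to order by \emph{argument} rather than by value, i.e.\ to use the charge $c=\phi+i\rho^\vee$ as the paper does: $>_c$ is convex, $\arg c(\al)\gtrless\pi/2$ exactly when $\phi(\al)\lessgtr0$, so Theorem \ref{th:semi-cuspidal} still yields your three chunks, with the middle chunk a single factor since at most one weight in a strictly $>_c$-decreasing sequence can have argument exactly $\pi/2$.

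The step you flag as your "main obstacle" is also a real gap, and it is load-bearing: unless the chunk-boundary partial sums are exactly $v_b$ and $v_t$, the functor $\Res{\nu}{F}$ (restriction to $R({v_b})\otimes R_F\otimes R({\nu-v_t})$) is not the restriction to $R(\wt(L'))\otimes R(\wt(L_F))\otimes R(\wt(L''))$, and your first paragraph says nothing about $\Res{\nu}{F}L$. It can be closed with a short computation you should supply. Every word in $\ch(L)$ is a shuffle of words $\Bk_i\in\ch(L_i)$, and $c$-semi-cuspidality of $L_i$ gives, for every prefix $\mu_i$ of $\Bk_i$, the inequality $\phi(\mu_i)/\rho^\vee(\mu_i)\ge\phi(\wt(L_i))/\rho^\vee(\wt(L_i))$; hence $\phi(\mu_i)\ge\phi(\wt(L_i))$ when $\phi(\wt(L_i))<0$ and $\phi(\mu_i)\ge0$ otherwise. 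Summing over $i$, every prefix weight $\mu$ of $L$ satisfies $\phi(\mu)\ge\phi(\nu_1)$, where $\nu_1$ is the partial sum over the factors with $\phi(\wt(L_i))<0$; since $\nu_1$ is itself a prefix weight by unmixing, $\phi(\nu_1)=\min_{P_L}\phi$, and the equality analysis of the same inequalities shows that every prefix weight lying on $F$ is $\nu_1$ plus a prefix weight of the middle factor. This pins down $\nu_1=v_b$ and the second boundary as $v_t$. With the convex order repaired and this identification in place, the remainder of your argument goes through.
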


\begin{proof}
Choose a linear function $\phi: \mathfrak{h}^* \rightarrow {\Bbb R}$ that obtains its minimum on $P_L$
exactly on $F$, and consider the charge $c=\phi + i \rho^\vee$. By Theorem \ref{th:semi-cuspidal}, 
$L$ is the unique simple quotient of  $L_1\circ\cdots \circ L_h$ for some semi-cuspidals with $\wt(L_1) >_c \cdots >_c \wt(L_h)$. 

If there is some index $k$ such that $\phi(\wt(L_k))=0$, let $L_F= L_k$, let $L'$
be the simple quotient of $L_1 \circ\cdots \circ L_{k-1}$, and let 
$L''$ be the simple quotient of $L_{k+1} \circ\cdots \circ L_{h}$.
Then $L'\circ L_F\circ L''$ is a quotient of $L_1\circ\cdots \circ
L_h$, and thus has a unique simple quotient, which is $L$.  On the
other hand $(L_1, \ldots, L_h)$ is unmixing, which implies $(L', L_F, L'')$ is also unmixing, so
$\Res\nu F (L'\circ L_F\circ L'')=L'\boxtimes L_F\boxtimes L''$, and so
$L$ must also restrict to this same module.

If there is no $k$ such that $\phi(\wt(L_k))=0$, then let $k$ be maximal such that $\phi(\wt(L_k))<0$. Then the same argument applies with 
$L_F= 0$, $L'$ the simple quotient of $L_1 \circ\cdots \circ L_{k}$, and
$L''$ the simple quotient of $L_{k+1} \circ\cdots \circ L_{h}$.
\end{proof}

\begin{defn}\label{def-KLR-poly}
Fix $L \in \QH$.
The {\bf KLR polytope} $\tilde P_L$ of $L$ is the polytope $P_L$ along with the data of the isomorphism class of the semi-cuspidal representation $L_E$ associated to each edge $E$ of $P_L$ in Proposition \ref{prop:3irr}. We denote by $\sfP^\QH$ the set of all KLR polytopes. 
\end{defn}

\begin{rem} \label{rem:not-arbitrary} 
The representations which can appear as the label of an edge $E$ in $\tilde P_L$ are not
arbitrary; they must be semi-cuspidal for any convex order $\succ$ such that $E$ is contained in the path $P_L^{\succ}$ from Lemma \ref{lem:ispath}. \end{rem}

\begin{prop} \label{prop:pseudo-weyl}
 Every edge of $P_L$ is parallel to a positive root of $\fg$. That is, $P_L$ is a pseudo-Weyl polytope. 
\end{prop}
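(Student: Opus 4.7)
The plan is to argue by contradiction. Fix an edge $E$ of $P_L$ with bottom and top vertices $v_b, v_t$ (with respect to $\rho^\vee$) and direction $\nu_E = v_t-v_b \in Q^+$; suppose that $\nu_E$ is not a positive real multiple of any positive root. The goal is to construct a charge $c$ that simultaneously forces the module $L_E$ produced by Proposition \ref{prop:3irr} to be $c$-semi-cuspidal, and (via Corollary \ref{cor:semi-cuspidal-count}) rules out the existence of any $c$-semi-cuspidal module of weight $\nu_E$.

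First, I would follow the argument used in the proof of Proposition \ref{prop:3irr} for an arbitrary $\phi \in \mathfrak{h}$ attaining its minimum on $P_L$ exactly along $E$ and $c = \phi + i\rho^\vee$. The path of partial sums $s_j = \wt(L_1)+\cdots+\wt(L_j)$ associated to the $c$-semi-cuspidal decomposition $L = A(L_1,\dots,L_h)$ lies in $P_L$, and since $\rho^\vee$ strictly increases along this path while $\phi$ attains its minimum on $P_L$ exactly on $E$, the path enters $E$ at $v_b$ and leaves at $v_t$ in a single step realized by some factor $L_k$ with $\wt(L_k)=\nu_E$. Thus $L_E = L_k$ is $c$-semi-cuspidal of weight $\nu_E$ for every such $\phi$.

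The key technical step is then a genericity argument to produce a particularly good $\phi$. The set $N(E) \subset \mathfrak{h}$ of functionals realizing $E$ as the minimum face of $P_L$ is a nonempty open subset of the codimension-one subspace $\{\phi : \phi(\nu_E)=0\}$. The contradiction hypothesis says $\beta \notin \mathbb{R}\nu_E$ for every $\beta \in \Delta_+$, so each linear condition $\phi(\beta)=0$ cuts $\{\phi(\nu_E)=0\}$ in a proper subhyperplane. Since $\Delta_+$ is countable, I can choose $\phi \in N(E)$ with $\phi(\beta) \neq 0$ for every positive root $\beta$.

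For this $\phi$, one has $\arg c(\beta) \neq \pi/2 = \arg c(\nu_E)$ for every $\beta \in \Delta_+$, so no decomposition $\nu_E = \beta_1+\cdots+\beta_n$ into positive roots with $\arg c(\beta_i)=\arg c(\nu_E)$ exists. By Corollary \ref{cor:semi-cuspidal-count} there are then no $c$-semi-cuspidals of weight $\nu_E$, contradicting the previous paragraph and completing the proof. The main obstacle is the genericity step, and what makes it go through is precisely that the contradiction hypothesis guarantees the linear constraint $\phi(\nu_E)=0$ is independent of each of the countably many constraints $\phi(\beta)=0$.
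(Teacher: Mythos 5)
Your proof is correct and takes essentially the same route as the paper's: both hinge on choosing a generic functional $\phi$ cutting out $E$ so that (at most one) positive root lies in its kernel, and then applying Corollary \ref{cor:semi-cuspidal-count} to the semi-cuspidal edge module $L_E$ supplied by Proposition \ref{prop:3irr}. The paper argues directly rather than by contradiction and states the genericity step more tersely, but the content is identical.
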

\begin{proof}
  For any edge $E$, pick a functional $\phi$ which achieves
  its minimum on $P_L$ exactly on $E$ and consider the charge $c=\phi+ i \rho^\vee$. Since at most one element of $\Delta_+^{\min}$ is parallel to $E$, we can ensure that $\phi(\alpha)=0$ for at most one $\alpha \in \Delta_+^{min}$. But  $L_E$ is semi-cuspidal for $c$ so by Corollary \ref{cor:semi-cuspidal-count} $\wt(L_E)$ is a multiple of a positive root, and hence $E$ is parallel to that root.
\end{proof}

\begin{rem} \label{rem:dec-sup}
In finite type, Corollary \ref{cor:semi-cuspidal-count} and Remark \ref{rem:not-arbitrary} show that there is only ever one possible label for a given edge, so $\tilde P_L$ is completely determined by the character polytope $P_L$. Hence $\sfP^{\QH}$ can be thought of as simply a set of pseudo-Weyl polytopes. 
\end{rem}

As in Lemma \ref{lem:ispath}, each convex order $\succ$ defines a path $P_L^\succ$
through $P_L$. We obtain a list of simple
modules $L_1,\dots, L_h$ with $\wt(L_1)\succ\cdots \succ \wt(L_h)$
by taking the modules corresponding to the edges in $P^\succ$. 

\begin{prop} \label{prop:LA}
For any simple $L$ and any convex order $\succ$,
$$L=A(L_1,\dots,
  L_h),$$ where $L_1, \ldots, L_h$ are as described above.
\end{prop}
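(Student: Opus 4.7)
My plan is to show that $P_L^\succ$ traces out the same path as the $\succ$-semi-cuspidal decomposition of $L$ and that the edge labels agree with the semi-cuspidal factors. Applying Theorem \ref{assignment}, I write $L = A(M_1,\ldots,M_k)$ for the unique $\succ$-semi-cuspidal decomposition. Since $\succ$ is total on $\Delta_+^{\min}$ (which contains no parallel vectors), each equivalence class is a singleton, so $\wt(M_i) = a_i\alpha_i$ for distinct positive roots $\alpha_i$ with $\alpha_1\succ\cdots\succ\alpha_k$. By Lemma \ref{lem:is-u} the tuple is unmixing, giving
\[ \Res{\nu}{a_1\alpha_1,\ldots,a_k\alpha_k}L \;\cong\; M_1\boxtimes\cdots\boxtimes M_k; \]
in particular each partial sum $\nu_j := \sum_{i\leq j}a_i\alpha_i$ lies in $P_L$.

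The main step is to verify that each segment $[\nu_{j-1},\nu_j]$ is an edge of $P_L$, since then the uniqueness in Lemma \ref{lem:ispath} forces this to be the $j$-th edge of $P_L^\succ$, and in particular $h=k$. For this, I use Lemma \ref{lem:convex} to produce a linear functional $\phi$ with $\phi(\alpha_j)=0$, $\phi(\alpha_i)>0$ for $i<j$, and $\phi(\alpha_i)<0$ for $i>j$, and I claim that $\phi$ attains its max on $P_L$ exactly on $[\nu_{j-1},\nu_j]$. For the upper bound, any weight $\nu'$ giving a nonzero restriction of $L$ arises as a shuffle-prefix of words from the characters of the $M_i$, so $\nu' = \sum_i \mu_i$ with $\mu_i\in Q^+$ a prefix weight of some word in $\ch(M_i)$. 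Choosing a generic compatible charge $c$ with $\phi = -\operatorname{Re}(c)$ and $\arg c(\alpha_j) = \pi/2$, the $c$-semi-cuspidality of $M_i$ yields $\arg c(\mu_i) \leq \arg c(\alpha_i)$. A case analysis on the sign of $\phi(\alpha_i)$ then bounds $\phi(\mu_i)\leq \max(0,\phi(a_i\alpha_i))$, with equality forcing $\mu_i\in\{0,a_i\alpha_i\}$ for $i\neq j$ and $\mu_j$ a non-negative multiple of $\alpha_j$; summing yields $\phi(\nu')\leq\phi(\nu_j)$ with equality precisely when $\nu'\in[\nu_{j-1},\nu_j]$. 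This convexity argument is the main technical obstacle.

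Once the segments are identified as the edges $E_j$ of $P_L^\succ$, Proposition \ref{prop:3irr} gives that $\Res{\nu}{E_j}L = L'_j\boxtimes L_j\boxtimes L''_j$ is simple, where $L_j$ is the label of $E_j$ in $\tilde P_L$. Since this restriction admits a further refinement onto $R(a_1\alpha_1)\otimes\cdots\otimes R(a_k\alpha_k)$ that must equal $M_1\boxtimes\cdots\boxtimes M_k$, matching the unique tensor factor of weight $a_j\alpha_j$ yields $L_j = M_j$ for each $j$. Therefore $L = A(M_1,\ldots,M_k) = A(L_1,\ldots,L_h)$, as claimed.
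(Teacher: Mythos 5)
Your proof takes a genuinely different route from the paper's. The paper's argument is a short induction on $h$: strip the top edge $E$ of $P_L^\succ$, apply Proposition \ref{prop:3irr} to write $\Res{\nu}{E}L=L'\boxtimes L_h$, observe that $P_{L'}^\succ$ agrees with $P_L^\succ$ below $E$, and reassemble via the inductive hypothesis. You instead start from the $\succ$-semi-cuspidal decomposition $L=A(M_1,\dots,M_k)$ of Theorem \ref{assignment} and show directly that its partial sums are the vertices of $P_L^\succ$, which is more work but has the side benefit of identifying the geometric Lusztig data of $P_L$ with the semi-cuspidal decomposition in one pass.

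The step you flag as the main obstacle does, however, contain a genuine gap. You fix a single functional $\phi$ (equivalently a single charge $c$ with $\phi=-\operatorname{Re}(c)$) separating $\alpha_j$ from the other $\alpha_i$, and then invoke ``the $c$-semi-cuspidality of $M_i$'' to conclude $\arg c(\mu_i)\le\arg c(\alpha_i)$. But $\succ$-semi-cuspidality of $M_i$ (Definition \ref{def:bicon-comp}) only guarantees $c'$-semi-cuspidality for charges $c'$ that are $(\{\alpha_i\},\succ,n)$-compatible, i.e.\ that correctly place \emph{all} roots of bounded depth relative to $\alpha_i$. Your $c$ is at best compatible for the single class $\{\alpha_j\}$; two roots on the same side of the hyperplane $\phi=0$ may be ordered by $>_c$ incompatibly with $\succ$, and a convex order need not be induced by any one charge, so one cannot in general choose a $c$ compatible for all of $\{\alpha_1\},\dots,\{\alpha_k\}$ at once. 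The step can be repaired: replace the appeal to $c$-semi-cuspidality of $M_i$ by its intrinsic consequence that any $\mu_i$ with $\Res{a_i\alpha_i}{\mu_i,\,a_i\alpha_i-\mu_i}M_i\neq0$ lies in $\operatorname{span}_{\mathbb{R}_{\geq 0}}\{\beta\preceq\alpha_i\}$ while $a_i\alpha_i-\mu_i$ lies in $\operatorname{span}_{\mathbb{R}_{\geq 0}}\{\beta\succeq\alpha_i\}$, and take $\phi$ to be a hyperplane from Lemma \ref{lem:convex} deep enough to correctly sign \emph{every} root of height at most $\langle\nu,\rho^\vee\rangle$, not merely $\alpha_1,\dots,\alpha_k$. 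With those adjustments your case analysis and the remainder of the argument (uniqueness of the path from Lemma \ref{lem:ispath}, matching of labels via Proposition \ref{prop:3irr}) go through.
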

\begin{proof}
  We induct on $h$, the case when $L$ is $\succ$-semi-cuspidal being obvious. Let $E$ be the top edge in $P^\succ$, and consider $\Res\nu EL$; by Proposition \ref{prop:3irr} this is of the form
  $L'\boxtimes L_h$.  Obviously
  the edges in $P_{L'}^\succ$ and $P_L^\succ$ coincide up to but not including $E$.
  Thus,
  $L_1,\dots, L_{h-1}$ are the simples associated to this walk for $L'$ by the
  algorithm above, and by the inductive assumption, $L'=A(L_1,\dots, L_{h-1}).$  Thus, $A(L_1,\dots, L_h)$ is the
  unique simple quotient of $L'\circ L_h$, which, again by Proposition \ref{prop:3irr}, is equal to $L$.
  \end{proof}

Proposition \ref{prop:LA} has the following immediate consequences:

\begin{cor} \label{cor:bij1} For any $L \in \KLR$ and any convex order $\succ$, 
  the polytope $P_L$ with the labeling of just its edges along $P^\succ$ uniquely
  determines $L$.  In particular, the map $L\mapsto \tilde P_L$ is
  a bijection $\QH \to \sfP^{\QH}$. \qed
\end{cor}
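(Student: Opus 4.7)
The plan is to extract the result directly from Proposition \ref{prop:LA}, which already does all the real work. Fix a convex order $\succ$ on $\Delta_+^{\min}$. Starting from $P_L$ together with the labels on just those edges that lie on the path $P_L^\succ$, I would read off, in order from $\mu_0(P_L)$ to $\mu^0(P_L)$, the sequence of semi-cuspidal representations $L_1, \ldots, L_h$ attached to the edges of $P_L^\succ$. By Proposition \ref{prop:LA}, this sequence satisfies $L = A(L_1, \ldots, L_h)$, so $L$ is determined by this partial decoration. In particular, $L$ is recoverable from $\tilde P_L$, which records strictly more data.

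For the second assertion, injectivity of $L \mapsto \tilde P_L$ is immediate from the first part (applied to any single convex order, for instance one coming from a generic charge, which exists by Lemma \ref{lem:orderexists}). Surjectivity is built into the definition: $\sfP^{\QH}$ was defined in Definition \ref{def-KLR-poly} to be the set of all KLR polytopes, i.e.\ the image of the map $L \mapsto \tilde P_L$, so every element of $\sfP^{\QH}$ is of the form $\tilde P_L$ by construction.

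No serious obstacle is anticipated, since Proposition \ref{prop:LA} together with the fact that the edge decoration of $\tilde P_L$ restricts to the decoration along $P_L^\succ$ gives the statement directly; the only thing worth being careful about is that the identification of the edges of $P_L^\succ$ with the successive weights $\wt(L_1) \succ \cdots \succ \wt(L_h)$ of the $\succ$-semi-cuspidal decomposition of $L$ matches on both sides, which is exactly the content of the proof of Proposition \ref{prop:LA}.
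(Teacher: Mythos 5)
Your argument is exactly the paper's: the corollary is stated there as an immediate consequence of Proposition \ref{prop:LA} (read off $L_1,\dots,L_h$ from the labeled path and recover $L$ as $A(L_1,\dots,L_h)$), with surjectivity holding by the definition of $\sfP^{\QH}$. The proposal is correct and adds the right amount of care about matching the edge labels to the semi-cuspidal decomposition.
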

\begin{cor} \label{cor:bij2} For any convex order $\succ$
  the function sending a labelled polytope to the list of semi-cuspidal
  representations attached to $P^\succ$ is a bijection from
  $\sfP^{\QH}$ to the set of 
  ordered lists of semi-cuspidal representations. \qed
\end{cor}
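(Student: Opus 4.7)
The plan is to combine Proposition \ref{prop:LA} (which realizes $L$ as $A(L_1,\dots,L_h)$ for the sequence of edge-labels along $P_L^\succ$) with Theorem \ref{assignment} (which guarantees that $A$ is a bijection from ordered tuples of $\succ$-semi-cuspidals to $\KLR$), together with the already-proved Corollary \ref{cor:bij1}.

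First I would verify that the map in question is well defined: given $\tilde P_L \in \sfP^{\QH}$ and the path $P_L^\succ$ from Lemma \ref{lem:ispath}, reading off the isomorphism classes $L_1,\dots,L_h$ attached to its edges (in the order they appear) produces a tuple of simples with $\wt(L_1)\succ\cdots\succ\wt(L_h)$, and each $L_j$ is $\succ$-semi-cuspidal by Remark \ref{rem:not-arbitrary}. This shows the image lies in the set of ordered lists of $\succ$-semi-cuspidals with strictly decreasing weights (with respect to $\succ$).

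Next I would handle injectivity: if two KLR polytopes yield the same tuple $(L_1,\dots,L_h)$, then by Proposition \ref{prop:LA} they both correspond to the simple $L=A(L_1,\dots,L_h)$, and Corollary \ref{cor:bij1} implies that $L$ determines $\tilde P_L$ uniquely, so the two polytopes coincide. For surjectivity, start with any ordered tuple $(L_1,\dots,L_h)$ of $\succ$-semi-cuspidals with $\wt(L_1)\succ\cdots\succ\wt(L_h)$. Theorem \ref{assignment} produces a simple $L=A(L_1,\dots,L_h) \in \QH$, and Proposition \ref{prop:LA} asserts that the labels along $P_L^\succ$ are precisely $(L_1,\dots,L_h)$. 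Hence $\tilde P_L$ maps to the given tuple.

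There is no real obstacle to overcome, since the genuine content, namely the existence and uniqueness of the $A$-decomposition and the realization of $A$-factors as edge labels along $P^\succ$, has already been established. The corollary is essentially a bookkeeping statement that repackages Proposition \ref{prop:LA}, Theorem \ref{assignment}, and Corollary \ref{cor:bij1} into a single bijection statement. The only thing to be careful about is matching the indexing convention in the ordered list with the orientation of $P^\succ$ from $\mu_0(P_L)$ to $\mu^0(P_L)$, which is fixed by Lemma \ref{lem:ispath}.
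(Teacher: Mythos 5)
Your argument is correct and matches the paper's, which simply records Corollary \ref{cor:bij2} as an immediate consequence of Proposition \ref{prop:LA} (with Theorem \ref{assignment} and Corollary \ref{cor:bij1} supplying, exactly as you say, the uniqueness needed for injectivity and the existence needed for surjectivity). Your elaboration is just a careful unpacking of what the paper leaves implicit, so there is nothing to add.
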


\begin{rem}
As mentioned in the introduction, Corollaries \ref{cor:bij1} and
\ref{cor:bij2} essentially mean that the semi-cuspidal decompositions
of $L \in \KLR$ with respect to convex orders can be thought of as
``general type" Lusztig data for $L$. So we have made precise and proven Theorem \ref{thmC}.
\end{rem}

Since the map which takes $L$ to $\tilde P_L$ is injective, the crystal structure on $\QH$ gives rise to a crystal structure on $\sfP^\QH$. Using Corollary \ref{cor:bij2}, we can now describe the resulting crystal operators. In the discussion below we repeatedly use the fact that, for any simple root $\alpha_i$, the unique semi-cuspidal $L_{n \alpha_i}$ of weight $n \alpha_i$ is exactly the induction $L_i^{\circ n}$; this is a special case of Proposition \ref{prop:tpi}, but is also a standard fact about the nil-Hecke algebra. 

\begin{prop}\label{polytope-crystal}
  To apply the operator $\tilde{f}_i$ to $\tilde P\in \sfP^\QH$,
  choose a convex order with $\al_i$ lowest, and read the path
  determined by that order to obtain a list of semi-cuspidal
  representations $L_1,\cdots, L_h$ corresponding to increasing roots
  in that order. If $L_h = \mathscr{L}_i^k$ for some $k \geq 1$, then \[
  \tilde{f}_i \tilde P= \tilde P_{A(L_1,\dots,L_{h-1}, \mathscr{L}_i^{k-1})}.\] If
  $L_h\ncong \mathscr{L}_i^k$, then $\tilde{f}_i \tilde
  P=0$.  \end{prop}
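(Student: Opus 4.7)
The plan is to transfer everything to $\QH$ via the bijection $L \mapsto \tilde P_L$ (Corollary \ref{cor:bij1}) and compute $\tilde f_i L$ directly, handling the two stated cases separately.

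In the first case, when $L_h \cong \mathscr{L}_i^k$ with $k \geq 1$, set $M := A(L_1, \ldots, L_{h-1}, \mathscr{L}_i^{k-1})$ (with the convention that the last factor is dropped when $k = 1$) and instead verify the equivalent statement $\tilde e_i M = L$. By Proposition \ref{prop:cos}, $\tilde e_i M = \cosoc(M \circ \mathscr{L}_i)$. The canonical surjection $L_1 \circ \cdots \circ L_{h-1} \circ \mathscr{L}_i^{k-1} \twoheadrightarrow M$ induces, by right-exactness of $(-) \circ \mathscr{L}_i$, a surjection $L_1 \circ \cdots \circ L_{h-1} \circ \mathscr{L}_i^k \twoheadrightarrow M \circ \mathscr{L}_i$. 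Proposition \ref{prop:LA} identifies the unique simple quotient of the left side as $A(L_1, \ldots, L_{h-1}, \mathscr{L}_i^k) = L$, so the simple cosocle $\tilde e_i M$ of the right side---being also a simple quotient of the left---must equal $L$, giving $\tilde f_i L = M$.

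In the second case, when $L_h$ is not isomorphic to $\mathscr{L}_i^k$ for any $k \geq 1$, it suffices to show $\Res^\nu_{\nu - \alpha_i, \alpha_i} L = 0$, which is equivalent to $\tilde f_i L = 0$ via Proposition \ref{prop:cos} (any nonzero $R(\alpha_i)$-module in our category has $\mathscr{L}_i$ in its socle). Exactness of $\Res$ applied to the surjection $L_1 \circ \cdots \circ L_h \twoheadrightarrow L$, combined with the shuffle formula for characters (Remark \ref{rem:char}), reduces this to showing that no word in the character of any individual $L_k$ ends in $i$.

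The main obstacle is verifying this last claim, where semi-cuspidality enters geometrically. Since $\succ$ is a total order, each $\wt L_k = m_k \beta_k$ for a unique $\beta_k \in \Delta_+^{\min}$; by the hypothesis combined with Proposition \ref{prop:tpi} we have $\beta_h \neq \alpha_i$, and for $k < h$ we have $\beta_k \succ \beta_h \succeq \alpha_i$, so $\alpha_i \prec \beta_k$ strictly in every case. Lemma \ref{lem:ect} then produces a $(\{\beta_k\}, \succ, N)$-compatible charge $c$ with $N$ large enough to cover $\alpha_i$, so $\arg c(\alpha_i) < \arg c(\wt L_k)$. Supposing a word $w \cdot i$ appeared in the character of $L_k$, with $\wt w = \wt L_k - \alpha_i$, $c$-semi-cuspidality of $L_k$ (Definition \ref{def:bicon-comp}) would force $\arg c(\wt w) \leq \arg c(\wt L_k)$. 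Rotating the complex plane so that $c(\wt L_k)$ becomes positive real, the vector $c(\alpha_i)$ would have strictly negative imaginary part (its argument is strictly smaller, and $c(\Delta_+)$ lies in an open half-plane so the argument gap is in $(-\pi,0)$), forcing $c(\wt w) = c(\wt L_k) - c(\alpha_i)$ to have strictly positive imaginary part---contradicting $\arg c(\wt w) \leq \arg c(\wt L_k)$ and completing the argument.
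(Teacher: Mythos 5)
Your proof is correct and follows essentially the same route as the paper's: for the case $L_h\cong\mathscr{L}_i^k$ you establish $\tilde e_i M=L$ by surjecting the induction $L_1\circ\cdots\circ L_{h-1}\circ\mathscr{L}_i^{k}$ onto $M\circ\mathscr{L}_i$ and comparing unique simple quotients, then invoke the crystal axiom $\tilde e_i M=L\Leftrightarrow \tilde f_i L=M$; for the other case you show no word in the character ends in $i$. The only difference is that you spell out the geometric semi-cuspidality argument (via a compatible charge) for why no word in any $L_k$ ends in $i$, a step the paper leaves implicit in its one-line shuffle-product remark.
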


\begin{proof}
 If $L_h\ncong
\mathscr{L}_i^k$, then $L= A(L_1,\dots, L_h)$ is a quotient of $L_1\circ
\dots \circ L_h$, whose character is a quantum shuffle of words not
ending in $i$, and thus contains no words ending in $i$. Hence
$\tilde{f}_i A(L_1,\dots, L_h)=0$ by definition.  

If $L=A(L_1,\dots,L_{h-1}, \mathscr{L}_i^{k})$ for $k \geq 1$, then L is the unique simple quotient of
\[A(L_1,\dots,L_{h-1})\circ  \mathscr{L}_i^{k}\cong
A(L_1,\dots,L_{h-1})\circ  \mathscr{L}_i^{k-1} \circ \mathscr{L}_i.
\]
This surjects onto  
\[ A(L_1,\dots,L_{h-1},  \mathscr{L}_i^{k-1})\circ   \mathscr L_i,\] 
so 
by definition $\tilde{e}_i A(L_1,\dots,L_{h-1},
\mathscr{L}_i^{k-1})=L$.
Since we know the $\e_i, \f_i$ are a crystal structure, the result follows form the properties in Definition \ref{def:crystal}.
\end{proof}

We also have the following, which is simply a restatement of Corollary \ref{saito-A} in the language of polytopes.
\begin{cor}\label{cor:polytope-Saito}
  To apply a Saito reflection functor $\Sai_i$ to a polytope $\tilde P\in \sfP^\QH$ with
  $\tilde{f}_i^* \tilde P=0$, choose a convex order with $\al_i$ greatest and let $L_1,\cdots, L_h$ be
  as before.  Then \[ \Sai_i \tilde P=\tilde P_{A(\Sai_iL_1,\dots,\Sai_iL_h)}.   \] 
  Similarly, to apply $\Sai^*_i$ to a polytope $\tilde P\in \sfP^\QH$ with
  $\tilde{f}_i \tilde P=0$, choose a convex order with $\al_i$ least and let $L_1,\cdots, L_h$ be
  as before.  Then $\Sai^*_i \tilde P=\tilde P_{A(\Sai^*_iL_1,\dots,\Sai^*_iL_h)}.$
  \qed
\end{cor}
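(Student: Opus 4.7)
The plan is to simply unpack Corollary \ref{saito-A} through the bijection $L\mapsto \tilde P_L$ of Corollary \ref{cor:bij1}. Fix $\tilde P\in \sfP^\QH$ with $\tilde f_i^*\tilde P=0$ and let $L\in\QH$ be the corresponding simple module. Choose a convex order $\succ$ with $\al_i$ greatest, and let $L_1,\dots,L_h$ be the list of $\succ$-semi-cuspidal representations obtained by reading the path $P_L^\succ$. By Proposition \ref{prop:LA}, $L= A(L_1,\dots,L_h)$.

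The first real step is to verify that the hypothesis of Corollary \ref{saito-A} is met, namely that $\al_i$ strictly dominates $\wt(L_1)$ in $\succ$. Since $\al_i$ is the greatest root and the weights decrease along the chain, the only alternative is $L_1\cong \mathscr{L}_i^k$ for some $k\geq 1$. But the $*$-analogue of Proposition \ref{polytope-crystal} (applied to the convex order $\succ$ and its opposite, or obtained formally by conjugating by $\sigma$) tells us that in that case $\tilde f_i^*\tilde P$ would be the polytope of $A(\mathscr L_i^{k-1},L_2,\dots,L_h)\neq 0$, contradicting the hypothesis. Hence $\al_i\succ\wt(L_1)\succ\cdots\succ\wt(L_h)$.

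We are now in position to apply Corollary \ref{saito-A}, which yields
\[\Sai_i L \;=\;\Sai_i A(L_1,\dots,L_h)\;\cong\; A(\Sai_i L_1,\dots,\Sai_i L_h).\]
Moreover, by Proposition \ref{prop:SaiA}, the tuple $(\Sai_i L_1,\dots,\Sai_i L_h)$ is precisely the $\succ^{s_i}$-semi-cuspidal decomposition of $\Sai_i L$, so this is genuinely the data read off from the path $P_{\Sai_i L}^{\succ^{s_i}}$. Translating this identity of simple modules back through the bijection $L\mapsto \tilde P_L$ gives the first equality of the corollary.

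The dual statement for $\Sai_i^*$ follows by conjugating everything by the Kashiwara involution: under the twist $(-)^\sigma\colon\QH\to\QH$ of Proposition \ref{prop:cos}, the operators $\tilde e_i,\tilde f_i,\Sai_i$ are exchanged with $\tilde e_i^*,\tilde f_i^*,\Sai_i^*$, and a convex order with $\al_i$ greatest becomes one with $\al_i$ least; semi-cuspidality and the formation of $A(-)$ are preserved, so the argument above applies verbatim. The only subtlety worth being careful about is this translation between the polytope-level hypothesis $\tilde f_i^*\tilde P=0$ and the combinatorial condition $\al_i\succ\wt(L_1)$, but this is essentially forced by Proposition \ref{polytope-crystal}, so no serious obstacle arises.
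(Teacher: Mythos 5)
Your proposal is correct and is essentially the paper's own argument: the paper states this corollary with no separate proof, remarking only that it is ``simply a restatement of Corollary \ref{saito-A} in the language of polytopes,'' and your write-up fleshes out exactly that translation (via Proposition \ref{prop:LA}, the hypothesis check that $\al_i\succ\wt(L_1)$, Corollary \ref{saito-A}, and Proposition \ref{prop:SaiA} to identify the reflected tuple as the $\succ^{s_i}$-semi-cuspidal decomposition). The dual statement via the Kashiwara involution is likewise the intended reading.
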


Comparing Propositions \ref{polytope-crystal} and Corollary \ref{cor:polytope-Saito} with the definition of crystal theoretic Lusztig data it is immediate that:
\begin{cor}\label{cor:crystal-polytope}
  Fix a simple $L$ and a convex order $\succ$. Let $b$ be the element in $B(-\infty)$ corresponding to $L$. The geometric Lusztig data $a^\succ_{\al}(P_L)$ from Definition \ref{def:geom-LD} agrees with the crystal-theoretic Lusztig
  data $\coa^\succ_{\al}(b)$ from Definition \ref{ctLusztig} for all real
  roots $\alpha$ which
  are accessible from above or below. 
  \qed
\end{cor}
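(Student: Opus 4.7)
The proof is a straightforward induction combining Proposition \ref{polytope-crystal} and Corollary \ref{cor:polytope-Saito}; I sketch it for the accessible-from-above case, the accessible-from-below case being dual via Kashiwara's involution $(-)^\sigma$ (see Proposition \ref{prop:cos}), which swaps starred and unstarred crystal operators and leaves the underlying polytope invariant. The induction variable is the depth $k \geq 1$ of $\alpha$ in the accessible-from-above chain of $\succ$, that is, the least $k$ with $\alpha = s_{i_{-1}} \cdots s_{i_{-(k-1)}} \alpha_{i_{-k}}$, obtained by iteratively peeling off the minimum of the convex order.

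For the base case $k = 1$, $\alpha = \alpha_i$ is the minimum simple root of $\succ$, so Definition \ref{ctLusztig} gives $\coa^\succ_{\alpha_i}(b) = \varphi_i(b)$. Writing $L = A(L_1, \ldots, L_h)$ for the $\succ$-semi-cuspidal decomposition, iterating Proposition \ref{polytope-crystal} shows that the last factor is $L_h = \scrL_i^{\varphi_i(b)}$, which gives $a^\succ_{\alpha_i}(P_L) = \varphi_i(b)$. For the inductive step with $\alpha \neq \alpha_i$, set $b' := \tf_i^{\varphi_i(b)} b$; Proposition \ref{polytope-crystal} shows that the $\succ$-semi-cuspidal decomposition of $b'$ is $A(L_1, \ldots, L_{h-1})$, the factor $L_h = \scrL_i^{\varphi_i(b)}$ having been stripped off (and $b' = b$ if $\varphi_i(b) = 0$). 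By Corollary \ref{cor:polytope-Saito}, $\Sai_i^*(b')$ has $\succ^{s_i}$-semi-cuspidal decomposition $A(\Sai_i^* L_1, \ldots, \Sai_i^* L_{h-1})$, and by Proposition \ref{prop:SaiA} the factor $\Sai_i^* L_j$ of a semi-cuspidal $L_j$ of weight $n\alpha$ has weight $n\, s_i\alpha$. Consequently $a^{\succ^{s_i}}_{s_i\alpha}(P_{\Sai_i^*(b')}) = a^\succ_\alpha(P_L)$. Applying the inductive hypothesis to $s_i\alpha$, whose depth in $\succ^{s_i}$ is $k-1$, and combining with the recursive part of Definition \ref{ctLusztig}, yields $\coa^\succ_\alpha(b) = a^\succ_\alpha(P_L)$.

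No step presents a serious obstacle: the crystal-theoretic recursion and the geometric recursion are set up to match term-by-term, the key observation being, via Proposition \ref{prop:tpi}, that for a real root $\alpha$ the unique (if present) semi-cuspidal factor of $L$ whose weight is a positive multiple of $\alpha$ must be $\scrL_\alpha^n$, so its exponent $n$ is precisely the edge length $a^\succ_\alpha(P_L)$. The only bookkeeping concerns the case where no such factor appears, in which case both sides of the asserted equality are zero.
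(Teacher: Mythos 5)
Your argument is correct and is exactly the comparison the paper has in mind: the corollary is stated with no proof beyond the remark that it is ``immediate'' from Proposition \ref{polytope-crystal}, Corollary \ref{cor:polytope-Saito} and Definition \ref{ctLusztig}, and your induction on the depth of $\alpha$ in the accessible chain simply makes that term-by-term matching of the two recursions explicit. Nothing to change.
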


\begin{proof}[Proof of Theorem~\ref{th:ft-iso}]
  Two pseudo-Weyl polytopes for a finite dimensional root system coincide if
  and only if their Lusztig data are identical for every convex
  order.  By Proposition \ref{prop:cld-mv} the geometric Lusztig data of the MV polytope corresponding to $b$ is given by the
  crystal-theoretic Lusztig data $\coa_{\bullet}(b)$, and that of the KLR
  polytope $P_L$ is given by $a_{\bullet}(P_L)$ by definition.  Thus Corollary
  \ref{cor:crystal-polytope} shows that these polytopes coincide.
\end{proof}

\subsection{Face crystals}
\label{sec:cryst-corr-face}
\mbox{}
Fix a charge $c$. By \cite[Theorem 1]{Bor91}, after possibly taking a central extension and including some extra derivations, the subalgebra of
$\mathfrak{g}$ spanned by root spaces of argument $\pi/2$
is a Borcherds algebra. This can have infinite rank, but it will have only finitely many positive entries on the diagonal of its Cartan
matrix.
Let $\fg_c$ be the subalgebra of this Borcherds algebra generated by the real
roots of argument $\pi/2$. This is the Kac-Moody algebra whose Cartan matrix consists of the rows and columns with positive diagonal entries. Let $\Delta_c$ be the root system of $\fg_c$. 

\begin{rem}
To understand the definition of $\Delta_c$, it is instructive to consider $\asl_4$ with a charge $c$
such that $c(\alpha_1),c(\alpha_3),$ and $c(\delta)$ all have argument
$\pi/2$, but $c(\alpha_2)$ does not. Then $\Delta_c$ is the product of
two copies of the $\asl_2$ root system. In particular, $\Delta_c$ has
two non-parallel imaginary roots, whereas $\Delta$ has no such pair.
\end{rem}

\begin{rem}
  As we discuss in \S\ref{ss:beyond-affine} below, in general it
  may be better to see a face as associated to the Borcherds algebra discussed
  at the beginning of this section. However, in affine type, defining
  $\fg_c$ as we do has some advantages.
\end{rem}

Let
$\be_{\underline 1},\dots,\be_{\underline s}$ be the simple roots of
$\fg_c$.  To avoid confusion between the roots of $\fg$ and those of $\fg_c$, we will index
the latter with underlined numbers. As in \S\ref{sec:c-good-words}, let
$\scrL_{\be_{\underline{i}}}$ denote the unique cuspidal module for $c$ with weight
$\be_{\underline i}$.  
We can now generalize Theorem \ref{th:ft-iso} a little bit.

\begin{prop}\label{prop:real-faces} If $c$ is such that there
  are either only finitely many roots $\al$ with
$\arg(c(\al))\leq \pi/2$ or only finitely many roots with
$\arg(c(\al))\geq\pi/2$, then $\fg_c$ is finite type and,
for each $L \in \QH$, the face $F$ of $P_L$ defined by $c$ is an MV polytope for $\fg_c$.
\end{prop}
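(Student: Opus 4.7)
The plan is to reduce the statement to Theorem~\ref{th:ft-iso} applied to the algebra $\fg_c$. For the finite type claim, suppose only finitely many roots of $\fg$ have $\arg(c(\al))\leq\pi/2$ (the other case is symmetric); then in particular only finitely many lie on the line of argument $\pi/2$, so $\fg_c$ has only finitely many positive roots and is finite type. Moreover, no imaginary root of $\fg$ can lie on this line, since its positive integer multiples would again be roots with $\arg\leq\pi/2$, contradicting finiteness. Hence every edge direction of $F$ corresponds to a real root of $\fg_c$, and $\Delta_c$ exhausts the roots of $\fg$ on the axis.

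To analyze the face, let $\phi$ be a real linear functional whose minimum on $P_L$ cuts out $F$. Applying Proposition~\ref{prop:3irr} with this $\phi$ gives $\Res{\nu}{F}L\cong L'\boxtimes L_F\boxtimes L''$, where $L_F$ is a simple $c$-semi-cuspidal module whose weight $\nu_F$ lies in $\text{span}_{\Z_{\geq 0}}\{\be_{\underline 1},\dots,\be_{\underline s}\}$, and by Proposition~\ref{prop:LA} the character polytope $P_{L_F}$ is a translate of $F$. The core technical step is to identify $L_F$ with a simple module $L_F'$ over the $\fg_c$-KLR algebra $R_{\fg_c}(\nu_F)$ in a way that matches their KLR polytopes. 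I would build this identification inductively: each $c$-cuspidal $\scrL_{\be_{\underline i}}$ (unique by Corollary~\ref{cor:semi-cuspidal-count}, applied to a refinement of $c$ that makes each $\be_{\underline i}$ the smallest positive root of its argument class) plays the role of the one-dimensional cuspidal over the $\fg_c$-KLR algebra at the simple root $\be_{\underline i}$. Corollary~\ref{saito-A} and Lemma~\ref{le:saito-to-simple} then propagate this identification to every weight in the $\fg_c$-root lattice, preserving semi-cuspidal decompositions along the way, so that the Lusztig data of $L_F$ with respect to any convex order refining $c$ on the axis matches that of some $L_F'\in\QH_{\fg_c}$.

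Once this identification is in place, Theorem~\ref{th:ft-iso} applied to the finite-type algebra $\fg_c$ says that the character polytope of $L_F'$ is the MV polytope of the corresponding element of $B_{\fg_c}(-\infty)$. Since that polytope agrees with $P_{L_F}$, which is a translate of $F$, the face $F$ is an MV polytope for $\fg_c$. The main obstacle is the identification in the middle paragraph: one must show that the $\fg$-notion of $c$-semi-cuspidality, restricted to weights in the $\fg_c$-root lattice, matches the $\fg_c$-notion of semi-cuspidality for finite-type convex orders, and that all Lusztig data are preserved under the correspondence. The uniqueness of real-root cuspidals in finite type (Corollary~\ref{cor:finite-type}), combined with Proposition~\ref{prop:SaiA}, should make this compatibility check tractable, after which the rest of the argument is essentially formal.
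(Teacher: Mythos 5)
There is a genuine gap, and it sits exactly where you flag it: the ``identification'' of $L_F$ with a simple module over the $\fg_c$-KLR algebra. As stated, this is not a compatibility check that can be propagated from the cuspidals $\scrL_{\be_{\underline i}}$: the $\be_{\underline i}$ are in general not simple roots of $\Delta$, the modules $\scrL_{\be_{\underline i}}$ are not one-dimensional, and there is no algebra map or functor relating $R(\nu_F)$ to the KLR algebra of $\fg_c$ along which Lusztig data could be compared. (The authors make precisely this point in \S\ref{ss:beyond-affine}: they know of no direct connection, not even a functor, between the KLR algebra attached to a face and the lower-rank KLR algebra of the root system spanned by that face.) So declaring a correspondence $\scrL_{\be_{\underline i}}\leftrightarrow \scrL_{\al_{\underline i}}$ and ``propagating'' it via Corollary~\ref{saito-A} does not by itself produce a module $L_F'$ over $R_{\fg_c}$, nor a matching of character polytopes; one would instead have to prove directly that the face crystal is isomorphic to $B^{\fg_c}(-\infty)$ with matching Lusztig data for all convex orders, which is the hard content of the proposition, not a formality.

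The paper closes this gap by exploiting the finiteness hypothesis in a way your outline does not: after reducing (as you do) to $L$ semi-cuspidal of argument $\pi/2$, it applies Lemma~\ref{le:saito-to-simple} and Corollary~\ref{cor:polytope-Saito} to a chain of Saito reflections $\sigma_{i_1}\cdots\sigma_{i_k}$ that pushes the face to the extreme of the convex order. The key observation is then that each reflected root $s_{i_1}\cdots s_{i_k}\be_{\underline j}$ must be a \emph{simple root of $\Delta$ itself} (otherwise either all its simple constituents would have argument $\pi/2$, contradicting simplicity of $\be_{\underline j}$ in $\fg_c$, or some constituent would have larger argument than everything else, which is impossible). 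Consequently $\sigma_{i_1}\cdots\sigma_{i_k}L$ is literally a module over the KLR algebra of the finite-type subdiagram on those simple roots --- no strands with other labels occur --- and Theorem~\ref{th:ft-iso} applies verbatim. This is the step your proposal is missing: the finiteness hypothesis is what makes the reflection terminate with genuine simple roots, turning the problematic ``identification'' into a tautology. Without it, your middle paragraph remains an unproven assertion rather than a tractable check.
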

\begin{proof}
Without loss of generality we can assume $L$ is semi-cuspidal with argument
$\pi/2$. We will handle   the case where there are only finitely many roots satisfying $\arg(c(\al))\geq\pi/2$. The other case follows by a symmetric argument. 

Clearly the root system for $\fg_c$ contains only finitely many roots, so it is finite type.
Choose a convex order refining $\succ_c$. 
By Lemma \ref{le:saito-to-simple} we can find a list of reflections
$s_{i_1},\dots, s_{i_k}$ such that Corollary \ref{cor:polytope-Saito} applies to show that
that $s_{i_1}\cdots
s_{i_k} F$ is a face of the polytope $P_{\sigma_{i_1}\cdots
  \sigma_{i_k} L}$, and the roots parallel to $s_{i_1}\cdots
s_{i_k} F$ all have argument with respect to $c^{s_{i_1}\cdots
  s_{i_k}}$ greater then all other roots. 

If $s_{i_1}\cdots
s_{i_k}\beta_{\underline{j}}$ is a sum of more than one simple root, either these
all have argument $\pi/2$ for $c^{s_{i_1}\cdots
s_{i_k}}$, which contradicts the simplicity of
$\beta_{\underline{j}}$ in $\mathfrak{g}_c$, or one of the simple
roots must have argument with respect to $c^{s_{i_1}\cdots
s_{i_k} }$ greater than $s_{i_1}\cdots
s_{i_k}\beta_{\underline{j}}$, which is impossible. Thus, the simple roots  $s_{i_1}\cdots s_{i_k}\beta_{\underline{j}}$ of the reflected face root system must all be simple for the full root system $\Delta$. 

It follows that $\sigma_{i_1}\cdots \sigma_{i_k} L$ is a representation of the KLR
algebra for the finite type algebra $\fg_{c^{s_{i_1}\cdots s_{i_k}}}$;
we simply don't use any strands labeled with other roots.  Thus, $P_L$
is an MV polytope for $\fg_{c^{s_{i_1}\cdots s_{i_k}}}$ by
Theorem \ref{th:ft-iso}. 
\end{proof}

\begin{defn} \label{def:afc}
Fix a charge $c$. The {\bf face crystal} $\QH[c]$ is the set of $c$-semi-cuspidal representations $L$ of argument $\pi/2$. 
\end{defn}

In fact, $\KLR[c]$ only depends on how the argument of $c(\beta)$ compares with $\pi/2$ for each root $\beta$. In particular, we see:

\begin{lemma} \label{lem:justclass}
If $c, c'$ are two charges and for every $\alpha \in \Delta$, 
$\arg c(\alpha) < \pi/2$ (resp. $\arg c(\alpha) > \pi/2$) if and only if $\arg c'(\alpha)< \pi/2$ (resp. $\arg c'(\alpha)> \pi/2$), then $\KLR[c]=\KLR[c']$. 
\end{lemma}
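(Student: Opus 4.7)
The plan is to prove the two inclusions $\KLR[c] \subseteq \KLR[c']$ and $\KLR[c'] \subseteq \KLR[c]$ separately; by the symmetry of the hypothesis in $c$ and $c'$, only one direction needs argument. So I would fix $L \in \KLR[c]$ of weight $\nu := \wt(L)$ and aim to show that $L$ is $c'$-semi-cuspidal with $\arg(c'(\nu)) = \pi/2$.

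First I would verify $\arg(c'(\nu)) = \pi/2$. Corollary \ref{cor:semi-cuspidal-count} forces $\nu$ to admit at least one expression $\nu = \beta_1 + \cdots + \beta_n$ with each $\beta_i$ a positive root satisfying $\arg(c(\beta_i)) = \pi/2$; otherwise no $c$-semi-cuspidal of weight $\nu$ and argument $\pi/2$ would exist. The trichotomy hypothesis immediately gives $\arg(c'(\beta_i)) = \pi/2$ for these same roots, so $c'(\nu)$ is a positive integer combination of positive imaginary numbers and thus lies on the positive imaginary axis.

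For the main step, showing $L$ itself is $c'$-semi-cuspidal, I would consider its $c'$-semi-cuspidal decomposition $L = A(L'_1, \ldots, L'_{h'})$ from Theorem \ref{th:semi-cuspidal} and derive a contradiction from $h' \geq 2$. The key intermediate observation is a ``trichotomy transfer'' for the cuspidal factors: for each $i$, $\arg(c'(\wt L'_i))$ is respectively $<$, $=$, or $>$ $\pi/2$ if and only if $\arg(c(\wt L'_i))$ is. This follows by applying Corollary \ref{cor:semi-cuspidal-count} to the $c'$-semi-cuspidal $L'_i$, which writes $\wt L'_i$ as a sum of positive roots whose $c'$-arguments all equal $\arg(c'(\wt L'_i))$; the trichotomy hypothesis transfers the common classification of these roots to $c$-arguments, and a positive combination preserves the trichotomy class of the argument.

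With this in hand, if $\arg(c'(\wt L'_1)) > \pi/2$, the trichotomy transfer gives $\arg(c(\wt L'_1)) > \pi/2$; Frobenius reciprocity applied to the surjection $L'_1 \circ \cdots \circ L'_{h'} \twoheadrightarrow L$ (which exists since $L = A(L'_1, \ldots, L'_{h'})$) shows $\Res{\nu}{\wt L'_1, \nu - \wt L'_1}L$ is non-zero, so $\wt L'_1$ is a prefix weight of $L$, contradicting the $c$-semi-cuspidality of $L$ at argument $\pi/2$. In the remaining case every $\arg(c'(\wt L'_i)) \leq \pi/2$; strict decrease in the decomposition then permits at most one factor with argument exactly $\pi/2$, so since $h' \geq 2$ at least one factor has $\mathrm{Re}(c'(\wt L'_i)) > 0$, giving $\mathrm{Re}(c'(\nu)) > 0$ and contradicting $\arg(c'(\nu)) = \pi/2$. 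I expect the main obstacle to be the trichotomy transfer, which relies on Corollary \ref{cor:semi-cuspidal-count} characterizing a semi-cuspidal's weight as a sum of roots with matching arguments rather than merely matching sign of $\arg - \pi/2$.
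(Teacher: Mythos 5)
Your proof is correct, but it takes a genuinely different and much longer route than the paper's. The paper's proof is a two-line observation: for $L$ of weight $\nu$ with $\arg c(\nu)=\pi/2$, membership in $\KLR[c]$ is equivalent to the vanishing $\Res{\nu}{\nu-\beta,\beta}L=0$ for every $\beta$ with $\arg c(\beta)<\pi/2$, and by hypothesis this is the same collection of $\beta$'s for $c$ and for $c'$, so the two conditions coincide; no decomposition theory is invoked. Your argument instead runs the $c'$-semi-cuspidal decomposition of $L$ and rules out $h'\geq 2$, using Corollary \ref{cor:semi-cuspidal-count} twice: once to see $\arg c'(\nu)=\pi/2$, and once for the ``trichotomy transfer.'' That transfer is sound, and it buys you something the paper's proof quietly elides: the hypothesis is stated only for roots, whereas the vanishing condition a priori ranges over arbitrary positive lattice elements, and your use of Corollary \ref{cor:semi-cuspidal-count} to write each relevant weight as a sum of roots of constant argument is exactly the bridge needed. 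The one small imprecision is at the end of your second case, where you identify ``$\arg<\pi/2$'' with ``positive real part''; this depends on the choice of half-plane, and the cleaner statement is that a sum of vectors lying weakly clockwise of the $\pi/2$-ray, with at least one strictly clockwise, is itself strictly clockwise of that ray, contradicting $\arg c'(\nu)=\pi/2$. This is cosmetic, not a gap.
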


\begin{proof}
Fix $L \in \KLR$ of weight $\nu$ such that $\arg c(\nu)=\pi/2.$ Then
$L \in \KLR[c]$ if and only if $\Res{\nu}{\nu-\be,\be}L=0$ for all
$\beta$ with $\arg c(\beta) < \pi/2$. This is the same condition as  
$\Res{\nu}{\nu-\be,\be}L=0$ for all $\beta$ with $\arg c'(\beta) <
\pi/2$, since this is the same set of roots.  Thus, we have that $L \in \KLR[c']$.
\end{proof}

In fact,
$\QH[c]$ consists exactly of those representations which occur as the
representation $L_F$ associated in \S\ref{ss:klrp} to the face $F$ where the real part of $c$ takes on its minimal value, for some $L$. 
This justifies the term ``face" in Definition
\ref{def:afc}.  We now explain the term ``crystal."  

\begin{defn} \label{def:face-operator}For $L \in \QH[c]$, define
  \begin{equation*}
\begin{aligned}
& \tilde {\sse}_{{\underline{i}}}L=\cosoc(L\circ \mathscr{L}_{\be_{\underline{i}}}),
&& \tilde \sse_{{\underline{i}}}^*L=\cosoc(\mathscr{L}_{\be_{\underline{i}}} \circ L), \\
&\tilde \ssf_{{\underline{i}}}L=\soc(L \triangleright \mathscr{L}_{\be_{\underline{i}}}), 
&& \tilde \ssf_{{\underline{i}}}^*L=\soc(\mathscr{L}_{\be_{\underline{i}}}\triangleleft L) 
\end{aligned}
\end{equation*}
$$\varphi_{\underline{i}}(L)=\max\{n \mid \text{Res}^\nu_{\nu-n \beta_{\underline{i}}, n  \beta_{\underline{i}}  }  L \neq0\} \qquad
\varphi^*_{\underline{i}}(L)=\max\{n \mid \text{Res}^\nu_{ n
  \beta_{\underline{i}},\nu-n \beta_{\underline{i}} }  L \neq0\},$$
\[\varepsilon_{\underline i}= \varphi_{\underline i}- \langle \wt(L),
\beta_{\underline i}^\vee \rangle,\qquad   \varepsilon^*_{\underline i}=
\varphi^*_{\underline i}- \langle \wt(L), \beta_{\underline i}^\vee
\rangle,\] 
where $\triangleleft,  \triangleright, \circ$ and $\text{Res}$ are as in \S\ref{ssec:cs1} and $\beta_{\underline i}^\vee$
 is the co-root with respect to $\Delta_c$.
\end{defn}

If $\be_{\underline{i}}=\al_j$ is a simple root for $\Delta$, then these operations agree with
the crystal operators $\te_j,\tf_j$ from Proposition \ref{prop:cos}. This is precisely why we have modified the definition of $\tf_i$ from that used in \cite{LV}, as discussed in Remark
\ref{rem:def-change}.  It is also possible to
give a definition of $\tilde \ssf_{{\underline{i}}}$ generalizing that
of \cite{LV}, by replacing $\mathscr{L}_{\be_{\underline{i}}}$ with
its projective cover.

\begin{prop}\label{e-f-defined}
  For every $L\in \QH[c]$, the modules $\tilde
  {\sse}_{{\underline{i}}}L, \tilde {\sse}_{{\underline{i}}}^*L,\tilde
  {\ssf}_{{\underline{i}}}L, \tilde {\ssf}_{{\underline{i}}}^*L$ are
  all irreducible. Furthermore, 
  the operators $\tilde {\sse}_{{\underline{i}}}$ and $\tilde
{\ssf}_{{\underline{i}}}$ define a 
$\fg_c$ combinatorial bicrystal
structure with weight function given by the weight of $L$, and $\varphi_{\underline i}, \varphi_{\underline i}^*,\varepsilon_{\underline i},\varepsilon^*_{\underline i}$ as above. 
  \end{prop}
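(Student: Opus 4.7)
The plan is to reduce to the case where the simple root $\be_{\underline{i}}$ of $\fg_c$ is itself a simple root of $\fg$, where the operators become the usual KLR crystal operators of Proposition \ref{prop:cos}.

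First I would establish irreducibility of $\tilde{\sse}_{\underline{i}}L$; the other three operators follow by symmetric arguments (swapping left/right via the $\sigma$-twist and exchanging $\circ$ with its right adjoints $\triangleright, \triangleleft$). Choose a convex order $\succ$ on $\Delta_+^{\min}(\fg)$ refining the argument preorder coming from $c$ and making $\be_{\underline{i}}$ extremal (smallest) among roots of argument $\pi/2$. Since $L$ is $c$-semi-cuspidal of argument $\pi/2$ and $\scrL_{\be_{\underline{i}}}$ is $c$-cuspidal of the same argument, every composition factor of $L \circ \scrL_{\be_{\underline{i}}}$ is $c$-semi-cuspidal of argument $\pi/2$ by Theorem \ref{th:semi-cuspidal}. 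The $\succ$-semi-cuspidal decomposition of $L$ from Theorem \ref{assignment} has the form $L = A(L_1,\dots,L_h)$ with each $\wt(L_r) \succ \be_{\underline{i}}$ (except possibly the last when $L_h = \scrL_{\be_{\underline{i}}}^{k}$), so by appending (or augmenting) with $\scrL_{\be_{\underline{i}}}$ we obtain a valid $\succ$-decomposition whose unique simple quotient equals $\tilde{\sse}_{\underline{i}}L$. Now apply Lemma \ref{le:saito-to-simple} to the extremal factor $\scrL_{\be_{\underline{i}}}$ to produce a finite sequence of Saito reflections in $\fg$ sending $\be_{\underline{i}}$ to a simple root $\al_m$ of $\fg$. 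By Corollary \ref{saito-A}, the operator $\tilde{\sse}_{\underline{i}}$ is intertwined with $\tilde{e}_m$ on the transported simple module, and $\tilde{e}_m$ produces an irreducible by Proposition \ref{prop:cos}. Reversing the reflections yields irreducibility of $\tilde{\sse}_{\underline{i}}L$.

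Next I would verify the bicrystal axioms of Proposition \ref{cor:comb-characterizaton2}, with the trivial module over $R(0)$ as the common lowest weight element. The relation $\varphi_{\underline{i}} = \varepsilon_{\underline{i}} + \langle \wt, \be_{\underline{i}}^\vee\rangle$ is automatic from the definition of $\varepsilon_{\underline{i}}$. The lowest-weight combinatorial crystal axioms (Definition \ref{def:lwcrystal}) follow from the fact that iterated application of the $\tilde{\ssf}_{\underline{i}}$ decreases $\rho^\vee$-height by positive amounts of argument $\pi/2$. For the single-index axioms defining compatibility between $\tilde{\sse}_{\underline{i}}$ and $\tilde{\ssf}_{\underline{i}}$ I transport through the Saito reflections above to reduce to the corresponding identities for $\tilde{e}_m, \tilde{f}_m, \tilde{e}_m^*, \tilde{f}_m^*$ on $B(-\infty)$ of $\fg$, which hold by Proposition \ref{prop:cos}. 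The two-index axioms (the commutation $\tilde{\sse}_{\underline{j}}^{*}\tilde{\sse}_{\underline{i}} = \tilde{\sse}_{\underline{i}}\tilde{\sse}_{\underline{j}}^{*}$ from condition (\ref{ccc1}) and the jump-lemma conditions (\ref{ccc2})--(\ref{ccc5})) I would handle one pair at a time: choose a convex order in which both $\be_{\underline{i}}$ and $\be_{\underline{j}}$ are consecutive extremal roots among those of argument $\pi/2$, apply iterated Saito reflections to send both to simple roots of $\fg$, and invoke the corresponding axioms for $B(-\infty)$ of $\fg$ together with the jump lemma (Lemma \ref{lem:jump}).

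The main obstacle I anticipate is the simultaneous reduction needed for the two-index axioms, since distinct simple roots of $\fg_c$ generically require distinct reflection sequences in $\fg$ to become simple. The resolution is that for each fixed pair $(\underline{i}, \underline{j})$ we need only one reflection sequence sending both $\be_{\underline{i}}$ and $\be_{\underline{j}}$ to simple roots of $\fg$, and such a sequence can be constructed inductively using Lemma \ref{le:saito-to-simple} applied to the relevant extremal factors, while maintaining at each intermediate step the hypotheses $\varphi_{j_r}^{*} = 0$ needed to invoke Corollary \ref{saito-A}. One also needs that the Cartan pairing $\langle \be_{\underline{i}}, \be_{\underline{j}}^\vee\rangle$ inside $\fg_c$ matches the $\fg$-pairing after reflection; this follows because $\be_{\underline{j}}^\vee$, as a coroot of $\Delta_c$, pairs with weights of argument $\pi/2$ in exactly the way the $\fg$-coroot of the image simple root does, since Saito reflections are orthogonal transformations preserving the symmetrized form.
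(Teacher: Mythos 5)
Your overall strategy---transport everything through Saito reflections until $\be_{\underline{i}}$ becomes a simple root of $\fg$---has a genuine gap, and it is not the route the paper takes. The problem is your appeal to Lemma \ref{le:saito-to-simple}: that lemma only applies to real roots that are \emph{accessible from above or below}, i.e.\ finitely far from one end of the convex order. A simple root $\be_{\underline{i}}$ of $\fg_c$ has argument exactly $\pi/2$ for $c$, and in general there are infinitely many roots of argument both less than and greater than $\pi/2$ (already in affine type, for a face charge $c_\gamma$), so $\be_{\underline{i}}$ is not accessible and the lemma gives you nothing. The statement you actually need---that some finite sequence of Saito reflections carries $\be_{\underline{i}}$ to a simple root of $\fg$ while inducing isomorphisms of face crystals---is Lemma \ref{lem:ssd} in the paper, and its proof (via Lemma \ref{lem:saito-face-crystal}) presupposes that the face-crystal operators are already defined and well-behaved, which is exactly what Proposition \ref{e-f-defined} is supposed to establish; so invoking it here is circular. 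A further soft spot: even granting the reduction, Corollary \ref{saito-A} intertwines the reflections with the cosocle operators $\tsse_{\underline{i}}$, but nothing available at this stage intertwines them with $\tssf_{\underline{i}}$, which is defined via a socle of a Hom-space rather than a cosocle of an induction. Finally, you are over-proving: the proposition asserts only a combinatorial bicrystal structure (two crystal structures sharing a weight function), so the two-index axioms (\ref{ccc1})--(\ref{ccc5}) of Proposition \ref{cor:comb-characterizaton2} are not required here; they are handled separately and later (Lemma \ref{lem:e-commute}, Lemma \ref{lem:vfc}, Proposition \ref{prop:face-comp}), and your proposed ``simultaneous reduction of a pair of face-simple roots to $\fg$-simple roots'' is precisely the step the paper avoids by arguing directly with a deformed charge.

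The irony is that the correct argument is already half-present in your own sketch. The paper deforms $c$ to charges $c_\pm$ making $\be_{\underline{i}}$ extremal among the roots of argument $\pi/2$ without disturbing the rest of the order, so the semi-cuspidal decompositions of $L$ take the forms $A(\scrL_{\be_{\underline{i}}}^{n},\dots)$ and $A(\dots,\scrL_{\be_{\underline{i}}}^{k})$. Then $\scrL_{\be_{\underline{i}}}\circ L$ is a quotient of $\scrL_{\be_{\underline{i}}}^{n+1}\circ\cdots$, which has a unique simple quotient because $\scrL_{\be_{\underline{i}}}^{n+1}$ is irreducible (Proposition \ref{prop:tpi}) and Theorem \ref{th:semi-cuspidal} applies; this gives irreducibility of $\tsse_{\underline{i}}^*L$ and, symmetrically, of $\tsse_{\underline{i}}L$, with no reflections needed. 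Irreducibility of $\tssf_{\underline{i}}L$ then follows from Frobenius reciprocity, $\Hom(L',L\triangleright\scrL_{\be_{\underline{i}}})\cong\Hom(L'\circ\scrL_{\be_{\underline{i}}},L)$, which identifies $\tssf_{\underline{i}}L$ with $A(\dots,\scrL_{\be_{\underline{i}}}^{k-1})$. The single-index crystal axioms (that $\tssf_{\underline{i}}\tsse_{\underline{i}}L\cong L$ and that $\varphi_{\underline{i}}(L)=\max\{m:\tssf_{\underline{i}}^{m}L\neq 0\}=k$) are then read off directly from these explicit decompositions. If you replace your reflection argument with this direct one, your proof closes.
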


\begin{proof}
Fix $L \in \KLR$ and let $\nu=\wt(L)$. For each simple root $\beta_{\underline{i}}$ of $\Delta_c$, 
we can choose deformations $c_{\pm}$ of the
charge $c$ such that:
\begin{itemize}
\item For some small $\varepsilon>0$, elements $\mu$ of the weight
  lattice with $\nu-\mu\in \text{span}_{{\mathbb Z}_{\geq 0}} \{
  \alpha_i \}$ have
  \begin{itemize}
  \item $\arg(c_{\pm}(\mu))\in (\pi/2-\varepsilon,\pi/2+\varepsilon)$ if
    and only if $\arg(c(\mu))=\pi/2$,
   \item $\arg(c_{\pm}(\mu)) >\pi/2+\varepsilon$ if and only if
     $\arg(c(\mu))>\pi/2$,
   \item $\arg(c_{\pm}(\mu)) <\pi/2-\varepsilon$ if and only if $\arg(c(\mu))<\pi/2$.
\end{itemize}
\item The root $\beta_{\underline{i}}$ is greater for $>_{c_+}$ and
  lesser for $>_{c_-}$ than
  all other roots $\be\neq \beta_{\underline{i}}$ with $\arg(c(\be))=\pi/2$.
\end{itemize}   
The semi-cuspidal decompositions of $L$ with respect to $c_+$ and $c_-$ must be of the form 
\begin{equation} \label{eq:nk}
A(\scrL_{\beta_{\underline{i}}}^n,\dots) \qquad \text{and} \qquad A(\dots,
\scrL_{\beta_{\underline{i}}}^k),
\end{equation}
respectively, for some $n,k \geq 0$. 
The conditions on $c_\pm$ imply that every representation which
appears in these must be itself in $\KLR[c]$.

  Any quotient of $\scrL_{\beta_{\underline{i}}}\circ L$ is also
a quotient of $\scrL_{\beta_{\underline{i}}}^{n+1}\circ \cdots$. By Proposition \ref{prop:tpi}, $\scrL_{\beta_{\underline{i}}}^{n+1}$ is irreducible, so, by Theorem \ref{th:semi-cuspidal}, $\scrL_{\beta_{\underline{i}}}^{n+1}\circ \cdots$ has a unique simple quotient. Thus $\tilde
{\sse}_{{\underline{i}}}^*L = A(\scrL_{\beta_{\underline{i}}}^{n+1},\dots) $ and $\tilde
{\sse}_{{\underline{i}}}L = A(\dots, \scrL_{ \beta_{\underline{i}}}^{k+1}) $ are irreducible. The irreducibility of $\tilde {\ssf}_{{\underline{i}}} L$ and $\tilde
{\ssf}^*_{{\underline{i}}} L$ follow from a dual argument: using the
fact that any map from a simple into another module lands in its socle, and 
Frobenius reciprocity, 
\[\Hom_{R({\nu}-\be_{\underline{i}})}(L', \tilde {\ssf}_{{\underline{i}}}
L)\cong \Hom_{R({\nu}-\be_{\underline{i}})}(L',L \triangleright \scrL_{\beta_{\underline{i}}})\cong \Hom_{R({\nu})}(L'\circ
\scrL_{\beta_{\underline{i}}}, L).\]  The latter space of maps is 1
dimensional if $L=\tilde
{\sse}_{{\underline{i}}} L'$, and 0 otherwise, so $L'$ has multiplicity
one in $\tilde {\ssf}_{{\underline{i}}} L$ if $L=\tilde
{\sse}_{{\underline{i}}} L'$, and multiplicity 0 otherwise.  Thus, $\tilde {\ssf}_{{\underline{i}}} L=A(\dots,
\scrL_{\beta_{\underline{i}}}^{k-1})$, and in particular is
irreducible.  

Thus, we do have well defined operations of $\KLR[c]$. It remains to check that these
satisfy the conditions in the definition of combinatorial crystal (Definition \ref{def:crystal}), both for the unstarred and starred operators. 
Condition (i) is tautological from our definition of $\varepsilon_{\underline i}$ and $\varepsilon_{\underline i}^*$, and (iv) is
vacuous in this case.
For (ii) and (iii), it suffices to show that, for all $L \in \KLR[c]$, 
 \begin{enumerate}
  \item[(a)] $L\cong\tilde {\ssf}_{{\underline{i}}} \tilde  {\sse}_{{\underline{i}}}L\cong \tilde {\ssf}_{{\underline{i}}}^*\tilde  {\sse}_{{\underline{i}}}^*L$, and
 \item[(b)]  $\varphi_{{\underline i}}(L)= \max \{ n : \tssf^n_{{\underline i}} L \neq 0 \}$, and $\varphi_{{\underline i}}^*(L)= \max \{ n : (\tssf_{{\underline i}}^*)^n L \neq 0 \}$ .
  \end{enumerate}
Condition (a) has been established above, and these arguments also show that $\varphi_{{\underline i}}^*(L)=n$ and $\varphi_{{\underline i}}(L)=k$ for $n,k$ as in \eqref{eq:nk}, from which (b) follows. 
\end{proof}

\begin{lemma}\label{lem:saito-face-crystal} 
Fix a charge $c$. If $\alpha_i$ is a simple root such that $\arg c(\alpha_i) > \pi/2$ then 
Saito reflection $\sigma_i$ induces a bicrystal isomorphism between
$\KLR[c]$ and $\KLR[c^{s_i}]$. Similarly, if $\alpha_i$ is a simple root such that $\arg c(\alpha_i) <\pi/2$, then $\sigma_i^*$ induces a bi-crystal isomorphism between $\KLR[c]$ and $\QH[c^{s_i}]$.
\end{lemma}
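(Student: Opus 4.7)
The argument applies the Saito-reflection formalism of Section~3---in particular Proposition~\ref{prop:SaiA}---to each $L \in \KLR[c]$, viewed as its own one-term semi-cuspidal decomposition. I would proceed in three stages: promote $c$ to a convex pre-order $\succ$ with $\alpha_i$ as its unique maximum, transport $L$ through $\sigma_i$, and verify that the image lies in $\KLR[c^{s_i}]$ and that the face-crystal operators intertwine.

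For the first stage, apply Lemma~\ref{lem:i-order} to the convex pre-order $>_c$ together with the hyperplane $H = \ker(\operatorname{Re} c)$, cooriented so that $H_+ = \{\alpha : \arg c(\alpha) > \pi/2\}$ is the positive side; the hypothesis is satisfied since $\arg c(\alpha_i) > \pi/2$. The output is a convex pre-order $\succ$ with $\alpha_i$ as unique maximum in which the relative order of roots in $H_0 \cup H_-$ is unchanged, where $H_0$ and $H_-$ denote the roots with $\arg c$ equal to and less than $\pi/2$ respectively. In particular, $H_0$ is still a single equivalence class for $\succ$, and the original charge $c$ is $(H_0,\succ,n)$-compatible in the sense of Definition~\ref{def:c-comp} for every $n$, so every $L \in \KLR[c]$ is $\succ$-semi-cuspidal.

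For the second stage, fix $L \in \KLR[c]$. Because $\arg c(\alpha_i) > \pi/2 = \arg c(\wt L)$ and $L$ is $c$-semi-cuspidal, no word in $\ch L$ can begin with $i$, whence $\Res{\wt L}{\alpha_i,\wt L - \alpha_i}(L) = 0$ and therefore $\tf_i^* L = 0$. Proposition~\ref{prop:SaiA}, applied to the trivial one-term decomposition of $L$, then yields that $\sigma_i L$ is $\succ^{s_i}$-semi-cuspidal of weight $s_i \wt L$. The equivalence class of $s_i \wt L$ in $\succ^{s_i}$ is $s_i H_0$, which is precisely $\{\alpha \in \Delta_+ \setminus \{\alpha_i\} : \arg c^{s_i}(\alpha) = \pi/2\}$, and $\arg c^{s_i}(s_i \wt L) = \arg c(\wt L) = \pi/2$. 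Invoking Lemma~\ref{lem:justclass}---which permits replacing $c^{s_i}$ by a nearby genuine charge sharing the same trichotomy of positive roots into the three $\pi/2$-classes---yields $\sigma_i L \in \KLR[c^{s_i}]$.

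For the third stage, the bicrystal intertwining is entirely parallel: $s_i$ takes the simple roots $\beta_{\underline j}$ of $\fg_c$ to those of $\fg_{c^{s_i}}$, and $\sigma_i \scrL_{\beta_{\underline j}} \cong \scrL_{s_i \beta_{\underline j}}$ by Proposition~\ref{prop:SaiA}; applying Corollary~\ref{saito-A} to the two-term decompositions $(\scrL_{\beta_{\underline j}}^n, M)$ and $(M, \scrL_{\beta_{\underline j}}^n)$ in suitable convex orders shows that $\sigma_i$ commutes with each of $\tsse_{\underline j}, \tssf_{\underline j}, \tsse_{\underline j}^*, \tssf_{\underline j}^*$. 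Bijectivity comes from the symmetric half of the lemma applied to $c^{s_i}$, where $\alpha_i$ now sits in the $<\pi/2$ class, yielding $\sigma_i^* : \KLR[c^{s_i}] \to \KLR[c]$ that is inverse to $\sigma_i$ since $\sigma_i$ and $\sigma_i^*$ are already mutually inverse on the relevant subsets of $B(-\infty)$. The principal technical subtlety is the notational abuse that $c^{s_i}$ is not literally a charge when $\alpha_i$ is non-extremal for $c$, which is handled once and for all by Lemma~\ref{lem:justclass}.
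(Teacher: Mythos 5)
Your proposal is correct and follows essentially the same route as the paper: bijectivity comes from Proposition \ref{prop:SaiA} (after arranging via Lemma \ref{lem:i-order} for $\alpha_i$ to be extremal, and noting that semi-cuspidality of argument $\pi/2$ forces $\tf_i^*L=0$), and the intertwining of the face-crystal operators comes from the compatibility of $\sigma_i$ with semi-cuspidal decompositions of the form $A(\scrL_{\beta_{\underline j}}^n,\dots)$ and $A(\dots,\scrL_{\beta_{\underline j}}^k)$ via Corollary \ref{saito-A}. Your extra care in identifying $\KLR[c^{s_i}]$ through Lemma \ref{lem:justclass} just makes explicit a step the paper leaves implicit.
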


\begin{proof}
By Proposition \ref{prop:SaiA} these maps are bijections. It remains to show that they respect the crystal structure. 
We consider $\sigma_i$, the statement about $\sigma_i^*$ following by a symmetric argument. 

Choose a simple root $\beta_{\underline j}$ in $\Delta_c$. As in the proof of Proposition \ref{e-f-defined},
refine $>_c$ into two convex orders $\succ_{\pm}$ such that, among roots with argument $\pi/2$,  $\beta_{\underline i}$ is minimal for $\succ_+$ and maximal for $\succ_-$. 

For $L\in \KLR[c]$, the semi-cuspidal decompositions with respect to $\succ_\pm$ have the form
\[L=A(\scrL_{\beta_{\underline{i}}}^n,\dots)=A(\dots,
\scrL^k_{\beta_{\underline{i}}})\]
for some $n,k \geq 0$, 
where we recall from Proposition \ref{prop:tpi} that $\scrL_{\beta_{\underline{i}}}^n$ is irreducible. 
 Then 
 \[\tsse_{\underline{i}}^*L=A(\scrL_{\beta_{\underline{i}}}^{n+1},\dots) \qquad \text{and} \qquad 
\tsse_{\underline{i}}L=A(\dots,
\scrL_{\beta_{\underline{i}}}^{k+1}).\]
By Proposition \ref{prop:SaiA} these
operations commute with Saito reflection as required.
\end{proof}

\begin{lemma} \label{lem:ssd}
Fix a charge $c$ and let $\beta_{\underline{j}}$ be a simple root for $\Delta_c$. There is a sequence of Saito reflections and dual Saito reflections, 
$\sigma_{i_k}^{x_k} \cdots \sigma_{i_1}^{x_1}$, where each $x_k$ is either $*$ or nothing (indicating dual Saito reflection or Saito reflection) such that
\begin{itemize}
\item $s_{i_k} \cdots s_{i_1} \beta_{\underline{j}}$ is a simple root
  $\alpha_i$, and 
\item at each stage $\sigma_{i_j}^{x_j}$ is a crystal
  isomorphism from $\KLR[c^{s_{i_1} \cdots s_{i_{j-1}}} ]$ to
  $\KLR[c^{s_{i_1} \cdots s_{i_{j-1}} s_{i_j}}]$.
\end{itemize}
\end{lemma}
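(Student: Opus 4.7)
The plan is to proceed by induction on the height of $\beta_{\underline{j}}$ as a positive real root of $\Delta$. When $\beta_{\underline{j}}$ already equals some simple root $\alpha_i$ of $\Delta$ the empty sequence suffices; otherwise I will produce a single simple reflection that reduces the height and is allowed by Lemma \ref{lem:saito-face-crystal}, then iterate.

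For the inductive step the key claim is the existence of a simple root $\alpha_i$ of $\Delta$ with $\langle \beta_{\underline{j}}, \alpha_i^\vee\rangle>0$ and $\arg c(\alpha_i)\neq \pi/2$. Given such an $\alpha_i$, I take $\sigma_{i_1}^{x_1}=\sigma_i$ if $\arg c(\alpha_i)>\pi/2$ and $\sigma_{i_1}^{x_1}=\sigma_i^*$ if $\arg c(\alpha_i)<\pi/2$; Lemma \ref{lem:saito-face-crystal} provides the required bicrystal isomorphism $\QH[c]\to \QH[c^{s_i}]$. Then $s_i\beta_{\underline{j}}$ has strictly smaller $\Delta$-height, and I claim it is a simple root of $\Delta_{c^{s_i}}$: since $\alpha_i\notin \Delta_c$, no positive real root of argument $\pi/2$ is sent by $s_i$ to a negative root, so $s_i$ restricts to a positivity-preserving bijection $\Delta_c^+\to \Delta_{c^{s_i}}^+$; any decomposition $s_i\beta_{\underline{j}}=\gamma_1+\gamma_2$ in $\Delta_{c^{s_i}}^+$ would pull back to a decomposition of $\beta_{\underline{j}}$ in $\Delta_c^+$, contradicting its simplicity. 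The inductive hypothesis applied to $s_i\beta_{\underline{j}}$ in the charge $c^{s_i}$ then supplies the remaining reflections and the composed sequence finishes the job.

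The main obstacle is thus the existence claim for $\alpha_i$. Some simple root $\alpha_i$ with $\langle\beta_{\underline{j}},\alpha_i^\vee\rangle>0$ exists because $\beta_{\underline{j}}$ is real (so $(\beta_{\underline{j}},\beta_{\underline{j}})>0$), and expanding $\beta_{\underline{j}}=\sum n_i\alpha_i$ forces at least one term with $n_i>0$ and $(\beta_{\underline{j}},\alpha_i)>0$. Now suppose for contradiction that every such $\alpha_i$ had $\arg c(\alpha_i)=\pi/2$; fix one. Then $\alpha_i$ is a real root of argument $\pi/2$, so $\alpha_i\in\Delta_c^+$. The root $s_i\beta_{\underline{j}}=\beta_{\underline{j}}-k\alpha_i$ with $k=\langle\beta_{\underline{j}},\alpha_i^\vee\rangle>0$ is a positive real root of $\Delta$ (since $\beta_{\underline{j}}\neq\alpha_i$ by the assumption $\mathrm{height}\geq 2$), so $c(s_i\beta_{\underline{j}})$ lies in the open upper half-plane. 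But $c(s_i\beta_{\underline{j}})=c(\beta_{\underline{j}})-kc(\alpha_i)$ is a difference of two positive pure imaginary numbers, hence pure imaginary; together with the positivity of its imaginary part this gives $\arg c(s_i\beta_{\underline{j}})=\pi/2$, so $s_i\beta_{\underline{j}}\in\Delta_c^+$. Writing $\beta_{\underline{j}}=s_i\beta_{\underline{j}}+k\alpha_i$ now decomposes $\beta_{\underline{j}}$ as a sum of two positive roots of $\Delta_c$, contradicting the simplicity of $\beta_{\underline{j}}$ in $\Delta_c$.

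The verification that $s_i\beta_{\underline{j}}$ is simple in $\Delta_{c^{s_i}}$ is the second place where one has to be careful; but as sketched above it is just a transport-of-structure statement for the Kac-Moody algebra $\mathfrak{g}_c$ under the reflection $s_i$ across a hyperplane not containing the ``imaginary axis of $c$.'' Once this bookkeeping is in place, the induction completes the proof.
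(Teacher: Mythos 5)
Your proof is correct and follows essentially the same route as the paper's: induction on the height of $\beta_{\underline{j}}$ in $\Delta_+$, producing at each step a simple root $\alpha_i$ with $\langle\beta_{\underline{j}},\alpha_i^\vee\rangle>0$ whose argument is forced away from $\pi/2$ by the simplicity of $\beta_{\underline{j}}$ in $\Delta_c$, then invoking Lemma \ref{lem:saito-face-crystal}. You supply two details the paper leaves implicit (why such an $\alpha_i$ exists, and why $s_i\beta_{\underline{j}}$ is again simple for $\Delta_{c^{s_i}}$), but the argument is the same.
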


\begin{proof}
We proceed by induction on the  height $\langle \beta_{\underline{j}}, \rho^\vee \rangle$ of $\beta_{\underline{j}}$ in $\Delta_+$, the height $1$ case being trivial. If $\langle \beta_{\underline{j}}, \rho^\vee \rangle>1$ then for some $i$ we must have $q= \langle \beta_{\underline{j}}, \alpha_i^\vee \rangle >0$. But then $\alpha_i$ and $\beta_{\underline{j}}-q \alpha_i$ are both positive roots, so, since $\beta_{\underline{j}} $ is simple for $\Delta_c$, they cannot both be in $\Delta_c$. It follows that $\arg c(\alpha_i) \neq \pi/2$, so by Lemma \ref{lem:saito-face-crystal} we can apply a Saito reflection $\sigma_i$ or $\sigma_i^*$ and get a face crystal isomorphism from $\KLR[c]$ to $\KLR[c^{s_i}]$. The new simple root corresponding to $\beta_{\underline{j}}$ under this reflection is $\beta_{\underline{j}}-q \alpha_i$, which has lower height. 
\end{proof}

\begin{lemma}\label{lem:e-commute}
  The operators $\tsse_{\underline i}$ and $\tsse_{\underline j}^*$ (and thus $\tssf_{\underline i}$ and
  $\tssf_{\underline j}^*$) for ${\underline i} \neq {\underline j}$ commute. That is, condition (\ref{ccc1})
  of  Proposition
\ref{cor:comb-characterizaton2} holds.
\end{lemma}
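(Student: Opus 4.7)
The plan is to exhibit a single convex refinement $\succ$ of $>_c$ for which the $\succ$-semi-cuspidal decomposition of $L$ displays the actions of both $\tsse_{\underline{i}}$ and $\tsse_{\underline{j}}^*$ transparently. My first step will be to choose a convex total order $\succ$ on $\Delta_+^{\min}$ refining the $c$-preorder such that, among the positive real roots of $\Delta_c$ (the equivalence class of argument-$\pi/2$ minimal roots), the simple root $\beta_{\underline{j}}$ is $\succ$-maximal and $\beta_{\underline{i}}$ is $\succ$-minimal. Since $i\neq j$, this simultaneous extremality is achievable: starting from any convex order on $\Delta_{c,+}^{\min}$ supplied by Lemma \ref{lem:orderexists}, apply Lemma \ref{lem:i-order} to force $\beta_{\underline{j}}$ to the top, then reverse the order and apply Lemma \ref{lem:i-order} again to force $\beta_{\underline{i}}$ to the top (so to the bottom of the original), then reverse back; the first application leaves $\beta_{\underline{i}}$ on the negative side of the relevant hyperplane, so its relative position is preserved by the second, which leaves $\beta_{\underline{j}}$'s position untouched. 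Finally, apply Lemma \ref{refinement} to refine the ambient preorder using this chosen order on the arg-$\pi/2$ class.

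The second step will be to extract a normal form for $L$. Since $L \in \QH[c]$ has weight of argument $\pi/2$, every factor in the $\succ$-semi-cuspidal decomposition of $L$ provided by Theorem \ref{assignment} has weight a positive multiple of a real root of $\Delta_c$, and by Corollary \ref{cor:numcusp} together with Proposition \ref{prop:tpi} the only semi-cuspidal of weight $m\beta_{\underline{k}}$ is $\scrL_{\beta_{\underline{k}}}^m$. The extremality of $\beta_{\underline{j}}$ and $\beta_{\underline{i}}$ in $\succ$ then forces any $\scrL_{\beta_{\underline{j}}}^n$-factor to the first slot and any $\scrL_{\beta_{\underline{i}}}^k$-factor to the last, yielding
\[
L \;=\; A\bigl(\scrL_{\beta_{\underline{j}}}^{n},\; M_1,\dots,M_{h'},\; \scrL_{\beta_{\underline{i}}}^{k}\bigr)
\]
for some integers $n,k\geq 0$ (omitting an outer factor with exponent $0$) and intermediate semi-cuspidals $M_r$ whose weights lie strictly between $\beta_{\underline{j}}$ and $\beta_{\underline{i}}$ in $\succ$.

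The third step will be to compute each operator on this normal form. The surjection $\scrL_{\beta_{\underline{j}}}^{n}\circ M_1\circ\cdots\circ\scrL_{\beta_{\underline{i}}}^{k} \twoheadrightarrow L$ coming from the definition of $A$, after inducing $\scrL_{\beta_{\underline{j}}}\circ(-)$ on the left and invoking $\scrL_{\beta_{\underline{j}}}\circ\scrL_{\beta_{\underline{j}}}^{n} = \scrL_{\beta_{\underline{j}}}^{n+1}$ from Proposition \ref{prop:tpi}, produces a surjection $\scrL_{\beta_{\underline{j}}}^{n+1}\circ M_1\circ\cdots\circ\scrL_{\beta_{\underline{i}}}^{k} \twoheadrightarrow \scrL_{\beta_{\underline{j}}}\circ L$. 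The source is again a valid $\succ$-semi-cuspidal product, so Theorem \ref{assignment} identifies its unique simple quotient as $A(\scrL_{\beta_{\underline{j}}}^{n+1}, M_1,\dots, \scrL_{\beta_{\underline{i}}}^{k})$; hence $\tsse_{\underline{j}}^*L = \cosoc(\scrL_{\beta_{\underline{j}}}\circ L)$ equals this module. A mirror argument on the right gives $\tsse_{\underline{i}}L = A(\scrL_{\beta_{\underline{j}}}^{n},M_1,\dots,\scrL_{\beta_{\underline{i}}}^{k+1})$. Reapplying each formula to the other's output (whose normal form has the same intermediate $M_r$) then gives
\[
\tsse_{\underline{i}}\tsse_{\underline{j}}^*L \;=\; A\bigl(\scrL_{\beta_{\underline{j}}}^{n+1},M_1,\dots,\scrL_{\beta_{\underline{i}}}^{k+1}\bigr) \;=\; \tsse_{\underline{j}}^*\tsse_{\underline{i}}L,
\]
which is the required commutation; the assertion for $\tssf_{\underline{i}}$ and $\tssf_{\underline{j}}^*$ then follows formally from the combinatorial crystal structure of Proposition \ref{e-f-defined}.

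The hardest step will be the first one, namely producing a convex order simultaneously extremizing both $\beta_{\underline{j}}$ and $\beta_{\underline{i}}$ among the argument-$\pi/2$ roots; once this is achieved, the extremality mechanically forces the simple-power form of the outermost factors via Proposition \ref{prop:tpi}, and the rest of the argument reduces to repeated invocation of the uniqueness of simple quotients in Theorem \ref{assignment}.
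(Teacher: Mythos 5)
Your proof is correct and follows essentially the same route as the paper's: the paper deforms the charge $c$ to a charge $c'$ making $\beta_{\underline{j}}$ greatest and $\beta_{\underline{i}}$ least among the argument-$\pi/2$ roots, reads off the semi-cuspidal decomposition $(\scrL^n_{\beta_{\underline{j}}}, L_2,\dots,L_{h-1},\scrL^k_{\beta_{\underline{i}}})$, and observes that the two operators bump the exponents at opposite ends — exactly your argument, with your convex-order refinement playing the role of the charge deformation. (One harmless overstatement: the middle factors need not have weights that are multiples of real roots of $\Delta_c$ — imaginary roots of argument $\pi/2$ can occur — but your argument only uses the identification of the two outermost factors, which is correct.)
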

\begin{proof}
  Since $\be_{\underline j}$ and $\be_{\underline i}$
  are simple among the roots with $c$-argument $\pi/2$, there is a
  deformation $c'$ of $c$ such that $\be_{\underline i}$ is lowest
  among the roots with $arg(c(\be))= \pi/2$ and $\be_{\underline j}$ is
  greatest.
  Let $(\scrL^n_{\be_{\underline j}}, L_2, \dots, L_{h-1}, \scrL^k_{\be_{\underline i}})$ be the semi-cuspidal decomposition of $L$
  with respect to $c'$. Then $\tsse_{{\underline{i}}}\tsse_{\underline{j}}^*L=\tsse_{\underline{j}}^*\tsse_{{\underline{i}}}L=A(\scrL^{n+1}_{ \be_{\underline j}}, L_2, \dots, L_{h-1}, \scrL^{k+1}_{ \be_{\underline i}}).$
\end{proof}

\begin{lemma} \label{lem:vfc}
For each $\beta_{\underline i}$, the operators $\tsse_{\underline i}$ and $\tsse^*_{\underline i}$ satisfy the condition that, for all $L$,
$$\varphi_{\underline i}^*(\tsse_{\underline i} L) \geq \varphi_{\underline i}^*(L) \;\; \text{ and } \;\; \varphi_{\underline i}(\tsse^*_{\underline i} L) \geq \varphi_{\underline i}(L).$$
\end{lemma}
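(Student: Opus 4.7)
The plan is to reduce to the case where $\beta_{\underline i}$ is a simple root of $\fg$, where the inequality becomes a standard statement about the bicrystal structure on $B(-\infty)$.

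First, I would apply Lemma \ref{lem:ssd} to produce a sequence of (dual) Saito reflections $\sigma_{i_k}^{x_k}\cdots \sigma_{i_1}^{x_1}$ which, by Lemma \ref{lem:saito-face-crystal}, assembles into a bicrystal isomorphism $\sigma\colon \QH[c]\to \QH[c']$ carrying $\beta_{\underline i}$ to a simple root $\alpha_j$ of $\fg$. A bicrystal isomorphism of face crystals intertwines $\tsse_{\underline i},\tsse_{\underline i}^*$ with the operators $\tsse_j,\tsse_j^*$ of the target (centered at the image simple root) and preserves the associated functions $\varphi_{\underline i},\varphi_{\underline i}^*$. Hence the desired inequalities for $L\in\QH[c]$ follow from the corresponding inequalities applied to $\sigma(L)\in\QH[c']$, and it suffices to verify the lemma under the assumption that $\beta_{\underline i}=\alpha_j$ is already simple in $\fg$.

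In this simple root case, for any $L\in\QH[c]\subset \QH$ the face crystal operators $\tsse_j,\tsse_j^*$ and the functions $\varphi_j,\varphi_j^*$ coincide with the standard ones on $\QH\simeq B(-\infty)$ from Proposition \ref{prop:cos}. The inequality then reduces to a combinatorial statement about $B(-\infty)$, which I would deduce directly from Proposition \ref{cor:comb-characterizaton2}: condition (\ref{ccc4}) gives $\varphi_j^*(\tsse_j L)=\varphi_j^*(L)$ whenever $\mathrm{jump}_j(L)=\varphi_j(L)+\varphi_j^*(L)-\langle \wt(L),\alpha_j^\vee\rangle\geq 1$, while condition (\ref{ccc3}) gives $\tsse_j L=\tsse_j^* L$ when $\mathrm{jump}_j(L)=0$, in which case $\varphi_j^*(\tsse_j L)=\varphi_j^*(\tsse_j^* L)=\varphi_j^*(L)+1$. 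Either way the inequality is at least weak, and the parallel statement for $\tsse_{\underline i}^*$ follows by the symmetric argument (or by applying $(-)^\sigma$ to exchange starred and unstarred operators).

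The main (and essentially only) obstacle is the reduction step: one must check that bicrystal isomorphisms coming from Saito reflections between face crystals really do intertwine the operators centered at $\beta_{\underline i}$ with those centered at its image simple root, which amounts to tracking the identification of the $\fg_c$-simple roots across the reflections. Once Lemmata \ref{lem:saito-face-crystal} and \ref{lem:ssd} are invoked this is automatic, so after these reductions the proof is a direct appeal to the axiomatic characterization of $B(-\infty)$.
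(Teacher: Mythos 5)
Your proposal is correct and follows essentially the same route as the paper: reduce via Lemma \ref{lem:ssd} to the case where $\beta_{\underline i}$ is a simple root of $\fg$, then invoke parts (\ref{ccc3}) and (\ref{ccc4}) of Proposition \ref{cor:comb-characterizaton2} together with the identification $\QH\simeq B(-\infty)$. Your explicit case split on $\mathrm{jump}_j(L)$ just spells out what the paper calls "immediate."
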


\begin{proof}
By Lemma \ref{lem:ssd} we can apply Saito reflections $\sigma_i$ and $\sigma_i^*$ a number of
times to reduce to the case when $\beta_{\underline{i}}$ is a simple
root. The condition is then immediate from Proposition \ref{cor:comb-characterizaton2} parts \eqref{ccc3} and \eqref{ccc4}
since $\KLR$ along with the full crystal operators $\tilde{e}_i, \tilde{f}_i$ is a copy of $B(-\infty)$. 
\end{proof}

\subsection{Affine face crystals}
\label{sec:affine-face-crystals}

Some aspects of face crystals are considerably simpler
in the affine case than the general; in other cases results may hold more generally, but we will stay in the affine setting to simplify notation and proofs. Thus in Sections
\ref{sec:affine-face-crystals}-\ref{sec:an-example}, unless otherwise stated, we assume
$\fg$ is affine with minimal imaginary root $\delta$.

Fix a charge $c$. 
If $\arg(c(\delta)) \neq \pi/2$, it is clear that $\mathfrak{g}_c$ is of finite type (although it may be reducible). If $\arg(c(\delta)) = \pi/2$, then $\mathfrak{g}_c$ is either affine or a product of affine algebras. To see this, note that for growth reasons $\mathfrak{g}_c$ cannot be worse than a product of affine algebras. Furthermore, for each root $\alpha$ of $\mathfrak{g}_c$, $n \delta-\alpha$ is also a root for some $n$, so $\alpha$ cannot be part of a finite type sub-root system. In the first case we say that $\mathfrak{g}_c$ is of finite type, and in the second case we say that $\mathfrak{g}_c$ is of affine type. 

\begin{lemma} \label{lem:face-comp}
Assume $L^h \in \KLR[c]$ is lowest weight for the $\fg_c$ bicrystal structure, and that 
$\wt(L^h)=n\delta$ for $n\in \Z_{\geq 0}$. Then the component of $\KLR[c]$ generated by $L^h$ under the crystal operators $\tsse_{{\underline j}}$ is the same as the component generated by $L^h$ under the $\tsse_{{\underline j}}^*$.
\end{lemma}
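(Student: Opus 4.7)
The plan is to invoke Proposition \ref{cor:comb-characterizaton2} to identify the sub-bicrystal of $\KLR[c]$ generated from $L^h$ by the operators $\tsse_{\underline j}$ and $\tsse_{\underline j}^*$ with $B(-\infty)$ for $\fg_c$, where $L^h$ corresponds to the lowest weight element. Granted this identification, the lemma is immediate: in $B(-\infty)$ the lowest weight element generates the entire crystal under either the unstarred or the starred raising operators alone, so both components in the statement coincide with the full sub-bicrystal.

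As a first step, one shows that $\tsse_{\underline j}(L^h)=\tsse_{\underline j}^*(L^h)$ for every $\underline j$. Since $L^h$ is bicrystal lowest weight, $\varphi_{\underline j}(L^h)=\varphi_{\underline j}^*(L^h)=0$, and since $\beta_{\underline j}$ is a real root of $\fg$, we have $\langle n\delta, \beta_{\underline j}^\vee\rangle=0$ (as $\delta$ pairs trivially with every real root in affine type). Thus the jump $\varphi_{\underline j}(L^h)+\varphi_{\underline j}^*(L^h)-\langle\wt(L^h),\beta_{\underline j}^\vee\rangle$ vanishes. Lemma \ref{lem:ssd} produces a sequence of Saito and dual Saito reflections carrying $\beta_{\underline j}$ to a simple root $\alpha_m$ of $\fg$, and by Lemma \ref{lem:saito-face-crystal} each reflection is a bicrystal isomorphism of face crystals, so one may transport the question to the setting where $\beta_{\underline j}$ has been replaced by $\alpha_m$. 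There the face crystal operators coincide with the ordinary $\KLR$ operators (since $\scrL_{\beta_{\underline j}}$ transports to $\scrL_m$) and the Lauda--Vazirani jump lemma (Lemma \ref{lem:jump}) applies directly, giving the desired equality after transporting back.

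To verify the hypotheses of Proposition \ref{cor:comb-characterizaton2}, condition \eqref{ccc1} is immediate from Lemma \ref{lem:e-commute}. Conditions \eqref{ccc2}--\eqref{ccc5} are verified by the same Saito-reflection technique: given any element in the sub-bicrystal and any $\underline i$, reflection carries the question into the situation where $\beta_{\underline i}$ is a simple root of $\fg$, transporting $\varphi_{\underline i},\varphi_{\underline i}^*$, the operators $\tsse_{\underline i},\tsse_{\underline i}^*$, and the pairing $\langle\cdot,\beta_{\underline i}^\vee\rangle$ to their analogues at $\alpha_m$; the required conditions then follow because $\KLR$ satisfies them as an instance of $B(-\infty)_\fg$. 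The hypothesis that $L^h$ is bicrystal lowest weight ensures it serves as the common $b_-$ of the proposition, and uniqueness of lowest weight elements in the bicrystal orbit (which forces any element of weight $n\delta$ in the orbit to equal $L^h$) identifies it correctly with the lowest weight element of $B(-\infty)_{\fg_c}$.

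The chief obstacle is to verify conditions \eqref{ccc2}--\eqref{ccc5} uniformly throughout the sub-bicrystal, rather than merely at the single point $L^h$; the Saito-reflection reduction is the essential technical device, and the key point making it available is that every simple root $\beta_{\underline j}$ of $\fg_c$ is a positive real root of the ambient affine algebra $\fg$, so Lemma \ref{lem:ssd} always provides a reduction to a simple root of $\fg$.
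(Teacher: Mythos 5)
Your plan is circular at its main step. Proposition \ref{cor:comb-characterizaton2} takes as a \emph{hypothesis} that $(B,\e_i,\f_i)$ and $(B,\e_i^*,\f_i^*)$ are both lowest weight combinatorial crystals with the same lowest weight element $b_-$; by Definition \ref{def:lwcrystal}(i) this means every element of $B$ can be brought down to $b_-$ using only the unstarred lowering operators, and likewise using only the starred ones. If you take $B$ to be the sub-bicrystal of $\KLR[c]$ generated by $L^h$ under both families of operators, then verifying this hypothesis with $b_-=L^h$ is exactly the assertion of the lemma: a priori, an element reached from $L^h$ by a mixture of $\tsse_{\underline j}$ and $\tsse_{\underline j}^*$ might, when lowered by unstarred operators alone, terminate at some other lowest weight element of $\KLR[c]$ (and the face crystal genuinely has many such elements, one per multipartition, by Corollary \ref{cor:aff-lowest}). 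Your appeal to ``uniqueness of lowest weight elements in the bicrystal orbit'' does not close this gap, since the problematic element would be unstarred-lowest-weight without being bicrystal-lowest-weight, and nothing forces its weight to be $n\delta$ at this stage. This is precisely why the paper proves the present lemma \emph{before} Proposition \ref{prop:face-comp}: the lemma supplies the lowest-weight-bicrystal hypothesis, and only then is Proposition \ref{cor:comb-characterizaton2} invoked.

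The correct argument is a direct induction on the depth $d(L)$ of an element $L=\tsse_{\underline{j_d}}\cdots\tsse_{\underline{j_1}}L^h$. Your base case is essentially right and matches the paper's: the jump at $L^h$ vanishes, and the Saito reduction of Lemmata \ref{lem:ssd} and \ref{lem:saito-face-crystal} transports the question to a simple root of $\fg$, where Proposition \ref{cor:comb-characterizaton2} (applied to the ambient $B(-\infty)$ for $\fg$, which is legitimately known to satisfy it) gives $\tsse_{\underline j}L^h=\tsse_{\underline j}^*L^h$. For the inductive step one rewrites $L=\tsse_{\underline{j_d}}\cdots\tsse_{\underline{j_2}}\tsse_{\underline{j_1}}^*L^h$ using the base case, checks $\tssf_{\underline{j_1}}^*L\neq 0$ via Lemma \ref{lem:vfc}, and then splits into the cases $\underline{j_1}\neq\underline{j_d}$ (handled by the commutation Lemma \ref{lem:e-commute} plus the inductive hypothesis) and $\underline{j_1}=\underline{j_d}$ (handled by another Saito reduction to a simple root of $\fg$ and the local structure of Corollary \ref{cor:KS-diag}). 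You have assembled the right toolbox, but the induction on depth is the essential missing structure; without it the invocation of Proposition \ref{cor:comb-characterizaton2} for $\fg_c$ assumes what is to be proved.
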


\begin{proof}
We proceed by induction on the sum
$d(L)$ of the coefficients of the expression for $\wt(L)-\wt(L^h)$ in
terms of the $\beta_{\underline k}$, which we call the depth of $L$. We will show that if 
$L= \tsse_{\underline{j_{d}}}\tsse_{\underline{j_{d-1}}} \cdots \tsse_{\underline{j_1}} L^h$
then $L$ is also in the starred component of $L^h$. 
The reversed statement follows via a symmetric proof. 

If $d=1$ then $L= \tsse_{\underline j} L$ for some $\underline j$. By Lemma \ref{lem:ssd} we can use a sequence of Saito reflections and dual Saito reflections to reduce to the case when $\beta_{\underline{j}}$ is a simple root $\alpha_i$. Then $\tilde e_i L^h = \tilde e_i^* L^h$ by Proposition
\ref{cor:comb-characterizaton2} and the fact that the whole crystal is $B^\fg(-\infty)$, so the claim holds.

Now assume that the component generated by $L^h$ under the ordinary crystal operators agrees with that generated by the $*$ operators at all depths $<d$, and fix $L$ with $d(L)=d$ in the unstarred component of $L^h$.
By the $d=1$ case 
$$L= \tsse_{\underline{j_{d}}}\tsse_{\underline{j_{d-1}}} \cdots \tsse_{\underline{j_2}} \tsse_{\underline{j_1}}^* L^h$$
for some $\underline{j_d}, \underline{j_{d-1}}, \ldots, \underline{j_2}, \underline{j_1}$.
By Lemma \ref{lem:vfc}, we see that
$\tssf_{\underline {j_1}}^* L \neq 0$. 

If $\underline {j_1} \neq \underline {j_{d}}$, then, by Lemma \ref{lem:e-commute},
$L =\tsse^*_{\underline {j_1}}\tsse_{\underline {j_{d}}}\tssf^*_{\underline{j_1}}\tssf_{\underline {j_{d}}}L$. The
module $  \tssf_{\underline {j_{d}}}L$ is manifestly in the component of the
unstarred component of $L^h$, and thus by induction in the starred
component as well. Using the inductive hypothesis again,  $\tsse_{\underline {j_{d}}} \tssf^*_{\underline {j_1}}\tssf_{\underline {j_{d}}}L$
is still in the starred
components of $L^h$, and so $L$ is as well. 

If $\underline {j_1} = \underline {j_{d}}$, then again using Lemma \ref{lem:ssd}, we can reduce to the case when $\be_{\underline
  i}$ to a simple root $\alpha_i$. It follows from Proposition
\ref{cor:comb-characterizaton2} (see also Corollary \ref{cor:KS-diag}) and the fact that the whole crystal is
$B^\fg(-\infty)$ that we have one of the following two situations:

(1) 
$L =\tsse^*_{\underline {j_1}}\tsse_{\underline {j_{1}}}\tssf^*_{\underline{j_1}}\tssf_{\underline {j_{1}}}L$. Then the same argument as in the case $\underline {j_1} \neq \underline {j_{d}}$ shows that $L$ is in the starred
component of $L^h$. 

(2) $\tssf_{\underline{j_1}}^*L=\tssf_{\underline{j_1}}L$.  In this case, by induction,
$\tssf_{\underline{j_1}}^*L=\tssf_{\underline{j_1}}L$ is in both the starred and unstarred
component of $L^h$.  So $L= \tsse_{\underline{j_{d}}}^*
\tssf_{\underline{j_{1}}}^*L$ is also in the starred component.
\end{proof}

\begin{prop} \label{prop:face-comp}
Assume $L^h \in \KLR[c]$ is lowest weight for the bicrystal
structure, and $\wt(L^h)=n\delta$  for $n\in \Z_{\geq 0}$. Then the component generated by $L^h$ under all
$\tsse_{\underline j}, \tsse_{\underline j}^*$ is  isomorphic (as a bicrystal) to the infinity crystal $B^{\fg_c}(-\infty)$. 
\end{prop}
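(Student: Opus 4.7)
My strategy is to apply the bicrystal characterization of Proposition \ref{cor:comb-characterizaton2} to the component $B \subset \KLR[c]$ generated by $L^h$ under the operators $\tsse_{\underline j}, \tsse_{\underline j}^*$, endowed with the $\fg_c$-bicrystal structure from Proposition \ref{e-f-defined} and the shifted weight $\wt_c(L) := \wt(L) - n\delta$ (so that $\wt_c(L^h)=0$ lies in the $\fg_c$-root lattice, since weights in $B$ differ from $n\delta$ by a non-negative integer combination of the $\fg_c$-simple roots $\be_{\underline j}$).

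First I would verify that both $(B, \tsse_{\underline j}, \tssf_{\underline j})$ and $(B, \tsse_{\underline j}^*, \tssf_{\underline j}^*)$ are lowest weight combinatorial crystals for $\fg_c$ with common lowest weight element $L^h$. The key input is Lemma \ref{lem:face-comp}, which guarantees that the components generated from $L^h$ by $\tsse_{\underline j}$ alone and by $\tsse_{\underline j}^*$ alone coincide, so $L^h$ can be reached from any $L \in B$ by either flavor of lowering operators. The identities $\varphi_{\underline j}(L)=\max\{n:\tssf_{\underline j}^n L\neq 0\}$ and its starred analogue follow from the proof of Proposition \ref{e-f-defined}, which realized these $\varphi$'s as the lengths of the $\scrL_{\be_{\underline j}}$-strings appearing in the semi-cuspidal decompositions with respect to suitable deformations $c_\pm$ of the charge $c$.

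Next I would verify conditions (i)--(vi) of Proposition \ref{cor:comb-characterizaton2}. Condition (i) is immediate from Proposition \ref{e-f-defined}, which asserts that $\tsse_{\underline i}L$ and $\tsse_{\underline i}^*L$ are always irreducible, hence nonzero. Condition (ii), the commutativity $\tsse_{\underline i}\tsse_{\underline j}^* = \tsse_{\underline j}^*\tsse_{\underline i}$ for $\underline i \neq \underline j$, is exactly Lemma \ref{lem:e-commute}. For conditions (iii)--(vi), which involve the $\fg_c$-jump $\varphi_{\underline i}(L) + \varphi_{\underline i}^*(L) - \langle \wt(L), \be_{\underline i}^\vee\rangle$, I would apply Lemma \ref{lem:ssd} to find a composition of (dual) Saito reflections $\sigma_{i_k}^{x_k}\cdots \sigma_{i_1}^{x_1}$ taking $\be_{\underline i}$ to a $\fg$-simple root $\alpha_m$. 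By Lemma \ref{lem:saito-face-crystal} each step is a bicrystal isomorphism between the relevant face crystals, and under such reflections both $\wt(L)$ and $\be_{\underline i}^\vee$ transform by the corresponding Weyl group element, so the pairing $\langle \wt(L), \be_{\underline i}^\vee\rangle$ is preserved. Once $\be_{\underline i}$ has been reduced to a simple root $\alpha_m$, the face-crystal operators $\tsse_{\underline i}, \tssf_{\underline i}, \tsse_{\underline i}^*, \tssf_{\underline i}^*$ coincide with the full $\KLR$-operators $\tsse_m, \tssf_m, \tsse_m^*, \tssf_m^*$, and each of (iii)--(vi) reduces to the corresponding axiom for $\KLR \simeq B^{\fg}(-\infty)$, which holds by Proposition \ref{cor:comb-characterizaton2} (the non-negativity in (iii) is, in fact, the Lauda-Vazirani jump Lemma \ref{lem:jump}).

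The main obstacle will be the bookkeeping in this Saito reduction: checking that the alternating sequence of reflections supplied by Lemma \ref{lem:ssd} transports both starred and unstarred face-crystal data compatibly, that the eventual identifications of $\varphi_{\underline i}, \varphi_{\underline i}^*$ with $\varphi_m, \varphi_m^*$ are legitimate (they are, because the restriction functor used to define $\varphi_{\underline i}$ becomes literally $\Res{\nu}{\nu-n\alpha_m, n\alpha_m}$ after reflection), and that $\langle \wt(L), \be_{\underline i}^\vee\rangle$ is genuinely preserved under each step of the reduction. Once all six conditions are verified, Proposition \ref{cor:comb-characterizaton2} yields a bicrystal isomorphism $B \simeq B^{\fg_c}(-\infty)$, completing the proof.
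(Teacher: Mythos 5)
Your proposal is correct and follows essentially the same route as the paper: establish the lowest weight combinatorial bicrystal structure via Proposition \ref{e-f-defined} and Lemma \ref{lem:face-comp}, dispose of conditions (\ref{ccc0}) and (\ref{ccc1}) of Proposition \ref{cor:comb-characterizaton2} trivially and via Lemma \ref{lem:e-commute} respectively, and reduce the remaining single-index conditions to the simple-root case of the full crystal $\QH\simeq B(-\infty)$ using the Saito reflections of Lemma \ref{lem:ssd}. The extra bookkeeping you flag (the weight shift by $n\delta$ and the invariance of $\langle\wt(L),\be_{\underline i}^\vee\rangle$ under the reflections) is correct and merely makes explicit what the paper leaves implicit.
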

\begin{proof}
By Proposition \ref{e-f-defined} and Lemma \ref{lem:face-comp} the component containing $L^h$ is a lowest weight combinatorial bicrystal. Hence it suffices to check the conditions
of Proposition
\ref{cor:comb-characterizaton2}. Condition \eqref{ccc0} is trivial and \eqref{ccc1} is checked in
Lemma \ref{lem:e-commute} above.
Each of  \eqref{ccc2}--\eqref{ccc5} only involves a
single $\be_{\underline i}$.  
By Lemma \ref{lem:ssd} we can find a sequence of Saito reflections which takes $\beta_{\underline{i}}$ to a simple root, and such that at each step we have an isomorphism of face crystals. This reduces to the case when $\beta_{\underline i}$ is simple for $\fg$, and then
the conditions follow from the isomorphism of $\QH$ with
$B(-\infty)$ for all of $\fg$.
\end{proof}

\begin{cor} \label{cor:aff-lowest}
If $\fg_c$ is of finite type, then $\KLR[c]\cong B^{\fg_c}(-\infty)$. 
If $\fg_c$ is of affine type, then $\KLR[c]$ 
is
isomorphic as a bicrystal to a direct sum of copies of $B^{\fg_c}(-\infty)$, all lowest weight elements $L^h$ have $\wt(L^h) = k \delta$ for some $k$, and the number of lowest weight elements of weight $k \delta$ is the number of $q$-multipartitions of $k$, where $q= r-s=\rk \fg-\rk \fg_c$. 
\end{cor}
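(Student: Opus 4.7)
The plan is to combine Proposition \ref{prop:face-comp} with a generating function count; the crucial missing step is to verify the hypothesis of that proposition, namely that every lowest weight element $L^h$ of $\KLR[c]$ satisfies $\wt(L^h) \in \Z_{\geq 0}\delta$.

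By definition of lowest weight, $\varphi_{\underline j}(L^h) = \varphi_{\underline j}^*(L^h) = 0$ for every simple root $\beta_{\underline j}$ of $\fg_c$. I would choose Saito reflections $\sigma_{i_1}^{x_1}, \ldots, \sigma_{i_k}^{x_k}$ as in Lemma \ref{lem:ssd} sending $\beta_{\underline j}$ to a simple root $\alpha_\ell$ of $\fg$; by Lemma \ref{lem:saito-face-crystal} these are face-crystal isomorphisms, so the reflected module is still lowest weight. Applying the jump lemma of $\fg$ (Lemma \ref{lem:jump}) in the reflected face crystal and using Weyl-invariance of the pairing yields $\langle \wt(L^h), \beta_{\underline j}^\vee \rangle \leq 0$ for every $\underline j$. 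Writing $\wt(L^h) = \sum_j a_j \beta_{\underline j} + k\delta$ with $a_j, k \geq 0$ (possible because $\wt(L^h)$ is a positive sum of positive roots of argument $\pi/2$, which are real roots of $\fg_c$ or multiples of $\delta$) and using $\langle \delta, \beta_{\underline j}^\vee \rangle = 0$, the inequality becomes $\sum_j a_j C_{ji} \leq 0$ for all $i$, where $C$ is the Cartan matrix of $\fg_c$. In finite type $\fg_c$, positive-definiteness of $C$ forces all $a_j = 0$; in affine type, non-negative solutions form the ray of multiples of the null vector. So in both cases $\wt(L^h) \in \Z_{\geq 0}\delta$, and Proposition \ref{prop:face-comp} identifies each component of $\KLR[c]$ with a copy of $B^{\fg_c}(-\infty)$.

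Since weights in $\KLR[c]$ are bounded below in the positive root lattice, iterated application of $\tssf_{\underline j}$'s terminates at a lowest weight element, so $\KLR[c]$ is a direct sum of these copies indexed by its lowest weight elements. In the finite type case the only admissible lowest weight is $0$ (since an imaginary root at argument $\pi/2$ would force infinitely many such roots, contradicting the finiteness of $\fg_c$), so $\KLR[c] \cong B^{\fg_c}(-\infty)$. In the affine case, Corollary \ref{cor:numcusp} gives the generating function for the number of $c$-semi-cuspidals as $\prod_{\beta \,:\, \arg c(\beta) = \pi/2}(1-e^{-\beta})^{-m_\beta}$; factoring out the real roots of $\fg_c$ leaves the imaginary contribution $\prod_{k \geq 1}(1-e^{-k\delta})^{-r}$, while the analogous factorization of $\sum_\nu \dim U(\fn_+^{\fg_c})_\nu\, e^{-\nu}$ contributes $\prod_{k \geq 1}(1-e^{-k\delta_c})^{-s}$. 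Taking the quotient (and using $\delta_c = \delta$) yields $\prod_{k \geq 1}(1-e^{-k\delta})^{-q}$, the classical generating function for $q$-multipartitions, confirming the stated count.

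The main technical obstacle is the Saito-reflection reduction: to invoke the jump lemma at the non-simple root $\beta_{\underline j}$, one must carefully track how $c$, the simple roots of $\fg_c$, and the face crystal all change under each Saito reflection, using Lemma \ref{lem:saito-face-crystal} at every step. A secondary subtlety is the assumption $\delta_c = \delta$ in the generating-function comparison, which must be verified for the $\fg_c$'s arising in affine face crystals.
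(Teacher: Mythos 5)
Your route differs from the paper's in a genuine way. The paper never proves directly that bicrystal-lowest-weight elements of $\KLR[c]$ have weight in $\Z_{\geq 0}\delta$; instead it runs an induction on $k$, comparing the generating function $\prod_{\al\in\Delta_c}(1-t^{\al})^{-\dim\fg_\al}$ for semi-cuspidals with the Kostant partition function of $\fg_c$, and extracts at each stage both where the new lowest weight elements sit and that they are lowest weight for the starred structure as well. Your replacement of the first of these deductions by the jump lemma, transported along the Saito reflections of Lemma \ref{lem:ssd}, together with positive (semi-)definiteness of the symmetrized Cartan matrix of $\fg_c$, is sound and arguably more conceptual. The two points you flag are both fine: the reflection bookkeeping is exactly what Lemmas \ref{lem:saito-face-crystal} and \ref{lem:ssd} provide (and the relevant $\varphi_i$ or $\varphi_i^*$ vanishes because a semi-cuspidal of argument $\pi/2$ admits no prefix, resp.\ suffix, of argument on the wrong side of $\pi/2$, so the Saito reflections genuinely reflect the weight); and $\delta_c\in\Q_{>0}\delta$ follows in one line because $\delta_c$ is isotropic while the radical of the affine bilinear form on the root lattice is $\R\delta$. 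The concluding generating-function count is then the same as the paper's.

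The one genuine gap is the sentence ``iterated application of $\tssf_{\underline j}$'s terminates at a lowest weight element.'' Descending with the unstarred operators alone only produces an element killed by all $\tssf_{\underline j}$, and nothing you have proved shows such an element is also killed by all $\tssf_{\underline j}^*$ --- which is exactly what both your Cartan-matrix step (you need $\varphi_{\underline j}=\varphi_{\underline j}^*=0$ to turn the jump inequality into $\langle \wt(L^h),\beta_{\underline j}^\vee\rangle\le 0$) and Proposition \ref{prop:face-comp} require. This is precisely the point the paper's counting induction is designed to handle, so it cannot be waved away. Fortunately your argument repairs easily: descend using both families $\tssf_{\underline j}$ and $\tssf_{\underline j}^*$ in any order; each step strictly decreases $\langle \wt,\rho^\vee\rangle$, so the process terminates at an element killed by all of them, i.e.\ a bicrystal-lowest-weight element, and the original module lies in its bicrystal component, which is a copy of $B^{\fg_c}(-\infty)$ by your first paragraph combined with Proposition \ref{prop:face-comp}. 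With that one correction (and granting the same informality about imaginary root multiplicities in twisted types that the paper itself allows), the proof is complete.
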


\begin{proof}
By Proposition \ref{prop:face-comp}, the trivial representation generates a copy of $B^{\fg_c}(-\infty)$ as a bicrystal.   
  By Corollary \ref{cor:semi-cuspidal-count} the generating function for the number of
  $c$-cuspidal representations of argument $\pi/2$ in $\KLR[c]$ is 
  \[a(t)=\prod_{\al\in
    \Delta_c}\frac{1}{(1-t^{\al})^{\dim\fg_\al}}.\] 
    Comparing with
  the Kostant partition function \[b(t)=\prod_{\al\in
    \Delta_c}\frac{1}{(1-t^{\al})^{\dim(\fg_c)_\al}}\] for $\fg_c$, we
  see that, if $\fg_c$ is finite type, these functions agree. Hence the element of weight $0$ must generate everything and we are done.
  
  If $\fg_c$ is of affine type, then \[\frac {b(t)}{a(t)}= \prod_{k\geq
    1}\frac{1}{(1-t^{k\delta})^{q}}
    \] is the
  generating function of the number of $q$-multipartitions, where $t^\delta$ counts the total number of boxes.

We now proceed by induction.
Fix some $k \geq 0$, and make the assumption
\begin{enumerate}
\item[(A)] All lowest weight elements for $\KLR[c]$ for the unstarred crystal structure of weight at most $k \delta$ have weight $j \delta$ for some $j \leq k$. All of these are also lowest weight for the starred crystal structure as well, and hence by Proposition \ref{prop:face-comp} generate a copy of $B(-\infty)$, and the number of such highest weight elements for each $j \leq k$ is the number of $q$-multipartitions of $j$. 
\end{enumerate}
 Comparing generating functions, the copies of
  $B^{\fg_c}(-\infty)$ generated by lowest weight elements of weight at most $k \delta$ exhaust all elements on $\KLR[c]$  of depth less
  than $(k+1) \delta$, and miss exactly the number of
  $q$-multipartitions of $k+1$ in that depth. This holds true for both the unstarred and the starred crystal structures, and since each lowest weight element generates the same set under both crystal structures, the elements missed for both must coincide. Thus each of the lowest weight elements fund at weight $(k+1) \delta$ are in fact lowest weight for both crystal structures, and the induction proceeds.
\end{proof}

\begin{lemma} \label{lem:ecd}
In finite or affine type, for any charge $c$, the lowest weight elements of $\KLR[c]$ are exactly those which are $c'$-semi-cuspidal for all $c'$ in a neighborhood of $c$.
\end{lemma}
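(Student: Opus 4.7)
The plan is to prove both implications by analyzing how the semi-cuspidal decomposition of $L$ varies as $c'$ moves in a neighborhood of $c$, and relating this variation to the face bicrystal operators from Proposition \ref{e-f-defined}.

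For the forward direction, I will assume $L\in\KLR[c]$ is lowest weight. By Corollary \ref{cor:aff-lowest}, this forces $\wt(L)=k\delta$ for some $k\geq 0$ (the finite-type case is immediate since the only lowest weight is the trivial module, which is vacuously $c'$-semi-cuspidal for every $c'$). Suppose for contradiction that $L$ fails to be $c'$-semi-cuspidal for some $c'$ arbitrarily close to $c$. Then the $c'$-semi-cuspidal decomposition $L=A(L_1,\dots,L_h)$ of Theorem \ref{th:semi-cuspidal} has $h\geq 2$, and because $L$ is $c$-semi-cuspidal with $\arg c(\wt L)=\pi/2$, each $\wt(L_i)$ must lie in the positive cone spanned by the face roots of $\fg_c$. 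For generic $c'$, the leading weight $\wt(L_1)$ is a positive multiple of a single positive root $\gamma$ of $\fg_c$. I will then apply Lemma \ref{lem:ssd} inside $\fg_c$, combined with Lemma \ref{lem:saito-face-crystal}, to successively Saito-reflect until $\gamma$ becomes a simple root $\beta_{\underline{j}}$ of $\fg_c$; Proposition \ref{prop:tpi} then forces the (transported) leading factor to be $\mathscr{L}_{\beta_{\underline{j}}}^{n}$ for some $n\geq 1$. Reading this decomposition in the $c_+$ form used in the proof of Proposition \ref{e-f-defined}, this yields $\varphi^*_{\underline{j}}(L)\geq n\geq 1$, contradicting $\tssf^*_{\underline{j}}(L)=0$.

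For the backward direction I will argue the contrapositive: if $L\in\KLR[c]$ is not lowest weight, then some perturbation of $c$ violates semi-cuspidality. Without loss of generality $\tssf^*_{\underline{j}}(L)\neq 0$ for some face simple root $\beta_{\underline{j}}$. I choose a perturbation $c_+$ of $c$ inside the given neighborhood so that $\beta_{\underline{j}}$ becomes the greatest face root; by the proof of Proposition \ref{e-f-defined} the $c_+$-semi-cuspidal decomposition of $L$ begins with $\mathscr{L}_{\beta_{\underline{j}}}^{n}$ for some $n\geq 1$. If this decomposition has length at least two then $L$ is already not $c_+$-semi-cuspidal and we are done; otherwise $L\cong\mathscr{L}_{\beta_{\underline{j}}}^{n}$, and I handle this by analyzing the character of $\mathscr{L}_{\beta_{\underline{j}}}$, using Lemma \ref{le:saito-to-simple} to reduce to a situation where prefix sums of its words that are not parallel to $\beta_{\underline{j}}$ furnish the obstruction: a small further perturbation then places such a prefix on the counterclockwise side of $\wt(L)$, breaking $c'$-semi-cuspidality.

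The main obstacle will be the iterated Saito-reflection reduction in the forward direction. Each individual step is controlled by Corollary \ref{saito-A} and Lemma \ref{lem:saito-face-crystal}, but to drive $\gamma$ down to a face simple root I must ensure that at every stage the reflection used is by a simple root of the ambient $\fg$ which lies off the face, so that Lemma \ref{lem:saito-face-crystal} applies and the semi-cuspidal decomposition of the reflected module is obtained by reflecting each factor. Once this bookkeeping is set up, the contradiction with $\tssf^*_{\underline{j}}(L)=0$ is immediate, and the remaining assertions follow from Proposition \ref{e-f-defined} and Corollary \ref{cor:aff-lowest}.
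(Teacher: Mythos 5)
Your overall architecture is the same as the paper's: handle each inclusion by perturbing $c$, reading off the semi-cuspidal decomposition for the perturbed charge, and using Saito reflections to push the offending root down to a simple root of $\Delta$. The only cosmetic difference is that you work with the top factor $L_1$ and the starred operators where the paper works with the bottom factor $L_h$ and the unstarred ones.

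The genuine gap is the residual case of your backward direction, $L\cong \mathscr{L}_{\beta_{\underline{j}}}^{\,n}$, and your proposed escape cannot work. Since $\mathscr{L}_{\beta_{\underline{j}}}$ is $c$-cuspidal, every proper prefix weight of a word in its character has $c$-argument strictly less than $\pi/2$; a proper prefix weight of a word in $\ch(\mathscr{L}_{\beta_{\underline{j}}}^{\,n})=\ch(\mathscr{L}_{\beta_{\underline{j}}})^{*n}$ is a sum of $n$ such prefix weights (each possibly $0$ or all of $\beta_{\underline{j}}$), hence is either a multiple of $\beta_{\underline{j}}$ or has $c$-argument strictly below $\pi/2$. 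Both kinds of prefix survive every sufficiently small perturbation, so $\mathscr{L}_{\beta_{\underline{j}}}^{\,n}$ is $c'$-semi-cuspidal for all $c'$ near $c$ even though $\tssf_{\underline{j}}$ does not kill it; the extreme case $L=\mathscr{L}_i$ with $\arg c(\al_i)=\pi/2$, which is cuspidal for every charge, makes this unmistakable. There is no prefix ``not parallel to $\beta_{\underline{j}}$'' of argument $\pi/2$ available to be tipped over, so no further perturbation or Saito reflection closes this case: the statement as literally written fails there and must be read with the restriction $\wt(L)\in\Z_{\geq0}\delta$ implicit from its placement after Corollary \ref{cor:aff-lowest}. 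Under that restriction your problem case cannot arise ($n\beta_{\underline{j}}$ is never a multiple of $\delta$ for $\beta_{\underline{j}}$ real), and $\wt(L)-\beta_{\underline{j}}$ is a positive combination of face roots not parallel to $\beta_{\underline{j}}$, which is exactly what lets a small tilt making $\beta_{\underline{j}}$ minimal among the face roots push that prefix strictly above $\wt(L)$. (The paper's own write-up hides the same point in the assertion that $\beta_{\underline{j}}$ can be made ``the minimal root in $\Delta_+^{\min}$,'' which is impossible for $\beta_{\underline{j}}$ non-simple in $\Delta$ and gives nothing when $\wt(L)$ is parallel to $\beta_{\underline{j}}$.)

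A secondary issue in your forward direction: ambient Saito reflections in off-face simple roots carry the induced positive system of one face to that of the next, so they preserve simplicity within the face and cannot turn a non-simple positive root $\gamma$ of $\fg_c$ into a simple one; Lemma \ref{lem:ssd} goes the other way (from a face-simple root to a $\Delta$-simple root). The correct move, and the one the paper makes via Lemma \ref{le:saito-to-simple}, is to reflect in the extremal simple roots of the perturbed total order until $\gamma$ itself becomes simple for $\Delta$, checking at each stage via Lemma \ref{lem:saito-face-crystal} that the reflecting root lies off the face so that $\varphi^*_{\underline{j}}$ is transported back to the original $L$.
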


\begin{proof}
Fix $L \in \KLR[c]$.
Define $\Delta_+^{\text{res}} \subset \Delta_+^{\text{min}}$ to be those minimal roots of weight at most $\wt(L)$.

If $L$ is not lowest weight in $\KLR[c]$, then choose a simple root $ \beta_{\underline j}$ of $ \Delta_+^{\text{min}}$ such that $\tssf_{\underline j} L \neq 0$. For any deformation $c'$ of $c$ such that $\beta_{\underline{j}}$ is the minimal root in $\Delta_+^{\text{min}}$  and such that the order of any pair of root in $\Delta_+^{\text{res}}$ remains unchanged, it is clear that $L$ is no longer semi-cuspidal. Thus if $L$ remains semi-cuspidal for all $c'$ in a neighborhood of $c$ then $L$ is lowest weight in $\KLR[c]$.
If $\mathfrak{g}_c$ is finite type, then the only lowest weight element in $\KLR[c]$ is $\mathscr{L}_\emptyset$, so this is enough.  

If $\mathfrak{g}_c$ is of affine type then, by Corollary \ref{cor:aff-lowest}, $\wt(L)$ is a multiple of $\delta$. Fix a deformation $c'$ of $c$ which is small enough so as not to change the order of any pair of roots in $\Delta_+^{\text{res}}$. Assume for a contradiction that $L$ is not $c'$ semi-cuspidal, and let  
$L= A(L_1, \ldots L_h)$ be its semi-cuspidal decomposition. Then we must have $L_h <_{c'} \delta$ (since $\wt(L)=\delta$), so $\wt(L_h)$ is a multiple of a real root $\beta$. If $\beta$ is a simple root $\alpha_i$ for the whole root system $\Delta$ then $\alpha_i$ is a simple root in $\Delta_c$ as well, so clearly $L$ was not lowest weight in $\KLR[c]$. Otherwise, we can use Lemma \ref{lem:ssd} to reduce to this case. 
\end{proof}

\begin{prop} \label{prop:induce-with-lowest} 
Fix $M,N \in \KLR[c]$. Assume $M$ is lowest weight for the $\fg_c$
crystal structure, and $N$ is in the component generated by the
trivial representation. Then $M \circ N = N \circ M$, this module is irreducible, and  $N\mapsto M\circ N$ is a
  bicrystal isomorphism between the component of the trivial
  representation and that of $M$. 
\end{prop}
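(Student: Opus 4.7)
I would argue by induction on the height $\rho^\vee(\wt(N))$, with the base case $\wt(N) = 0$ (so $N = \mathscr{L}_\emptyset$) being trivial. For the inductive step, write $N = \tsse_{\underline{j}} N'$ for some simple root $\beta_{\underline{j}}$ of $\fg_c$ and some $N'$ of strictly smaller weight in the trivial component (the symmetric case $N = \tsse^*_{\underline{j}} N'$ is handled analogously). The inductive hypothesis says $\tilde P := M \circ N' \cong N' \circ M$ is irreducible, and that this is the element of the $M$-component corresponding to $N'$ under the bicrystal isomorphism of Proposition \ref{prop:face-comp}.

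First I would treat the special case $N' = \mathscr{L}_\emptyset$, i.e. $N = \mathscr{L}_{\beta_{\underline{j}}}$. Since $M$ is lowest weight in $\QH[c]$ we have $\varphi_{\underline{j}}(M) = \varphi^*_{\underline{j}}(M) = 0$, and since $\wt(M) = k\delta$ pairs trivially with any real coroot, $\langle \wt(M), \beta_{\underline{j}}^\vee \rangle = 0$. The face-crystal analogue of Lemma~\ref{lem:jump} (which is a consequence of Proposition~\ref{prop:face-comp} together with the jump inequality $\eqref{ccc2}$ of Proposition~\ref{cor:comb-characterizaton2}) then forces $\text{jump}_{\underline{j}}(M) = 0$, so $M \circ \mathscr{L}_{\beta_{\underline{j}}} \cong \mathscr{L}_{\beta_{\underline{j}}} \circ M$ is irreducible and coincides with $\tsse_{\underline{j}} M$.

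For the general inductive step I would reduce to the case where $\beta_{\underline{j}}$ is a simple root of the ambient $\fg$: use Lemmata \ref{lem:saito-face-crystal} and \ref{lem:ssd} to find a sequence of Saito reflections and dual Saito reflections that sends $\beta_{\underline{j}}$ to a simple root $\alpha_i$ of $\fg$ while carrying the face crystal $\QH[c]$ isomorphically to $\QH[c']$ for some $c'$, and use Proposition~\ref{prop:SaiA} (and Corollary~\ref{saito-A}) to transport the semi-cuspidal decompositions of $M$, $N'$, $N$ along these reflections, preserving both the lowest-weight property and the ``in the trivial component'' property. Once $\beta_{\underline{j}} = \alpha_i$ is simple in $\fg$, I can apply the honest jump lemma (Lemma~\ref{lem:jump}) and Lemma~\ref{lem:unmixing} to $\tilde P$: using $\varphi_i(M) = \varphi^*_i(M) = 0$ together with $\langle \wt(M), \alpha_i^\vee\rangle = 0$, a character computation (via the shuffle-product formula $\ch(M\circ N') = \ch(M) * \ch(N')$) gives $\varphi_i(\tilde P) = \varphi_i(N')$ and $\varphi^*_i(\tilde P) = \varphi^*_i(N')$, hence $\text{jump}_i(\tilde P) = \text{jump}_i(N')$. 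Lemma~\ref{lem:unmixing} applied to the unmixing pair $(M, N')$ then identifies $\tsse_i \tilde P$ with $M \circ \tsse_i N' = M \circ N$, forcing the latter to be irreducible and equal to $\tsse_i(M \circ N')$.

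Commutativity $M \circ N \cong N \circ M$ follows because the shuffle product is symmetric (so $\ch(M\circ N) = \ch(N \circ M)$) and the argument above applies symmetrically to show $N \circ M$ is also irreducible; two irreducibles with the same character are isomorphic. Assembling these facts, the map $N \mapsto M \circ N$ is a well-defined weight-preserving bijection between the underlying sets of the two components (by Corollary~\ref{cor:aff-lowest}, both are copies of $B^{\fg_c}(-\infty)$), and it intertwines $\tsse_{\underline{j}}$ by the displayed identity $\tsse_{\underline{j}}(M\circ N) = M\circ\tsse_{\underline{j}} N$; the $\tsse^*_{\underline{j}}$-case follows by a symmetric induction or via Kashiwara's involution (Proposition~\ref{prop:cos}). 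The main technical obstacle will be the irreducibility step: carefully tracking how $\varphi_{\underline{j}}$ and $\varphi^*_{\underline{j}}$ behave under induction along an unmixing pair, and managing the bookkeeping through the Saito reflections, especially when both $\varphi_i(N')$ and $\varphi^*_i(N')$ can be positive.
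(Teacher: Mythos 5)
Your plan founders at the central irreducibility step. You propose to apply Lemma \ref{lem:unmixing} to ``the unmixing pair $(M,N')$,'' but this pair is not unmixing in general: both $\wt(M)$ and $\wt(N')$ have $c$-argument $\pi/2$, so Lemma \ref{lem:semi-cuspidal-unmixing} does not apply, and by the criterion of Lemma \ref{lem:gum} a suffix of a word of $M$ can perfectly well have the same weight as a prefix of a word of $N'$ (the lowest-weight hypothesis on $M$ only rules out suffixes of weight exactly $\beta_{\underline{j}}$, not of weight $\beta_{\underline{j}}+\beta_{\underline{k}}$, etc.). The paper flags exactly this failure in the remark following Lemma \ref{lem:ccs}, and this is why it replaces the unmixing argument with the more elaborate one involving the string datum $\mathbf{a}$ of $N$, the module $L_{\mathbf{a}}$, and the idempotents $e_{k\delta,\mathbf{a}}$, using cuspidality of the $\scrL_{\beta_{\underline{j}}}$ and the vanishing $\tf_{\underline{j}}M=0$ to control $e_{k\delta,\mathbf{a}}(M\circ L_{\mathbf{a}})$. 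Moreover, even where $(L_1,L_2)$ is unmixing, Lemma \ref{lem:unmixing} requires $\varphi_i(L_1)=\varphi_i(L_2)=0$ (resp.\ the starred condition on both factors); you only have this for $M$, not for $N'$.

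A second, independent gap: the unmixing machinery (Lemmas \ref{lem:unmixing-quotient} and \ref{lem:unmixing}) only ever produces a \emph{unique simple quotient}, whereas the proposition asserts that $M\circ N$ is itself irreducible. Your phrase ``forcing the latter to be irreducible'' conflates the two. The paper's proof of irreducibility needs a genuinely different ingredient: the isomorphism $M\circ N\cong\operatorname{coind}(N\boxtimes M)$ from \cite[2.2]{LV} puts a copy of $N\boxtimes M$ in the socle; Lemma \ref{lem-sdc} shows the relevant idempotent-images lie in the socle; and Corollary \ref{cor:aff-lowest} (the cosocle lies in the \emph{starred} component of $M$ as well) shows the cosocle is not killed by one of those idempotents, so the socle-to-cosocle map is nonzero and the module is simple. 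None of this appears in your outline, and the commutativity $M\circ N\cong N\circ M$ is extracted from the same socle analysis rather than from a character comparison. Your base case via the jump lemma and the Saito-reflection bookkeeping are fine, but the inductive engine as proposed does not run.
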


Before proving Proposition \ref{prop:induce-with-lowest}, we need the following weaker statement:

\begin{lemma} \label{lem:ccs}
  With the notation of Proposition \ref{prop:induce-with-lowest},
  $M\circ N$ has a unique simple quotient, and the map $N\mapsto
  A(M,N)$ commutes with the unstarred crystal operators.
\end{lemma}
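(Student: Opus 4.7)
The plan is to prove the lemma by induction on the depth of $N$ in the connected component of the trivial representation in $\KLR[c]$, using the unstarred operators $\tsse_{\underline j}$ (available since, by Lemma~\ref{lem:face-comp}, the trivial representation generates the same component under both starred and unstarred operations). Defining $A(M,N) := \tsse_{\underline j}(A(M,N''))$ whenever $N = \tsse_{\underline j} N''$ will automatically yield the desired commutation with unstarred operators, so the whole content is to check that this really is the unique simple quotient of $M\circ N$.

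First I would establish a commutation lemma: for every simple root $\beta_{\underline j}$ of $\fg_c$, the module $M \circ \scrL_{\beta_{\underline j}} \cong \scrL_{\beta_{\underline j}} \circ M$ is irreducible. The approach is to use Lemma~\ref{lem:ssd} to produce a sequence of (possibly dual) Saito reflections that take $\beta_{\underline j}$ to a simple root $\alpha_i$ of $\fg$; by Lemma~\ref{lem:saito-face-crystal}, these transport $M$ to a module $M'$ which is still lowest weight in the reflected face crystal, and whose weight remains a multiple of $\delta$ (since $s_i\delta=\delta$). The lowest-weight hypothesis forces $\varphi_i(M')=\varphi_i^*(M')=0$, and $\langle\wt(M'),\al_i^\vee\rangle=0$ because $\wt(M')\in\Z\delta$, so $\operatorname{jump}_i(M')=0$. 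The Lauda–Vazirani jump lemma (Lemma~\ref{lem:jump}) then gives $M'\circ\scrL_i\cong \scrL_i\circ M'$ irreducible; Corollary~\ref{saito-A} transports this back to the original setting.

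With this in hand, the base case $N = \scrL_\emptyset$ is immediate. For the inductive step, writing $N = \tsse_{\underline j} N''$ gives a surjection $N''\circ\scrL_{\beta_{\underline j}}\twoheadrightarrow N$, hence $M\circ N''\circ \scrL_{\beta_{\underline j}}\twoheadrightarrow M\circ N$. The inductive hypothesis provides a second surjection from the same module, $M\circ N''\circ \scrL_{\beta_{\underline j}}\twoheadrightarrow A(M,N'')\circ \scrL_{\beta_{\underline j}}\twoheadrightarrow \tsse_{\underline j}A(M,N'')$. A routine check using $M\circ\rad(N''\circ\scrL_{\beta_{\underline j}})\subseteq\rad(M\circ N''\circ\scrL_{\beta_{\underline j}})$ shows the latter factors through $M\circ N$, so $\tsse_{\underline j}A(M,N'')$ appears as a quotient of $M\circ N$.

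The principal obstacle is establishing \emph{uniqueness} of the simple quotient of $M\circ N$. Given any simple quotient $L$, Frobenius reciprocity (Definition~\ref{integrate-out}) produces a non-zero map $M\circ N''\to L\triangleright\scrL_{\beta_{\underline j}}$; since $M\circ N''$ has simple cosocle $A(M,N'')$ by the inductive hypothesis, $A(M,N'')$ must appear as a composition factor of $L\triangleright\scrL_{\beta_{\underline j}}$, whose socle is $\tssf_{\underline j}L$. I would then identify $A(M,N'')\cong \tssf_{\underline j}L$ — forcing $L\cong \tsse_{\underline j}A(M,N'')$ — by exploiting the preliminary commutation result together with the fact that, after the Saito reduction used in its proof, $L\triangleright\scrL_{\beta_{\underline j}}$ has all composition factors isomorphic to $\tssf_{\underline j}L$ (a standard consequence of jump-lemma-type analysis in the bicrystal $\KLR\cong B(-\infty)$). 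This identification is where the assumption that $M$ is lowest weight (rather than some generic element of $\KLR[c]$) does its real work, by ensuring the jump condition holds simultaneously for all simple roots $\beta_{\underline j}$ of $\fg_c$.
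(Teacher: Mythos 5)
Your strategy (induction on the depth of $N$, with adjunction $\Hom(M\circ N''\circ \scrL_{\be_{\underline j}},L)\cong\Hom(M\circ N'', L\triangleright\scrL_{\be_{\underline j}})$ driving the uniqueness step) is genuinely different from the paper's, which instead takes the string data $\mathbf{a}$ of $N$, forms the explicit module $L_{\mathbf a}$ as a quotient of $\scrL^{a_\ell}_{\be_{\underline{j_\ell}}}\circ\cdots\circ\scrL^{a_1}_{\be_{\underline{j_1}}}$, and shows by a direct character computation (using that each $\scrL_{\be_{\underline j}}$ is cuspidal and that $M$ is killed by every $\tssf_{\underline j}$) that $e_{k\delta,\mathbf a}(M\circ L_{\mathbf a})\cong M\boxtimes\scrL^{a_\ell}_{\be_{\underline{j_\ell}}}\boxtimes\cdots\boxtimes\scrL^{a_1}_{\be_{\underline{j_1}}}$ while $e_{k\delta,\mathbf a'}$ kills it for $\mathbf a'>\mathbf a$; uniqueness of the simple quotient and the string-data computation then fall out together. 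The paper even flags why something this elaborate is needed: neither $(M,L_{\mathbf a})$ nor $(M,\scrL^{a_\ell}_{\be_{\underline{j_\ell}}},\dots)$ is unmixing, so the soft arguments fail.

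Unfortunately your route has two genuine gaps. First, the decisive step — identifying $A(M,N'')$ with $\tssf_{\underline j}L$ — rests on the claim that every composition factor of $L\triangleright\scrL_{\be_{\underline j}}$ is isomorphic to $\tssf_{\underline j}L$. This is not a standard consequence of the jump lemma and is false in general: already for a genuine simple root $\al_i$, the module $L\triangleright\scrL_i$ has simple socle $\tf_iL$ but typically also has composition factors $L''$ with $\varphi_i(L'')<\varphi_i(L)-1$; nothing in your setup rules these out, and you would in any case need to re-prove such multiplicity statements for the face operators $\tssf_{\underline j}$, which Proposition \ref{e-f-defined} does not supply. Without this, knowing that $A(M,N'')$ occurs as a composition factor of $L\triangleright\scrL_{\be_{\underline j}}$ does not pin down $L$. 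Second, your preliminary commutation lemma transports the statement ``$M'\circ\scrL_i$ is irreducible'' back through Saito reflections via Corollary \ref{saito-A}. But $\Sai_i$ is only a bijection on isomorphism classes of simples, not a functor on modules (the paper notes that such functors are known only in symmetric type, via Kato), and Corollary \ref{saito-A} only controls the \emph{cosocle} of an induction of semi-cuspidals; it cannot transport irreducibility of the full induction $M'\circ\scrL_i$ to irreducibility of $M\circ\scrL_{\be_{\underline j}}$. (That irreducibility is true, but it is essentially the content of Proposition \ref{prop:induce-with-lowest} itself, which the paper proves only after Lemma \ref{lem:ccs} using coinduction and Lemma \ref{lem-sdc}.) The step showing that $\tsse_{\underline j}A(M,N'')$ factors through $M\circ N$ also needs more care than ``a routine check,'' but that is repairable; the two points above are the real obstructions.
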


\begin{proof}
For any list of weights $\nu_1,\dots, \nu_m$, let
$e_{\nu_1,\dots,\nu_m}$ be the idempotent that projects to all
sequences which consist of a chunk of strands summing to $\nu_1$, a
chunk summing to $\nu_2$, etc.

  Choose any infinite list of nodes $\underline{j_1},\underline{j_2},\dots$ in the Dynkin
  diagram of $\fg_c$ in which each node appears infinitely many times. Let $(a_1, a_2, \dots)$ be the string data of $N$, considered as an element of
  $B^{\fg_c}(-\infty)$, so in particular
  $N=\tsse_{\underline{j_{1}}}^{a_1}\tsse_{\underline{j_{2}}}^{a_{2}} \cdots
  \tsse_{\underline{j_\ell}}^{a_\ell} L_\emptyset.$
By Corollary \ref{cor:aff-lowest}, $\wt(M)=k\delta$ for some $k$. 

Set  $e_{\mathbf{a}}=e_{a_\ell \beta_{\underline {j_\ell}},\dots ,a_1\beta_{\underline {j_1}}},$ and $e_{k \delta, \mathbf{a}}=e_{k \delta, a_\ell \beta_{\underline {j_\ell}},\dots ,a_1\beta_{\underline {j_1}}}$. Let
\begin{equation} \label{eq:bie}
\scrL_{\bf a}= \scrL^{a_\ell}_{ \beta_{\underline{j_{\ell}}}} \circ
  \scrL^{a_{\ell-1}}_{\beta_{\underline{j_{\ell-1}}}} \circ \cdots \circ\scrL^{a_1}_{ \beta_{\underline{j_1}}}
  \end{equation}
and let
$L_{\mathbf{a}}$ be the quotient of
$\scrL_{\bf a}$
  by the subalgebra generated by $e_{{\bf a}'} \scrL_{\bf a}$ for all $\mathbf{a}'>\mathbf{a}$ in lexicographic order. By the definition of string data (Definition \ref{def-string}), $N$ is a quotient of $L_{\mathbf{a}}$.

Consider a word in the character of $e_{k\delta,\mathbf{a}} (M\circ L_{\mathbf{a}})$. This must be a shuffle of a word in each factor of $M\circ \scrL^{a_\ell}_{ \beta_{\underline{j_{\ell}}}} \circ
  \scrL^{a_{\ell-1}}_{\beta_{\underline{j_{\ell-1}}}} \circ \cdots \circ\scrL^{a_1}_{ \beta_{\underline{j_1}}}$. Each of the roots $\beta_{\underline{j}}$ is minimal, so
$\scrL_{\beta_{\underline{j}}}$ is necessarily cuspidal, not
just semi-cuspidal. Thus the letters from each factor that land in any
fixed chunk of
weight $a_k \beta_{\underline {j_k}}$ must have total weight
$a'\beta_{\underline{j_k}}$ for $a'\leq a_k$ (otherwise adding up the contributions from all factors gives something with argument less then $\pi/2$).  
Since  $\tilde
f_{\beta_{\underline j}}M =0$ for all $\underline{j}$, no such chunk can come from $M$. Furthermore, any diagram that permutes strands involving two different $\scrL_{\beta_{\underline{\ell_k}}}^{a_k}$ must
factor through the image of an idempotent $e_{k\delta,\mathbf{a}'}$ higher in
lexicographic order, which is then killed when we take the quotient by to get $L_{\bf a}$ (compare with the argument in \cite[3.7]{KLI}).  Thus
\begin{equation} 
e_{k\delta,\mathbf{a}} (M\circ L_{\mathbf{a}})\cong M\boxtimes \scrL^{a_\ell}_{ \beta_{\underline{j_{\ell}}}} \boxtimes
  \scrL^{a_{\ell-1}}_{ \beta_{\underline{j_{\ell-1}}}} \boxtimes \cdots
  \boxtimes \scrL^{a_1}_{ \beta_{\underline{j_1}}}, \qquad \text{and} \label{eq:3}
\end{equation}
\begin{equation}
e_{k\delta,\mathbf{a'}} (M\circ L_{\mathbf{a}})\cong 0, \quad \text{for all} \quad \mathbf{a'}>\mathbf{a} \label{eq:33}
\end{equation}

It now follows that 
  $M \circ L_{\mathbf{a}}$ has a unique simple quotient: any proper submodule
  is killed by $e_{k\delta,\mathbf{a}}$ so the sum of any two proper
  submodules is as well, and thus is still proper. But $M \circ N$ is clearly a quotient of $M \circ L_{\mathbf{a}}$, so it also has a unique simple quotient.

 Using the definition of the crystal operators, \eqref{eq:3} and \eqref{eq:33} imply that the string data of the unique simple quotient of $M \circ L_{\mathbf{a}}$ with respect to $\fg_c$ is ${\bf a}$, and so this module is actually $\tsse_{\underline{j_{1}}}^{a_1}\tsse_{\underline{j_{2}}}^{a_{2}} \cdots
  \tsse_{\underline{j_\ell}}^{a_\ell} M$. Hence the map $N \rightarrow A(M, N)$ commutes with the ordinary crystal operators.
\end{proof}

\begin{rem}
The reader may notice the
  resemblance of the above argument to that we used earlier based on the
  unmixing property; unfortunately, neither $(M, L_{\mathbf{a}})$ nor $(M, \scrL_{\underline{j_{\ell}}}^{a_\ell},
\dots, \scrL_{\underline{j_1}}^{a_1})$ is actually unmixing, so we must
use this more elaborate argument.
\end{rem}

\begin{lemma} \label{lem-sdc}
With the notation of Proposition \ref{prop:induce-with-lowest}, for any sequence $i_1,\dots,i_d$ where $\sum_j {\beta_{\underline{i_j}}}= \wt(N)$,
$$\dim e_{\beta_{\underline{i_d}}, \ldots, \beta_{\underline{i_1}}, k \delta} (M \circ N)= \dim e_{\beta_{\underline{i_d}}, \ldots, \beta_{\underline{i_1}}, k \delta} (N \boxtimes M).$$ 
\end{lemma}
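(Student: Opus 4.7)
The plan is to use the shuffle identity
\[
\dim e_{\beta_{\underline{i_d}},\ldots,\beta_{\underline{i_1}},k\delta}(M\circ N) = \sum_{\Bi}(\ch(M)\ast\ch(N))[\Bi],
\]
where the sum is over words $\Bi$ fitting the chunk pattern, and to show that only the ``trivial shuffle''---placing all letters of a word $\Bi_N\in\ch(N)$ in the first $|\wt(N)|$ positions (matching the $\beta_{\underline{i_j}}$-chunks) and all letters of a word $\Bi_M\in\ch(M)$ in the last $k|\delta|$ positions (filling the $k\delta$-chunk)---contributes. This trivial shuffle contributes precisely $\dim e_{\beta_{\underline{i_d}},\ldots,\beta_{\underline{i_1}}}(N)\cdot\dim(M)$, the right-hand side, so the lemma reduces to showing that every non-trivial shuffle contributes zero (each shuffle contribution $\ch(M)[\Bi_M]\cdot\ch(N)[\Bi_N]$ is already non-negative, so this is a vanishing statement, not a cancellation).

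To analyze a non-trivial shuffle, for each $l=1,\ldots,d$ I let $\mu^{(l)}$ denote the total weight of the $M$-letters landing in the first $l$ chunks; this is a prefix weight of $\Bi_M$, and its complement is a prefix weight of $\Bi_N$. Since $M$ and $N$ both lie in $\QH[c]$, both prefix weights have $c$-argument at most $\pi/2$; since their sum $\sum_{j\le l}\beta_{\underline{i_{d-j+1}}}$ lies in $\operatorname{span}_{\mathbb{Q}_{\ge 0}}\Delta_c^+$ and so has $c$-argument exactly $\pi/2$, convexity of arguments forces $\mu^{(l)}$ itself to have $c$-argument $\pi/2$. But $M$ is lowest weight in $\QH[c]$, hence by Lemma \ref{lem:ecd} is $c'$-semi-cuspidal for all $c'$ in a neighborhood of $c$; applying a one-parameter perturbation of $c$ to the semi-cuspidality inequality $\arg c'(\mu^{(l)})\le\arg c'(k\delta)$ shows that any prefix weight of $\Bi_M$ with $c$-argument $\pi/2$ must lie on the line $\Q\delta$. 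Thus $\mu^{(l)}=p_l\delta$ for some integers $0=p_0\le p_1\le\cdots\le p_d\le k$.

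The final step is to force every $p_l=0$, which I plan to obtain by adapting the chunk-by-chunk combinatorial argument from the proof of Lemma \ref{lem:ccs}. The $M$-contribution to chunk $l$ is $(p_l-p_{l-1})\delta$ and the $N$-contribution is $\beta_{\underline{i_{d-l+1}}}-(p_l-p_{l-1})\delta$, and both must be non-negative integer sums of simple roots of $\fg$. Note that because $\langle\delta,\beta_{\underline j}^\vee\rangle=0$ for every real root $\beta_{\underline j}$ of $\fg$, Lemma \ref{lem:jump} gives $\operatorname{jump}_{\underline j}(M)=0$, and hence $M\circ\scrL_{\beta_{\underline j}}\cong\scrL_{\beta_{\underline j}}\circ M\cong\tsse_{\underline j}M$ is already irreducible for every simple root $\beta_{\underline j}$ of $\fg_c$. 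Combining this ``base commutation'' with the vanishings $\tssf_{\underline j}M=\tssf^*_{\underline j}M=0$ and inducting backward through the chunks, one concludes that every increment $p_l-p_{l-1}$ must vanish, so the shuffle is trivial. The main obstacle is precisely this final combinatorial step: tracking how a hypothetical non-zero $\delta$-shift through $M$'s prefixes would decompose against the simple-root structure of each individual chunk, and ruling it out using the lowest-weight and semi-cuspidality constraints on $M$ and $N$ together.
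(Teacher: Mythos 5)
Your reduction to showing that every non-trivial shuffle contributes zero, and your argument that the $M$-letters landing in the first $l$ chunks have total weight $p_l\delta$ (semi-cuspidality of both factors plus convexity of arguments forces the $c$-argument of that prefix to be exactly $\pi/2$, and then Lemma \ref{lem:ecd} with a perturbation of $c$ forces it onto the line $\Q\delta$), is exactly the content of the paper's proof. The genuine gap is the final step, which you yourself flag as the ``main obstacle'' and propose to attack via a jump-lemma/commutation argument. That route is both unavailable and circular: Lemma \ref{lem:jump} is stated for simple roots $\alpha_i$ of $\fg$, not for the $\beta_{\underline{j}}$ (a face-crystal version would have to be established first, e.g.\ via Lemma \ref{lem:ssd}), and the statement you want to use --- that $M\circ\scrL_{\beta_{\underline{j}}}\cong\scrL_{\beta_{\underline{j}}}\circ M$ is irreducible --- is precisely the special case $N=\scrL_{\beta_{\underline{j}}}=\tsse_{\underline{j}}\mathscr{L}_\emptyset$ of Proposition \ref{prop:induce-with-lowest}, the very result this lemma is feeding into.

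What you are missing is that the step closes immediately from an observation you already made in passing: the $N$-letters in chunk $l$ form a prefix of a word of $N$ of weight $\beta_{\underline{i_{d-l+1}}}-(p_l-p_{l-1})\delta$, which must therefore lie in $Q^+$. But $\beta_{\underline{i_j}}-q\delta\notin Q^+$ for any $q\geq 1$: such an element would be a nonzero element of $Q^+$ of $c$-argument $\pi/2$, and writing $\beta_{\underline{i_j}}=(\beta_{\underline{i_j}}-q\delta)+q\delta$ would exhibit $\beta_{\underline{i_j}}$ as decomposable among positive roots of argument $\pi/2$, contradicting its simplicity in $\Delta_c$. Hence $p_l=p_{l-1}$ for every $l$, so $p_d=0$, all of $M$ lands in the final $k\delta$-chunk, and the shuffle is trivial. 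This chunk-by-chunk induction (each chunk of weight $\beta_{\underline{i}}$ consists entirely of $N$-strands) is exactly how the paper argues; no commutation of $M$ past $\scrL_{\beta_{\underline{j}}}$ is needed.
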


\begin{proof}
If $\Bi$ is a non-trivial word in the character of
$M$, then the weight of any prefix $\Bi_p$ is either
$<_c\delta$ or is a multiple of $\delta$. In particular, given any word in the character of $M \circ N$ with a prefix of weight $\beta_{\underline i}$ for some $i$, all strands in that prefix must come from $N$. Proceeding inductively, any word in $M \circ N$ with a prefix beginning with blocks that step along $\be_{\underline{i_1}}, \dots,
\be_{\underline{i_d}}$ for an arbitrary sequence $i_1,\dots,i_d$ must have the property that all strands in that prefix must come from $N$. 
\end{proof}

\begin{proof}[Proof of Proposition \ref{prop:induce-with-lowest}]
By \cite[2.2]{LV}, the induction $M\circ N$ is
  isomorphic to the coinduction $\operatorname{coind}(N\boxtimes
  M)$, so there is an injection from $N \boxtimes M$ into the socle of $M \circ N$. By Lemma \ref{lem-sdc}, this implies that $e_{\beta_{\underline{i_d}}, \ldots, \beta_{\underline{i_1}}, k \delta} (M \circ N)$ is contained in the socle of $M \circ N$ for any sequence $i_1,\dots,i_d$ where $\sum_j {\beta_{\underline{i_j}}}= \wt(N)$.

Let $L$ be the cosocle of $M \circ N$. By Lemma \ref{lem:ccs}, $L$ is irreducible and in the unstarred crystal component of $M$, so by  Corollary \ref{cor:aff-lowest}, $L$ is also in the starred component of $M$. Equivalently, for some sequence $\underline{i_1},\dots,\underline{i_d}$ with $\sum_j {\beta_{\underline{i_j}}}= \wt(N)$, we have
$e_{\beta_{\underline{i_d}}, \ldots, \beta_{\underline{i_1}},k\delta}  L \neq 0$.  
In particular, the natural map from the socle of $M \circ N$ to the cosocle is non-zero. Since the cosocle $L$ is simple, this implies that $M \circ N$ itself is simple. 

Notice also that the natural map from $N\circ M$
to the socle of $M\circ N$ must be non-zero and thus an
isomorphism. Hence $N\circ M \simeq M \circ N$.

We have already established that $N \rightarrow A(M, N) = M \circ N$ is a crystal
isomorphism for the unstarred operators; the symmetric argument for $N\circ M$ establishes
that it is for the starred operators as well. 
\end{proof}

\subsection{Affine KLR polytopes}
\label{sec:affine-type}

Outside of finite type, the conventional definition of MV polytope
fails, although, as shown in \cite{BKT}, an alternate geometric
definition can be extended to symmetric affine type. We propose to use
the decorated polytopes $\tilde P_L$ as the ``general type MV
polytopes.'' This construction is not completely combinatorial, as the
decoration consists of various representations of KLR algebras. However, in
affine type we can extract purely combinatorial objects.

For the rest of this section fix $\fg$ of affine type with rank $r+1$.  
As usual, label the simple roots of $\fg$ by
$\al_0,\dots, \al_r$ with $\al_0$ being the distinguished vertex as in \cite{Kac90}. 
Let $\fg_{\mathrm{fin}}$ be the finite type Lie algebra for the diagram with the $0$ node removed. Let $\Delta_{\text{fin}}$ be the root system of $\fg_{\mathrm{fin}}$ and $\bar \alpha_i$ be its simple roots. 

Consider the projection $p:\Delta \rightarrow \Delta_{\text{fin}}$ defined by $p(\alpha_i)=\bar \alpha_i$
for $i \neq 0$, $p(\delta)= 0$. In all cases other than $A_{2n}^{(2)}$ the image of this map is exactly the set of finite type roots along with $0$ (this can be seen by checking that $p$ sends the simple affine roots to a set of finite type roots including all the simples, and using the affine Weyl group). For $A_{2n}^{(2)}$, the image also contains $\alpha/2$ for each of the long roots $\alpha$ in the finite type root system.

For each chamber coweight $\gamma = \theta \omega_i^\vee$ in the finite
type root system (i.e. each element in the Weyl group orbit of a fundamental coweight), define a charge $c_\gamma$ by 
$$c_\gamma(\alpha) = \langle \gamma, p(\alpha) \rangle+i\rho^\vee(\alpha).$$ 
Then $\arg(c_\gamma(\delta))=0$ so $c_\gamma$ defines a vertical face of $P_L$.
As in Section \ref{sec:affine-face-crystals}, the face crystal $\KLR[c_\gamma]$ is a crystal for a product of affine algebras. 

Our next goal is to attach a partition to this face, giving a precise definition of the partitions $\pi^\gamma$ from the introduction.
Let  $\Delta_{\mathrm{fin};\gamma}$ be the sub-root-system of $\Delta_{\text{fin}}$ 
on which $\gamma$ vanishes. Fix a set $\Pi = \{ \eta_1,\dots,
\eta_{r-1} \}$ of simple roots for $\Delta_{\mathrm{fin};\gamma}$. There is
 a unique $\eta_r \in \Delta_{\text{fin}}$ such that 
 \begin{itemize}
\item  
 $\{\eta_1,\dots,
\eta_{r-1} , \eta_r \}$ is a set of simple roots for
  $\Delta_{\mathrm{fin}}$, and 
  \item $\langle
\gamma,\eta_r\rangle=1$. 
\end{itemize}
Explicitly, $\eta_r$ is the unique root with $\langle
\gamma,\eta_r\rangle=1$ such that $\eta_r-\eta_i$ is never a root.

Let $c_\Pi$ be a charge such that the roots sent to $\pi/2$ are exactly the linear
combinations of $p^{-1}(\eta_r)$ and $\delta$, and such that, for all $1 \leq i \leq r-1$, the positive roots in  $p^{-1}(\eta_i)$  are $>_{c_\Pi}\delta$. In particular, for any root $\alpha$,
\begin{equation} \label{eq:oimp}
\alpha <_{c_\Pi} \delta\;\;  \text{ implies } \;\; \alpha \leq_{c_\gamma} \delta.
\end{equation}

The root system $\fg_{c_\Pi}$ is rank 2 affine, and thus is of type
$A_1^{(1)}$ or $A_2^{(2)}$.
 The positive cone for $\fg$ defines simple roots for $\fg_{c_\Pi}$, which we denote by $\be_{\underline 1}$ and
$\be_{\underline 0}$, choosing the labeling so that
$\langle \gamma,p(\be_{\underline 1})\rangle<0$ and thus
$\be_{\underline 1}>_{c_\gamma} \be_{\underline 0}$. For $i=0,1$,
define $\ell_i = \frac{|\beta_{\underline{i}}|}{\sqrt{2}}$ (which is
always $1$ or $2$).  Certainly 
$\ell_0\be_{\underline 0}+\ell_1\be_{\underline 1}$ must be an integer multiple of $\delta$. 

\begin{defn}\label{def:d}
  Let $d_{\gamma}$ be the integer such that 
  $\ell_0\be_{\underline 0}+\ell_1\be_{\underline 1}=d_\gamma \delta$.   
\end{defn}

\begin{rem}
These $d_\gamma$ appear, with a slightly different definition, in \cite[(2.2)]{BeckN}. 
\end{rem}

\begin{example}
Let $\mathfrak{g}$ be of type $A_5^{(3)}$. Then the Dynkin diagram is
\begin{center}
\begin{tikzpicture}[scale=0.8]

\draw (0,0)--(0,2);
\draw (-2,0)--(0,0);
\draw (0,0.1)--(2,0.1);
\draw (0,-0.1)--(2,-0.1);

\draw[line width = 0.04cm] (0.8,0)--(1.15,0.3);
\draw[line width = 0.04cm] (0.8,0)--(1.15,-0.3);

\draw node [shape=circle, fill=white,draw,line width=0.05cm] at (0,2) {$\;$};
\draw node  at (0,2) {$0$};

\draw node [shape=circle, fill=white,draw,line width=0.05cm] at (-2,0) {$\;$};
\draw node  at (-2,0) {$1$};
\draw node [shape=circle, fill=white,draw,line width=0.05cm] at (0,0) {$\;$};
\draw node  at (0,0) {$2$};
\draw node [shape=circle, fill=white,draw,line width=0.05cm] at (2,0) {$\;$};
\draw node at (2,0) {$3$};

\draw node at (2.5,-0.2) {,};
\end{tikzpicture}
\end{center}
$\delta = \alpha_0+\alpha_1+2\alpha_2+\alpha_3$, and
the underlying finite-type root system is of type $C_3$. 
Consider $\gamma=\omega_3$. Then certainly $\beta_{\underline 0}=\alpha_3$. One might hope that $\beta_{\underline 1}$ was equal to 
$\delta-\alpha_3$, but it turns out that this is not an affine root. Instead, $\beta_{\underline 1}= 2\alpha_0+ 2 \alpha_1 + 4 \alpha_2 +\alpha_3$. 
One can calculate $\ell_0=\ell_1=1$ and $\beta_{\underline 0}+\beta_{\underline 1}= 2 \delta.$ Hence $d_{\omega_3} =2$. 

In this case it is fundamental weights corresponding to long roots that have $d_\gamma \neq 1$, but this is not the general pattern since, as discussed in \cite{BeckN}, $d_\gamma=1$ for all chamber weights in all non-twisted cases. 
\end{example}

\begin{defn} \label{def:isrface}
For each partition $\lambda$, 
  let $\mathscr{L}_{\la;\gamma}$ be the element of the lowest weight $\fg_{c_\Pi}$-crystal generated by the trivial module $\mathscr{L}_{\emptyset}$ which has
  purely imaginary Lusztig
  datum $\la$ for the ordering $\be_{\underline 1}>\be_{\underline 0}$, as defined in
  \cite{BDKT}.   Explicitly, one can easily show using the combinatorics in \cite{BDKT} that
\[\mathscr{L}_{\la;\gamma}=\tsse_{\underline
  1}^{\ell_1\la_1}(\tsse_{\underline
  0}^*)^{\ell_0\la_1}(\tsse_{\underline
  1}^*)^{\ell_1\la_2}\tsse_{\underline 0}^{\ell_0\la_2}\tsse_{\underline
  1}^{\ell_1\la_3}(\tsse_{\underline 0}^*)^{\ell_0\la_3}\cdots 
\mathscr{L}_{\emptyset}.
\]
using the operators $\tsse_{\underline
  j}$ defined in Definition \ref{def:face-operator}.
\end{defn}
Note that the weight of the module $\mathscr{L}_{\la;\gamma}$ is 
$d_\gamma|\lambda|\delta$.

\begin{lemma}\label{friendly-Saito}
  The Saito reflection $\Sai_i$ induces a bicrystal isomorphism from  $\KLR[c_\gamma]$ to $\KLR[c_{s_i\gamma}]$ if $\langle \gamma, p(\al_i)
\rangle\leq 0$ and $\Sai_i^*$  induces a bicrystal isomorphism from  $\KLR[c_\gamma]$ to $\KLR[c_{s_i\gamma}]$ if $\langle \gamma, p(\al_i)
\rangle\geq 0$.
\end{lemma}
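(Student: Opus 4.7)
The plan is to reduce to Lemma \ref{lem:saito-face-crystal} by splitting into three overlapping cases based on the sign of $\langle \gamma, p(\al_i)\rangle$. Recall that $\arg c_\gamma(\al_i) = \pi/2$ exactly when $\langle \gamma, p(\al_i)\rangle = 0$, is greater than $\pi/2$ when this pairing is negative, and is less than $\pi/2$ when it is positive. So when $\langle \gamma, p(\al_i)\rangle < 0$, Lemma \ref{lem:saito-face-crystal} applies verbatim and yields a bicrystal isomorphism $\Sai_i \colon \KLR[c_\gamma] \to \KLR[c_\gamma^{s_i}]$. The case of $\Sai_i^*$ for $\langle \gamma, p(\al_i)\rangle > 0$ is completely symmetric, obtained by applying the Kashiwara involution $(-)^\sigma$.

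The first step after this is identifying the target: I will show $\KLR[c_\gamma^{s_i}] = \KLR[c_{s_i\gamma}]$ via Lemma \ref{lem:justclass}. For this I verify the $p$-equivariance $\langle s_i\gamma, p(\al)\rangle = \langle \gamma, p(s_i\al)\rangle$ for every root $\al$. For $i \neq 0$ this is automatic since $p$ sends $\al_j \mapsto \bar\al_j$ and intertwines the two copies of the finite reflection. For $i = 0$, one uses $p(\al_0) = -\theta$ (or, in the $A_{2n}^{(2)}$ case, the analogous half-sum) together with $\langle \al_j, \al_0^\vee\rangle = -\langle \bar\al_j, \theta^\vee\rangle$ (which follows from $\al_0 = \delta - \theta$ modulo the scaling appropriate to the type) to define $s_0$ on finite coweights as the reflection through $p(\al_0)$ and check $p \circ s_0 = s_0 \circ p$. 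The real parts of $c_\gamma^{s_i}(\al)$ and $c_{s_i\gamma}(\al)$ then agree for every $\al$, so the sign relative to $\pi/2$ of their arguments agrees (for $\al \neq \al_i$ both are in the upper half plane; for $\al = \al_i$ both have positive real part, and one uses the chosen branch cut to compare them consistently), giving the required equality of face crystals.

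The remaining subtlety, and the main obstacle, is the boundary case $\langle \gamma, p(\al_i)\rangle = 0$, where Lemma \ref{lem:saito-face-crystal} does not apply directly since $\arg c_\gamma(\al_i) = \pi/2$. Here $s_i\gamma = \gamma$ so the target charge equals $c_\gamma$, and I must show $\Sai_i$ is a bicrystal automorphism of $\KLR[c_\gamma]$ on the domain $\{L : \varphi_i^*(L)=0\}$. Fix such an $L$ and refine the $c_\gamma$-preorder to a total convex order $\succ$ in which $\al_i$ is the maximum of its equivalence class $\mathscr{C}$. The hypothesis $\varphi_i^*(L) = 0$ forces $\wt(L_1) \neq k\al_i$ in the $\succ$-semi-cuspidal decomposition $L = A(L_1,\dots,L_h)$, so $\al_i \succ \wt(L_1)$ and Proposition \ref{prop:SaiA} applies: $\Sai_i L = A(\Sai_i L_1,\dots,\Sai_i L_h)$ is $\succ^{s_i}$-semi-cuspidal. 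Since $s_i$ acts by permutation on the argument-$\pi/2$ class $\mathscr{C}$ (the real parts of $c_\gamma$ being $s_i$-invariant by the equivariance above) while sending roots with argument $< \pi/2$ to roots with argument $< \pi/2$, the order $\succ^{s_i}$ is still a refinement of the $c_\gamma$-preorder, so $\Sai_i L$ is $c_\gamma$-semi-cuspidal of argument $\pi/2$ and lies in $\KLR[c_\gamma]$.

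Finally, that this defines a bicrystal isomorphism (rather than just a bijection of underlying sets) follows from Corollary \ref{saito-A} applied to each face-crystal operator $\tsse_{\underline j}, \tsse_{\underline j}^*$: one runs through a simple root $\be_{\underline j}$ of $\fg_{c_\gamma}$, uses Lemma \ref{lem:ssd} if necessary to reduce $\be_{\underline j}$ to a simple root of $\fg$, and then the statement becomes the known commutation of Kashiwara operators with Saito reflection at simple roots. The inverse of $\Sai_i$ on this bicrystal is obtained by running the argument with $\varphi_i(L) = 0$ in place of $\varphi_i^*(L) = 0$.
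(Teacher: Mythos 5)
Your proposal is correct and follows essentially the same route as the paper: apply Lemma \ref{lem:saito-face-crystal} in the strict case and identify $\KLR[c_\gamma^{s_i}]$ with $\KLR[c_{s_i\gamma}]$ by checking that the two charges classify every root the same way relative to argument $\pi/2$ (the equivariance $\langle s_i\gamma, p(\beta)\rangle=\langle\gamma,p(s_i\beta)\rangle$ that you spell out is exactly what the paper's chain of equivalences implicitly uses). The only divergence is your separate treatment of the boundary case $\langle\gamma,p(\al_i)\rangle=0$, where Lemma \ref{lem:saito-face-crystal} does not literally apply and $\Sai_i$ is only defined on the sublocus $\varphi_i^*=0$; the paper's proof passes over this case in silence, and in fact every application of the lemma in the paper uses only the strict inequality, so your extra care addresses a looseness in the statement rather than revealing a gap in your own argument.
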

\begin{proof}
We consider the case where  $\langle \gamma, p(\al_i)
\rangle\leq 0$, the other case being similar.
By Lemma \ref{lem:saito-face-crystal}, $\sigma_i$ induces a bicrystal isomorphism between $\KLR[c_\gamma^{s_i}]$ and $\KLR[c_\gamma]$. Thus it suffices to show that $\KLR[c_\gamma^{s_i}]$ and $\KLR[c_{s_i \gamma}]$ are the same set. But this is clear since for any $\beta$
$$c_{\gamma}^{s_i}(\beta) < \pi/2 \;\Leftrightarrow \; c_\gamma(s_i(\beta))< \pi/2 \;\Leftrightarrow\; c_{s_i \gamma}(\beta) < \pi/2,$$
so the conditions of being cuspidal of argument $\pi/2$ for these two charges is identical (note however that the charges themselves are not identical).
\end{proof}

\begin{lemma}\label{not-cuspidal}
  Fix $M \in \KLR[c_\gamma] \cap \KLR[c_\Pi]$ of weight $n\delta$, and assume $M$ is in the $\fg_{c_\Pi}$-crystal component of $\mathscr{L}_{\emptyset}$. Then $M\cong \mathscr{L}_{\la;\gamma}$
  for some $\la$.
\end{lemma}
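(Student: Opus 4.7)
The plan is to use the bijection of Corollary \ref{cor:aff-lowest} to identify the $\fg_{c_\Pi}$-crystal component of $\mathscr{L}_\emptyset$ with $B^{\fg_{c_\Pi}}(-\infty)$, so that $M$ corresponds to an element whose rank-$2$ affine MV polytope $Q$ (in the sense of \cite{BDKT}) we may analyze. By the explicit description of $\mathscr{L}_{\la;\gamma}$ in Definition \ref{def:isrface}, it suffices to show that $Q$ has purely imaginary Lusztig data for the convex order $\succ_+$ (with $\be_{\underline 1}>\be_{\underline 0}$); the partition $\la := a_\delta$ is then forced and $M \cong \mathscr{L}_{\la;\gamma}$ by the uniqueness part of Theorem \ref{th:unique-aff}.

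To rule out real Lusztig data, I would argue by contradiction. Suppose $a_{r_{k_1}} > 0$ for some minimal $k_1 \ge 1$. Then the $\succ_+$-path in $Q$ has its first non-trivial edge parallel to $r_{k_1}$, so $v_1 = \mu_0 + a_{r_{k_1}} r_{k_1}$ is a vertex of $Q$ and hence of the ambient KLR polytope $P_M$; this gives $\Res{n\delta}{a_{r_{k_1}} r_{k_1},\, n\delta - a_{r_{k_1}} r_{k_1}} M \ne 0$. The key computation is that since $p(\delta) = 0$ we have $p(r_{k_1}) = p(\be_{\underline 1})$, and by the labeling convention $\langle \gamma, p(\be_{\underline 1})\rangle < 0$; hence $\arg c_\gamma(v_1 - \mu_0) > \pi/2$, violating $c_\gamma$-semi-cuspidality of $M$.

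A symmetric argument at the top of the $\succ_+$-path kills the $r^{k_0}$ family of real roots. If $a_{r^{k_0}} > 0$ for minimal $k_0$, then $v_2 = \mu^0 - a_{r^{k_0}} r^{k_0}$ is a vertex of $Q$, giving another nonzero restriction; the identity
\[ \ell_0 \langle \gamma, p(\be_{\underline 0})\rangle + \ell_1 \langle \gamma, p(\be_{\underline 1})\rangle \;=\; \langle \gamma, p(\delta)\rangle \;=\; 0, \]
which follows because $\ell_0\be_{\underline 0}+\ell_1\be_{\underline 1}$ is a positive multiple of $\delta$, forces $\langle \gamma, p(\be_{\underline 0})\rangle > 0$. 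Thus the real part of $c_\gamma(n\delta - a_{r^{k_0}} r^{k_0})$ equals $-a_{r^{k_0}} \langle \gamma, p(\be_{\underline 0})\rangle < 0$, so once again $\arg c_\gamma(v_2 - \mu_0) > \pi/2$, contradicting semi-cuspidality.

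Combining the two cases, all real Lusztig data on the $\succ_+$-side of $Q$ vanish, so $Q$ has Lusztig datum $(0,\ldots,0,\la,0,\ldots,0)$, and uniqueness yields $M \cong \mathscr{L}_{\la;\gamma}$. The main technical obstacle I anticipate is rigorously identifying vertices of the rank-$2$ affine polytope $Q$ (a priori an object in the weight space of $\fg_{c_\Pi}$) with vertices of the ambient polytope $P_M\subset\fh^*$, so that restrictions translate between the two settings; this should follow from the $c_\Pi$-semi-cuspidality of $M$ together with the compatibility of the bijection in Corollary \ref{cor:aff-lowest} with restriction functors, but it is the point requiring the most care.
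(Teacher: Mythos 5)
Your overall strategy is reasonable and your sign computations are correct, but the step you yourself flag as delicate is a genuine gap, and the source you cite for closing it does not close it. The issue is the passage from ``$a_{r_{k_1}}(Q)>0$'' to ``$\Res{n\delta}{a_{r_{k_1}}r_{k_1},\,n\delta-a_{r_{k_1}}r_{k_1}}M\neq 0$''. The polytope $Q$ is a purely combinatorial object attached to $M$ through the face-crystal isomorphism of Corollary \ref{cor:aff-lowest} together with Theorem \ref{th:unique-aff}, so its Lusztig data are the \emph{crystal-theoretic} data of Definition \ref{ctLusztig} computed inside the face crystal. For $k_1=1$ (and dually $k_0=1$) these are $\varphi^*_{\underline 1}(M)$ and $\varphi_{\underline 0}(M)$, which are restriction multiplicities by definition, and your contradiction goes through. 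But for $k_1>1$ the datum $a_{r_{k_1}}$ is defined via iterated Saito reflections, and nothing established up to this point of the paper identifies it with an edge of the character polytope $P_M$ or with a nonzero restriction of $M$. Corollary \ref{cor:aff-lowest} is a bicrystal statement with no content about restriction functors; and the statement you actually need --- that the $2$-face of $P_M$ in these directions is the rank-$2$ affine MV polytope of the corresponding face-crystal element --- is part of Theorem \ref{thmB}, whose proof relies on Proposition \ref{prop-pi-all} and Lemma \ref{lem:face-lw} and hence on the present lemma. Invoking it here would be circular.

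This missing bridge is exactly what the paper's argument is engineered to build. After using Lemma \ref{friendly-Saito} to reduce to the case where $\be_{\underline 0}$ is a simple root $\al_i$ of $\Delta$, the paper inducts on the smallest $m$ with $\coa_{(m+1)\be_{\underline 0}+m\be_{\underline 1}}(M)\neq 0$: applying $\Sai_i^*$ lowers $m$ by $2$ in the reflected order while, by Corollary \ref{saito-A}, preserving the (failure of) semi-cuspidality for $c_\gamma$; the base case $m=0$ is the one where the Lusztig datum is literally a restriction multiplicity, as in your $k=1$ case. (Note also that the paper only needs to treat one family of real roots, since for weight $n\delta$ the vanishing of the $r^k$-data forces that of the $r_k$-data; your treatment of both families is harmless but redundant.) So to repair your proof you would either have to import essentially the same Saito-reflection induction to interpret $a_{r_{k}}$ for $k>1$, or restructure the argument so that only the extremal data $\varphi^*_{\underline 1}$ and $\varphi_{\underline 0}$ of iterated reflections of $M$ are ever used --- which is what the paper does.
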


\begin{proof} 
By construction there is a unique minimal root for $>_{c_\gamma}$, and this is a simple root $\alpha_i$ for $\Delta$. 
Since $\beta_{\underline{0}} <_{c_\gamma} \delta$, 
there can only be finitely many $\alpha \in \Delta_+^{min}$ with
$\alpha \leq_{c_\gamma} \beta_{\underline{0}}$. If
$\beta_{\underline{0}}\neq \alpha_i$, then, by Lemma
\ref{friendly-Saito}, the Saito reflection $\sigma_i^*$ is a crystal isomorphism from $\KLR[c_\gamma]$ to $\KLR[c_{s_i\gamma}]$ 
and from $\KLR[c_\Pi]$ to $\KLR[c_{s_i\Pi}]$. This reduces the claim to a case where there are fewer simple roots $\leq \beta_{\underline{0}}$. In this way, we reduce to the case when $\beta_{\underline{0}}$ is a simple root $\alpha_i$ for $\Delta_+$. 

Consider a representation $M$ which is $c_\Pi$-cuspidal of weight $n
\delta$. By Theorem \ref{th:unique-aff} (see also Remark \ref{rem-lda}), $M$ is of the form $\mathscr{L}_{\lambda; \gamma}$ if and only if its crystal-theoretic Lusztig data $\coa_{(m+1) \beta_{\underline{0}}+
  m\beta_{\underline{1}}}(M)$ (see Definition
\ref{ctLusztig}) with respect to the order $\beta_{\underline{1}} >
\beta_{\underline{0}}$ is always trivial.
Thus it suffices to prove that
if $M$ is semi-cuspidal and in the component of $\mathscr{L}_\emptyset$ for $\fg_{c_\Pi}$ , and $M$ has non-trivial Lusztig data of the form  $\coa_{(m+1) \beta_{\underline{0}}+ m\beta_{\underline{1}}}(M)$  for some $m \geq 0$, then $M$ is not semi-cuspidal for $c_\gamma$.

We proceed by induction on the smallest integer $m$ such
  that $\coa_{(m+1)\be_{\underline 0}+m\be_{\underline 1}}(M)\neq 0$, proving the statement for all $\gamma$ simultaneously. If $m=0$ the statement is clear, giving the base case of the induction.  

So assume $m>0$, and recall that we have already reduced to the case when $\beta_{\underline 0}= \alpha_i$. 
  Consider $\Sai^*_iM$. By
  Corollary \ref{saito-A} this must be semi-cuspidal for the charge
  $c_\Pi^{s_{\underline 0}}$.  The face-crystal  $\QH[c_\Pi^{s_i}]$ is still rank-2 affine, with simple roots $\be_{\underline 0}$ and $\be_{\underline 1}$, 
  and the Lusztig data of $\Sai^*_i M$ for
  the order $\be_{\underline 1}<\be_{\underline 0}$ are given by
  $\bar{a}_{\al}(\Sai^*_{\underline 0}M)=a_{s_{\underline 0}\al}(M)$
  for $\al\neq \be_{\underline 0}$.  But \[s_{i}((m+1)\be_{\underline 0}+m\be_{\underline
    1})= s_{\underline
    0}((m+1)\be_{\underline 0}+m\be_{\underline
    1})=(m-1)\be_{\underline 0}+m\be_{\underline 1},\] so, since our
  inductive assumption covered all chamber weights, we are assuming that
  $\Sai^*_{\underline 0}M$ is not semi-cuspidal for $c_{s_{\underline{0}} \gamma}$.
  But then applying Corollary \ref{saito-A} again it is clear that
  $M$ is not semi-cuspidal for $c_\gamma$.  This completes the proof.
  \end{proof}

\begin{prop}\label{prop-pi-all}
The modules $\mathscr{L}_{\pi;\gamma}$ are a complete, irredundant
list of lowest-weight semi-cuspidal modules of argument $\pi/2$ for
$c_\gamma$, and this labeling is independent of the choice of base
in $\fg_{c_\gamma}$.
\end{prop}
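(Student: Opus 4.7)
The plan combines a counting argument with Lemma~\ref{not-cuspidal}. By Corollary~\ref{cor:aff-lowest} applied to $\fg_{c_\gamma}$, which is affine of rank $r$, one less than $\rk\fg = r+1$, the lowest-weight elements of $\KLR[c_\gamma]$ of weight $k\delta$ are in bijection with $1$-multipartitions (i.e., ordinary partitions) of $k$. Separately, the modules $\scrL_{\pi;\gamma}$ for $|\pi|=k$ are pairwise distinct: by Theorem~\ref{th:unique-aff}, in the rank-$2$ affine MV-polytope picture they are distinguished by their imaginary Lusztig data $\pi$. Hence once we show each $\scrL_{\pi;\gamma}$ lies in $\KLR[c_\gamma]$ and is lowest-weight there, the counts match and we obtain a bijection, proving completeness and irredundancy.

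The main task is therefore the claim that $\scrL_{\pi;\gamma} \in \KLR[c_\gamma]$ and is lowest-weight for the $\fg_{c_\gamma}$-bicrystal. I would prove this by induction on $|\pi|$ using the explicit formula in Definition~\ref{def:isrface}: the base case $\scrL_\emptyset$ is trivially in every face crystal and lowest-weight. In the inductive step, each operator $\tsse_{\underline i}$ or $\tsse_{\underline i}^*$ corresponds, via Propositions~\ref{prop:induce-with-lowest} and~\ref{prop:tpi}, to inducing (resp.\ coinducing) with a power of the cuspidal $\scrL_{\beta_{\underline i}}$. Since $\beta_{\underline 0}$ and $\beta_{\underline 1}$ need not be simple in $\fg_{c_\gamma}$, I would first apply Lemma~\ref{le:saito-to-simple} together with Lemma~\ref{friendly-Saito} to reduce, via Saito and dual Saito reflections (which intertwine $\KLR[c_\gamma]$ with $\KLR[c_{w\gamma}]$), to the case that $\beta_{\underline i}$ is simple in $\fg$, where the inductive step becomes straightforward. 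Lemma~\ref{not-cuspidal} then identifies the resulting modules in $\KLR[c_\gamma]\cap\KLR[c_\Pi]$ as the $\scrL_{\pi;\gamma}$, confirming membership.

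Independence of $\Pi$ follows once completeness is known, because the target set of lowest-weight elements of $\KLR[c_\gamma]$ is intrinsic. To verify that the partition label also agrees: two choices $\Pi, \Pi'$ differ by an element of the Weyl group of $\Delta_{\mathrm{fin};\gamma}$ which, via Lemma~\ref{friendly-Saito}, can be realized as a product of Saito reflections. These reflections preserve the imaginary axis $\Z\delta$ and, combined with Corollary~\ref{saito-A}, carry the rank-$2$ MV-polytope construction of $\scrL_{\pi;\gamma}$ to itself, so the same partition $\pi$ is obtained.

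The main obstacle is the inductive step above: propagating $c_\gamma$-semi-cuspidality through induction with $\scrL_{\beta_{\underline i}}$ when $\beta_{\underline i}$ is not simple in $\fg_{c_\gamma}$. The Saito-reflection reduction handles this, but care is needed at each reflection step to ensure that membership in the correct face crystal is preserved.
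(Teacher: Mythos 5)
Your high-level strategy (counting via Corollary \ref{cor:aff-lowest} plus distinctness of the $\mathscr{L}_{\pi;\gamma}$) is sound in principle, but the step you identify as the ``main task'' --- showing directly that each $\mathscr{L}_{\pi;\gamma}$ lies in $\KLR[c_\gamma]$ and is lowest weight there --- does not go through as you describe, and this is a genuine gap rather than a detail. The operators $\tsse_{\underline 0},\tsse_{\underline 1}$ in Definition \ref{def:isrface} are crystal operators for the \emph{face} $c_\Pi$, and $\beta_{\underline 0},\beta_{\underline 1}$ satisfy $\langle\gamma,p(\beta_{\underline 0})\rangle>0$ and $\langle\gamma,p(\beta_{\underline 1})\rangle<0$; they are not roots of $\fg_{c_\gamma}$ at all, so ``reducing to the case $\beta_{\underline i}$ is simple'' says nothing about $c_\gamma$-semi-cuspidality. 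Worse, the intermediate modules in the defining formula have weights of the form $n\delta+m\beta_{\underline i}$ with $m\neq 0$, so they are not even elements of $\KLR[c_\gamma]$, and an induction that applies one crystal operator at a time cannot track membership in that face crystal. Finally, your appeal to Lemma \ref{not-cuspidal} is backwards: that lemma takes membership in $\KLR[c_\gamma]\cap\KLR[c_\Pi]$ as a \emph{hypothesis} and concludes $M\cong\mathscr{L}_{\lambda;\gamma}$; it cannot be used to ``confirm membership.''

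The paper's proof runs the containment in the opposite direction, which is what makes the counting close the argument. For $\gamma=\omega_i^\vee$ with the standard base, one starts from an arbitrary lowest-weight $L\in\KLR[c_{\omega_i^\vee}]$, shows (using the comparison \eqref{eq:oimp} of the two charges, which is special to this choice of base) that $L$ is automatically $c_\Pi$-semi-cuspidal, factors it as $M\circ N$ via Proposition \ref{prop:induce-with-lowest}, shows $M=\mathscr{L}_\emptyset$, and only then invokes Lemma \ref{not-cuspidal} to conclude $L\cong\mathscr{L}_{\pi;\gamma}$. The count from Corollary \ref{cor:aff-lowest} then forces every $\mathscr{L}_{\pi;\gamma}$ to occur, so the fact you were trying to prove directly comes out for free. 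You also do not address the reduction from a general chamber coweight to a fundamental one; in the paper this needs a separate induction on Saito reflections, including the delicate case $\al_i=\be_{\underline 1}$, which requires the rank-$2$ result \cite[3.9]{MT??} rather than the general Lemma \ref{friendly-Saito}.
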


Before proving Proposition \ref{prop-pi-all} we first prove a weaker fact: 

\begin{lemma}\label{lem-pi-all-omega}
  Proposition \ref{prop-pi-all} holds when $\gamma=\om_i^\vee$ is a
  fundamental coweight, and the base $\Pi= \{\eta_j\}$ is given by the simple
  roots excluding $\al_i$. 
\end{lemma}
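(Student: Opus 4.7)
The plan is to combine a counting argument with the characterization provided by Lemma \ref{not-cuspidal}. I would first compute the number of lowest-weight elements of $\KLR[c_\gamma]$ of weight $k\delta$ using the generating-function argument from the proof of Corollary \ref{cor:aff-lowest}. Even when $\fg_{c_\gamma}$ is a product of affine Kac--Moody algebras (as occurs for non-extremal $i$), the ratio $a(t)/b(t)$ in the imaginary direction works out to $\prod_{k \geq 1}(1-t^{k\delta})^{-1}$: both $\dim\fg_{k\delta}$ and the total imaginary multiplicity in $\fg_{c_\gamma}$ over the $\fg$-weight $k\delta$ are controlled by the rank of the finite part of $\fg$, giving an exponent of $-1$. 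This yields $p(k)$ lowest-weight elements of weight $k\delta$, matching the number of partitions indexing the $\mathscr{L}_{\pi;\gamma}$. Pairwise non-isomorphism of the $\mathscr{L}_{\pi;\gamma}$ is immediate from Definition \ref{def:isrface}: distinct $\pi$ give distinct elements of $B^{\fg_{c_\Pi}}(-\infty)$, and the latter embeds faithfully in $\KLR$.

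The principal step, and the main obstacle, is to identify each lowest-weight $M \in \KLR[c_\gamma]$ of weight $k\delta$ as some $\mathscr{L}_{\pi;\gamma}$. By Lemma \ref{not-cuspidal}, it suffices to show $M$ lies in $\KLR[c_\Pi]$ and in the $\fg_{c_\Pi}$-component of $\mathscr{L}_\emptyset$. I would attack this by analyzing the $c_\Pi$-semi-cuspidal decomposition $M \cong A(L_1, \ldots, L_h)$: the lowest-weight hypothesis for $\fg_{c_\gamma}$, namely that $\Res{k\delta}{k\delta-\beta_{\underline{j}},\beta_{\underline{j}}}(M) = 0$ and its mirror vanish for every simple root $\beta_{\underline{j}}$ of $\fg_{c_\gamma}$, should force $h=1$ with $L_1$ purely imaginary. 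The obstruction is that $c_\Pi$ does not lie in any neighborhood of $c_\gamma$ (their $\pi/2$-sets are transverse, intersecting only along $\mathbb{Z}\delta$), so Lemma \ref{lem:ecd} alone does not yield $c_\Pi$-semi-cuspidality of $M$. To overcome this, I would use Lemma \ref{friendly-Saito} to reduce each $\beta_{\underline{j}}$ to a simple root of $\fg$ via a sequence of Saito reflections intertwining the face-crystal structures, and then exploit the resulting concrete vanishing conditions on restriction functors to rule out non-trivial real contributions in the $c_\Pi$-decomposition.

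Independence from the choice of base $\Pi$ follows from the observation that any two bases of $\Delta_{\mathrm{fin};\gamma}$ differ by an element of the Weyl group of that sub-root-system, and the Saito reflections of Lemma \ref{friendly-Saito} provide compatible bicrystal isomorphisms on $\KLR[c_\gamma]$ that intertwine the resulting partition labelings. The hard part of the argument is the transverse nature of $c_\Pi$ and $c_\gamma$ in the main step; this is why the proof must proceed through a reduction to simple roots of $\fg$ via Saito reflections rather than through any direct neighborhood-based application of Lemma \ref{lem:ecd}.
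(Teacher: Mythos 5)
Your outline gets the target right (reduce to Lemma \ref{not-cuspidal}, then count via Corollary \ref{cor:aff-lowest}), but there are two genuine gaps in the middle step, one of which is a missing idea rather than a missing detail.

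First, Lemma \ref{not-cuspidal} has \emph{two} hypotheses: that $M$ is $c_\Pi$-semi-cuspidal, \emph{and} that $M$ lies in the $\fg_{c_\Pi}$-crystal component of $\mathscr{L}_\emptyset$. Your argument only ever aims at the first ("force $h=1$ with $L_1$ purely imaginary"). The second is not automatic and is not implied by semi-cuspidality: $\KLR[c_\Pi]$ is a rank~$2$ affine face crystal whose lowest-weight elements of weight $k\delta$ are indexed by $(r-1)$-multipartitions, so a $c_\Pi$-semi-cuspidal of imaginary weight could a priori sit in any of many components. The paper closes this by writing $L=M'\circ N$ with $M'$ lowest weight for $\fg_{c_\Pi}$ and $N$ in the identity component (Proposition \ref{prop:induce-with-lowest}), and then showing $M'$ is killed by \emph{every} $\tf_j$ of the full algebra $\fg$ --- by $\tf_i$ because it is lowest weight in $\KLR[c_\Pi]$, by $\tf_0$ because it is $c_{\omega_i^\vee}$-semi-cuspidal and $\alpha_0$ is $c_{\omega_i^\vee}$-minimal, and by the remaining $\tf_j$ because it is lowest weight in $\KLR[c_{\omega_i^\vee}]$ --- forcing $M'=\mathscr{L}_\emptyset$ and hence $L=N$. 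Nothing in your proposal plays this role.

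Second, for the $c_\Pi$-semi-cuspidality itself, you correctly identify that $c_\Pi$ is not a small deformation of $c_\gamma$, but the proposed repair via Saito reflections is not the right mechanism and is left as a hope ("should force", "exploit the resulting concrete vanishing conditions"). The actual mechanism is the order comparison \eqref{eq:oimp} built into the \emph{choice} of $c_\Pi$: the positive roots in $p^{-1}(\eta_j)$, $j\le r-1$, are placed $>_{c_\Pi}\delta$, so every root $\alpha<_{c_\Pi}\delta$ satisfies $\alpha\le_{c_\gamma}\delta$. If $L$ failed to be $c_\Pi$-semi-cuspidal, the bottom factor $Q$ of its $c_\Pi$-decomposition would have real weight $\alpha<_{c_\Pi}\delta$, hence $\alpha=_{c_\gamma}\delta$ by semi-cuspidality for $c_\gamma$, making $Q$ a lowest-weight $c_\gamma$-semi-cuspidal of argument $\pi/2$ with weight a real root --- impossible by Corollary \ref{cor:aff-lowest}, which says all such have weight a multiple of $\delta$. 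No Saito reflections are needed, and it is unclear how your reflection scheme would simultaneously control all $r$ simple roots $\beta_{\underline j}$ of $\fg_{c_\gamma}$ while tracking the $c_\Pi$-decomposition, since each reflection requires a vanishing condition and changes the convex order.

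The counting and pairwise-distinctness parts of your proposal are fine and match the paper's closing step.
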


\begin{proof}

Fix a lowest-weight
$L \in \KLR[c_{\omega^\vee_i}]$. Assume for a contradiction that $L$ is not $c_\Pi$-semi-cuspidal. Then there must be a $c_\Pi$-cuspidal $Q$
whose weight is a real root $\al <_{c_\Pi} \delta$ such that $L$ is a quotient of $Q'\circ
Q$ for some simple $Q'$. By \eqref{eq:oimp}, we have $\al\leq_{c_{\omega^\vee_i}} \delta$ and, since $L$ is $c_{\omega^\vee_i}$-semi-cuspidal $\al\geq_{c_{\omega^\vee_i}} \delta$, we see $\al=_{c_{\omega^\vee_i}} \delta$, or equivalently $\alpha$ has argument $\pi/2$ for $c_{{\omega^\vee_i}}$. Since $L$ is $c_{\omega^\vee_i}$-semi-cuspidal and lowest weight for 
$\fg_{c_{{\omega^\vee_i}}}$, this implies $Q$ has these properties as well. But by
Corollary \ref{cor:aff-lowest} all such lowest weight semi-cuspidals have weight a multiple of $\delta$, so this is impossible, and so $L$ is in fact 
$c_\Pi$-semi-cuspidal.  

As in Proposition \ref{prop:induce-with-lowest}, there exist canonical
$M,N \in \KLR[c_\Pi]$, with $M$ lowest-weight and $N$ in the component of the
identity for $\fg_{\Pi}$, such that $L=M\circ N=N\circ M$.  Thus both $M$ and $N$ must be semi-cuspidal and lowest-weight for
$\fg_{c_{\omega^\vee_i}}$. In particular, $M$ is killed
\begin{itemize}
\item by $\tf_i$ since it is lowest-weight in $\KLR[c_\Pi]$,
\item by $\tf_0$
  since it is semi-cuspidal for 
$c_{\omega_i^\vee}$ and $\al_0$ is the lowest root
  for this order, and 
\item by all other $\tf_j$'s since it is lowest-weight
  for $\KLR[c_{\omega^\vee_i}]$.  
\end{itemize}
Thus $M$ is lowest weight for the full $\mathfrak{g}$ crystal structure, so $M=\mathscr{L}_\emptyset$, and hence
  $L=N$.

Thus 
$L$ is semi-cuspidal for $c_{\omega^\vee_i}$ and for $c_\Pi$, and is in the component of the trivial module for $\fg_{c_\Pi}$, so it follows by Lemma
\ref{not-cuspidal} that $L= \scrL_{\pi;\gamma}$ for some $\pi$.  By Corollary \ref{cor:aff-lowest}  the number of lowest weight cuspidals of weight $n \delta$ for $\fg_{c_{\omega_i^\vee}}$ is exactly the number of partitions of $n$, so all $\scrL_{\pi;\gamma}$ must occur.  \end{proof}
\begin{proof}[Proof of Proposition \ref{prop-pi-all}]
We reduce all other cases to that covered in Lemma
\ref{lem-pi-all-omega}.  

If 
$\gamma=\omega^\vee_i$ but we have chosen a
different base 
$\Pi'= \{\eta_i \}'$ of $\fg_{\mathrm{fin};\gamma}$, then we can find an element $w=s_{i_1}\cdots s_{i_k}$ of the
Weyl group $W_{\mathrm{fin};\gamma}$ such that $w\eta_i=\eta_i'$.
Applying a sequence of the dual or primal Saito reflections $\sigma_{i_1}^x\cdots
\sigma_{i_k}^x$ where $x$ is taken to be nothing or $*$, depending on
the sign  of $\langle \gamma,p(\al_i)\rangle$, gives a crystal
isomorphism from $\KLR[c_{\Pi}]$ to $\KLR[c_{\Pi'}]$, and thus sends $\mathscr{L}_{\pi;\gamma}$ as defined using $\{\eta_i\}$
to $\mathscr{L}_{\pi;\gamma}$ as defined using the $\{\eta_i'\}$. On
the other hand, these operators
leave $\mathscr{L}_{\pi;\gamma}$ unchanged (since
it is killed by $\tf_{i_m}$ and
$\tf^*_{i_m}$ and has weight a multiple of $\delta$).  Thus
$\mathscr{L}_{\pi;\gamma}$ is independent of this choice.

Now consider a general chamber coweight $\gamma$. If $\gamma$ is not a
fundamental coweight then, for some $1 \leq i \leq r$, we must have $\langle \gamma, p(\al_i)\rangle<0$, and so $\alpha_i >_c \delta$. Notice that
$\al_i\neq \be_{\underline 0}$, since $\langle \gamma,
p(\beta_{\underline{0}})\rangle >0$. Thus 
$\varphi_i^*(\mathscr{L}_{\pi;\gamma})=0$, so we can apply $\sigma_i$.
If $\al_i\neq \be_{\underline 1}$ then by Lemmata
\ref{lem:saito-face-crystal}  and \ref{friendly-Saito} applying $\sigma_i$ to all cuspidal
modules for $c_\gamma$ defines an isomorphism of crystals to the same
set-up for $c_{s_i\gamma}$, which is negative on one fewer positive root
in the finite type system than $\gamma$; in particular it sends
$\scrL_{\pi;\gamma}$ to $\scrL_{\pi;s_i\gamma}$.  If $\al_i=
\be_{\underline 1}$, the same fact follows from the known action of
Saito reflections on $B(-\infty)$ for affine rank 2 Lie algebras
by \cite[3.9]{MT??}.  By
induction, we may reduce to the case where $\gamma$ is a fundamental
coweight, so the result follows by Lemma \ref{lem-pi-all-omega}.
\end{proof}

Fix a generic charge $c$ such that $\delta$ has argument $\pi/2$. 
This defines a positive system in the finite type root system, where
we say $\bar \alpha$ is positive if $p^{-1}(\alpha) >_c
\delta$. Let $\bar \chi_1, \ldots, \bar \chi_r$ be the corresponding
set of simple roots and
$\gamma_1,\dots, \gamma_r$ the dual set of coweights.  
For each 
$r$-tuple of partitions $\bpi=(\pi^{\gamma_1},\dots, \pi^{\gamma_r})$, define
  \begin{equation}\label{eq:fac}\displaystyle
    \mathscr{L}(\bpi)=\mathscr{L}_{\pi^{\gamma_1};\gamma_1}\circ 
    \mathscr{L}_{\pi^{\gamma_2};\gamma_2}\circ \cdots \circ 
    \mathscr{L}_{\pi^{\gamma_r};\gamma_r}.
  \end{equation}

\begin{rem}
The modules $ \mathscr{L}(\bpi)$ agree with Kleshchev's imaginary modules \cite[\S 4.3]{Kl}.  Note that
  in contrast to Kleshchev, we have a {\it canonical} labeling of these by
  multipartitions. 
After the appearance of this paper on the arXiv, Kleshchev-Muth
\cite{KMuth} and McNamara \cite{McNIII} reproduced this indexing using
other methods.  
The match with Kleshchev-Muth's indexing reduces to
the rank 2 case by \cite[5.10]{Kl}, and is clear
in that case since Definition \ref{def:isrface} above shows that
$\mathscr{L}_{\pi;\gamma}$ in this case contains the ``Gelfand-Graev
word'' $\mathbf{g}^\pi$ in its character. By \cite[Th. 9]{KMuth},
Kleshchev-Muth's bijection is uniquely characterized by this
property.  Similarly \cite[14.6]{McNIII} shows that McNamara's
bijection must be the same as ours. 
\end{rem}

\begin{lemma} \label{lem:face-lw}
Fix a charge $c$ such that $\delta$ has argument $\pi/2$, and let $s$ be the rank of $\Delta_c$. 
Then there are chamber weights $\gamma_1, \ldots, \gamma_{r-s}$ for $\Delta_{fin}$ such that the lowest weight elements in the face crystal $\KLR[c]$ are exactly  
$$\mathscr{L}_{\pi^{1};\gamma_1}\circ 
    \mathscr{L}_{\pi^{2};\gamma_2}\circ \cdots \circ 
    \mathscr{L}_{\pi^{{r-s}};\gamma_{r-s}}$$
for all choices of partitions $\pi^1, \ldots, \pi^{r-s}$. 
 This module is irreducible and independent of the
 ordering of $\gamma_1, \ldots, \gamma_{r-s}$.   
\end{lemma}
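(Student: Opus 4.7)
My plan combines the counting in Corollary \ref{cor:aff-lowest} with repeated applications of Proposition \ref{prop:induce-with-lowest} in intermediate face crystals. First I fix the chamber coweights. The charge $c$ determines a positive system on $\Delta_{\mathrm{fin}}$ via the projection $p$, with simple roots $\eta_1,\dots,\eta_r$ ordered so that $\eta_1,\dots,\eta_s$ span $\Delta_{\mathrm{fin};c}:=p(\Delta_c^{\mathrm{re}})$; I take $\gamma_j$ for $1\le j\le r-s$ to be the fundamental coweight of $\Delta_{\mathrm{fin}}$ dual to $\eta_{s+j}$. Each $\gamma_j$ vanishes on $\Delta_{\mathrm{fin};c}$, so $\Delta_c\subset\Delta_{c_{\gamma_j}}$ and the $c$-face is a common sub-face of every $c_{\gamma_j}$-face.

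By Corollary \ref{cor:aff-lowest}, $\KLR[c]$ has exactly $p_{r-s}(k)$ lowest-weight elements of weight $k\delta$, matching the count of $(r-s)$-multipartitions of $k$; since the modules $\mathscr{L}(\bpi)$ are indexed by the same data, it suffices to show each $\mathscr{L}(\bpi)$ is an irreducible lowest-weight element of $\KLR[c]$ and that they are pairwise non-isomorphic and independent of the factor ordering. I would establish these properties by first deforming $c$ to a generic charge $c'$ with $\delta$ still at argument $\pi/2$ that simultaneously refines all the $c_{\gamma_j}$-faces (putting the $\gamma_j$ in some linear order $\sigma$), and then inducting on $r-s$ with base case $r-s=1$ handled by Proposition \ref{prop-pi-all}. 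For the inductive step, I would work inside $\KLR[c_{\gamma_{\sigma(1)}}]$: Proposition \ref{prop-pi-all} tells me $M:=\mathscr{L}_{\pi^{\sigma(1)};\gamma_{\sigma(1)}}$ is lowest weight there, and the inductive hypothesis would identify $N:=\mathscr{L}_{\pi^{\sigma(2)};\gamma_{\sigma(2)}}\circ\cdots\circ\mathscr{L}_{\pi^{\sigma(r-s)};\gamma_{\sigma(r-s)}}$ as an element of $\KLR[c_{\gamma_{\sigma(1)}}]$ lying in the trivial $\fg_{c_{\gamma_{\sigma(1)}}}$-component. Proposition \ref{prop:induce-with-lowest} then yields irreducibility of $M\circ N$ together with the commutation $M\circ N\cong N\circ M$; cycling which factor is singled out gives full independence of ordering and, through the resulting bicrystal isomorphism, distinctness of the $\mathscr{L}(\bpi)$.

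The main obstacle will be verifying that each $\mathscr{L}(\bpi)$ actually lies in $\KLR[c]$ as a lowest-weight element for $\fg_c$, rather than merely in one of the larger face algebras $\fg_{c_{\gamma_j}}$. The difficulty is that each factor $\mathscr{L}_{\pi^j;\gamma_j}$ is only $c_{\gamma_j}$-semi-cuspidal, which is strictly weaker than $c$-semi-cuspidality since the $c$-face is a proper sub-face of each $c_{\gamma_j}$-face. The commutation from the previous paragraph lets me arrange the factors in the $c'$-order; because every factor has weight a positive multiple of $\delta$, this ordered product realises the $c'$-semi-cuspidal decomposition of $\mathscr{L}(\bpi)$ from Theorem \ref{assignment}, and every prefix of any word in the character must then have argument at most $\pi/2$ for $c$ as well, giving $c$-semi-cuspidality. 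The lowest-weight property for $\fg_c$ follows because each $\mathscr{L}_{\pi^j;\gamma_j}$ is killed by $\tf_{\underline j}$ for every simple root $\beta_{\underline j}$ of $\fg_c\subset\fg_{c_{\gamma_j}}$, combined with Lauda--Vazirani's jump lemma (Lemma \ref{lem:jump}) to propagate this killing through the iterated induction. The most delicate point is checking that each successive induced module lands in the appropriate trivial component of its intermediate face crystal, which ultimately reduces via Lemma \ref{lem:justclass} and Corollary \ref{cor:aff-lowest} to a comparison of generating functions counting semi-cuspidals at each stage.
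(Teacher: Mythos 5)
Your overall skeleton matches the paper's: induction on $r-s$ with base case Proposition \ref{prop-pi-all}, an inductive step driven by Proposition \ref{prop:induce-with-lowest}, and the count from Corollary \ref{cor:aff-lowest} to rule out further lowest weight elements. But the way you feed the two modules into Proposition \ref{prop:induce-with-lowest} is reversed relative to the paper, and this creates a genuine gap. You take $M=\mathscr{L}_{\pi^{\sigma(1)};\gamma_{\sigma(1)}}$ (lowest weight in $\KLR[c_{\gamma_{\sigma(1)}}]$ by Proposition \ref{prop-pi-all}) and let $N$ be the product of the remaining $r-s-1$ factors, asserting that the inductive hypothesis identifies $N$ as an element of $\KLR[c_{\gamma_{\sigma(1)}}]$ in the trivial $\fg_{c_{\gamma_{\sigma(1)}}}$-component. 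It does not: the inductive hypothesis concerns the face crystal of a charge cutting out the face where $\gamma_{\sigma(2)},\dots,\gamma_{\sigma(r-s)}$ vanish, whose root system (of rank $s+1$) neither contains nor is contained in $\Delta_{c_{\gamma_{\sigma(1)}}}$ (of rank $r$); lowest-weight-ness for the former gives neither $c_{\gamma_{\sigma(1)}}$-semi-cuspidality of $N$ nor membership in the trivial component of the latter. Nor can you patch this by inducing the factors of $N$ pairwise inside $\KLR[c_{\gamma_{\sigma(1)}}]$, since each factor individually lies in the trivial component there and Proposition \ref{prop:induce-with-lowest} requires one of the two modules to be lowest weight (the example of \S\ref{sec:an-example} shows an induction of two trivial-component modules need not even be irreducible). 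The paper arranges things the other way precisely to avoid this: $M$ is the product of the first $j-1$ factors, which by induction is irreducible and lowest weight for the face $F_{j-1}$ cut out by $\gamma_1,\dots,\gamma_{j-1}$, while $N$ is the single new factor $\mathscr{L}_{\pi^{\gamma_j};\gamma_j}$, placed in the trivial component of the $F_{j-1}$-face crystal by choosing the base $\Pi$ in Definition \ref{def:isrface} so that $\eta_j$ is parallel to $F_{j-1}$ (e.g.\ $c_\Pi=\sum_{k\neq j}c_{\gamma_k}$), a choice Proposition \ref{prop-pi-all} permits. Both hypotheses of Proposition \ref{prop:induce-with-lowest} are then checkable, and lowest-weight-ness for $F_j$ follows because the result is an irreducible induction of lowest weight modules.

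A second, smaller problem: your argument for $c$-semi-cuspidality of $\mathscr{L}(\bpi)$ --- arranging the factors ``in the $c'$-order'' so that the product ``realises the $c'$-semi-cuspidal decomposition from Theorem \ref{assignment}'' --- is ill-posed. Every factor has weight a multiple of $\delta$, hence the same argument $\pi/2$ under any charge keeping $\delta$ vertical, so there is no ordering by argument, and Theorem \ref{assignment} never produces a decomposition into factors of equal argument. What you actually need is that each individual factor is $c$-semi-cuspidal (after which the shuffle description of characters does show the induction is too); that is exactly the difficulty you flagged but did not resolve, and in the paper it is absorbed into the same choice of $c_\Pi$ made in the inductive step.
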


\begin{proof}
Proposition \ref{prop-pi-all} shows that  this statement holds for
$r-s=1$. We proceed by induction on $r-s$, assuming the holds for $r-s=j-1$ for some $2 \leq j \leq r$.
Thus, we have assumed that $$\mathscr{L}_{\pi^{1};\gamma_1}\circ 
    \mathscr{L}_{\pi^{2};\gamma_2}\circ \cdots \circ 
    \mathscr{L}_{\pi^{{j-1}};\gamma_{j-1}}$$ is irreducible.  
    Choose $c_\Pi=\sum_{k\neq j}c_{\gamma_k}$ in the definition of
    $\mathscr{L}_{\pi^{\gamma_j};\gamma_j}$. Then, since
    $\eta_j$ is parallel to the face $F_{j-1}$ defined by the vanishing of the
    weights $\gamma_1,\dots, \gamma_{j-1}$, the module 
    $\mathscr{L}_{\pi^{\gamma_j};\gamma_j}$ is in the
    component of the identity of the face crystal for $F_{j-1}$, so, by Proposition \ref{prop:induce-with-lowest}, $\mathscr{L}_{\pi^{\gamma_1};\gamma_1}\circ 
    \mathscr{L}_{\pi^{\gamma_2};\gamma_2}\circ \cdots \circ 
    \mathscr{L}_{\pi^{\gamma_{j}};\gamma_{j}}$ is irreducible.
    It is lowest weight in $\KLR[c]$ since it is an irreducible induction of
    lowest weight representations. 
    
    The partition $\pi^{\gamma_i}$ is
uniquely determined by the isomorphism type of the induced module, so by induction the modules $\mathscr{L}_{\pi^{1};\gamma_1}\circ 
    \mathscr{L}_{\pi^{2};\gamma_2}\circ \cdots \circ 
    \mathscr{L}_{\pi^{{r-s}};\gamma_{r-s}}$ are all distinct. By Corollary
    \ref{cor:aff-lowest} this is the right number, so there can be no others. This establishes the result
    for $r-s=j$.  
\end{proof}
If $s=0$ (which holds for generic $c$), then the face crystal is
trivial, and every semi-cuspidal is lowest weight.  Thus:
\begin{cor}\label{cor:commute}
If $s=0$, the modules $\mathscr{L}(\bpi)$ give a
complete and irredundant list of semi-cuspidal modules with argument $\pi/2$.
\qed
\end{cor}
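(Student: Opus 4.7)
The plan is to observe that when $s = 0$, the rank-2 reductive structure of the face algebra collapses so dramatically that the corollary falls out immediately from Lemma \ref{lem:face-lw}. Specifically, when $s = 0$ the root system $\Delta_c$ is empty, so $\fg_c$ has no simple roots $\be_{\underline i}$. Looking back at Definition \ref{def:afc} and Proposition \ref{e-f-defined}, the combinatorial crystal structure on $\KLR[c]$ has no generators $\tsse_{\underline i}, \tssf_{\underline i}, \tsse_{\underline i}^*, \tssf_{\underline i}^*$ at all. Consequently, \emph{every} element of $\KLR[c]$ is automatically a lowest weight element of the face bicrystal (vacuously killed by all $\tssf_{\underline i}$ and $\tssf_{\underline i}^*$).

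With that observation in hand, I would simply invoke Lemma \ref{lem:face-lw} with $r-s = r$. That lemma asserts that the lowest weight elements of $\KLR[c]$ are exactly the modules
$$\mathscr{L}_{\pi^{\gamma_1};\gamma_1}\circ \mathscr{L}_{\pi^{\gamma_2};\gamma_2}\circ \cdots \circ \mathscr{L}_{\pi^{\gamma_r};\gamma_r} = \mathscr{L}(\bpi)$$
as $\bpi$ ranges over all $r$-tuples of partitions, that each such induction is irreducible, and that the labeling by $\bpi$ is one-to-one. Combined with the observation of the previous paragraph, these lowest weights exhaust all of $\KLR[c]$, so the $\mathscr{L}(\bpi)$ constitute a complete and irredundant list of $c$-semi-cuspidal modules of argument $\pi/2$.

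There is essentially no obstacle: the hard work, including the induction on the number of chamber coweights, irreducibility of the iterated induction via Proposition \ref{prop:induce-with-lowest}, and the count of lowest weight elements via Corollary \ref{cor:aff-lowest}, was all carried out in the proof of Lemma \ref{lem:face-lw}. The only thing left to remark on, and the mild conceptual point worth stressing, is that the hypothesis $s = 0$ forces \emph{every} semi-cuspidal of argument $\pi/2$ to be lowest weight for the (now trivial) face crystal, which is what allows the classification of lowest weights from Lemma \ref{lem:face-lw} to become a classification of semi-cuspidals.
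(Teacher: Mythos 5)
Your proposal is correct and is essentially the paper's own argument: the paper's entire proof is the sentence preceding the corollary, namely that for $s=0$ the face crystal is trivial so every semi-cuspidal of argument $\pi/2$ is lowest weight, after which Lemma \ref{lem:face-lw} (with $r-s=r$) gives completeness and irredundancy of the list $\mathscr{L}(\bpi)$. Your added remark that the absence of operators makes every element vacuously lowest weight is exactly the intended justification.
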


Corollary \ref{cor:commute} tells us the possible decorations for an edge of $P_L$ parallel to $\delta$. The following explains how to read off the decoration for a given $L$. 
So, fix $L \in \KLR$ and a finite type chamber coweight $\gamma$.
Consider the $c_\gamma$-semi-cuspidal decomposition $(\dots, L_2,
L_1,L_0,L^1,L^2,\dots,)$ of $L$, where $\wt(L_0)$ has argument
$\pi/2$.  

\begin{defn} \label{def:pig}
Let
$\pi^\gamma(L)$ be the partition such that $L_0$ lies in the $\mathfrak{g}_{c_\gamma}$-crystal
component of $\mathscr{L}_{\pi^\gamma(L);\gamma}$.  
\end{defn}

\begin{prop}
  The representation decorating any imaginary edge $E$ in $P_L$ as in Definition \ref{def-KLR-poly} is exactly 
  $\mathscr{L}(\pi^{\gamma_1}(L),\dots, \pi^{\gamma_r}(L))$, where the $\gamma_i$
  are the chamber coweights which achieve their lowest value on $E$, and $\pi^{\gamma_i}(L)$ is the
  partition from Definition \ref{def:pig}.
\end{prop}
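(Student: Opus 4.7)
The plan is to identify $L_E$ concretely via Corollary \ref{cor:commute} as an induction of the modules $\mathscr{L}_{\pi^j;\gamma_j}$, and then to verify that each entry $\pi^j$ of the resulting multipartition coincides with $\pi^{\gamma_j}(L)$ by examining the face crystal $\KLR[c_{\gamma_j}]$ and invoking Proposition \ref{prop:induce-with-lowest}.

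Concretely, pick a generic charge $c$ whose real part achieves its minimum on $P_L$ exactly on $E$, and generic enough that only positive multiples of $\delta$ have $c$-argument $\pi/2$; then $\fg_c=0$. By Proposition \ref{prop:3irr}, $L_E$ is the unique piece of the $c$-semi-cuspidal decomposition of $L$ of $c$-argument $\pi/2$, and its weight is a positive multiple of $\delta$. The simple coweights $\gamma_1,\dots,\gamma_r$ of the positive system on $\Delta_{\mathrm{fin}}$ defined by $c$ (as in the setup before \eqref{eq:fac}) are precisely the chamber coweights whose face $F_{\gamma_i}=P_L^{c_{\gamma_i}}$ contains $E$: a short normal-fan computation shows that the closed normal cone $\overline{N_E}$ modulo $\delta$ is the closed Weyl chamber cut out by this positive system, whose chamber coweights are exactly its $r$ fundamental coweights. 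Corollary \ref{cor:commute} now yields a unique multipartition $\bpi=(\pi^1,\dots,\pi^r)$ with $L_E=\mathscr{L}(\bpi)=\mathscr{L}_{\pi^1;\gamma_1}\circ\cdots\circ \mathscr{L}_{\pi^r;\gamma_r}$.

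To show $\pi^i=\pi^{\gamma_i}(L)$, fix $i$ and consider the $c_{\gamma_i}$-semi-cuspidal decomposition of $L$; let $L_0^{(i)}$ denote its unique piece of $c_{\gamma_i}$-argument $\pi/2$. By Definition \ref{def:pig}, $L_0^{(i)}$ lies in the $\fg_{c_{\gamma_i}}$-bicrystal component of $\mathscr{L}_{\pi^{\gamma_i}(L);\gamma_i}$ inside $\KLR[c_{\gamma_i}]$. The plan is to exhibit an isomorphism $L_0^{(i)}\cong \mathscr{L}_{\pi^i;\gamma_i}\circ N$ for some $N\in\KLR[c_{\gamma_i}]$ lying in the $\fg_{c_{\gamma_i}}$-bicrystal component of the trivial module; Proposition \ref{prop:induce-with-lowest} then provides a bicrystal isomorphism from the identity component onto the component of $\mathscr{L}_{\pi^i;\gamma_i}$, placing $L_0^{(i)}$ in the latter and forcing $\pi^{\gamma_i}(L)=\pi^i$.

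To produce the factorization, interpolate via $c_t=(1-t)c_{\gamma_i}+tc$ for $t\in[0,1]$; each $c_t$ has $\delta$ at argument $\pi/2$, and the $c_t$-semi-cuspidal decomposition of $L$ changes only at finitely many transitional values, at each of which adjacent-argument pieces merge as $t$ decreases. Tracking this deformation, $L_0^{(i)}$ emerges as the unique simple quotient of the induction of those $c$-semi-cuspidal pieces of $L$ whose weights lie on the $c_{\gamma_i}$-hyperplane; these consist of the factors $\mathscr{L}_{\pi^j;\gamma_j}$ of $L_E$ (for all $j$, since each has weight a multiple of $\delta$) together with cuspidal modules $\mathscr{L}_\alpha$ for real roots $\alpha\in \Delta_{c_{\gamma_i}}$ pulled in from the top and bottom of the $c$-decomposition. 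The real-root cuspidals lie in the identity component of the $\fg_{c_{\gamma_i}}$-bicrystal by construction of its crystal operators, and each $\mathscr{L}_{\pi^j;\gamma_j}$ with $j\neq i$ lies in the identity component by invoking Proposition \ref{prop-pi-all}, Lemma \ref{friendly-Saito}, and the reduction in the proof of Lemma \ref{lem:face-lw}, where one takes the base $\Pi$ in the definition of $\mathscr{L}_{\pi^j;\gamma_j}$ to include a $c_{\gamma_i}$-summand. Iterating Proposition \ref{prop:induce-with-lowest} to commute the unique lowest-weight factor $\mathscr{L}_{\pi^i;\gamma_i}$ past the identity-component factors then produces the desired $N$. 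The main obstacle is this deformation tracking together with the verification that every $\mathscr{L}_{\pi^j;\gamma_j}$ with $j\neq i$ genuinely belongs to the identity component of $\KLR[c_{\gamma_i}]$, rather than only of the face crystal $\KLR[c_{\gamma_j}]$ in which it was originally constructed.
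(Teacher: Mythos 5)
Your opening moves match the paper's: identify $L_E=\mathscr{L}(\bpi)$ via Corollary \ref{cor:commute}, and use Proposition \ref{prop:induce-with-lowest} to pin down the $\fg_{c_{\gamma_i}}$-crystal component. The divergence, and the problem, is the step you yourself flag as ``the main obstacle'': relating the module $L_0^{(i)}$ appearing in Definition \ref{def:pig} (the $c_{\gamma_i}$-semi-cuspidal piece of argument $\pi/2$) to the $c$-semi-cuspidal pieces of $L$ by interpolating $c_t=(1-t)c_{\gamma_i}+tc$. You assert that as $t$ decreases the pieces ``merge'' and that $L_0^{(i)}$ is the unique simple quotient of the induction of exactly those $c$-pieces whose weights lie on the hyperplane $\langle\gamma_i,p(\cdot)\rangle=0$. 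This does not follow from anything in the paper. The uniqueness statement in Theorem \ref{assignment} lets you pass from a pre-order to a refinement of it, but the total order $>_c$ is \emph{not} a refinement of the three-class pre-order determined by $c_{\gamma_i}$: a real root $\alpha$ with $\langle\gamma_i,p(\alpha)\rangle=0$ and $\alpha<_c\delta$ can be interleaved in the $>_c$-order with roots $\beta$ having $\langle\gamma_i,p(\beta)\rangle>0$, so the class of roots of $c_{\gamma_i}$-argument $\pi/2$ need not be an interval for $>_c$, and the grouping claim is exactly the kind of statement that requires proof. No lemma controls how semi-cuspidal decompositions transform along a path of charges (Lemma \ref{lem:gen-braid} is about paths through a fixed polytope, not about decompositions), so this step is a genuine gap rather than a routine verification.

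It is worth noting that the paper's own proof avoids the deformation entirely and is much shorter: having written $L_E=\mathscr{L}(\xi^{\gamma_1},\dots,\xi^{\gamma_r})$, it factors this as the induction of $\mathscr{L}(\xi^{\gamma_1},\dots,\emptyset,\dots,\xi^{\gamma_r})$ with $\mathscr{L}_{\xi^{\gamma_i};\gamma_i}$, observes that the first factor lies in the identity component of $\KLR[c_{\gamma_i}]$ because $\gamma_i$ vanishes on the simple roots $\eta_j$ ($j\neq i$) used to build it (this is precisely the point you defer to Proposition \ref{prop-pi-all}, Lemma \ref{friendly-Saito} and Lemma \ref{lem:face-lw}), while the second is lowest weight for that face crystal, and then applies Proposition \ref{prop:induce-with-lowest} once to conclude that $L_E$ itself lies in the component of $\mathscr{L}_{\xi^{\gamma_i};\gamma_i}$, whence $\pi^{\gamma_i}(L)=\xi^{\gamma_i}$. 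If you want to keep your architecture, you must either prove the merging claim for the specific interpolation you use, or replace it by the paper's direct component computation on $L_E$.
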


\begin{proof} 
Let $c$ be a generic charge such that $E$ is part of the path $P_L^c$.
Let $L_0$ be the representation in the
$>_c$-semi-cuspidal decomposition of $L$ whose weight is a multiple
of $\delta$.  Then, by Corollary \ref{cor:commute}, $L_0=  \mathscr{L}(\xi^{\gamma_1},\dots,
\xi^{\gamma_r})$ for some partitions $\xi^{\gamma_i}$. We need to show that, for each $i$, the partition $\pi^{\gamma_i}$ attached to $L$
by Definition \ref{def:pig} is $\xi^{\gamma_i}$.  

The module
$\mathscr{L}(\xi^{\gamma_1},\dots,\xi^{\gamma_{i-1}}, \emptyset,
\xi^{\gamma_{i+1}},\dots, 
\xi^{\gamma_r})$ is in the crystal component of the identity for
$c_{\gamma_i}$, since $\gamma_i$ vanishes on $\eta_j$ for $j\neq i$.
On the other hand, we already know that
$\mathscr{L}_{\xi^{\gamma_i};\gamma_i}$ is lowest weight for the face
crystal of $c_{\gamma_i}$.  Thus, by Proposition \ref{prop:induce-with-lowest}, the induction of these two modules
is irreducible and in the component of $\mathscr{L}_{\xi^{\gamma_i};\gamma_i}$ for
the face crystal of the facet defined by $\gamma_i$.  But then by definition,
thus $\pi^{\gamma_i}=\xi^{\gamma_i}$.
\end{proof}

\begin{defn}\label{def:aMV}
The {\bf affine MV polytope} $P_L$ associated to $L \in \KLR$ is the character polytope along with the data of $\pi^\gamma(L)$ for each chamber co-weight $\gamma$. 
\end{defn}

This is a decorated affine pseudo-Weyl polytope as defined in
the introduction. It encodes the same information as the KLR polytope
$\tilde P_L$ in the sense of Definition \ref{def-KLR-poly}.  To obtain
the KLR polytope from the affine MV polytope, we
decorate each edge parallel to a real root with only possible
semi-cuspidal representation, and each edge $E$ parallel with $\delta$ with the representation
$\scrL(\bpi)$ associated to $\pi^\gamma$ for the chamber co-weights which achieve their minimum on $P_L$ along the edge $E$. 

\begin{defn} \label{def:adw-lus}
The {\bf Lusztig data} of a decorated affine pseudo-Weyl with respect to a convex order $\succ$ is the geometric data of the underlying polytope, along with the information of the partitions $\pi^\gamma$ for fundamental weight $\gamma$ of the positive system in $\Delta_{\text{fin}}$ defined by $\succ$. 
\end{defn}

\subsection{Proof of Theorems \ref{thmB} and \ref{corB}}
\label{sec:proof}

\begin{proof}[Proof of Theorem \ref{thmB}]
By Lemma \ref{lem:skel}, any vertex of a pseudo-Weyl polytope $P$ is in the path $P^{>_c}$
for some generic charge $c$.  Thus, any pseudo-Weyl polytope is the
convex hull of the paths $P^{>_c}$ when $c$ ranges over generic charges.

Fix a generic charge $c$ and consider the convex order $>_c$. We
first claim that there can be at most one decorated polytope
satisfying the conditions of Theorem \ref{thmB} with a given Lusztig
datum with respect to $>_c$. To see this, fix such a decorated
polytope $P$, and consider another generic charge $c'$. Lemma
\ref{lem:gen-braid} shows that the path $P^{>_c}$ can be changed to the path
$P^{>_{c'}}$ by moving across finitely many 2-faces in such a way
that, at each step, the path passes through both the top and bottom
vertex of that 2-face. The conditions of Theorem \ref{thmB} then allow
us to determine $P^{>_{c'}}$ from $P^{>_{c}}$.

  By Theorem \ref{th:semi-cuspidal} and Corollary
  \ref{cor:semi-cuspidal-count} (see also Corollary \ref{cor:commute} for the
  imaginary part), we can find a simple $L$ such that $P_L$ has any
  specified Lusztig datum with respect to $>_c$. To prove
  Theorem \ref{thmB}, it thus suffices to show that each $P_L$ satisfies all
  the specified conditions on $2$-faces.

  Every 2-face is either real or parallel to $\delta$.  The real
  2-faces are themselves MV polytopes by Proposition
  \ref{prop:real-faces}. Thus it remains to check that 2-faces
  parallel to $\delta$ yield affine MV polytopes (after
  shortening the imaginary edge as in the statement).  Fix a charge $c$ such that the
  roots sent to the imaginary line form a rank 2 affine sub-root system, and
  let $\fg_c$ be the associated rank 2 affine algebra. This defines a
  (possibly degenerate) 2-face $F_c(P_L)$ of any $P_L$, and all imaginary
  2-faces occur this way for some $c$.

  Let $\gamma_1, \ldots, \gamma_{r-1}$ be the $r-1$ finite type
  chamber weights which define facets of $P_L$ containing $F_c$ for
  all $L$, and $\gamma_+, \gamma_-$ the two chamber weights that
  define faces that intersect $F_c$ in vertical lines.
   If you deform
  $c$ a small amount, then it gives a complete order on roots, and
  picks out one of the two vertical edges of $F_c$. We can choose
  deformations $c_\pm$ such that the set of chamber weights associated
  with these charges are $\{ \gamma_1, \ldots, \gamma_{r-1},
  \gamma_\pm \}$. 
  
  By Lemma \ref{lem:face-lw}, the $c_\pm$ semi-cuspidal modules are exactly those of the form 
  $$L= \mathscr{L}_{\pi^{1};\gamma_1}\circ 
    \mathscr{L}_{\pi^{2};\gamma_2}\circ \cdots \circ 
    \mathscr{L}_{\pi^{{r-1}};\gamma_{r-1}} \circ \mathscr{L}_{\pi^{{\pm}};\gamma_\pm}$$
    for partitions $\pi^1, \ldots, \pi^{r-1}, \pi^\pm$, and
    furthermore the first $r-1$ factors give the lowest weight element
    in the component of the $c$-face crystal containing $L$. Thus it
    suffices to show that, for any $M$ in $\KLR[c]$ in the component
    $\KLR[c, \mathscr{L}_\emptyset]$ of the face crystal generated by $\mathscr{L}_\emptyset$, the face $F_c(P_M)$ is an MV polytope for $\fg_c$. For this it suffices to show that the map 
 $M \mapsto  F_c(P_M)$  from $\KLR[c, \mathscr{L}_\emptyset]$ to the set of
 $\fg_c$-pseudo-Weyl polytopes
 satisfies the
  conditions of Theorem \ref{th:unique-aff}. 
  \begin{enumerate}
  \item[(i)] This is clear for the trivial element (in which
    case the weight is 0 on both sides), and it is also clear that
    this property is preserved by the $\fg_c$ crystal operators.
  \item[(ii.1-4)] Using Lemmata \ref{cor:polytope-Saito} and \ref{lem:ssd}, we can find
    Saito reflections in $B(-\infty)$ which reduce us to the case
    where $\be_{\underline 0}$ or $\be_{\underline 1}$ is simple for
    $\fg$. Hence 1 and 2 follow from Proposition
    \ref{polytope-crystal}. Parts 3 and 4 are then clear from the form of $*$ involution.
        
  \item[(iii.1-4)] Using  Lemmata \ref{cor:polytope-Saito} and \ref{lem:ssd}, we can again
    reduce to a case where $\be_{\underline 0}$ is simple in $\Delta$. Then Saito reflection in this root for the face crystal $B^{\fg_c}(-\infty)$
    is the restrictions of the corresponding reflection in the full
    crystal $B(-\infty)$. Hence the statements for $\beta_{\underline{0}}$ are a consequence of Corollary \ref{saito-A}. To get
    the statements for the reflections in $\beta_{\underline 1}$ we instead use
    Saito reflections in $B(-\infty)$ to reduce this to a simple root.
  \item[(iv)] By definition (see Definition \ref{def:isrface}),
    $\mathscr{L}_{\la;\gamma}=\tsse_{\bar{1}}^{\ell_1\la_1}(\tsse_{\bar{0}}^*)^{\ell_0\la_1}\bar{\mathscr{L}}_{\la\setminus
      \la_1;\gamma}$, and this is semi-cuspidal for the other convex order on $\Delta_c$, from which (iv) is immediate. \qedhere
  \end{enumerate}
\end{proof}

\begin{proof}[Proof of Theorem \ref{corB}]
  By \cite[Theorems 5.9 and 5.12]{MT??}, if one takes the transpose of
  each partition $\lambda_\gamma$ decorating the Harder-Narasimhan
  polytopes $HN_b$ from \cite[Sections 1.5 and 7.6]{BKT}, then these
  satisfy the conditions in Theorem \ref{thmB} (i.e the same
  conditions satisfied by the KLR polytopes $P_L$). By Lemmas
  \ref{lem:skel} and \ref{lem:gen-braid} a decorated affine
  pseudo-Weyl polytope satisfying the conditions of Theorem \ref{thmB}
  is uniquely determined by its Lusztig data with respect to any one
  charge. The number of elements of $B(-\infty)$ of each weight is
  given by the Kostant partition function, which also counts the
  number of possible Lusztig data. Thus the set of KLR polytopes and
  the set of HN polytopes (with decoration transposed) coincide, and
  this set is indexed by the possible Lusztig data for any fixed
  charge. Since both index $B(-\infty)$, we get a bijection
  $B(-\infty)\to B(-\infty)$. This bijection commutes with the crystal
  operators $\tilde f_i$, since in both cases $\tilde f_i$ acts in a
  simple way on the Lusztig datum for any convex order $\succ$ with
  $\alpha_i$ minimal. Since $B(-\infty)$ is connected, this map is the
  identity.
\end{proof}

\subsection{An example}
\label{sec:an-example}

Fix a generic charge $c$.
If one were trying to naively generalize the notion of Lusztig data in $\KLR$ from the finite type situation, one might hope to
find a totally ordered set of cuspidal simples 
such that the modules $A(L_1^{n_1}, \ldots, L_k^{n_k})$ for $L_1 \geq_c
\cdots \geq_c L_k$ are a
complete list of the simples. We now illustrate how, even in affine type, this will
fail. We note that this example is also
treated in \cite[Example 3.3]{Kashnote} 
for different purposes.

Consider the case of $\asl_2$.
Choose the polynomial $Q_{01}(u,v)$ to be $u^2+quv+v^2$ for some
$q\in \K$ (this is not a completely general choice of $Q$, but any choice of
$Q$ gives an algebra isomorphic this one after passing to a finite
field extension).

Choose a charge where $\alpha_0 <_c \alpha_1$.
There are exactly two semi-cuspidal representations of weight
$2\delta$.  These can be
described as $\mathscr{L}_{(2);\om}=\te_1^2\te_0^2\mathscr{L}_\emptyset$ and  $\mathscr{L}_{(1,1);\om}=\te_1\te_0
\te_1\te_0\mathscr{L}_\emptyset$. Consider the induction
$\mathscr{L}_{(1);\om}\circ \mathscr{L}_{(1);\om}$.  This is
6-dimensional, spanned by the elements \[ \tikz{  
\node[label=left:{$v=$}] at (-6,0){
\tikz[very thick]{\draw (0,0) -- node [above, at end]{$0$} (0,1); \draw (.5,0) -- node [above, at end]{$1$}  (.5,1); \draw (1,0)
  -- node [above, at end]{$0$}  (1,1); \draw (1.5,0) -- node [above, at end]{$1$}  (1.5,1);
  \draw[fill=white]  (.25,0) ellipse (3.5mm and 1.5mm);    
 \draw[fill=white]  (1.25,0) ellipse (3.5mm and 1.5mm);    
}
};
\node[label=left:{$\psi_2v=$}] at (0,0){
\tikz[very thick]{\draw (0,0) -- node [above, at end]{$0$} (0,1); \draw (.5,0) -- node [above, at end]{$1$}  (1,1); \draw (1,0)
  -- node [above, at end]{$0$}  (.5,1); \draw (1.5,0) -- node [above, at end]{$1$}  (1.5,1);
  \draw[fill=white]  (.25,0) ellipse (3.5mm and 1.5mm);    
 \draw[fill=white]  (1.25,0) ellipse (3.5mm and 1.5mm);    
}
};
\node[label=left:{$\psi_3\psi_2v=$}] at (6,0){
\tikz[very thick]{\draw (0,0) -- node [above, at end]{$0$} (0,1); \draw (.5,0) -- node [above, at end]{$1$}  (1.5,1); \draw (1,0)
  -- node [above, at end]{$0$}  (.5,1); \draw (1.5,0) -- node [above, at end]{$1$}  (1,1);
  \draw[fill=white]  (.25,0) ellipse (3.5mm and 1.5mm);    
 \draw[fill=white]  (1.25,0) ellipse (3.5mm and 1.5mm);    
}
};
}
\]
\[ \tikz{  
\node[label=left:{$\psi_1\psi_2v=$}] at (-6,0){
\tikz[very thick]{\draw (0,0) -- node [above, at end]{$0$} (.5,1); \draw (.5,0) -- node [above, at end]{$1$}  (1,1); \draw (1,0)
  -- node [above, at end]{$0$}  (0,1); \draw (1.5,0) -- node [above, at end]{$1$}  (1.5,1);
  \draw[fill=white]  (.25,0) ellipse (3.5mm and 1.5mm);    
 \draw[fill=white]  (1.25,0) ellipse (3.5mm and 1.5mm);    
}
};
\node[label=left:{$\psi_3\psi_1\psi_2v=$}] at (0,0){
\tikz[very thick]{\draw (0,0) -- node [above, at end]{$0$} (.5,1); \draw (.5,0) -- node [above, at end]{$1$}  (1.5,1); \draw (1,0)
  -- node [above, at end]{$0$}  (0,1); \draw (1.5,0) -- node [above, at end]{$1$}  (1,1);
  \draw[fill=white]  (.25,0) ellipse (3.5mm and 1.5mm);    
 \draw[fill=white]  (1.25,0) ellipse (3.5mm and 1.5mm);    
}
};
\node[label=left:{$\psi_2\psi_3\psi_1\psi_2v=$}] at (6,0){
\tikz[very thick]{\draw (0,0) -- node [above, at end]{$0$} (1,1); \draw (.5,0) -- node [above, at end]{$1$}  (1.5,1); \draw (1,0)
  -- node [above, at end]{$0$}  (0,1); \draw (1.5,0) -- node [above, at end]{$1$}  (.5,1);
  \draw[fill=white]  (.25,0) ellipse (3.5mm and 1.5mm);    
 \draw[fill=white]  (1.25,0) ellipse (3.5mm and 1.5mm);    
}
};
}
\]
where $v$ is any non-zero element of $\mathscr{L}_{(1);\om}\boxtimes
\mathscr{L}_{(1);\om}$, which is 1-dimensional.

The span $H$ of the basis vectors other than $v$ is a submodule (it is the
kernel of a map to $\mathscr{L}_{(1,1);\om}$). 
The image of the idempotent $e_{0011}$ is irreducible over
$R(2\al_0)\otimes R(2\al_1)$, and generates $H$.  Thus, either
\begin{itemize}
\item $H$ is irreducible or 
\item $\psi_2\psi_3\psi_1\psi_2v$ spans a submodule.
\end{itemize}
But, 
\[\tikz[xscale=1.1]{ \node[label=left:{$\psi_2^2\psi_3\psi_1\psi_2v=$}] at (-6,0){ \tikz[very
    thick]{\draw (0,0) to[out=45,in=-90] (1,1) to[out=90,in=-45]  node [above, at end]{$0$} (.5,1.5); \draw (.5,0) to[out=45,in=-90] (1.5,1) to[out=90,in=-90] node [above, at end]{$1$}  (1.5,1.5); \draw (1,0)
  to[out=135,in=-90] (0,1) to[out=90,in=-90] node [above, at end]{$0$}  (0,1.5); \draw (1.5,0) to[out=135,in=-90] (.5,1) to[out=90,in=-135] node [above, at end]{$1$} (1,1.5);
  \draw[fill=white]  (.25,0) ellipse (3.5mm and 1.5mm);    
 \draw[fill=white]  (1.25,0) ellipse (3.5mm and 1.5mm);    
}
};
\node[label=left:{$=q$}] at (-3,0){
\tikz[very thick]{\draw (0,0)  to[out=30,in=-90] node [above, at end]{$0$}
  node[pos=.8,fill=black,circle,inner sep=2pt]{} (.5,1); \draw (.5,0) -- node [above, at end]{$1$}  (1.5,1); \draw (1,0)
  -- node [above, at end]{$0$}  (0,1); \draw (1.5,0)  to[out=150,in=-90] node[pos=.8,fill=black,circle,inner sep=2pt]{}  node [above, at end]{$1$}  (1,1);
  \draw[fill=white]  (.25,0) ellipse (3.5mm and 1.5mm);    
 \draw[fill=white]  (1.25,0) ellipse (3.5mm and 1.5mm);    
}
};
\node[label=left:{$=-q$}] at (-0,0){
\tikz[very thick]{\draw (0,0) -- node [above, at end]{$0$} (0,1); \draw (.5,0) -- node [above, at end]{$1$}  (1,1); \draw (1,0)
  -- node [above, at end]{$0$}  (.5,1); \draw (1.5,0) -- node [above, at end]{$1$}  (1.5,1);
  \draw[fill=white]  (.25,0) ellipse (3.5mm and 1.5mm);    
 \draw[fill=white]  (1.25,0) ellipse (3.5mm and 1.5mm);    
}
};
}\]
 Thus, if
$q\neq 0$, $H$ is irreducible and thus 
$H\cong\mathscr{L}_{(2);\om}$.  Its inclusion is
split, with complement spanned by $qv+ \psi_2\psi_3\psi_1\psi_2v$.  In particular,
$\mathscr{L}_{(1);\om}\circ \mathscr{L}_{(1);\om}$ is semi-simple with
both $\mathscr{L}_{(2);\om}$ and $\mathscr{L}_{(1,1);\om}$ occurring as
summands.  We see that neither of these modules can thus be cuspidal, since
 \[\ch(\mathscr{L}_{(2);\om})=4\cdot w[0011]+w[0101].\]
If $q=0$, then the behavior is quite different; in this case $
\psi_2\psi_3\psi_1\psi_2v$ spans the socle of $\mathscr{L}_{(1);\om}\circ
\mathscr{L}_{(1);\om}$, and $H$ is its radical.  In particular, $\mathscr{L}_{(1);\om}\circ
\mathscr{L}_{(1);\om}$ is indecomposable,
and a 3-step extension where a copy of   $\mathscr{L}_{(2);\om}$ is
sandwiched between the socle and cosocle, both isomorphic to
$\mathscr{L}_{(1,1);\om}$. So in particular, when $q=0$, the representation
$\mathscr{L}_{(2);\om}$ {\it is} cuspidal,
since \[\ch(\mathscr{L}_{(2);\om})=4\cdot w[0011].\]

The KLR polytopes of these representations are independent of $q$ and
are given by 
\[
\begin{tikzpicture}
  \node at (-3,0){
\tikz[yscale=0.4, xscale=1.2] {
\node at (0,0) {$\bullet$}; 
\node at (0,4) {$\bullet$};
\node at (-2,2) {$\bullet$};
\draw [line width = 0.04cm] (0,0) -- (-2,2);
\draw [line width = 0.04cm] (0,4) -- (-2,2);
\draw [line width = 0.04cm] (0,0) -- node[right,midway]{$(2)$}(0,4);
\draw [line width = 0.01cm, color=gray] (-1,1) -- (0, 2);
\draw [line width = 0.01cm, color=gray] (-1,3) -- (0, 2);
}
};
 \node at (3,0){
\tikz[yscale=0.4, xscale=1.2] {
\node at (0,0) {$\bullet$}; 
\node at (0,4) {$\bullet$};
\node at (-1,1) {$\bullet$};
\node at (-1,3) {$\bullet$};
\draw [line width = 0.04cm] (0,0) -- (-1,1);
\draw [line width = 0.04cm] (-1,3) -- node[midway,left] {$(1)$}(-1,1);
\draw [line width = 0.04cm] (0,4) -- (-1,3);
\draw [line width = 0.04cm] (0,0) -- node[right,midway]{$(1,1)$}(0,4);
\draw [line width = 0.01cm, color=gray] (-1,1) -- (0, 2);
\draw [line width = 0.01cm, color=gray] (-1,3) -- (0, 2);
}
};
\end{tikzpicture}
\]

If one takes the choice of parameters as in \cite{VV} corresponding to
an Ext-algebra of perverse sheaves on the moduli of representations of
a Kronecker quiver (which is also that fixed by \cite{BKKL} in order
to find a relationship to affine Hecke algebras with $\nu=-1$ or in
characteristic $2$), then we take $q=-2$.  Thus, if the field $\K$ has
characteristic $\neq 2$, we have $q\neq 0$ and $\dim
\mathscr{L}_{(2);\om}=5$ whereas if  $\K$ does have characteristic 2, then
$q=0$ and $\dim \mathscr{L}_{(2);\om}=4$.  Under Brundan and
Kleshchev's isomorphism \cite{BKKL} between quotients of KLR algebras and
cyclotomic Hecke algebras, this corresponds to the change in
characters as we pass from the Hecke algebra at a root of unity to the
symmetric group, or the difference between the canonical basis and
$2$-canonical basis.

In the $q=0$ case, the number of cuspidals in this example is in fact the root multiplicity of $2 \delta$. One might naively hope that at $q=0$ this holds more generally, but explicit calculations in more complicated examples show that it does not.

\subsection{Beyond affine type} \label{ss:beyond-affine}

In affine type, while we can have many different semi-cuspidal
representations corresponding to an imaginary root, we still have
considerable control over the structure of these representations. In particular, all the required labels for these MV polytopes can be encoded with the data of a partition associated to each facet parallel to $\delta$.

In general, we expect that the structure of a 2-face should be
controlled by the set of roots obtained by intersecting a
2-dimensional plane with $\Delta$. If $\fg$ is of finite type then
this set is also a finite type root system and the 2-faces are finite
type MV polytopes. In affine type, this intersection can also be a rank-2
affine root system, and 2-faces are essentially rank 2 affine MV
polytopes. But because of the multiplicities, the sum of these root
spaces is actually not quite an affine root system---rather, it is the root system of an
infinite-rank Borcherds algebra whose Cartan matrix is obtained by
adding infinitely many rows and columns of zeroes to the rank 2 affine
matrix.  The structure we have observed in the 2-faces (many copies of the same crystal $B(-\infty)$ in the case when the intersection is affine) seems to be a manifestation of this larger
algebra.

Beyond affine type, when one intersects $\Delta$ with a 2-plane, the
resulting set of real roots
will generate a root system of rank at most 2. However, if there is to be a
generalization of Theorem \ref{thmB}, considering this small rank root system is probably not enough. Rather, one should consider the
entire sum of the root spaces; by \cite[Theorem 1]{Bor91},
up to a central extension which may split up imaginary root spaces, this will be the root system of a Borcherds algebra (of possibly infinite rank). 
Nonetheless, one could hope to define MV polytopes for this algebra, and that
the 2-faces could be matched to these. Unfortunately, even if this
were possible, ``reduction to rank two" would mean
reduction to a Borcherds algebra of possibly infinite rank, leaving it
debatable whether this actually improves matters; it still may shed light on the structure of KLR algebras and their representations.  

In any case, there will certainly be new difficulties beyond affine type. To illustrate some of these, consider the Cartan matrix 
\begin{equation}
\left(
\begin{array}{ccc}
2&-2&-2\\
-2&2&-2\\
-2&-2&2
\end{array}
\right).
\end{equation}
This is of hyperbolic type, and the imaginary root $\beta= \alpha_1+\alpha_2+\alpha_3$ has multiplicity 2. 
Fix a charge $c$ with $c(\alpha_0)=1+i, c(\al_1)=-1+i, c(\al_2)=i$.
The only real root with $c(\al)\in i\R$ is $\al_2$ itself, so real roots only generates a copy of $\mathfrak{sl}_2$, but the intersection is rank 2, since it contains $\beta$. This is already a new phenomenon as in finite and affine type the real roots corresponding to a 2-face always generated a rank 2 root system. 

Nonetheless,
Proposition \ref{e-f-defined} 
shows that
the semi-cuspidals of
argument $\pi/2$ are a combinatorial bicrystal for $\mathfrak{sl}_2$.
If the naive analogue of Corollary \ref{cor:aff-lowest} held, then we would
have that $\te_2$ and $\te_2^*$ act identically on every
semi-cuspidal  of argument $\pi/2$, since this is the case in
$B^{\mathfrak{sl}_2}(-\infty)$.  However, both $\te_2 \te_1 \te_0 \scrL_\emptyset$ and $
 \te_2^*\te_1 \te_0\scrL_\emptyset$ are 1-dimensional; the former
 has character $w[012]$ and the latter $w[201]$.  Thus, they are
 necessarily distinct. 

Attacking this case will require stronger techniques than we possess
at the moment. For instance, the sharp-eyed reader will note that we give no direct
connection between the KLR algebra attached to a face and the lower
rank KLR algebra for the root system spanned by that face.  While this
seems like an obvious suggestion, we see no such connection (say, a
functor) at the moment.  Perhaps more progress can be made  
if such a functor can be found.

\bibliography{./gen}
\bibliographystyle{amsalpha}
\end{document}